\documentclass[11pt,a4paper,oneside,centertags,reqno]{amsart}

\usepackage[title]{appendix}

\usepackage{etoolbox}
\robustify{\footnote}

\usepackage[multiple]{footmisc}
\usepackage{txfonts}
\usepackage{exscale}
\usepackage{amsbsy}
\usepackage[colorlinks=true,citecolor=red,pagebackref,hypertexnames=false]{hyperref}
\usepackage{lmodern}
\usepackage{endnotes}
\usepackage{latexsym}
\usepackage{amsmath}
\usepackage{amsfonts}
\usepackage{amssymb}
\usepackage{graphpap}
\usepackage{epsfig}
\usepackage{amscd}
\usepackage{amsthm}
\usepackage{enumerate}
\usepackage{enumitem}
\usepackage{eucal}
\usepackage[francais]{[babel}
\usepackage[deutsch]{[babel}
\usepackage[ansinew]{inputenc}
\usepackage{graphics}
\usepackage{dsfont}

\usepackage[backrefs,non-compressed-cites,initials,nobysame,]{amsrefs}
\usepackage{pdfsync}
\usepackage{color}
\usepackage{mathrsfs}

\usepackage{thmtools}
\usepackage{hyperref}
\usepackage{cleveref}

\declaretheoremstyle[
  headfont=\bfseries,
  bodyfont=\normalfont,
  numbered=no
]{namedstyle}

\declaretheorem[name={Assumption BE}, style=namedstyle]{assBC}

\newcommand{\AssBE}{Assumption~\hyperref[ass:BE]{BE}}

\usepackage{bbm}

\usepackage{caption}

\usepackage[backrefs,non-compressed-cites,initials,nobysame,]{amsrefs}
\usepackage{pdfsync}
\usepackage{color}
\usepackage{mathrsfs}
\usepackage{tikz}
\usepackage{pgfplots}

\font\got=eufm10 at 11pt

\font\posebni=msam10

\numberwithin{equation}{section}

\renewcommand{\Re}[0]{{\rm Re}\,}
\renewcommand{\Im}[0]{{\rm Im}\,}

\newcommand{\C}[0]{{\mathbb C}}

\newcommand{\N}[0]{{\mathbb N}}

\newcommand{\R}[0]{\mathbb{R}}

\newcommand{\leqsim}[0]{\,\text{\posebni \char46}\,}
\newcommand{\geqsim}[0]{\,\text{\posebni \char38}\,}

\newcommand{\cA}[0]{{\mathcal A}}
\newcommand{\cB}[0]{{\mathcal B}}
\newcommand{\cC}[0]{{\mathcal C}}
\newcommand{\cD}[0]{{\mathcal D}}
\newcommand{\cE}[0]{{\mathcal E}}

\newcommand{\cJ}[0]{{\mathcal J}}

\newcommand{\cM}[0]{{\mathcal M}}
\newcommand{\cN}[0]{{\mathcal N}}
\newcommand{\cO}[0]{{\mathcal O}}
\newcommand{\cP}[0]{{\mathcal P}}
\newcommand{\cQ}[0]{{\mathcal Q}}
\newcommand{\cR}[0]{{\mathcal R}}
\newcommand{\cS}[0]{{\mathcal S}}

\newcommand{\cV}[0]{{\mathcal V}}
\newcommand{\cW}[0]{{\mathcal W}}

\newcommand{\cZ}[0]{{\mathcal Z}}

\newcommand{\oA}[0]{{\mathscr A}}
\newcommand{\oB}[0]{{\mathscr B}}
\newcommand{\oC}[0]{{\mathscr C}}
\newcommand{\oD}[0]{{\mathscr D}}

\newcommand{\oL}[0]{{\mathscr L}}
\newcommand{\oP}[0]{{\mathscr P}}
\newcommand{\oV}[0]{{\mathscr V}}
\newcommand{\oW}[0]{{\mathscr W}}

\newcommand{\gota}[0]{{\text{\got a}}}
\newcommand{\gotb}[0]{{\text{\got b}}}

\newcommand\bS{\mathbf{S}}

\newcommand{\mn}[2]{\{ #1 : #2 \}}
\newcommand{\Mn}[2]{\left\{ #1 : #2 \right\}}
\newcommand{\sk}[2]{\left\langle #1 , #2\right\rangle}

\renewcommand{\div}[0]{{\rm div}\,}

\newcommand{\Dom}[0]{{\rm D}}

\newtheorem{theorem}{Theorem}[section]
\newtheorem{defi}[theorem]{Definition}
\newtheorem{lemma}[theorem]{Lemma}
\newtheorem{proposition}[theorem]{Proposition}
\newtheorem{corollary}[theorem]{Corollary}

\theoremstyle{remark}
\newtheorem{remark}[theorem]{Remark}  

\theoremstyle{definition}

\renewcommand\leq[0]{\leqslant}
\renewcommand\geq[0]{\geqslant}
\renewcommand\epsilon[0]{\varepsilon}
\renewcommand\theta[0]{\vartheta}

\newcommand\wrt{\,\text{\rm d}}
\renewcommand\mod[1]{\left\vert{#1}\right\vert}

\newcommand\norm[2]{{\left\Vert{#1}\right\Vert_{#2}}}

\begin{document}

\title[First-order perturbations and negative potentials]{Bilinear embedding for divergence-form operators with first-order terms and negative potentials}
\author[Morelato]{Lorenzo Luciano Morelato}
\author[Poggio]{Andrea Poggio}
\date{\today}

\address{Lorenzo Luciano Morelato \\Universit\`a degli Studi di Genova\\ Dipartimento di Matematica\\ Via Dodecaneso\\ 35 16146 Genova\\ Italy }
\address{Andrea Poggio \\Universit\`a degli Studi di Genova\\ Dipartimento di Matematica\\ Via Dodecaneso\\ 35 16146 Genova\\ Italy }

\begin{abstract}
This article establishes a bilinear embedding for second-order divergence-form operators  with complex  coefficients,  characterized by the simultaneous presence of first-order terms and negative potentials. This work provides a further development of the theory initiated by Carbonaro and Dragi\v{c}evi\'c for the homogeneous case, and recently extended by the second author  to cases where first-order terms or negative potentials were treated in isolation.

We work in the general setting of arbitrary open subsets of $\mathbb{R}^d$ under Dirichlet, Neumann, or mixed boundary conditions. Our main contribution is the introduction of a unified notion of  generalized $p$-ellipticity that extends all its predecessors and serves as the natural condition for the bilinear inequality. Methodologically, we overcome the rigidity of the Bellman-heat method on arbitrary open subsets by introducing a novel sequence-based approach that unifies and simplifies the previous techniques.

As fundamental applications, we prove the boundedness of the $H^\infty$-calculus on $L^p$ and establish $L^p$-maximal regularity.  Moreover, we show that this generalized $p$-ellipticity provides a sufficient condition for the $L^p$-contractivity and $L^p$-analyticity of the generated semigroup.
\end{abstract}

\maketitle

\bibliographystyle{amsxport}

\section{Introduction and statement of the main results}
\label{s: Neumann introduction}
Let $\Omega \subseteq \mathbb{R}^d$ be a nonempty open set. 
The main goal of this paper is to extend the bilinear embedding theorems proved by the second author in \cite{Poggio} and \cite{P-Potentials} to divergence-form operators allowing the simultaneous presence of first-order perturbations and negative potentials, formally given by
$$
\oL u = -\mathrm{div}(A \nabla u) + \langle \nabla u, \overline{b} \rangle - \mathrm{div}(c u) + V u, \qquad \text{on } \Omega.
$$
Here, $A$ is a uniformly elliptic matrix on $\Omega$ with bounded coefficients, $b,c \in L^\infty(\Omega;\mathbb{C}^d)$, and $V \in L^1_{\mathrm{loc}}(\Omega)$ is a strongly subcritical potential; see Section~\ref{sss: ss pot} for the precise definition. 
These operators are defined on $L^2(\Omega)$ in the weak sense via a sesquilinear form, where different types of boundary conditions are incorporated through the choice of the form domain; see Sections~\ref{ss: op} and~\ref{s: boundary} below.

Exploring a new bilinear embedding adds another building block to a consolidated framework that yields fundamental results in harmonic analysis, including dimension-free bounds for Riesz transforms \cite{CD-Riesz,DraVol,DV,Dv-Sch} and sharp spectral multiplier results \cite{CD-mult,CD-OU}.
Bilinear embedding theorems for divergence-form operators have a relatively recent history. 
The first significant contribution was due to Dragi\v{c}evi\'c and Volberg \cite{Dv-kato}, who established a dimension-free bilinear embedding on the \emph{whole} space $\mathbb{R}^d$ for Schr\"odinger-type operators associated with \emph{real} elliptic matrices and \emph{nonnegative} potentials, for all exponents $p \in (1,\infty)$. 
An extension to the complex case was later undertaken by Carbonaro and Dragi\v{c}evi\'c, who initially focused on the case without potentials. 
However, passing to complex coefficients turned out to be far from straightforward. Indeed, a necessary condition for the validity of the bilinear embedding on $\R^d$ for a given exponent $p$ is the uniform $L^p$ boundedness of the semigroup associated with the divergence-form operator; see \cite[Section~1.6]{CD-DivForm}. 
It is well known that, for general complex matrices on $\mathbb{R}^d$, such uniform boundedness does not hold throughout the entire interval $(1,\infty)$ \cite{HMMcl}. 
Consequently, an additional structural condition on the matrix was required in order to extend the bilinear embedding to the complex setting. 
To this end, the authors introduced the notion of \emph{$p$-ellipticity}, an algebraic property of the matrix which is strictly stronger than mere ellipticity when $p \neq 2$, and coincides with it when $p=2$ \cite{CD-DivForm}. 
Moreover, the bilinear embedding proved in \cite{CD-DivForm} involves two possibly different divergence-form operators, whose associated matrices may be completely independent of each other. This is in contrast with \cite{Dv-kato}, where the matrices (and the potentials) were assumed to coincide.
A key ingredient in \cite{CD-DivForm} is a suitable notion of (generalized) convexity with respect to pairs of matrices which, combined with $p$-ellipticity, allows one to apply the so-called \emph{heat-flow method} associated with  a Bellman function of Nazarov and Treil.

Independently and simultaneously, the $p$-ellipticity condition was introduced by Dindo\v{s} and Pipher in the context of boundary value problems \cite{Dindos-Pipher}, where it yielded regularity estimates for local solutions of $\oL u=0$ with complex coefficients, thus serving as a substitute for the De Giorgi--Nash--Moser theory, which is only available in the real case.

It is worth noting that a first instance of a bilinear embedding for complex matrices on $\mathbb{R}^d$ was obtained in \cite{AuHM} as a consequence of conical square function estimates. In that case, however, the matrices were taken to be equal, and the constant appearing in the bilinear inequality does not enjoy the dimension-free property characteristic of \cite{Dv-kato, CD-DivForm}.

Subsequently, in \cite{CD-Mixed} the focus shifted to the case of arbitrary open subsets of $\mathbb{R}^d$. 
In that work, Carbonaro and Dragi\v{c}evi\'c considered divergence-form operators with Dirichlet, Neumann, or mixed boundary conditions. 
Working on arbitrary open sets is substantially more challenging than working in the whole space $\mathbb{R}^d$, since many favorable properties of the operators and of the associated semigroups are no longer available. 
The main difficulty addressed in \cite{CD-Mixed} was the justification of an integration by parts within the  heat-flow method, which was overcome through the development of a novel approximation technique.

This approximation scheme has subsequently been exploited to prove bilinear embedding theorems for Schr\"odinger-type operators with nonnegative potentials \cite{CD-Potentials}, divergence-form operators with first-order perturbations and nonnegative potentials \cite{Poggio}, and Schr\"odinger-type operators with negative potentials \cite{P-Potentials}, as well as divergence-form operators with dynamical boundary conditions \cite{BER}.

In the case of nonnegative potentials treated in \cite{CD-Potentials}, no additional structural difficulties arise: the theory continues to hold under the sole assumption of $p$-ellipticity of the matrices. The primary challenge in that setting lies in the treatment of possibly unbounded potentials, which is addressed through a non-trivial truncation argument.

In the presence of first-order terms and nonnegative potentials, the additional lower-order contributions require a refinement of the original framework. 
In this setting, the generalized convexity underlying the heat-flow method must incorporate not only the matrix coefficients but also vectors and potential terms, leading to the introduction of the class $\mathcal{B}_p$ \cite{Poggio}. 
From a technical viewpoint, the heat-flow method relies on a regularization argument: one first works under additional smoothness assumptions on the coefficients, exploiting interior elliptic regularity, and then removes these assumptions through a suitable approximation procedure.

Adapting the heat-flow method to operators with negative potentials is equally delicate. 
The presence of a nontrivial negative part prevents one from applying the method under the sole assumption of $p$-ellipticity, and additional control is required. 
This is provided by the new class $\mathcal{A}\mathcal{P}_p$ \cite{P-Potentials}, which yields stronger convexity estimates for the Bellman function, sufficient to compensate for the contribution of the negative potential. Since strongly subcritical potentials are not stable under truncation of the positive part, in contrast with \cite{CD-Potentials}, the bilinear embedding inequality must be proved directly for potentials whose positive part may be unbounded, requiring a substantially more delicate analysis. 
New convexity properties of the approximation sequence of \cite{CD-Mixed} turn out to be decisive in the limiting procedure; see \cite[Section~7.2]{P-Potentials} for further details.

The purpose of the present paper is to treat simultaneously the two settings studied separately in \cite{Poggio} and \cite{P-Potentials}, namely divergence-form operators with first-order terms and with negative potentials. 
The new structural condition introduced here, extending $p$-ellipticity, can be seen as a natural fusion of the two classes $\mathcal{B}_p$ and $\mathcal{A}\mathcal{P}_p$. 
However, although bilinear embeddings are already available in each of these two frameworks individually, their combination is far from straightforward. 
This difficulty reflects the rigidity of the heat-flow method: even minor changes in the analytical setup may lead to serious obstacles. 
On the one hand, the regularization argument used in \cite{Poggio} appears inapplicable in this joint setting, since it is not clear whether subcriticality is invariant under the regularization of the potential. On the other hand, the presence of first-order terms prevents the use of the limiting argument employed in \cite{P-Potentials}. 
As a consequence, a new approach is required. A promising direction is to incorporate into the heat-flow method a new approximating sequence introduced by the second author in his PhD thesis. There, the goal was to provide an alternative proof of the bilinear embedding, with the long-term aim of improving a trilinear embedding theorem on arbitrary open subsets of $\mathbb{R}^d$ \cite{CDKS-Tril}. We show that this approximation scheme turns out to be particularly well suited to the purposes of the present paper.

Finally, we emphasize that, as with the classes of operators previously discussed, the generalized $p$-ellipticity condition introduced here is fundamental for the extrapolation of the semigroup $(T_t)_{t>0}$ generated by $-\oL$ from $L^2(\Omega)$ to $L^r(\Omega)$. Specifically, under this structural assumption on the coefficients, the semigroup is analytic and contractive on $L^r(\Omega)$ for all exponents $r$ satisfying the condition $|1/2-1/r|\leq|1/2-1/p|$. Furthermore, by extending the approach introduced in \cite{CD-Mixed}, and further developed in \cite{Poggio, P-Potentials, Egert20}, we show that our main result serves as a key ingredient in proving that, under the new $p$-ellipticity assumption on the coefficients, $\oL$ admits a bounded holomorphic functional calculus on $L^p$ and possesses maximal parabolic regularity.


\subsection{The operator coefficients}

\subsubsection{Elliptic matrices}
Denote by $\cA(\Omega)$ the class of all complex uniformly strictly elliptic $d\times d$ matrix-valued functions on $\Omega$ with $L^{\infty}$ coefficients (in short, elliptic matrices). That is to say, $\cA(\Omega)= \cA_{\lambda,\Lambda}(\Omega)$ is the class of all measurable $A:\Omega\rightarrow \C^{d\times d}$ for which there exist $\lambda$, $\Lambda>0$ such that for almost all $x\in\Omega$ we have
\begin{equation}
	\label{eq: N ellipticity}
	\aligned
	\Re\sk{A(x)\xi}{\xi}
	&\geq \lambda|\xi|^2\,,
	&\quad\forall\xi\in\C^{d};\\
	\mod{\sk{A(x)\xi}{\eta}}
	&\leq \Lambda \mod{\xi}\mod{\eta}\,,
	&\quad\forall\xi,\eta\in\C^{d}.
	\endaligned
\end{equation}
We denote by $\lambda(A)$ and $\Lambda(A)$ the optimal $\lambda$ and $\Lambda$, respectively.

\subsubsection{Strongly subcritical potentials}
\label{sss: ss pot}
The operators considered in this work involve a specific class of potentials, which we introduce in the following definition.

Let $\oV$ be a closed subspace of  $W^{1,2}(\Omega)$ containing $W^{1,2}_0(\Omega)$. 
\begin{defi}
A real locally integrable function $V$  is said to be a strongly subcritical potential for $\oV$ if there exist $\alpha \geq 0$ and $\sigma \in [0,1)$ such that
\begin{equation}
	\label{e : subc ineq}
	\int_\Omega V_- |v|^2 \leq \alpha \int_\Omega |\nabla v|^2 
	+ \sigma \int_\Omega V_+ |v|^2, 
	\qquad \forall v \in \oV.
\end{equation}
We denote by $\cP(\Omega,\oV)$ the class of all strongly subcritical potentials for $\oV$. When $\oV$ is clear from the context, we simply write $\cP(\Omega)$.
\end{defi}
For fixed $\alpha \geq 0$ and $\sigma \in [0,1)$ we write $\cP_{\alpha,\sigma}(\Omega,\oV)$, or simply $\cP_{\alpha,\sigma}(\Omega)$, for the subclass of $\cP(\Omega)$ in which \eqref{e : subc ineq} holds with these constants. Hence,
\begin{equation}
	\nonumber
	\cP(\Omega) = \bigcup_{\alpha \geq 0,\, \sigma \in [0,1)} 
	\cP_{\alpha,\sigma}(\Omega).
\end{equation}
Given $V \in \cP(\Omega,\oV)$, we define $\alpha(V,\oV)$ (or simply $\alpha(V)$ if no ambiguity arises) as the smallest admissible $\alpha$ for which \eqref{e : subc ineq} holds with some $\sigma \in [0,1)$. The corresponding constant $\sigma$ will be denoted by $\sigma(V,\oV)$, or simply $\sigma(V)$.

We point out that every nonnegative potential is trivially strongly subcritical with $\alpha=0$.

\subsubsection{First-order terms}
\label{sss: first order}
We recall the definition of the class $\cB(\Omega)$, originally introduced in \cite[Section~1]{Poggio}. For the time being, we assume that the potentials are nonnegative.

Let $\mu \in (0,1]$ and $M >0$. Denote by $\cB_{\mu,M}(\Omega)$ the class of all $(A,b,c,V) \in \cA(\Omega) \times (L^\infty(\Omega,\C^d))^2 \times L_{\text{loc}}^1(\Omega, \R_+)$ for which 
\begin{eqnarray}
	\label{eq: sect form below}
	&\Re\sk{A(x)\xi}{\xi} + \Re \sk{\overline{b}(x)+c(x)}{\xi} + V(x) \geq \mu (|\xi|^2 +  V(x)),  \\
	\label{eq: sect form above}
	&|\overline{b}(x)-c(x)| \leq M\sqrt{V},
\end{eqnarray}
 for almost all $x\in\Omega$ and every $\xi \in \C^d$. Moreover, we set
\begin{equation}
	\nonumber
	\cB(\Omega) = \bigcup_{\mu,M>0} \cB_{\mu,M}(\Omega).
\end{equation}
For $\oA=(A,b,c,V)$, we denote by $\mu(\oA)$ and $M(\oA)$ the optimal $\mu$ and $M$, respectively.

Notice that \eqref{eq: sect form below} and \eqref{eq: sect form above} imply that 
\begin{equation}
	\label{eq: bc cont by V}
	\aligned
	|b(x)|\leqsim \sqrt{V}, \qquad  |c(x)| \leqsim \sqrt{V},
	\endaligned
\end{equation}
for almost all $x \in \Omega$ \cite{Poggio}.

 
\subsection{The operator}
\label{ss: op}
Let $\oV$ be a closed subspace of  $W^{1,2}(\Omega)$ containing $W^{1,2}_0(\Omega)$. Suppose that $A \in \cA(\Omega)$, $b,c \in L^\infty(\Omega;\C^d)$ and $V \in \cP_{\alpha,\sigma}(\Omega,\oV)$. Set $\oA=(A,b,c,V)$ and consider the sesquilinear form $\gota = \gota_{\oA,\oV}$ defined by
\begin{equation}
	\label{e : form sesq}
	\aligned
	\Dom(\gota) &=\Mn{u\in\oV}
	 {\int_\Omega V_+|u|^2 < \infty},\\
	\gota(u,v)&= \displaystyle \int_{\Omega}\sk{A\nabla u}{\nabla v}_{\C^{d}} + \sk{\nabla u}{\overline{b}}_{\C^d}\overline{v} + u\sk{c}{\nabla v}_{\C^d} + V u \overline{v}.
	\endaligned
\end{equation}
Clearly, $\gota$ is densely defined in $L^2(\Omega)$. Suppose furthermore that
\begin{equation}
	\label{eq: new condit p2}
	\oA_{\alpha,\sigma} := \left(A-\alpha I_d, b,c, (1-\sigma)V_+\right) \in \cB(\Omega).
\end{equation} 
Denote $\gotb =  \gota_{I_d,V_+,\oV}$. We know from \cite[Proposition 4.30]{O} that $\gotb$ is closed. In addition, by  \eqref{e : subc ineq}, \eqref{eq: new condit p2} and the {\it ad hoc} notation $\xi = \nabla u / u$, for all $u \in \Dom(\gota)$ we have
\begin{equation}
	\label{e : below bound Rea}
	\aligned
	\Re \gota (u,u) &= \int_\Omega \Re\sk{A\nabla u}{\nabla u} + \Re\left( \sk{\nabla u}{\overline{b}}\overline{u} + u\sk{c}{\nabla u}\right) + V_+|u|^2 - V_-|u|^2  \\
	& \geq\int_\Omega \Re\sk{(A-\alpha I_d) \nabla u}{\nabla u} +\Re\left( \sk{\nabla u}{\overline{b}}\overline{u} + u\sk{c}{\nabla u}\right) + (1-\sigma) V_+|u|^2 \\
	&= \int_\Omega |u|^2 \Bigl[\Re\sk{(A-\alpha I_d)\xi}{\xi} + \Re \sk{\overline{b}+c}{\xi} +  (1-\sigma) V_+ \Bigr] \\
	&\geq \mu(\oA_{\alpha,\sigma}) \int_\Omega |\nabla u|^2 + (1-\sigma) V_+ |u|^2.
	\endaligned
\end{equation}
In particular, $\Re \gota (u,u)  \geqsim \gotb(u,u)$. {\it Viceversa}, the boundedness of $A$, \eqref{eq: bc cont by V} and \eqref{e : subc ineq} yield $\Re\gota(u,u) \leq |\gota(u,u)| \leqsim \gotb(u,u)$. Therefore, $\gota$ is closed.

Given $\phi \in (0,\pi)$ define the sector
\begin{equation}
	\nonumber
	\bS_{\phi}=\mn{z\in\C\setminus\{0\}}{|\arg (z)|<\phi}
\end{equation}
and set $\bS_0=(0,\infty)$. Since
\begin{equation}
	\nonumber
	\Im \gota(u,u) = \int_\Omega  \Im\sk{A\nabla u}{\nabla u} - \Im \sk{\overline{b}-c}{\nabla u/u}|u|^2 \Bigr],
\end{equation}
the boundedness of $A$, \eqref{eq: new condit p2} and \eqref{e : below bound Rea} imply that  $\gota$ is sectorial of some angle $\theta_0 =\theta_0(\mu(\oA_{\alpha,\sigma}),M(\oA_{\alpha,\sigma}),\Lambda(A)) \in(0, \pi/2)$ in the sense of \cite{Kat}, meaning that its numerical range $\text{Nr}(\gota) = \{\gota(u): u\in \Dom(\gota), \, \|u\|_2=1\}$ satisfies
\begin{equation}
	\label{eq: sect numer range}
	\text{Nr}(\gota) \subseteq \overline{\bS}_{\theta_0}.
\end{equation}

Denote by $\oL=\oL^{\oA,\oV}_{2}$ the unbounded operator on $L^{2}(\Omega)$ associated with the sesquilinear form $\gota$. That is,
\begin{equation}
	\nonumber
	\Dom(\oL):=\Mn{u\in\Dom(\gota)}
	{\exists w\in L^2(\Omega):\ 
	\gota(u,v)=\sk{w}{v}_{L^2(\Omega)}\ \forall v\in \Dom(\gota)}
\end{equation}
and 
\begin{equation}
	\label{eq: ibp}
	\sk{\oL u}{v}_{L^2(\Omega)} = \gota(u,v), \quad \forall u\in\Dom(\oL),\quad \forall v\in\Dom(\gota)\,.
\end{equation}
Formally, $\oL$ is given by the expression
\begin{equation}
	\nonumber
	\oL u=-\div(A\nabla u)  + \sk{\nabla u}{\overline{b}} - \div(c u) + V u.
\end{equation}
It follows from \eqref{eq: sect numer range} that $-\oL$ is the generator of a strongly continuous semigroup on $L^{2}(\Omega)$
\begin{equation}
	\nonumber
	T_t =T^{\oA,\oV}_{t}, \quad t>0,
\end{equation}
which is analytic and contractive in the cone $\bS_{\pi/2-\theta_0}$. For details and proofs, we refer the reader to \cite[Chapter VI]{Kat} and \cite[Chapters I and IV]{O}.


\subsection{Boundary conditions}
\label{s: boundary}
Here we describe certain classes of closed subspaces $\oV$ of $W^{1,2}(\Omega)$ containing $W_0^{1,2}(\Omega)$ that satisfy additional conditions assumed throughout the remainder of this paper. We follow the framework established in \cite{CD-Potentials}.
\smallskip

We say that the space $\oV \subseteq W^{1,2}(\Omega)$ is {\it invariant} under:
\begin{itemize}
\item the function $p: \C \rightarrow \C$, if $u \in \oV$ implies $p(u):= p \circ u \in \oV$;
\item the family $\oP$ of functions $\C \rightarrow \C$, if it is invariant under all $p \in \oP$.
\end{itemize}

Define functions $P, T : \C \rightarrow \C$ by
\begin{equation*}
P(\zeta)= 
	\begin{cases}
	\zeta; & |\zeta| \leq 1,\\
	\zeta/|\zeta|; &|\zeta|\geq 1,
	\end{cases}\qquad \qquad
	T(\zeta) = \, (\Re\zeta)_+.
\end{equation*}
Thus $P(\zeta) = \min\{1,|\zeta|\}{\rm sign}\zeta$, where ${\rm sign}$ is defined as \cite[(2.2)]{O}:
\begin{equation}
	\nonumber
	{\rm sign}\zeta :=
	\begin{cases}
	\zeta/|\zeta|; & \zeta \ne 0,\\
	0; &\zeta=0.
	\end{cases}
\end{equation}
Let $\oV$ be a closed subspace of $W^{1,2}(\Omega)$ containing $W_0^{1,2}(\Omega)$ and such that
\begin{equation}
	\label{eq: inv P}
	\oV \text{ is invariant under the function } P,
\end{equation}
\begin{equation}
	\label{eq: inv N}
	\oV \text{ is invariant under the function } T.
\end{equation}
Note that, in general, \eqref{eq: inv P} does not imply \eqref{eq: inv N}; see \cite[Example~4.3.2]{O}.

It is well know (see \cite[Proposition~4.4\&4.11]{O}, \cite[Section~2,1]{CMR-NumRange} and \cite[Appendix A]{CD-Potentials}) that \eqref{eq: inv P} and \eqref{eq: inv N}  are satisfied in the following notable cases:
\begin{enumerate}[label=(\alph*)]
\item \label{i: D} $\oV = W_0^{1,2}(\Omega)$;
\item \label{i: N} $\oV = W^{1,2}(\Omega)$;
\item \label{i: bM} $\oV = \widetilde{W_D}^{1,2}(\Omega)$, the closure in $W^{1,2}(\Omega)$ of $\{u \in W^{1,2}(\Omega): {\rm dist}({\rm supp} \,u, D) >0\}$, where $D$ is a (possibly empty) closed subset of $\partial \Omega$;
\item \label{i: cM} $\oV= W_D^{1,2}(\Omega)$, the closure in $W^{1,2}(\Omega)$ of $\{u_{\vert_{\Omega}}: u\in C^{\infty}_{c}(\R^{d}\setminus D)\}$, where $D$ is a (possibly empty) proper closed subset of $\partial \Omega$.
\end{enumerate}

When $\oV$ corresponds to any of the cases \ref{i: D}-\ref{i: cM}, we say that $\oL= \oL^{\oA,\oV}$ is subject to \ref{i: D} {\it Dirichlet}, \ref{i: N} {\it Neumann}, \ref{i: bM} {\it mixed}, or  \ref{i: cM} {\it good mixed boundary conditions}.
\smallskip

When working with a pair of spaces $\oV$ and $\oW$, we  need to select them appropriately from the classes listed above. To this end, we impose a condition originally introduced in \cite{P-Potentials} which guarantees the validity of Corollary~\ref{c : in form dom}. Analogously to the sequence $\cR_{n,\nu}$ in \cite{CD-Mixed}, this assumption is required to implement the approximation scheme involving the new sequence $\cS_{n,\nu}$ within the heat-flow method; in particular, it is crucial for justifying an integration by parts. We refer the reader to \cite[Section~2]{P-Potentials} for a detailed discussion.
\begin{assBC}\label{ass:BE}
	We say that the pair $(\oV, \oW)$ satisfies the \AssBE \, if either of the following holds: 
	\begin{itemize}
	\item $\oV$ and $\oW$ fall into any of the special cases \ref{i: D}-\ref{i: bM}, or 
	\item $\oV$ and $\oW$ are of the type described in \ref{i: D} or \ref{i: cM}.
	\end{itemize}
\end{assBC}
\begin{table}[h]
	\begin{center}
	\begin{tabular}{|l|c|c|c|c|r|}
	\hline
	 & $W_0^{1,2}(\Omega)$ & $W_D^{1,2}(\Omega)$ & $\widetilde{W_D}^{1,2}(\Omega)$ & $W^{1,2}(\Omega)$ \\
	\hline
	$W_0^{1,2}(\Omega)$ & \checkmark & \checkmark & \checkmark & \checkmark \\
	\hline
	$W_D^{1,2}(\Omega)$ & \checkmark & \checkmark & $\times$  & $\times$ \\
	\hline
	$\widetilde{W_D}^{1,2}(\Omega)$ & \checkmark & $\times$ & \checkmark & \checkmark \\
	\hline
	$W^{1,2}(\Omega)$ & \checkmark & $\times$ & \checkmark & \checkmark\\ 
	\hline
	\end{tabular}
	\end{center}
	\caption{\AssBE}
\end{table}


\subsection{The $p$-ellipticity condition}
The $p$-ellipticity condition was introduced independently by Carbonaro and Dragi\v{c}evi\'c in the context of dimension-free bilinear embeddings \cite{CD-DivForm}, and by Dindo\v{s} and Pipher in the study of boundary value problems \cite{Dindos-Pipher}. It may be viewed as an algebraic strengthening of the standard ellipticity condition, coinciding with it when $p=2$, and offering a natural interpolation between real and complex elliptic matrices.  

A detailed account of the genesis of $p$-ellipticity can be found in \cite[Section~1.2]{CDKS-Tril}. Its conception by Carbonaro and Dragi\v{c}evi\'c originated in \cite{CD-DivForm} after several years of gradually refining the \emph{heat-flow method} associated with a specific \emph{Bellman function} $\cQ$, first introduced by Nazarov and Treil~\cite{NT} and further developed through a sequence of works on bilinear embeddings and spectral multipliers~\cite{DV,  Dv-Sch, Dv-kato, CD-Riesz, CD-mult,CD-OU}. In~\cite{Dv-Sch}, a detailed study of $\cQ$ revealed delicate convexity properties that proved to be crucial for establishing the bilinear embedding theorem. These discoveries suggested that $\cQ$ might possess deeper structural convexity features, a hypothesis that was later confirmed and systematically analyzed in subsequent works~\cite{CD-Riesz, CD-mult,CD-OU}, culminating in~\cite{CD-DivForm}.
In that context, the focus gradually shifted from the Bellman function itself to the convexity of its elementary components---namely, the power functions $|\zeta|^p$---considered with respect to the matrix $A$. This led to the introduction of a generalized notion of convexity adapted to matrix coefficients, which coincides with the classical one when $A$ is the identity. The $p$-ellipticity condition emerged precisely from this framework: it can be interpreted as the uniform convexity of $|\zeta|^p$ relative to $A$, later reformulated in the algebraic form recalled below \eqref{eq: Delta>0}. Before the general theory was established, convexity relative to $A$ had been explored in several particular configurations: when $A$ is either  the identity \cite{NT,DV, Dv-Sch},  real accretive \cite{Dv-kato}, of the form $e^{i\phi}I_d$ \cite{CD-mult}, or of the form $e^{i\phi}B$ with $B$ real, constant and with a symmetric part which is positive definite \cite{CD-OU}.
\bigskip

We now recall the precise definition introduced by Carbonaro and Dragi\v{c}evi\'c in \cite{CD-DivForm}.
Given $A \in \cA(\Omega) $ and $p \in (1, \infty)$, we say that $A$ is  {\it $p$-elliptic} if $\Delta_p(A) >0$, where
\begin{equation*}
	\Delta_p(A):=
	\underset{x\in\Omega}{{\rm ess}\inf}\min_{|\xi|=1}\, \Re\sk{A(x)\xi}{\xi+|1-2/p| \overline{\xi}}_{\C^{d}}.
\end{equation*}
Equivalently, $A$ is $p$-elliptic if there exists $C=C(A,p) >0$ such that for a.e. $x \in \Omega$,
\begin{equation}
	\label{eq: Delta>0}
	\Re\sk{A(x)\xi}{\xi+|1-2/p| \overline{\xi}}_{\C^{d}}
	\geq C |\xi|^2\,,
	\quad \forall\xi\in\C^{d}.
\end{equation}

Denote by $\cA_p(\Omega)$ the class of all $p$-elliptic matrix functions on $\Omega$. It is straightforward to see that $\cA_2(\Omega)=\cA(\Omega)$. In addition, a bounded matrix function $A$ is real and elliptic if and only if it is $p$-elliptic for all $p>1$ \cite{CD-DivForm}. We refer the reader to \cite{CD-DivForm} for further properties of $p$-ellipticity and $\Delta_p$.

A condition similar to \eqref{eq: Delta>0}, namely $\Delta_p(A)\geq0$, was formulated in a different manner by Cialdea and Maz'ya in \cite[(2.25)]{CiaMaz}. See \cite[Remark 5.14]{CD-DivForm}.


\subsection{Two generalizations of $p$-ellipticity}
\label{s: old cond}
The second author previously introduced two conditions on the coefficients that extend the notion of $p$-ellipticity to operators featuring first-order terms and nonnegative potentials \cite{Poggio}, as well as to those with negative potentials \cite{P-Potentials}. We briefly summarize these notions here, as they are fundamental to the formulation of our new  $p$-ellipticity condition.

\subsubsection{First-order terms and nonnegative potentials: inhomogeneous $p$-ellipticity}
Let $p >1$ and  consider the tuple of coefficients $ \oA=(A, b, c, V)$, where $A \in \cA(\Omega)$, $b,c \in L^\infty(\Omega;\C^d)$ and $V \in L^1_{\rm loc}(\Omega; \R_+)$.  For each $p \in (1, +\infty)$,  we consider the operator
\begin{equation}
	\label{e : operator Ip}
	\cJ_{p}(\xi)=\xi + (p-2)\Re\xi.
\end{equation}
Following \cite{Poggio}, we define the function $\Gamma_p^\oA=\Gamma_p^{A,b,c,V} : \Omega \times \C^d \rightarrow \R$  by 
\begin{equation}
	\label{e : def Gammap}
	\Gamma_p^\oA(x, \xi) =  \Re\sk{A(x)\xi}{\cJ_p \xi}_{\C^d} + \Re\sk{\overline{b}(x)+ (\cJ_pc)(x)}{\xi}_{\C^d} +  V(x).
\end{equation}
We now recall three classes of coefficients introduced in \cite{Poggio}. For convenience, we introduce a compact notation for one of these conditions. Indeed, we denote by $\cW_p(\Omega)$ the class of all $\oA=(A,b,c,V) \in \cA_p(\Omega) \times (L^\infty(\Omega,\C^d))^2 \times L_{\text{loc}}^1(\Omega, \R_+)$ for which 
\begin{itemize}
\item for almost all $x\in \Omega$ and all $\xi \in \C^d$
\begin{equation}
	\label{e : weak Gamma cond}
	\Gamma_p^\oA(x,\xi) \geq 0;
\end{equation}
\item the condition \eqref{eq: sect form above} holds.
\end{itemize}

Let  $\cS_p(\Omega)$ be the subclass of $\cW_p(\Omega)$ consisting of all $\oA=(A,b,c,V)$ for which there exists $\mu>0$ such that for a.e. $x \in \Omega$
\begin{equation}
	\label{e : equiv B}
	\Gamma_p^\oA(x, \xi) \geq \mu (|\xi|^2 +  V(x)),  \quad \forall \xi \in \C^d.
\end{equation}
We denote by $\mu_p(\oA)$, or just $\mu_p$, the largest admissible $\mu$ in \eqref{e : equiv B}. Clearly,  $(A,b,c,V) \in \cS_2(\Omega)$ if and only if \eqref{eq: sect form below} and \eqref{eq: sect form above} hold, that is, $\cS_2(\Omega)=\cB(\Omega)$. Furthermore, when $b=c=0$, this  condition coincides with $p$-ellipticity, in the sense that $(A,0,0,V)\in
\cS_p(\Omega)$ if and only if $A \in \cA_p(\Omega)$ \cite{CD-DivForm, CD-Potentials}.

Finally, we consider the class
\begin{equation}
	\nonumber
	\cB_p(\Omega)=  \cS_p(\Omega)\cap \cS_{q}(\Omega), \quad \frac{1}{p}+\frac{1}{q}=1,
\end{equation}
which also coincides with $\cA_p(\Omega)$ when $b=c=0$, in the sense that $(A,0,0,V) \in \cB_p(\Omega)$ if and only if $A \in \cA_p(\Omega)$, owing to the invariance of $p$-ellipticity under conjugation \cite[Corollary~5.16]{CD-DivForm}. It also coincides with $\cB(\Omega)$ when $p=2$.
\smallskip

The introduction of $\cB_p$ as an extension of $\cS_p$ is motivated by the requirement to generalize $p$-ellipticity while preserving several key structural properties of the  $\cA_p$ classes  \cite{Poggio}.

\subsubsection{Negative potentials: perturbed $p$-ellipticity}
\label{sss : p ellipt gen negat}
Let $p>1$, let $q$ be its conjugate exponent, and let $A \in \cA_p(\Omega)$. Given a closed subspace $\oV$ of $W^{1,2}(\Omega)$ such that $W_0^{1,2}(\Omega)\subseteq\oV$ and a potential $V \in \cP(\Omega,\oV)$, we say that
\begin{itemize}
\item $(A,V) \in \widetilde{\cA\cP}_p(\Omega,\oV)$ if 
\begin{equation}
	\label{eq: weak new cond}
	\Delta_p\left(A - \alpha(V,\oV) \frac{pq}{4} I_d\right) \geq 0;
\end{equation}
\item $(A,V) \in \cA\cP_p(\Omega,\oV)$ if 
\begin{equation}
	\label{eq: new cond}
	\Delta_p\left(A - \alpha(V,\oV) \frac{pq}{4} I_d\right) > 0,
\end{equation}
that is, $A - \alpha(V,\oV) (pq)/4 \, I_d$ is $p$-elliptic.
\end{itemize}

In the case $V_-=0$, we have $V \in \cP(\Omega,\oV)$ and $\alpha(V,\oV)=0$ for all  $\oV$. 
Consequently, \eqref{eq: weak new cond}  reduces to weak  $p$-ellipticity ($\Delta_p(A) \geq0$), while \eqref{eq: new cond} coincides with $p$-ellipticity: 
\begin{equation}
	\nonumber
	A \in \cA_p(\Omega) \iff (A,V) \in \cA\cP_p(\Omega,\oV) \, \text{ for all }\, \oV.
\end{equation}

\begin{remark}\label{r: abuse of notation}
Throughout the rest of the paper, with a slight abuse of notation, whenever we write $\oA = (A,b,c,V) \in \mathcal{A}\mathcal{P}_p(\Omega,\oV)$, 
we mean that $b=c=0$ and $(A,V) \in \mathcal{A}\mathcal{P}_p(\Omega,\oV)$.
\end{remark}


\subsection{The new condition}
\label{ss: new cond}
In the absence of lower-order terms, $p$-ellipticity is a condition imposed solely on the coefficient matrix $A$.
When strongly subcritical potentials are allowed, this notion is extended by requiring that a suitable perturbation of the matrix remains $p$-elliptic, with the perturbation depending on the subcriticality parameter of the potential \cite{P-Potentials}.

On the other hand, in the presence of first-order terms and nonnegative potentials, $p$-ellipticity is replaced by an inhomogeneous condition involving all coefficients, as established in \cite{Poggio}.

Guided by these two viewpoints, it is natural to expect that, when first-order terms and strongly subcritical potentials are simultaneously present, requiring a suitable perturbation of the coefficients to satisfy the inhomogeneous $p$-ellipticity condition may provide the right generalization of $p$-ellipticity to this setting.
\smallskip

Let $p>1$ and let $q$ denote its conjugate exponent, i.e., $1/p+1/q=1$. Let $A \in \cA_p(\Omega)$, $b,c \in L^\infty(\Omega;\C^d)$ and $V \in L^1_\text{loc}(\Omega,\R)$. Set $\oA = (A,b,c,V)$. For the sake of clarity, we introduce a compact notation to be used throughout the paper: given $\alpha \geq 0$ and $\sigma \in [0,1)$, we define
\begin{equation}
	\label{eq: def FA e A+}
	\aligned
	\cC_{p,\alpha,\sigma}(\oA)&:= \left( A - \alpha \frac{pq}{4} I_d, b,c, (1-\sigma) V_+\right). \\
	\endaligned
\end{equation}
Suppose that $\oV$ is a closed subspace of $W^{1,2}(\Omega)$ containing $W_0^{1,2}(\Omega)$. We say that
\begin{itemize}
\item $\oA \in \cW\cP_p(\Omega,\oV)$ if there exist $\alpha\geq0$ and $\sigma\in[0,1)$ such that $V\in\cP_{\alpha,\sigma}(\Omega,\oV)$ and
\begin{equation}
	\nonumber
	\cC_{p,\alpha,\sigma}(\oA)\in\cW_p(\Omega);
\end{equation}
\item $\oA \in \cS\cP_p(\Omega,\oV)$ if there exist $\alpha\geq0$ and $\sigma\in[0,1)$ such that $V\in\cP_{\alpha,\sigma}(\Omega,\oV)$ and
\begin{equation}
	\nonumber
	\cC_{p,\alpha,\sigma}(\oA)\in\cS_p(\Omega);
\end{equation}
\item $\oA \in \cB\cP_p(\Omega,\oV)$ if there exist $\alpha\geq0$ and $\sigma\in[0,1)$ such that $V\in\cP_{\alpha,\sigma}(\Omega,\oV)$ and
\begin{equation}
	\nonumber
	\cC_{p,\alpha,\sigma}(\oA)\in\cB_p(\Omega).
\end{equation}
\end{itemize}

It is straightforward to verify that these new conditions extend those introduced in \cite{Poggio} as well as those in \cite{P-Potentials}. In particular, the class $\cB\cP_p$ generalizes $p$-ellipticity itself.

Likewise $\cB_p$ and $\cA\cP_p$, the new coefficient class $\cB\cP_p$ inherits several fundamental properties of the classes $\cA_p$, including a decrease with respect to $p$, invariance under adjointness and small complex rotations of the coefficients; see Proposition~\ref{p: propr_SPp}.
\medskip

It also convenient to introduce the following notation for complex rotations of the operator coefficients. For $\phi \in \R$ and coefficients $\oA = (A,b,c,V)$, we define
\begin{equation}
	\label{eq: def rot}
	\oA_\phi := (e^{i\phi}A, e^{i\phi}b, e^{i\phi}c, (\cos\phi)V).
\end{equation}


\subsection{Semigroup properties on $L^p$}
\label{s: sem emb}
It is well known that $p$-ellipticity is intimately related to the $L^p$-contractivity and analyticity of semigroups generated by divergence-form operators. 
This connection has been extensively investigated for operators without first-order terms and potentials on arbitrary open subsets of $\R^d$, both under Dirichlet boundary conditions \cite{CD-DivForm} and under Neumann or mixed boundary conditions \cite{CD-Mixed,Egert20}, as well as for Schr\"odinger-type operators with nonnegative potentials \cite{CD-Potentials}. 
More recently, the second author showed that suitable generalized $p$-ellipticity conditions --- recalled in Section~\ref{s: old cond} --- continue to provide the appropriate structural framework for establishing $L^p$-contractivity and analyticity results for divergence-form operators with first-order perturbations and nonnegative potentials \cite{Poggio}, and for Schr\"odinger-type operators with negative potentials \cite{P-Potentials}.

In all the aforementioned settings, the proof of $L^p$-contractivity relies on a theorem of Nittka \cite[Theorem~4.1]{Nittka}, which reduces the problem to verifying the \emph{$L^p$-dissipativity} of the associated sesquilinear form and the invariance of its domain under the projection onto $\{u \in L^2 \cap L^p : \|u\|_p \le 1\}$.
For the notion of $L^p$-dissipativity we refer the reader to \cite{CiaMaz,CD-DivForm}. 
In each case, the relevant $p$-ellipticity condition ensures the $L^p$-dissipativity of the form, while the invariance property can be verified, for instance, by \cite[Theorem~4.31{\it\ 2)}]{O}; see the proof of \cite[Theorem~1.2]{CD-Potentials}. 

The same strategy applies in the present setting. Combining the arguments in the proofs of \cite[Theorem~1]{Poggio} and \cite[Theorem~1.3]{P-Potentials}, one obtains the following result. Recall the notation in \eqref{eq: def rot}.
\begin{theorem}
\label{t : contract}
Suppose that $\oV$ satisfies \eqref{eq: inv P} and \eqref{eq: inv N}. Choose $p \in (1,\infty)$, $\oA=(A,b,c,V) \in \cB\cP_2(\Omega,\oV)$ and $\phi \in \R$ such that $|\phi| <\pi/2 - \theta_0$ and $\oA_\phi \in  \cW\cP_p(\Omega,\oV)$. Then
\begin{equation}
	\nonumber
	\left( e^{-t e^{i\phi}\oL} \right)_{t >0}
\end{equation} 
extends to a strongly continuous semigroup of contractions on $L^p(\Omega)$.

If $V_-=0$, the same conclusion holds under milder assumptions on $\oV$, namely, when $\oV$ only satisfies \eqref{eq: inv P}.
\end{theorem}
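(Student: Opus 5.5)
The plan is to apply Nittka's criterion \cite[Theorem~4.1]{Nittka} to the rotated form $\gota_\phi:=e^{i\phi}\gota$. Since $|\phi|<\pi/2-\theta_0$, \eqref{eq: sect numer range} shows that $\gota_\phi$ is again a densely defined, closed, sectorial form on $L^2(\Omega)$. It is associated with $e^{i\phi}\oL$ and has the same domain $\Dom(\gota)$. We also observe that $\gota_\phi=\gota_{\oA_\phi,\oV}$ up to the imaginary part of the potential term, because $e^{i\phi}V=(\cos\phi)V+i(\sin\phi)V$. The imaginary part $i\sin\phi\int V|u|^2$ does not enter the real-part computations below. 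By Nittka's theorem, $L^p$-contractivity of $(e^{-te^{i\phi}\oL})_{t>0}$ follows from two facts. First, $\Dom(\gota)$ is invariant under the projection $\Pi_p$ onto the closed convex set $\{u\in L^2\cap L^p:\|u\|_p\le1\}$. Second, $\Re\gota_\phi(u,|u|^{p-2}u)\ge0$ for all $u\in\Dom(\gota)$ with $|u|^{p-2}u\in\Dom(\gota)$; this is $L^p$-dissipativity. Strong continuity on $L^p$ then follows from contractivity, density of $L^2\cap L^p$ and strong continuity on $L^2$, by the standard argument.

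\emph{Invariance.} As in the proof of \cite[Theorem~1.2]{CD-Potentials}, we will use \cite[Theorem~4.31 2)]{O}. The projection $\Pi_p$ acts pointwise as $u\mapsto \min\{1,\tau/|u|\}u$ for a suitable constant $\tau$, so the question reduces to the invariance of $\oV$ under $P$, which is \eqref{eq: inv P}. Membership in $\Dom(\gota)$ also requires $\int V_+|\Pi_pu|^2<\infty$, and this is immediate from $|\Pi_pu|\le|u|$. The hypothesis \eqref{eq: inv N} should enter only when $V_-\neq0$: it is needed to work with $|u|$, via $T$ and $P$, inside the subcriticality inequality \eqref{e : subc ineq}. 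This is why the final sentence of the statement only asks for \eqref{eq: inv P} when $V_-=0$.

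\emph{Dissipativity.} This is the main step. Write $u=|u|e^{i\theta}$ on $\{u\neq0\}$ and use the \emph{ad hoc} notation $\xi=\nabla u/u$. The algebra in \cite{Poggio} gives
\[
\Re\gota_\phi(u,|u|^{p-2}u)=\int_\Omega |u|^p\Bigl[\Gamma^{\oA_\phi}_p(x,\xi)-(\cos\phi)V_-\Bigr],
\]
where $\Gamma_p$ is taken with $V$ replaced by $(\cos\phi)V_+$. The negative part must be absorbed by \eqref{e : subc ineq}, applied to $v=|u|^{p/2}$; this should be justified by truncation, using \eqref{eq: inv N} and \eqref{eq: inv P}. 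One has $|\nabla |u|^{p/2}|^2=(p^2/4)|u|^p|\Re\xi|^2$, so
\[
(\cos\phi)\int V_-|u|^p\le \alpha\cos\phi\,\frac{p^2}{4}\int |u|^p|\Re\xi|^2+\sigma\cos\phi\int V_+|u|^p .
\]
The key algebraic identity to check is that $\frac{p^2}{4}|\Re\xi|^2$ is dominated by $\frac{pq}{4}\Re\sk{\xi}{\cJ_p\xi}$ after the standard substitution. Here $\Re\sk{\xi}{\cJ_p\xi}=|\xi|^2+(p-2)|\Re\xi|^2$ and $q=p/(p-1)$. The expected outcome is that the error term is exactly $\alpha\frac{pq}{4}\Re\sk{e^{i\phi}I_d\,\xi}{\cJ_p\xi}$ modulo a nonnegative quantity. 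This is where the normalization $pq/4$ in \eqref{eq: def FA e A+} comes from, as in \cite[Theorem~1.3]{P-Potentials}. If the pointwise inequality fails, I would instead argue as in \cite{P-Potentials}: rescale $\xi$ through the change of variables $\xi=\eta/\sqrt{\cdot}$ that turns the form of $\Gamma_p$ into the $\Delta_p$-form and compare there. Granting this, the integrand is bounded below by $|u|^p\,\Gamma_p^{\cC_{p,\alpha,\sigma}(\oA_\phi)}(x,\xi)$, taken with the rotated identity perturbation, and this is $\ge0$ because $\oA_\phi\in\cW\cP_p(\Omega,\oV)$. When $V_-=0$ we can take $\alpha=\sigma=0$, no absorption is needed, and \eqref{eq: inv N} is not used.

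The main obstacle is the careful justification of the absorption step. One issue is that $|u|^{p/2}$ and $|u|^{p-2}u$ need not lie in $\oV$ for general $u$. The other is that the identity perturbation is rotated by $e^{i\phi}$, which has to be matched with the $\cos\phi$ factor in front of $V$. I would handle both by first proving the inequality for bounded truncations $P(u/k)$ and passing to the limit by monotone convergence, as in \cite{P-Potentials}.
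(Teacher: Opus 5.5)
Your overall scheme is the one the paper refers to: Nittka's criterion for the rotated form $e^{i\phi}\gota$, invariance of $\Dom(\gota)$ from \eqref{eq: inv P} and $|\Pi_p u|\le|u|$, and dissipativity via the subcritical inequality plus generalized $p$-ellipticity, with \eqref{eq: inv N} needed only for $V_-$. Your formula for $\Re\bigl(e^{i\phi}\gota(u,|u|^{p-2}u)\bigr)$ in terms of $\Gamma_p$ and $(\cos\phi)V_-$ is also correct.

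The gap is in how you close the dissipativity step, which rests on a misreading of the hypothesis. By definition, $\oA_\phi\in\cW\cP_p(\Omega,\oV)$ means that $(\cos\phi)V\in\cP_{\alpha,\sigma}(\Omega,\oV)$ and
\[
\cC_{p,\alpha,\sigma}(\oA_\phi)=\Bigl(e^{i\phi}A-\alpha\frac{pq}{4}I_d,\;e^{i\phi}b,\;e^{i\phi}c,\;(1-\sigma)(\cos\phi)V_+\Bigr)\in\cW_p(\Omega).
\]
So the identity perturbation is not rotated; this tuple is not $(\cC_{p,\alpha,\sigma}(\oA))_\phi$ (compare the proof of Proposition~\ref{p: propr_SPp}). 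Also, $\alpha$ is the subcriticality constant of $(\cos\phi)V$, so no factor $\cos\phi$ should multiply it.

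The inequality you ``grant'' is in fact false. Since $\Im\sk{\xi}{\cJ_p\xi}=(p-2)\sk{\Im\xi}{\Re\xi}_{\R^d}$, domination of $\cos\phi\,\frac{p^2}{4}|\Re\xi|^2$ by $\frac{pq}{4}\Re\sk{e^{i\phi}\xi}{\cJ_p\xi}$ is equivalent to $\cos\phi\,|\Im\xi|^2\ge(p-2)\sin\phi\,\sk{\Im\xi}{\Re\xi}_{\R^d}$. This fails whenever $p\neq2$ and $\phi\neq0$: take $\Im\xi$ small and parallel to $\Re\xi$ with the appropriate sign. Even if it held, you would end with $\Gamma_p$ of a coefficient tuple about which the hypothesis says nothing, so the final appeal to $\oA_\phi\in\cW\cP_p(\Omega,\oV)$ would not apply. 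Your fallback of rescaling to the $\Delta_p$-form does not fix this, because the problem is the tuple you compare against, not the way $\Gamma_p$ is written.

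With the correct reading the step closes immediately, and there is no rotation to match. The subcritical inequality for $(\cos\phi)V$, applied to $|u|^{p/2}$ (justified by truncation as you propose), gives
\[
(\cos\phi)\int_\Omega V_-|u|^p\le\alpha\,\frac{p^2}{4}\int_\Omega|u|^p|\Re\xi|^2+\sigma\cos\phi\int_\Omega V_+|u|^p .
\]
Because $q(p-1)=p$, one has $\frac{pq}{4}\Re\sk{\xi}{\cJ_p\xi}=\frac{p^2}{4}|\Re\xi|^2+\frac{pq}{4}|\Im\xi|^2$. Hence
\[
\Re\bigl(e^{i\phi}\gota(u,|u|^{p-2}u)\bigr)\ge\int_{\{u\neq0\}}|u|^p\,\Gamma_p^{\cC_{p,\alpha,\sigma}(\oA_\phi)}(x,\xi)\ge0
\]
for every $p\in(1,\infty)$.

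A secondary inaccuracy: the $L^2$-projection onto $\{u\in L^2\cap L^p:\|u\|_p\le1\}$ is not $u\mapsto\min\{1,\tau/|u|\}u$, which is the projection onto an $L^\infty$-ball. For $\|u\|_p>1$ it is $u\mapsto{\rm sign}(u)\,g(|u|)$, where $g$ is the inverse of $s\mapsto s+\lambda s^{p-1}$ for a suitable $\lambda>0$. So the reduction of the invariance condition to \eqref{eq: inv P} is not a rescaling of $P$. It is supplied by the cited results (\cite[Theorem~4.1]{Nittka} and \cite[Theorem~4.31]{O}, as in the proof of \cite[Theorem~1.2]{CD-Potentials}).
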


Since the proof follows by now a well-established scheme, we omit the details and concentrate instead on the bilinear embedding.
\medskip

By combining the fact that  $\cB\cP_p$ forms a decreasing chain of coefficient classes and is stable under small complex rotations of the coefficients (see Proposition~\ref{p: propr_SPp}\ref{p : it : SPr SPp}, \ref{p : it : SPp theta}), Theorem~\ref{t : contract}, the identity
\begin{equation}
    \nonumber
    T_{t e^{i\phi}}=\exp\!\left(-t e^{i\phi}\mathscr{L}\right),
\end{equation}
and \cite[Chapter~II, Theorem~4.6]{EN}, we obtain the following corollary. This extends \cite[Corollary~1.3]{CD-Potentials}, which in turn generalizes \cite[Lemma~17]{CD-Mixed}.
\begin{corollary}
\label{c: N analytic sem}
Let $\Omega \subseteq \R^d$ be open. 
Suppose that $\oV$ satisfies \eqref{eq: inv P} and \eqref{eq: inv N} and $\oA=(A,b,c,V) \in \cB\cP_2(\Omega,\oV)$. Choose $p \in (1,\infty)$ such that $\oA \in \cB\cP_p(\Omega,\oV)$. Then there exists $\theta = \theta(p,\oA,\oV) >0$  such that if $|1-2/r|\leq |1-2/p|$, then $\{T_z \, : \, z \in \bS_\theta\}$ is analytic and contractive in $L^r(\Omega)$. 

If $V_-=0$, the same conclusion holds provided that $\oV$ satisfies only \eqref{eq: inv P}.
\end{corollary}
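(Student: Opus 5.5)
The plan is to reduce the corollary to Theorem~\ref{t : contract} applied to rotated coefficients, and then upgrade contractivity on rays to analyticity on a sector via \cite[Chapter~II, Theorem~4.6]{EN}.

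\emph{Step 1: rotation stability.} Since $\oA\in\cB\cP_p(\Omega,\oV)$, I will use Proposition~\ref{p: propr_SPp}\ref{p : it : SPp theta}. It gives $\theta_1=\theta_1(p,\oA,\oV)>0$ such that $\oA_\phi\in\cB\cP_p(\Omega,\oV)$ for all $|\phi|<\theta_1$.

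The subtle point is that rotation multiplies $V_+$ by $\cos\phi$ and $V_-$ by $\cos\phi$. So $V_\phi=(\cos\phi)V$ lies in $\cP_{\alpha\cos\phi,\sigma}(\Omega,\oV)$, and this is the bookkeeping I expect the proposition to encode. The other ingredient is that the strict inequality in $\cS_p$ survives a small perturbation of $(A,b,c)$, with the perturbation controlled using \eqref{eq: bc cont by V} and \eqref{eq: sect form above}.

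Since $\cB\cP_p\subseteq\cS\cP_p\subseteq\cW\cP_p$, we get $\oA_\phi\in\cW\cP_p(\Omega,\oV)$.

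\emph{Step 2: all intermediate exponents.} Take $r$ with $|1-2/r|\le|1-2/p|$, so $r$ lies between $p$ and $q$. The class $\cB_p=\cS_p\cap\cS_q$ is symmetric in $p\leftrightarrow q$. Combined with the monotonicity Proposition~\ref{p: propr_SPp}\ref{p : it : SPr SPp}, this gives $\oA_\phi\in\cB\cP_r(\Omega,\oV)\subseteq\cW\cP_r(\Omega,\oV)$ for the same $|\phi|<\theta_1$.

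Set $\theta=\min\{\theta_1,\pi/2-\theta_0\}$. We also need $\oA\in\cB\cP_2$, which is assumed, so that $\theta_0$ is defined. Theorem~\ref{t : contract} with exponent $r$ then yields that $(e^{-te^{i\phi}\oL})_{t>0}=(T_{te^{i\phi}})_{t>0}$ extends to a contraction semigroup on $L^r(\Omega)$ for each $|\phi|<\theta$.

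\emph{Step 3: analyticity.} These extensions are consistent, since all of them agree with $T_z$ on $L^2\cap L^r$. Hence $\{T_z:z\in\bS_\theta\}$ is uniformly bounded (indeed contractive) on $L^r$. Holomorphy on $L^r$ follows from holomorphy on $L^2$ together with uniform boundedness and density of $L^2\cap L^r$, via a Vitali-type argument. Strong continuity on closed subsectors follows in the same way. This is exactly the hypothesis of \cite[Chapter~II, Theorem~4.6]{EN}, which gives the analytic contractive semigroup on $\bS_\theta$.

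For $V_-=0$, Theorem~\ref{t : contract} only requires \eqref{eq: inv P}, so the same chain applies verbatim.

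The main obstacle is Step 1: verifying quantitatively that small rotations preserve $\cB\cP_p$ uniformly in $x$. The plan there is to factor out $\cos\phi$. Writing $e^{i\phi}=\cos\phi\,(1+i\tan\phi)$, the rotated $\Gamma_p$ equals $\cos\phi$ times the original, plus a $\tan\phi$-small error. That error is bounded by $|\xi|^2+|\xi|\sqrt{V_+}$, and it is absorbed by $\mu_p(|\xi|^2+(1-\sigma)V_+)$.
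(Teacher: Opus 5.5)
Your proposal is correct and follows essentially the same route as the paper. The paper also combines the rotation stability and monotonicity of $\cB\cP_p$ (Proposition~\ref{p: propr_SPp}\ref{p : it : SPr SPp},\ref{p : it : SPp theta}) with Theorem~\ref{t : contract} on each ray $te^{i\phi}$, and then passes to the sector via Engel--Nagel's Theorem~II.4.6. Two minor remarks: your Vitali-type Step~3 already proves the \emph{conclusion} of that theorem, so citing it there is redundant rather than needed; and in Step~2 you only need $\oA_\phi\in\cS\cP_r\subseteq\cW\cP_r$, which is exactly what Proposition~\ref{p: propr_SPp}\ref{p : it : SPr SPp} states, rather than the stronger $\oA_\phi\in\cB\cP_r$.
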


\begin{remark}
\label{r : angolo sect analit}
In the statement of Corollary~\ref{c: N analytic sem} we can take any $\theta$ such that $\oA_\theta\in\cB\cP_p(\Omega,\oV)$.
\end{remark}


\subsection{Bilinear embedding}
We now state the main result of this paper. Let $\oV$ and $\oW$ be closed subspaces of $W^{1,2}(\Omega)$ satisfying \AssBE. Let
\begin{equation}
	\nonumber
	\aligned
	\oA=(A,b,c,V) &\in \cA(\Omega) \times \big(L^\infty(\Omega;\C^d)\big)^2 \times \cP(\Omega,\oV),\\
	\oB=(B,\beta,\gamma,W) &\in \cA(\Omega) \times \big(L^\infty(\Omega;\C^d)\big)^2 \times \cP(\Omega,\oW).
	\endaligned
\end{equation}
Recall Remark~\ref{r: abuse of notation}. In the cases where
\begin{itemize}
\item $\oA$ and $\oB$ belong to $\cB_p(\Omega)$, or
\item $\oA \in \cA\cP_p(\Omega,\oV)$ and $\oB \in \cA\cP_p(\Omega,\oW)$,
\end{itemize}
the second author proved in \cite{Poggio} and \cite{P-Potentials}, respectively, that there exists a constant $C>0$, independent of the dimension $d$, such that
\begin{equation}
	\label{e : BE pert}
	\int_0^\infty \!\!\int_\Omega
	\sqrt{|\nabla T_t^{\oA,\oV}f|^2 + |V|\,|T_t^{\oA,\oV}f|^2}
	\sqrt{|\nabla T_t^{\oB,\oW}g|^2 + |W|\,|T_t^{\oB,\oW}g|^2}
	\leq C \|f\|_p \|g\|_q,
\end{equation}
for all $f,g \in (L^p \cap L^q)(\Omega)$, where $q = p/(p-1)$ denotes the conjugate exponent of $p$.

We show that the bilinear inequality \eqref{e : BE pert} extends to divergence-form operators including both first-order terms and negative potentials, provided that
\begin{equation}
	\nonumber
	\oA \in \cB\cP_p(\Omega,\oV)
	\qquad\text{and}\qquad
	\oB \in \cB\cP_p(\Omega,\oW).
\end{equation}
In Sections~\ref{s: be bnd neg pot} and \ref{s: unb neg} we shall establish the following result.
\begin{theorem}
\label{t: N bil}
Suppose that $(\oV,\oW)$ satisfies \AssBE. Choose $p\in (1,\infty)$. Let $q$ be its conjugate exponent, i.e., $1/p+1/q=1$.  Assume that $\oA=(A,b,c,V) \in \cB\cP_p(\Omega,\oV)$ and $\oB =(B,\beta,\gamma,W) \in \cB\cP_p(\Omega,\oW)$. There exists $C>0$ such that for any $f,g\in (L^{p}\cap L^{q})(\Omega)$ we have
\begin{equation}
	\label{eq: N bil}
	\aligned
	 \displaystyle\int^{\infty}_{0}\!\int_{\Omega}\sqrt{\mod{\nabla T^{\oA,\oV}_{t}f}^2 +\Bigl| V\Bigr| \mod{T^{\oA,\oV}_{t}f}^2}\sqrt{\mod{\nabla T^{\oB,\oW}_{t}g}^2 +  \Bigl|W\Bigr| \mod{T^{\oB,\oW}_{t}g}^2} \leq C \norm{f}{p}\norm{g}{q}.
	\endaligned
\end{equation}
\end{theorem}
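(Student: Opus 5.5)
We prove Theorem~\ref{t: N bil} by the Bellman heat-flow method built on the Nazarov--Treil function $\cQ=\cQ_{p,\delta}:\C\times\C\to\R_+$. Fix $f,g\in(L^p\cap L^q)(\Omega)$ and set $u=T^{\oA,\oV}_tf$, $v=T^{\oB,\oW}_tg$. We consider the flow
\begin{equation*}
\cE(t)=\int_\Omega \cQ(u(t),v(t)).
\end{equation*}
The elementary bounds $0\le\cQ(\zeta,\eta)\leqsim |\zeta|^p+|\eta|^q$ give $\cE(0)\leqsim\norm{f}{p}^p+\norm{g}{q}^q$. The core claim is
\begin{equation*}
-\cE'(t)\geqsim \int_\Omega \sqrt{|\nabla u|^2+|V||u|^2}\,\sqrt{|\nabla v|^2+|W||v|^2}.
\end{equation*}
Integrating in $t$ and using the usual rescaling $f\mapsto sf$, $g\mapsto g/s$ with optimization in $s$ then yields \eqref{eq: N bil}.

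\emph{Step 1: pointwise convexity.} Formally, after integrating by parts,
\begin{equation*}
-\cE'(t)=\int_\Omega H^{\oA,\oB}_{\cQ}[(u,v);(\nabla u,\nabla v)],
\end{equation*}
a generalized Hessian form in which the zero-order and first-order coefficients appear through $\Gamma_p$-type terms. Two estimates combine here. First, as in \cite{Poggio}, membership $\cC_{p,\alpha,\sigma}(\oA)\in\cB_p(\Omega)$ gives an inhomogeneous convexity lower bound
\begin{equation*}
\gtrsim |\nabla u||\nabla v|+(1-\sigma)V_+|u||v|\ (\text{suitably weighted}),
\end{equation*}
for the perturbed tuple $(A-\alpha\tfrac{pq}{4}I_d,b,c,(1-\sigma)V_+)$, and similarly for $\oB$. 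Second, as in \cite{P-Potentials}, the leftover term $\alpha\tfrac{pq}{4}\,\Re\langle\nabla u, \partial\text{-terms of }\cQ\rangle$ plus $-V_-\,\partial_\zeta\cQ\cdot u$ must be nonnegative after integration. This is obtained by applying the subcriticality inequality \eqref{e : subc ineq} to the test function $w=|u|^{p/2}$ in the region where $|u|^p\ge|v|^q$, and to the analogous power functions elsewhere. That these lie in $\oV$ and how the constant $pq/4$ arises are both taken from \cite{P-Potentials}. Since $\sigma<1$, the $V_-$ part is absorbed and a positive multiple of $|V|$ remains, because $V_-\le$ (subcritical) controls. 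This is why $|V|$, not only $V_+$, appears on the left.

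\emph{Step 2: justification on arbitrary $\Omega$.} The differentiation of $\cE$ and the integration by parts are not legitimate directly, since $\cQ$ is only $C^1$ and $u,v$ lack regularity. Following the plan announced in the introduction, we replace $\cQ$ by $\cQ\ast\varphi_\nu$ composed with the new approximating sequence $\cS_{n,\nu}$ instead of $\cR_{n,\nu}$. Corollary~\ref{c : in form dom}, which requires \AssBE, ensures that $\partial_{\bar\zeta}(\cS_{n,\nu})(u,v)\in\Dom(\gota)$ and its $\oB$-analogue lies in the form domain. This lets us use \eqref{eq: ibp} rigorously without any regularization of the coefficients, since such regularization may not preserve subcriticality. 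We first treat bounded $V_-$ (Section~\ref{s: be bnd neg pot}), where $u(t)$ stays in $L^\infty$-controlled classes, and then remove this by approximating $V_-$ with $\min(V_-,k)$ (Section~\ref{s: unb neg}). The truncation keeps $V\in\cP_{\alpha,\sigma}$ with the same constants, because lowering $V_-$ preserves \eqref{e : subc ineq}. We then pass to the limit via form convergence, so that the semigroups converge strongly, followed by Fatou.

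\emph{Main obstacle.} The hard part is Step 2 combined with Step 1: the convexity estimate must survive the approximation by $\cS_{n,\nu}$. The negative-potential absorption is an \emph{integrated}, not pointwise, inequality, so the approximate Bellman function must still produce test functions to which \eqref{e : subc ineq} applies, with errors vanishing as $n\to\infty$, $\nu\to0$. Meanwhile the first-order terms produce cross terms $\Re\langle \bar b+\cJ_pc,\cdot\rangle$ that are controlled only by $\sqrt{V_+}$ via \eqref{eq: sect form above}. I would first establish a convexity property of $\cS_{n,\nu}$ analogous to \cite[Section~7.2]{P-Potentials}: monotonicity of $\cS_{n,\nu}$ along rays and uniform bounds on its Hessian relative to $\cQ$. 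I would then split $\Omega$ according to $|u|^p\gtrless|v|^q$, absorbing the $V_-$ contribution separately in each region.
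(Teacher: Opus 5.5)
Your global architecture matches the paper: the heat flow with $\cQ$, perturbing $A$ by $\alpha\frac{pq}{4}I_d$ and absorbing $V_-$ through \eqref{e : subc ineq}, first bounded $V_-$, then truncation $V_-\wedge n$. The gap is in the step you yourself call the main obstacle, and the key lemma you propose for it is false. You plan to show that $\cS_{n,\nu}$ is monotone along rays, as $\cR_{n,\nu}$ is in \cite[Section~7.2]{P-Potentials}, so that the limits can be taken by Fatou. But $\cS_{n,\nu}=\Psi_n\cdot(\cQ*\varphi_\nu)$ is nonnegative, not identically zero and compactly supported. It must therefore decrease along rays in the transition region, so $G^{(V,W)}_{\cS_{n,\nu}}$ takes negative values. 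The correction term $C_1\nu^{q-2}(\cP_n*\varphi_\nu)$ that gives $\cR_{n,\nu}$ this property is deliberately absent from $\cS_{n,\nu}$. More fundamentally, once $b,c,\beta,\gamma\neq0$ the first-order part satisfies $H^{(b,\beta,c,\gamma)}_{\Phi}[\omega;-\Xi]=-H^{(b,\beta,c,\gamma)}_{\Phi}[\omega;\Xi]$ and has no sign. Also, $(\oA,\oB)$-convexity of the whole generalized Hessian need not survive convolution and truncation, so a pure Fatou argument is unavailable. Generic ``uniform Hessian bounds'' do not rescue this: with the radial cut-offs $\psi_n$ the natural majorant contains mixed terms such as $|u|^{p-1}|\nabla v|$, whose integrability is unknown (Section~\ref{s : rep psin2}).

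The missing idea is the dominated-convergence scheme of Proposition~\ref{p : L11}. Write $\mathbf{H}^{(\oA_+,\oB_+)}_{\cS_{n,\nu}}=\Psi_n\bigl(H^{(A,B)}_{\cQ*\varphi_\nu}+G^{(V_+,W_+)}_{\cQ*\varphi_\nu}\bigr)+(\mathrm{E.T.})_{n,\nu}$ and apply Fatou only to the first, nonnegative part. Then dominate $(\mathrm{E.T.})_{n,\nu}$ uniformly in $n$, and $H^{(b,c,\beta,\gamma)}_{\cQ*\varphi_\nu}$ uniformly in $\nu$, by the explicit $F$ of \eqref{eq : cand for F}. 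This needs three ingredients:
\begin{itemize}
\item cut-offs $\Psi_n=\Psi_\kappa\circ\cD_n$ built from the level set $|\zeta|^p=|\eta|^q$ of $\cQ$ and dilated anisotropically, so that $D^2_{\zeta\eta}\Psi_n$ lives only where $|\zeta|^p\sim|\eta|^q$;
\item the refined bounds of Lemma~\ref{l: N second order 2} and Remark~\ref{rem_stime}, e.g.\ $|D^2_{\eta\eta}(\cQ*\varphi_\nu)(\omega)|\,|\eta|\lesssim|\eta|^{q-1}$ uniformly in $\nu$;
\item the $L^p$-gradient estimate of Proposition~\ref{t : p grad est}, which makes $F(u,v;\nabla u,\nabla v)$ integrable.
\end{itemize}

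The order of operations also differs from yours. The approximation is run only for the nonnegative tuples $\oA_+,\oB_+$ (Lemma~\ref{l : heat flow est pos 2}). $V_-$ is handled afterwards via $\oL^{\oA}=\oL^{\oA_+}-V_-$ with $\Dom(\oL^{\oA})=\Dom(\oL^{\oA_+})$; this identity, not any $L^\infty$ control of $u(t)$, is why $V_-$ is first taken bounded (Remark~\ref{r : bound neg part necess}). Consequently no ``test functions produced by the approximants'' are needed. The subcritical inequality is applied globally, not region by region, to $|u|^{p/2}$, $|v|^{q/2}$ and the max-type function $G_p(u,v)$. Their membership in $\oV$, $\oW$ comes from Proposition~\ref{t : p grad est negative V} and Corollary~\ref{c: chain rule}, the latter itself a consequence of the nonnegative-potential estimate.

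Two further points are missing. Getting $|V|$ rather than only $V_+$ on the left requires the separate estimate \eqref{e : second est of E}, obtained by slightly enlarging $(\alpha_j,\sigma_j)$; $\sigma<1$ alone does not give it. In the truncation step you need $L^2$-convergence of $\nabla T^{\oA_n}_tf$ and $|V_n|^{1/2}T^{\oA_n}_tf$ (Theorem~\ref{t : conv semig}), not merely strong convergence of the semigroups.
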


\begin{remark}
We emphasize that in the specific case where $V\in\cP_{\alpha_1,\sigma_1}(\Omega,\oV)$ and $W\in\cP_{\alpha_2,\sigma_2}(\Omega,\oW)$ for some $\alpha_1,\alpha_2\geq0$ and $\sigma_1,\sigma_2\in[0,1)$, the constant $C>0$ can be chosen so as to  depend on $p$, $\alpha_1$, $\alpha_2$, and the ellipticity constants of $\cC_{p,\alpha_1,\sigma_1}(\oA)$ and $\cC_{p,\alpha_2,\sigma_2}(\oB)$, while remaining explicitly independent of the dimension $d$. Here, the term \emph{ellipticity constants} refers to $\lambda, \Lambda, \mu, M$, and $\mu_p$.
\end{remark}

This result incorporates several earlier theorems as special cases, including:
\begin{itemize}
\item $b=c=0$, $V=W$ nonnegative, $\Omega = \R^d$, $A, B$ equal and real \cite[Theorem 1]{Dv-kato};
\item $b=c=0$, $V=W=0$, $\Omega = \R^d$ \cite[Theorem 1.1]{CD-DivForm};
\item $b=c=0$, $V=W=0$ \cite[Theorem 2]{CD-Mixed};
\item $b=c=0$, $V, W$ nonnegative \cite[Theorem 1.4]{CD-Potentials};
\item $b=c=0$ \cite[Theorem~1.7]{P-Potentials};
\item $V, W$ nonnegative \cite[Theorem~3]{Poggio}.
\end{itemize}

Recently, bilinear inequalities of this type have been also proven for divergence-form operators subject to dynamical boundary conditions \cite[Theorem~1.4]{BER}.

  
\subsection{Maximal regularity and functional calculus} 
\label{s : max regul Hinfty}
The bilinear embedding for divergence-form operators has emerged as a formidable tool in establishing various $L^p$-properties, most notably the boundedness of the $H^\infty$-functional calculus on $L^p$ and $L^p$-maximal parabolic regularity. A significant result in this direction was provided by Carbonaro and  Dragi\v{c}evi\'c \cite{CD-Mixed}, who employed this method in the unperturbed setting to derive these properties for operators whose coefficients matrices satisfy the $p$-ellipticity condition. Regarding the  $H^\infty$-functional calculus, their argument combines a general result of Cowling, Doust, McIntosh, and Yagi \cite[Theorem~4.6 \& Example~4.8]{CDMY} with the bilinear embedding applied to the pair $(e^{\pm i\theta}A, e^{\mp i\theta}A^*)$. The bounded functional calculus then yields maximal regularity via the Dore--Venni theorem \cite{DoreVenni,PrussSohr}.

The crucial observation is that $p$-ellipticity is stable under small complex rotations and adjoints: if $A$ is $p$-elliptic, then so are $A^*$ and $e^{\pm i\theta}A$ for sufficiently small $\theta$. This invariance allows one to apply the bilinear embedding under the sole $p$-ellipticity assumption. The same mechanism was later used in \cite{BER} for divergence-form operators with dynamical boundary conditions, and extended in \cite{Poggio, P-Potentials} to operators with lower-order terms, where it was verified that the classes $\cB_p$ and $\cA\cP_p$ retain the analogous stability properties.

We adopt the same approach for the new class of coefficients $\cB\cP_p$. More precisely, we prove that $\cB\cP_p$ is likewise stable under small complex rotations and adjoints (see Proposition~\ref{p: propr_SPp}), thereby allowing the bilinear embedding argument to be carried over to this more general setting. 
In Section~\ref{ss : funct calcul max regul} we prove the following result.
\begin{theorem}
\label{t : teo funct calc Hinfty}
Suppose that $\oV$ falls into any of the special cases \ref{i: D}-\ref{i: cM} of Section~\ref{s: boundary}. Assume that $p \in (1,\infty)$ and $\oA \in \cB\cP_p(\Omega,\oV)$. Let $-\oL^\oA_p$ be the generator of the semigroup $(T_t^{\oA,\oV})_{t>0}$ on $L^p(\Omega)$. Then $\oL^\oA_p$ admits a bounded holomorphic functional calculus of angle $\theta<\pi/2$. Consequently, $\oL^\oA_p$ has maximal parabolic regularity.
\end{theorem}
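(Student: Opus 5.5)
The plan is to follow the scheme of \cite{CD-Mixed}, as adapted in \cite{Poggio, P-Potentials}. It combines the criterion of Cowling, Doust, McIntosh and Yagi \cite[Theorem~4.6 \& Example~4.8]{CDMY} with the bilinear embedding (Theorem~\ref{t: N bil}) applied to rotated and adjoint coefficients.

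\emph{Step 1: sectoriality on $L^p$.} Since $\oA\in\cB\cP_p(\Omega,\oV)$ and the classes $\cB\cP_p$ decrease as $|1/2-1/p|$ grows (Proposition~\ref{p: propr_SPp}), we have $\oA\in\cB\cP_2(\Omega,\oV)$. Each of the cases \ref{i: D}--\ref{i: cM} satisfies \eqref{eq: inv P} and \eqref{eq: inv N}. Corollary~\ref{c: N analytic sem} therefore gives some $\theta>0$ such that $(T_z)_{z\in\bS_\theta}$ is analytic and contractive on $L^p(\Omega)$. Consequently $\oL^\oA_p$ is sectorial of angle $\pi/2-\theta$, and by Remark~\ref{r : angolo sect analit} any $\theta$ with $\oA_{\pm\theta}\in\cB\cP_p$ is admissible.

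\emph{Step 2: square-function estimate via duality.} By \cite{CDMY} it suffices to prove, for some $\theta_1\in(0,\theta)$, a weak quadratic estimate of the form
\[
\int_0^\infty \bigl|\langle \oL e^{-te^{\pm i\theta_1}\oL}f,\,g\rangle\bigr|\,t\,\wrt t\leqsim \|f\|_p\|g\|_q .
\]
Equivalently, one may control $\int_0^\infty|\langle \oL T_{te^{i\theta_1}}f, T^*_{te^{-i\theta_1}}g\rangle|\,\wrt t$. Writing $\oL$ through the form $\gota$ and integrating by parts, this pairing is bounded pointwise by a constant times
\[
\sqrt{|\nabla u|^2+|V||u|^2}\,\sqrt{|\nabla v|^2+|V||v|^2},
\]
where $u=T^{\oA_{\theta_1},\oV}_tf$ and $v=T^{(\oA^*)_{-\theta_1},\oV}_tg$. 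Here $\oA^*=(A^*,c,b,V)$, and we use $|b|,|c|\leqsim\sqrt{V_+}$ from \eqref{eq: bc cont by V} applied to $\cC_{p,\alpha,\sigma}(\oA)$, together with the boundedness of $A$. The rotation $\oA_\phi$ contributes the factor $\cos\phi$ in front of $V$, which is harmless.

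\emph{Step 3: apply Theorem~\ref{t: N bil}.} By Proposition~\ref{p: propr_SPp}, $\cB\cP_p$ is invariant under adjoints and under small rotations. Hence $\oA_{\theta_1}\in\cB\cP_p(\Omega,\oV)$ and $(\oA^*)_{-\theta_1}\in\cB\cP_p(\Omega,\oV)$ for $\theta_1$ small. The pair $(\oV,\oV)$ satisfies \AssBE\ in every case \ref{i: D}--\ref{i: cM}: the diagonal of the table is checked. The bilinear embedding then bounds the integral by $C\|f\|_p\|g\|_q$, giving the bounded $H^\infty$-calculus of angle $<\pi/2$. Maximal regularity follows from the Dore--Venni theorem \cite{DoreVenni,PrussSohr}, since a bounded $H^\infty$-calculus of angle $<\pi/2$ yields bounded imaginary powers of angle $<\pi/2$ on the UMD space $L^p$.

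\emph{Main obstacle.} The delicate point is verifying that the rotated coefficients remain in $\cB\cP_p$ with the \emph{same} subcriticality parameters. Rotation multiplies $V$ by $\cos\phi$, so one needs $(\cos\phi)V\in\cP_{\alpha',\sigma'}$ with $\cC_{p,\alpha',\sigma'}(\oA_\phi)\in\cB_p$. I would rescale: $(\cos\phi)V\in\cP_{\alpha\cos\phi,\sigma}$. The perturbation $\cos\phi\,A-\alpha\cos\phi\,(pq/4)I_d$ then differs from $\cos\phi$ times the unrotated one by the term $i\sin\phi\,A$, which is small for small $\phi$ and can be absorbed by $\mu_p$ and $\mu_q$. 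This is presumably what Proposition~\ref{p: propr_SPp} provides, so I would invoke it. A secondary technicality is making sure the identity between the pairing and the form holds for $u,v$ in the form domains; this follows from analyticity on $L^2$.
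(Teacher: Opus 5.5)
Your architecture (sectoriality from Corollary~\ref{c: N analytic sem}, the Cowling--Doust--McIntosh--Yagi criterion fed by a complex-time square-function bound, then Dore--Venni) is the paper's. However, Step~3 does not work as written. The identification $u=T^{\oA,\oV}_{te^{i\theta_1}}f=T^{\oA_{\theta_1},\oV}_t f$ is false whenever $V\not\equiv0$. The form $e^{i\theta}\gota_{\oA}$ has coefficients $(e^{i\theta}A,e^{i\theta}b,e^{i\theta}c,e^{i\theta}V)$, so $e^{i\theta}\oL^{\oA}$ carries the \emph{complex} potential $e^{i\theta}V$. It therefore differs from $\oL^{\oA_\theta}$, whose potential is $(\cos\theta)V$, by the multiplication operator $i(\sin\theta)V$.

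Consequently, Theorem~\ref{t: N bil} applied to $(\oA_{\theta_1},(\oA^*)_{-\theta_1})$ bounds $T^{\oA_{\theta_1},\oV}_t$ and $T^{(\oA^*)_{-\theta_1},\oV}_t$, which are not the semigroups in your pairing. Nor can Theorem~\ref{t: N bil} be applied to the rotated tuple with potential $e^{i\theta}V$, since it is stated only for real potentials. So the factor $\cos\phi$ is not harmless bookkeeping; it is exactly what needs proof. The point you flag as the main obstacle, $\oA_{\pm\theta},(\oA^*)_{\mp\theta}\in\cB\cP_p$, is just Proposition~\ref{p: propr_SPp}\ref{p : it star},\ref{p : it : SPp theta}, with the same $\alpha,\sigma$ because $(\cos\phi)V\in\cP_{\alpha,\sigma}$.

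The missing step, which the paper takes, is to rerun the heat-flow method for $\cE_\theta(t)=\int_\Omega\cQ(T_{te^{\pm i\theta}}f,T^*_{te^{\mp i\theta}}g)$. This works because $\cQ$, $\cQ*\varphi_\nu$ and $\cS_{n,\nu}=\Psi_n\cdot(\cQ*\varphi_\nu)$ are invariant under $(\zeta,\eta)\mapsto(e^{ia}\zeta,e^{ib}\eta)$: the mollifiers are radial and $\Psi$ depends only on $|\zeta|,|\eta|$. Hence $\zeta\,\partial_\zeta\Phi$ and $\eta\,\partial_\eta\Phi$ are real, and $\Re\bigl(e^{i\theta}V u\,\partial_\zeta\Phi(u,v)\bigr)=(\cos\theta)V u\,\partial_\zeta\Phi(u,v)$. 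After taking real parts, the integration by parts therefore produces exactly $\mathbf{H}^{(\oA_\theta,(\oA^*)_{-\theta})}_\Phi$. The same holds for Proposition~\ref{t : p grad est negative V}, for the Fatou step using $G^{(V_+,W_+)}_{\cQ*\varphi_\nu}\geq0$, and for the splitting $\oL^{\oA}=\oL^{\oA_+}-V_-$ of Section~\ref{s: be bnd neg pot}: each only sees $\Re(e^{i\theta}V)=(\cos\theta)V$. The needed integrability $u,\oL u\in L^p\cap L^q$ for $u=T_{te^{\pm i\theta}}f$ comes from Remark~\ref{r : angolo sect analit}.

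This yields the complex-time bilinear embedding. Your pointwise bound of $\gota(u,v)$ using $|b|,|c|\lesssim\sqrt{V_+}$ then gives the CDMY estimate exactly as in the paper. Two minor slips:
\begin{itemize}
\item the CDMY weight is $\wrt t$, not $t\,\wrt t$; your ``equivalent'' form, obtained via $T^*_{te^{-i\theta}}=(T_{te^{i\theta}})^*$, is the correct one;
\item $\oA^*=(A^*,\overline{c},\overline{b},V)$, with conjugates.
\end{itemize}
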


The boundedness of the $H^\infty$-functional calculus for (systems of) divergence-form operators on domains has also been investigated by Egert \cite{Egert} and subsequently extended by Bechtel \cite{Bechtel}, the latter requiring significantly weaker regularity assumptions on the underlying domain $\Omega$. In their contexts, these results are established for a broader range of $p$, specifically where the associated semigroup remains uniformly bounded on $L^p$. In both settings, the $L^p$-boundedness of the $H^\infty$-calculus for $\oL_p^\oA$ serves as a fundamental tool to derive $L^p$-estimates for the corresponding square root operator.


\subsection{Notation}
Given two quantities $X$ and $Y$, we adopt the convention whereby $X \leqsim Y$ means that there exists an absolute constant $C>0$ such that $X \leq C Y$. If both $X \leqsim Y$ and $Y \leqsim X$, then we write $X \sim Y$. If  $\{\alpha_1, \dots, \alpha_n\}$ is a set of parameters, then $C(\alpha_1, \dots, \alpha_n)$ denotes a constant depending only on $\alpha_1,\dots,\alpha_n$. When $X \leq C(\alpha_1, \dots, \alpha_n) Y$, we will often write $X \leqsim_{\alpha_1, \dots, \alpha_n} Y$.

If $z=(z_1, \dots, z_d) \in \C^d$ and $w$ is likewise, we write
\begin{equation}
	\nonumber
	\sk{z}{w}_{\C^d} = \sum_{j =1}^d z_j \overline{w}_j
\end{equation}
and $|z|^2 = \sk{z}{z}_{\C^d}$. When the dimension is obvious, we sometimes omit the index $\C^d$ and only write $\sk{z}{w}$. When both $z$ and $w$ belong to $\R^d$ , we sometimes emphasize this by writing $\sk{z}{w}_{\R^d}$. This should not be confused with the standard pairing
\begin{equation}
	\nonumber
	\sk{\varphi}{\psi} = \int_{\Omega} \varphi \overline{\psi},
\end{equation}
where $\varphi$, $\psi$ are complex functions on $\Omega$ such that the above integral makes sense. All the integrals in this paper are with respect to the Lebesgue measure.

Unless stated otherwise, for every $r \in [1,\infty]$ we denote by $r^\prime$ its conjugate exponent, i.e., $1/r + 1/r^\prime = 1$. When working with a fixed exponent $p$, we set $q$ to be its conjugate exponent, so as to simplify the notation in the definition and subsequent use of the associated Bellman function introduced in \eqref{eq: N Bellman}.


\section{Generalized convexity and Bellman function}

\subsection{Real form of complex operators} 
We explicitly identify $\C^{d}$ with $\R^{2d}$ as follows. For each $d\in\N_{+}$ consider the operator $\cV_{d}:\C^{d}\rightarrow\R^{d}\times\R^{d}$, defined by 
\begin{equation}
	\nonumber
	\cV_{d}(\xi_{1}+i\xi_{2})=
	(\xi_{1},\xi_{2}),\quad \xi_{1},\xi_{2}\in\R^{d}.
\end{equation}
For $k, d \in \N_+$, we define another identification operator
\begin{equation}
	\nonumber
	\cW_{k,d}:\underbrace{\C^{d}\times\cdots\times\C^{d}}_{k-{\rm times}}\longrightarrow \underbrace{\R^{2d}\times\cdots\times\R^{2d}}_{k-{\rm times}},
\end{equation}
by the rule
\begin{equation}
	\nonumber
	\cW_{k,d}(\xi^{1},\dots, \xi^{k})
	=\left(\cV_{d}(\xi^{1}),\dots, \cV_{d}(\xi^{k})\right),\quad \xi^{j}\in\C^{d}, \, j=1,\dots,k.
\end{equation}
We set $\cW_d =\cW_{2,d}$ when $k=2$.

For a matrix $A\in\C^{d\times d}$, we shall  employ its real form:
\begin{equation}
	\nonumber
	\cM(A)=\cV_{d}A\cV_{d}^{-1}=\left[
	\begin{array}{rr}
	\Re A  & -\Im A\\
	\Im A  & \Re A
	\end{array}
	\right]\,.
\end{equation}


\subsection{Gradient and Hessian forms}
Let $\Phi: \C^N \rightarrow \R$ be a function of class $C^2$. We associate $\Phi$ with the function $\Phi_{\cW}$ on $\R^{2N}$ defined by
\begin{equation}
	\label{e : realis of compl fun}
	\Phi_{\cW} := \Phi \circ \cW_{N,1}^{-1}.
\end{equation}
We denote by $D\Phi(\omega)$ and $D^{2} \Phi (\omega)$ the gradient and the Hessian matrix, respectively, of $\Phi_\cW : \R^{2N} \rightarrow \R$ evaluated at the point $\cW_{N,1}(\omega) \in \R^{2N}$.

We shall use the notation
\begin{equation}
	\nonumber
	|D\Phi|=\left(\sum_{j=1}^{2N} |\partial_{x_j} \Phi_\cW|^2 \right)^{1/2}
\end{equation}
and
\begin{equation}
	\nonumber
	|D^2\Phi|=\left(\sum_{j,k=1}^{2N} |\partial^2_{x_jx_k} \Phi_\cW|^2 \right)^{1/2}.
\end{equation}
Moreover,  for all $j,k=1,\dots,N$ we will write
\begin{equation}
	\label{eq: 2der mix}
	|D^2_{\omega_j\omega_k}\Phi|= \left( \sum_{h,l=1}^2 |\partial^2_{\omega_j^h\omega_k^l}\Phi_\cW|^2\right)^{1/2},
\end{equation}
where $\omega_j = \omega_j^1+ i \omega_j^2$ represents the decomposition of the $j$-th complex component into its real and imaginary parts, for all $j=1,\dots,N$.


\subsection{Convexity with respect to complex matrices, complex vectors and real scalars}\label{s: GeH}
In \cite{CD-DivForm}, the authors introduced the notion of {\it generalized convexity of a function with respect to a matrix} (or a collection of matrices).  This concept was subsequently
extended in \cite{Poggio} to $4$-tuples $(A,b,c,V)$. We now recall both definitions.

Departing from the approach in \cite{CD-DivForm, Poggio}, we provide a unified definition for  the $N$-dimensional case, following \cite{CD-Mixed}. This allows for a more compact formulation of certain estimates (see \eqref{eq: trivial est hess}, \eqref{eq: trivial est hess first}, \eqref{eq: trivial est hess pot}, \eqref{eq: firs est G} and \eqref{eq: firs est T}). For a detailed discussion of the one and two-dimensional cases, we refer the reader to \cite{CD-DivForm, Poggio}.

Let $N,d \in \N_+$ and $\Phi: \C^N \rightarrow \R$ of class $C^2$. Choose and, respectively, denote
\begin{equation}
\nonumber
	\begin{array}{rclcrcl}
	\omega_1, \dots, \omega_N & \in & \C &\quad& \omega &:=& (\omega_1, \dots, \omega_N) \\
	\Xi_1, \dots, \Xi_N & \in & \C^d &\quad& \Xi &:=& (\Xi_1, \dots, \Xi_N) \\
	A_1, \dots, A_N & \in & \C^{d\times d} &\quad& \mathbf{A} &:=& (A_1, \dots, A_N) \\
	b_1, \dots, b_N & \in & \C^d &\quad& \mathbf{b} &:=& (b_1, \dots, b_N) \\
	c_1, \dots, c_N & \in & \C^d &\quad& \mathbf{c} &:=& (c_1, \dots, c_N) \\
	V_1, \dots, V_N & \in & \R &\quad& \mathbf{V} &:=& (V_1, \dots, V_N).
	\end{array}
\end{equation}
For every $j=1,\dots,N$ denote
\begin{equation}
	\nonumber
	\aligned
	\oA_j &:= (A_j,b_j,c_j,V_j).
	\endaligned
\end{equation}

Following \cite{CD-DivForm,CD-Mixed} and \cite{Poggio}, we define 
\begin{equation}
	\label{eq: comp gen Hess}
	\aligned
	H^{\mathbf{A}}_{\Phi}[\omega;\Xi]=&\,
	 \sk{\left[D^{2}\Phi(\omega)\otimes I_{\R^{d}}\right]\cW_{N,d}(\Xi)}{\left[\cM(A_1)\oplus \cdots \oplus \cM(A_N)\right]\cW_{N,d}(\Xi)}_{\R^{2Nd}},\\[4pt]
	 H^{(\mathbf{b},\mathbf{c})}_\Phi[\omega;\Xi]= & \,\sk{\left[D^{2}\Phi(\omega)\otimes I_{\R^{d}}\right] \cW_{N,d}(\Xi)}{\cW_{N,d}(\omega_1 c_1,\dots, \omega_N c_N)}_{\R^{2Nd}} \\[4pt]
	& + \, \sk{D\Phi(\omega)}{\cW_{N,1}\left(\sk{\Xi_1}{\overline{b_1}},\dots, \sk{\Xi_N}{\overline{b_N}}\right)}_{\R^{2N}}, \\[4pt]
	G^{\mathbf{V}}_\Phi(\omega) =& \,  \sk{D \Phi(\omega)}{\cW_{N,1}(V_1\omega_1, \dots, V_N\omega_N)}_{\R^{2N}},
	\endaligned
\end{equation}
where $\otimes$ denotes the Kronecker product of matrices (see, for example, \cite{CD-DivForm}), and $\cM(A_1)\oplus\cdots \oplus\cM(A_N)$ is the $2Nd \times 2Nd$ block diagonal real matrix with the $2d\times2d$ blocks $ \cM(A_1), \dots, \cM(A_N)$ along the main diagonal.

Finally, we define
\begin{equation}
	\label{d : Hess sup gen}
	\mathbf{H}_\Phi^{(\oA_1,\dots,\oA_N)}[\omega; \Xi] =H^{\mathbf{A}}_{\Phi}[\omega;\Xi]+H^{(\mathbf{b},\mathbf{c})}_\Phi[\omega; \Xi]+G^{\mathbf{V}}_\Phi(\omega).
\end{equation}

\begin{defi}{\cite{CD-DivForm,Poggio}}
We say that  $\Phi$ is $(\oA_1,\dots,\oA_N)$-{\it convex} in $\C^{N}$ if $\mathbf{H}^{(\oA_1,\dots,\oA_N)}_{\Phi}[\omega; \Xi]$ is nonnegative for all $\omega\in \C^{N}$, $\Xi \in \C^{Nd}$. 

If, in addition, $\mathbf{b}=\mathbf{c}=\mathbf{V}=0$, we say that $\Phi$ is $\mathbf{A}$-{\it convex}.
\end{defi}
\medskip

We maintain the same notation when instead of matrices we consider matrix-valued {\it functions} $A_1,\dots, A_N\in L^{\infty}(\Omega,\C^{d\times d})$, vector-valued functions $b_1,\dots,b_N,c_1,\dots,c_N \in L^\infty(\Omega,\C^d)$ and scalar functions $V_1,\dots,V_N \in L^1_{\rm loc}(\Omega,\R)$; in this case however we require that all the conditions are satisfied for a.e. $x\in\Omega$. 
\smallskip

It follows from the boundedness of the matrices that
\begin{align}
	\left|H^{\mathbf{A}}_{\Phi}[\omega; \Xi] \right| &\leqsim \sum_{j,k=1}^N \left|D^2_{\omega_j\omega_k}\Phi(\omega)  \right| |\Xi_j||\Xi_k|.
	\label{eq: trivial est hess}\\
	\left|H^{(\mathbf{b},\mathbf{c})}_{\Phi}[\omega; \Xi] \right| &\leqsim \sum_{j,k=1}^N \left|D^2_{\omega_j\omega_k}\Phi(\omega)  \right| |\Xi_k| \cdot |c_j||\omega_j| + \sum_{j=1}^N |\partial_{\omega_j}\Phi(\omega)||\Xi_j| |b_j|,
	\label{eq: trivial est hess first}\\
	|G_{\Phi}^{\mathbf{V}}(\omega)| &\leqsim \sum_{j=1}^N |\partial_{\omega_j}\Phi (\omega)| \cdot |V_j| |\omega_j|. \label{eq: trivial est hess pot}
\end{align}

\subsubsection{Generalized Hessian of product of functions}
Let $\Psi,\Phi : \C^N \rightarrow \R$. By applying Leibniz's rule twice, we get
\begin{equation}
	\nonumber
	\aligned
	D^2_{jk}(\Psi \cdot \Phi) = \Psi \cdot  D^2_{jk}\Phi + \Phi \cdot D^2_{jk}\Psi + D_k \Psi\cdot D_j\Phi + D_j \Psi \cdot D_k\Phi,
	\endaligned
\end{equation}
for all $j,k \in \{1, \dots, 4\}$. Therefore, by defining the matrix-valued function $\mathbf{L}=\mathbf{L}^{(\Psi,\Phi)}$ with entries
\begin{equation}
	\nonumber
	\mathbf{L}_{jk} =  D_j \Psi \cdot  D_k\Phi,
\end{equation}
we have
\begin{equation}
	\label{eq: leibniz per hess gen prod N}
	\aligned
	H_{\Psi \cdot \Phi}^{\mathbf{A}}[\omega; \Xi] &=\Psi(\omega)H_{\Phi}^{\mathbf{A}}[\omega; \Xi] + \Phi(\omega) H_{\Psi}^{\mathbf{A}}[\omega; \Xi]+ \, L^{\mathbf{A}}_{(\Psi,\Phi)}[\omega; \Xi], \\[2pt]
	H_{\Psi \cdot \Phi}^{(\mathbf{b},\mathbf{c})}[\omega; \Xi] &=\Psi(\omega)H_{\Phi}^{(\mathbf{b},\mathbf{c})}[\omega; \Xi] + \Phi(\omega) H_{\Psi}^{(\mathbf{b},\mathbf{c})}[\omega; \Xi]+ \, T^{\mathbf{c}}_{(\Psi,\Phi)}[\omega; \Xi], \\[2pt] 
	G_{\Psi \cdot \Phi}^{\mathbf{V}}(\omega) &= \Psi(\omega)  G_{\Phi}^{\mathbf{V}}(\omega) +  \Phi(\omega)G_{\Psi }^{\mathbf{V}}(\omega),
	\endaligned
\end{equation}
where
\begin{equation*}
	\aligned
	L^{\mathbf{A}}_{(\Psi,\Phi)}[\omega; \Xi] &:= 2\sk{[ \mathbf{L}_{\text{s}}(\omega) \otimes I_{\R^d}] \cW_{N,d}(\Xi)}{\left(\cM(A_1)\oplus\cdots\oplus\cM(A_N)\right)\cW_{N,d}(\Xi)}, \\[2pt]
	T^{\mathbf{c}}_{(\Psi,\Phi)}[\omega; \Xi] &:= 2\sk{[ \mathbf{L}_{\text{s}}(\omega) \otimes I_{\R^d}] \cW_{N,d}(\Xi)}{\mathcal{W}_{N,d}(\omega_1 c_1, \dots, \omega_N c_N)}.
	\endaligned
\end{equation*}
\smallskip

It follows from the boundedness of the matrices that
\begin{align}
	\left|L^{(A,B)}_{(\Psi,\Phi)}[\omega; \Xi] \right| \leqsim &\,\, \sum_{j,k=1}^N \left|\partial_{\omega_j}\Psi(\omega) \cdot \partial_{\omega_k}\Phi(\omega) \right| |\Xi_j||\Xi_k|, \label{eq: firs est G} \\
	\left|T^{(c,\gamma)}_{(\Psi,\Phi)}[\omega; X] \right| \leqsim &\,\,\sum_{j=1}^N \left|\partial_{\omega_j}\Psi(\omega) \cdot \partial_{\omega_j}\Phi(\omega) \right| ||\Xi_j| \cdot |c_j||\omega_j| \nonumber\\
	&+ \sum_{j\ne k} \left|\partial_{\omega_j}\Psi(\omega) \cdot \partial_{\omega_k}\Phi(\omega) \right| \bigl(|\Xi_k|\cdot |c_j||\omega_j|+ |\Xi_j|\cdot |c_k| |\omega_k|\bigr). \label{eq: firs est T}
\end{align}


\subsection{The Bellman function of Nazarov and Treil}
\label{ss: Bell func}
We want to study the monotonicity of the flow
\begin{equation}
	\label{eq: N flow}
	\cE(t)=\int_{\Omega}\cQ(T^{\oA,\oV}_{t}f,T^{\oB,\oW}_{t}g)
\end{equation}
associated with a particular explicit {\it Bellman function} $\cQ$ invented by Nazarov and Treil \cite{NT}. Here we use a simplified variant introduced in \cite{Dv-kato} which comprises only two variables:
\begin{equation}
	\label{eq: N Bellman}
	\cQ(\zeta,\eta)=
	|\zeta|^p+|\eta|^{q}+\delta
	\begin{cases}
	 |\zeta|^2|\eta|^{2-q};& |\zeta|^p\leqslant |\eta|^q;\\
	 (2/p)\,|\zeta|^{p}+\left(
	 2/q-1\right)|\eta|^{q};&|\zeta|^p\geqslant |\eta|^q\,,
	\end{cases}
\end{equation}
where $p\geq2$, $q=p/(p-1)$, $\zeta,\eta\in\C$ and $\delta>0$ is a positive parameter that will be fixed later. It was noted in \cite[p. 3195]{CD-DivForm} that $\cQ\in C^{1}(\C^{2})\cap C^{2}(\C^{2}\setminus\Upsilon)$, where
\begin{equation}
	\nonumber
	\Upsilon=\{\eta=0\}\cup\{|\zeta|^p=|\eta|^q\}\,,
\end{equation}
and that for $(\zeta,\eta)\in\C\times\C$ we have
\begin{equation}
	\label{eq: N 5}
	\aligned
	0\leqslant \cQ(\zeta,\eta) & \leqsim_{p,\delta}\,\left(|\zeta|^p+|\eta|^q\right), \\
	|(\partial_{\zeta}\cQ)(\zeta,\eta)| & \leqsim_{p,\delta}\, \max\{|\zeta|^{p-1},|\eta|\},\\
	|(\partial_{\eta}\cQ)(\zeta,\eta)| & \leqsim_{p,\delta}\, |\eta|^{q-1}\,,
	\endaligned
\end{equation}
where $\partial_\zeta=\left(\partial_{\zeta_1}-i\partial_{\zeta_2}\right)/2$ and $\partial_\eta=\left(\partial_{\eta_1}-i\partial_{\eta_2}\right)/2$.
\medskip

The Bellman function $\cQ$ possesses the distinctive property of being $(A,B)$-convex, provided that the matrices $A$ and $B$ are $p$-elliptic and the parameter $\delta$ in its definition is chosen sufficiently small \cite[Theorem~5.2]{CD-DivForm}. This feature can be traced back to the study of its elementary building blocks, namely the power functions. For every $r>0$  define the function $F_r\colon\C\to\R_+$ as
\begin{equation}
	\nonumber
	F_r(\zeta)=|\zeta|^r,\quad\zeta\in\C.
\end{equation}
It was shown in \cite{CD-DivForm} that  $F_p$ and $F_q$ are (strictly) convex with respect to $p$-elliptic matrices \cite[Corollary~5.10]{CD-DivForm}; indeed, the notion of $p$-ellipticity was  introduced precisely for this purpose. Subsequently, this concept was generalized in \cite{Poggio, P-Potentials} exactly  to ensure that these functions satisfy the generalized convexity condition even in the presence of vectors and potentials. In particular, it was established in \cite{Poggio} that
\begin{equation}
	\nonumber
	\mathbf{H}^{\oA(x)}_{F_r}\left[\zeta;X\right] = r |\zeta|^r \Gamma_r^\oA\left(x,\frac X\zeta  \right), 
\end{equation}
for a.e. $x \in \Omega$ and all $\zeta \in \C \setminus \{0\}$, $X\in \C^d$. In the case of nonnegative potentials, this identity directly implies the strict $\oA$-convexity of $F_r$, provided that $\oA\in\cS_r(\Omega)$. These considerations, applied with $r=p$ and $r=q$, lead to the  $(\oA,\oB)$-convexity of $\cQ$ whenever $\oA \in (\cS_p\cap\cS_2)(\Omega)$ and $\oB \in \cS_q(\Omega)$ \cite[Theorem~16]{Poggio}. 

Here we reproduce that result while distinguishing the estimates in the two regions of definition of $\cQ$, halting one step earlier in the chain of inequalities of \cite[Theorem~16]{Poggio}, and paying closer attention to the $\delta$-dependence of the involved constants. This choice is motivated by the desire to adapt the argument of \cite[Sections~6]{Poggio} in order to obtain stronger lower-estimates of $\mathbf{H}^{(\oA,\oB)}_{\cQ}$ whenever suitable perturbations of $\oA$ and $\oB$ belong to $(\cS_p \cap \cS_2)(\Omega)$ and $\cS_q(\Omega)$, respectively. Keeping track of the $\delta$-dependence of the involved constants is essential for this purpose;  this is the reason why we require the validity of \eqref{eq: beah c1}. These stronger estimates will be established in Section~\ref{ss: str est gen hess Q}.

\begin{theorem}
\label{t:ConvBelman}
Let $\Omega \subseteq \R^d$  be an open subset. Let $p \ge 2$ and denote by $q$ its conjugate exponent. Assume that $\oA=(A,b,c,V) \in (\cS_p \cap \cS_2)(\Omega)$ and $\oB=(B,\beta,\gamma,W) \in \cS_q(\Omega)$. Then the following statements hold:
\begin{enumerate}[label=\textnormal{(\roman*)}]
\item\label{i: conv est gr}there exists a constant $C>0$ such that, for every $\delta>0$ and for a.e. $x \in \Omega$, one has
\begin{equation}
	\aligned
	\mathbf{H}_{\mathcal{Q}}^{(\oA(x),\oB(x))}[\omega; (X,Y)] \quad \quad \quad \quad& \\
	\ge pq \min\left\{\frac{\mu_p(\oA)}{p}, \frac{\mu_q(\oB)}{q}\right\} \Big[&
	(p-1)|\zeta|^{p-2}\big(|X|^2 + V(x)|\zeta|^2\big)\\
	&+ (q-1)|\eta|^{q-2}\big(|Y|^2 + W(x)|\eta|^2\big)
	\Big],
	\endaligned
\end{equation}
for all $X,Y \in \C^d$ and $|\zeta|^p > |\eta|^q > 0$;
\item\label{i: conv est br}there exists $\delta_0 \in (0,1)$ such that, for every $\delta \in (0,\delta_0)$, one has positive constants $C_1(\delta)$ and $C_2(\delta)$ satisfying, for a.e. $x \in \Omega$,
\begin{equation}
	\nonumber
	\aligned
	\mathbf{H}_{\mathcal{Q}}^{(\oA(x),\oB(x))}[\omega; (X,Y)]
	&\ge \delta \Big[
	C_1(\delta)\,|\eta|^{2-q}\big(|X|^2 + V(x)|\zeta|^2\big)
	\\
	&\qquad\qquad
	+ C_2(\delta)\,|\eta|^{q-2}\big(|Y|^2 + W(x)|\eta|^2\big)
	\Big],
	\endaligned
\end{equation}
for all $X,Y \in \C^d$ and $|\zeta|^p < |\eta|^q$.
Moreover, $C_2(\delta)$ can be chosen so that
\begin{equation}
	\label{eq: beah c1}
	C_2(\delta) \to +\infty \quad \text{as } \delta \to 0;
\end{equation}
\item\label{i: conv est ur}there exist a continuous function $\tau : \C^2 \rightarrow [0,+\infty)$ with $\tau^{-1}=1/\tau$ being locally integrable on $\C^2 \setminus \{(0,0)\}$, and $\delta \in (0,1)$ such that for any $\omega =(\zeta,\eta) \in \C^2 \setminus \Upsilon$, $X,Y \in \C^d$, and  a.e. $x \in \Omega$, we have
\begin{equation}
	\nonumber
	\mathbf{H}_{\mathcal{Q}}^{(\oA(x),\oB(x))}[\omega; (X,Y)] \geqsim \tau (|X|^2 +V(x)|\zeta|^2) + \tau^{-1}(|Y|^2 +W(x)|\eta|^2).
\end{equation} 
We may take $\tau(\zeta,\eta)= \max\{|\zeta|^{p-2},|\eta|^{2-q}\}$.
\end{enumerate}

The constants $C$, $C_1(\delta)$, and $C_2(\delta)$ can be chosen to depend continuously on $p$, $\lambda(A,B)$, $\Lambda(A,B)$, $M(\oA,\oB)$, $\mu(\oA)$, $\mu_p(\oA)$, and $\mu_{q}(\oB)$,
but not on the dimension $d$.
\end{theorem}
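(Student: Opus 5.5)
We plan to treat the two regions of definition of $\cQ$ separately and use the product rule \eqref{eq: leibniz per hess gen prod N} to reduce everything to the generalized convexity of the power functions $F_r$. The key identity from \cite{Poggio} is
\[
\mathbf{H}^{\oA(x)}_{F_r}[\zeta;X]=r|\zeta|^r\Gamma_r^{\oA}(x,X/\zeta),
\]
valid for $\zeta\neq0$. Together with \eqref{e : equiv B}, it gives
\[
\mathbf{H}^{\oA}_{F_p}[\zeta;X]\ge p\,\mu_p(\oA)\,|\zeta|^{p-2}\big(|X|^2+V|\zeta|^2\big),
\]
and the analogous bound for $F_q$ with $\oB$ and $\mu_q(\oB)$. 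Since the generalized Hessian of a function of $(\zeta,\eta)$ that splits as $\Phi(\zeta)+\Psi(\eta)$ is the sum of the one-variable Hessians, these bounds will be the basic input in both regions.

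\textbf{Part (i).} In the region $|\zeta|^p>|\eta|^q>0$ we have
\[
\cQ=(1+2\delta/p)F_p(\zeta)+(1+\delta(2/q-1))F_q(\eta).
\]
Both coefficients are at least $1$, because $q\le2$ gives $2/q-1\ge0$. Applying the bounds above then yields a lower estimate of the form $\min\{p\mu_p,q\mu_q\}$ times the bracket. The bracket contains the factors $(p-1)$ and $(q-1)$; since $pq=p+q$, we can rewrite $p\mu_p\ge pq\,\tfrac{\mu_p}{p}(p-1)$, and similarly for $q$. This gives the stated constant, uniformly in $\delta$, so this step is routine.

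\textbf{Part (ii).} In the region $|\zeta|^p<|\eta|^q$ we write
\[
\cQ=F_p(\zeta)+F_q(\eta)+\delta\,F_2(\zeta)F_{2-q}(\eta).
\]
We expand $\mathbf{H}_{F_2F_{2-q}}$ with \eqref{eq: leibniz per hess gen prod N}, which produces three kinds of terms.
\begin{itemize}
\item The term $|\eta|^{2-q}\mathbf{H}^{\oA}_{F_2}[\zeta;X]$ is bounded below by $2\mu(\oA)|\eta|^{2-q}(|X|^2+V|\zeta|^2)$, using $\oA\in\cS_2$.
\item The term $|\zeta|^2\mathbf{H}^{\oB}_{F_{2-q}}$ may be negative. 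We bound it crudely via \eqref{eq: trivial est hess}--\eqref{eq: trivial est hess pot} and \eqref{eq: bc cont by V}, obtaining $\lesssim|\zeta|^2|\eta|^{-q}(|Y|^2+W|\eta|^2)$. Using $|\zeta|^2\le|\eta|^{2q/p}=|\eta|^{2q-2}$, this is $\lesssim|\eta|^{q-2}(|Y|^2+W|\eta|^2)$.
\item The mixed terms $L$ and $T$ are bounded by \eqref{eq: firs est G} and \eqref{eq: firs est T} by $|\zeta||\eta|^{1-q}|X|\,|Y|$, together with similar terms involving $\sqrt V|\zeta|$ and $\sqrt W|\eta|$ (again via \eqref{eq: bc cont by V}). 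We split these with a weighted Cauchy--Schwarz inequality, $\le\epsilon|\eta|^{2-q}|X|^2+\epsilon^{-1}|\zeta|^2|\eta|^{-q}|Y|^2$, and then apply the same region inequality to the second piece.
\end{itemize}

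The positive contribution $q\mu_q|\eta|^{q-2}(|Y|^2+W|\eta|^2)$ coming from $F_q$ is independent of $\delta$. The bad terms, after multiplication by $\delta$, are $O(\delta)$ times the same quantity. We would therefore take
\[
C_1=\mu(\oA)
\quad\text{and}\quad
C_2(\delta)=\frac{q\mu_q(\oB)-K\delta}{\delta}\ \ (\text{roughly}),
\]
which is positive for $\delta<\delta_0$ and tends to $+\infty$ as $\delta\to0$, giving \eqref{eq: beah c1}. The $F_p$ part is nonnegative and can simply be discarded.

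The \textbf{main obstacle} is the potential parts. The product rule for $G$ produces a cross term, and since $V$ may be large, the contributions $V|\zeta|^2$ and $W|\eta|^2$ inside the mixed first-order term $T$ must be absorbed. This uses $|c|\lesssim\sqrt V$ and $|\gamma|\lesssim\sqrt W$, and requires checking that every term carries the correct weights $|\eta|^{2-q}$ or $|\eta|^{q-2}$. We would do this bookkeeping carefully; it is where $\oA\in\cS_2$ is genuinely needed, to control the $X$-terms with the factor $|\eta|^{2-q}$.

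\textbf{Part (iii).} We combine (i) and (ii) for a fixed small $\delta$. In the region of (i), $|\zeta|^{p-2}\ge|\eta|^{2-q}$ because $|\zeta|^{p(p-2)/p}$ exceeds $|\eta|^{q(p-2)/p}=|\eta|^{2-q}$, using $q(p-2)/p=2-q$. Hence $\tau=|\zeta|^{p-2}$ there, and $|\eta|^{q-2}\ge\tau^{-1}$. In the region of (ii) the reverse holds and $\tau=|\eta|^{2-q}$. Continuity of $\tau$ is clear, and local integrability of $1/\tau$ away from the origin follows because $1/\tau\le\min\{|\zeta|^{2-p},|\eta|^{q-2}\}$ is bounded on compact subsets of $\C^2\setminus\{(0,0)\}$ up to an integrable singularity of order $|\cdot|^{q-2}$ in two real dimensions. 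Finally, the constants are tracked through the steps above; they involve only $p$ and the scalar ellipticity quantities, so they are independent of $d$.
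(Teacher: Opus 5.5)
Your proposal is correct and takes essentially the same route as the paper. The paper simply defers to the region-by-region argument of \cite[Theorem~16]{Poggio} and a sharper tracking of the $\delta$-dependence in \cite[(40)]{Poggio}; you carry this out explicitly via the identity $\mathbf{H}^{\oA}_{F_r}[\zeta;X]=r|\zeta|^r\Gamma^{\oA}_r(x,X/\zeta)$, the Leibniz rule, and the region inequality $|\zeta|^2|\eta|^{-q}\le|\eta|^{q-2}$. Two small slips do not affect the argument: $G$ is first order, so the Leibniz rule gives no cross term there (the only cross terms are $L$ and $T$), and for $\oB\in\cS_q$ the bound $|\beta|,|\gamma|\lesssim\sqrt{W}$ should be rederived with $\Gamma_q$ in place of $\Gamma_2$ rather than quoted from \eqref{eq: bc cont by V}, which is stated only for $\cB(\Omega)$.
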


\begin{proof}
Item \ref{i: conv est ur} follows by combining items \ref{i: conv est gr} and \ref{i: conv est br}. 

Item \ref{i: conv est gr} follows by the chain of inequalities in the proof of \cite[Theorem~16]{Poggio}.

What remains is item \ref{i: conv est br}. A refined analysis of the dependence on $\delta$ of the constants appearing on the right-hand side of \cite[(40)]{Poggio} shows that there exists a constant $\delta_0 \in (0,1)$, sufficiently small, such that for every $\delta \in (0,\delta_0)$ one can choose positive constants $C_1(\delta)$ and $C_2(\delta)$, satisfying \eqref{eq: beah c1}, for which the desired inequality is valid.
\end{proof}


\subsection{Regularization of the Bellman function $\cQ$}
\label{ss: reg Bel}
The Bellman function $\cQ$ is not $C^2$ on all $\C^d$. It is therefore convenient to work with its regularization. 

Denote by $*$ the convolution in $\R^4$ and let $(\varphi_\nu)_{\nu >0}$ be a nonnegative, smooth, and compactly supported approximation of the identity on $\R^4$. Explicitly, $\varphi_\nu(y) = \nu^{-4} \varphi(y/\nu)$, where $\varphi$ is smooth, nonnegative, radial, of integral $1$, and supported in the closed unit ball in $\R^4$. If $\Phi : \C^2 \rightarrow \R$, define $\Phi * \varphi_\nu = (\Phi_\cW * \varphi_\nu) \circ \cW_{2,1} : \C^2 \rightarrow \R$. Explicitly, for $\omega \in \C^2$, 
\begin{equation}
	\label{e : compl convol}
	\aligned
	(\Phi * \varphi_\nu)(\omega) &= \int_{\R^4} \Phi_\cW(\cW_{2,1}(\omega)-z) \varphi_\nu(z) \wrt z \\
	&= \int_{\R^4} \Phi( \omega - \cW^{-1}_{2,1}(z)) \varphi_\nu(z) \wrt z.
	\endaligned
\end{equation}

It follows from \eqref{eq: N 5} that
\begin{equation}
	\label{eq: zero and first order est reg}
	\aligned
	0\leqslant (\cQ*\varphi_{\nu})(\zeta,\eta)
	&\leqsim_{p,\delta} (|\zeta|+\nu)^p+(|\eta|+\nu)^q,\\
	\left|\partial_{\zeta} (\cQ*\varphi_{\nu})(\zeta,\eta)\right|
	&\leqsim_{p,\delta}\max\left\{(|\zeta|+\nu)^{p-1}, |\eta|+\nu\right\},\\ 
	\left|\partial_{\eta} (\cQ*\varphi_{\nu})(\zeta,\eta)\right|
	&\leqsim_{p,\delta} (|\eta|+\nu)^{q-1},
	\endaligned
\end{equation}
for all $\zeta,\eta\in \C$,  and $\nu\in(0,1)$; see also \cite[Theorem~4]{CD-Riesz}. In addition, a calculation shows that
\begin{equation}
	\nonumber
	\mod{(D^{2}\cQ)(\zeta,\eta)}\leqsim_{p,\delta}\, |\zeta|^{p-2}+|\eta|^{q-2}+|\eta|^{2-q}+1,
\end{equation}
for all $(\zeta,\eta)\in (\C\times \C)\setminus\Upsilon$. More precisely, recalling the notation \eqref{eq: 2der mix}, we have
\begin{equation}
	\label{eq: est Q ii sep}
	\aligned
	\mod{(D^{2}_{\zeta\zeta}\cQ)(\zeta,\eta)} &\leqsim_{p,\delta}\, |\zeta|^{p-2}+|\eta|^{2-q},\\
	\mod{(D^{2}_{\eta\eta}\cQ)(\zeta,\eta)} &\leqsim_{p,\delta}\, |\eta|^{q-2},\\ 
	\mod{(D^{2}_{\zeta\eta}\cQ)(\zeta,\eta)} &\leqsim_{p,\delta} \,1,
	\endaligned
\end{equation}
for all $(\zeta,\eta)\in(\C \times \C)\setminus\Upsilon$.
\medskip

In \cite{CD-Mixed} the authors proved the following lemma.
\begin{lemma}{\cite[Lemma 14]{CD-Mixed}}
\label{l: N second order}
There exists $C=C(p,\delta)>0$ such that 
\begin{enumerate}[label=\textnormal{(\roman*)}]
\item\label{eq: est Q*phi}${\displaystyle \hskip 5pt|(\cQ*\varphi_{\nu})(\omega)|\leq C\left(|\omega|^{p}+|\omega|^{q}+1\right);}$
\item\label{eq: est Q*phi i)}${\displaystyle \hskip 5pt\mod{D(\cQ*\varphi_{\nu})(\omega)}\leq C\left(|\omega|^{p-1}+|\omega|^{q-1}\right);}$
\item\label{eq: est Q*phi ii)}${\displaystyle \mod{D^{2}(\cQ*\varphi_{\nu})(\omega)}\leq C\nu^{q-2}\left( |\zeta|^{p-2}+|\eta|^{2-q}+1\right)}$,
\end{enumerate}
for all $\nu\in (0,1)$ and $\omega=(\zeta,\eta)\in\R^{2}\times\R^{2}$. 
\end{lemma}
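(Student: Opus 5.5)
We prove the three estimates directly from the definition of the mollification in \eqref{e : compl convol}, using the pointwise bounds \eqref{eq: N 5} and \eqref{eq: est Q ii sep} for $\cQ$ away from $\Upsilon$.

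\emph{Items \ref{eq: est Q*phi} and \ref{eq: est Q*phi i)}.} Since $\varphi_\nu$ is supported in the ball of radius $\nu<1$, every point $\omega-\cW^{-1}_{2,1}(z)$ with $z\in{\rm supp}\,\varphi_\nu$ has modulus at most $|\omega|+1$. The first line of \eqref{eq: N 5} then gives
\[
|(\cQ*\varphi_\nu)(\omega)|\leqsim_{p,\delta}(|\omega|+1)^p+(|\omega|+1)^q\leqsim |\omega|^p+|\omega|^q+1 .
\]
For the gradient, $\cQ\in C^1(\C^2)$, so $D(\cQ*\varphi_\nu)=(D\cQ)*\varphi_\nu$. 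The bounds in \eqref{eq: N 5} give $|D\cQ(\omega')|\leqsim |\omega'|^{p-1}+|\omega'|^{q-1}$, because $|\eta|\le |\omega'|$ and $|\eta|\le |\omega'|^{p-1}+|\omega'|^{q-1}$ (the bound holds when $|\omega'|\ge1$ via $p-1\ge1$, and when $|\omega'|\le1$ via $q-1\le 1$). Averaging this bound against $\varphi_\nu$ is delicate only for small $|\omega|$, since then $|\omega|+\nu$ is not dominated by $|\omega|$. Two facts rescue it. First, $\cQ$ is $C^1$. Second, near the origin $D\cQ$ has the homogeneity structure $|\zeta|^{p-1}$, $|\eta|^{q-1}$, and $|\zeta||\eta|^{2-q}\lesssim |\eta|^{q-1}$ in the region $|\zeta|^p\le|\eta|^q$. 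It therefore suffices to bound $|D\cQ|*\varphi_\nu$ by the same expression. I expect only a bound of the form $(|\omega|+\nu)^{p-1}+(|\omega|+\nu)^{q-1}$ to follow cleanly. This is where I would follow \cite[Lemma 14]{CD-Mixed} closely, and accept the version with $\nu$ added if that is what the later limiting argument actually needs.

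\emph{Item \ref{eq: est Q*phi ii)}.} This is the main point. $\cQ$ is only $C^1$, so we cannot differentiate twice under the integral without care. Since $\Upsilon$ is a Lebesgue-null closed set and $\cQ$ is $C^1$ with second derivatives locally integrable across $\Upsilon$, the distributional Hessian of $\cQ$ equals its pointwise Hessian on $\C^2\setminus\Upsilon$. The $C^1$ gluing across $\{|\zeta|^p=|\eta|^q\}$ ensures that no singular part appears there. Near $\{\eta=0\}$ the term $|\eta|^{q-2}$ is integrable in $\R^2$ since $q-2>-2$. Consequently
\[
D^2(\cQ*\varphi_\nu)=(D^2\cQ)*\varphi_\nu .
\]
By \eqref{eq: est Q ii sep},
\[
|D^2\cQ(\omega')|\leqsim |\zeta'|^{p-2}+|\eta'|^{2-q}+1+|\eta'|^{q-2}.
\]
The terms $|\zeta'|^{p-2}$ and $|\eta'|^{2-q}$ have nonnegative exponents, so averaging gives at most $(|\zeta|+1)^{p-2}+(|\eta|+1)^{2-q}\leqsim |\zeta|^{p-2}+|\eta|^{2-q}+1$. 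The singular term needs an explicit estimate. Using $\|\varphi_\nu\|_\infty\leqsim \nu^{-4}$ and integrating out the two $\zeta$-variables over a disc of radius $\nu$, we get
\[
\int |\eta-\eta'|^{q-2}\varphi_\nu\leqsim \nu^{-2}\int_{|\eta'|\le \nu}|\eta'|^{q-2}\,d\eta'\sim \nu^{-2}\nu^{q}=\nu^{q-2},
\]
uniformly in $\eta$, since the worst case is centering at the singularity. Since $\nu^{q-2}\ge1$ for $\nu<1$, all contributions combine to $C\nu^{q-2}(|\zeta|^{p-2}+|\eta|^{2-q}+1)$.

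The main obstacle is justifying that the distributional second derivatives of $\cQ$ carry no singular part on $\Upsilon$. I would prove this by integrating by parts against test functions on $\C^2\setminus\Upsilon_\epsilon$ and letting $\epsilon\to0$. The boundary terms vanish because $D\cQ$ is continuous across the interface, while near $\{\eta=0\}$ they are controlled by $\epsilon\cdot\epsilon^{q-1}\to0$.
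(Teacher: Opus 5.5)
Your arguments for items (i) and (iii) are fine and match the cited proof. For (iii), your key computation $\int|\eta-\eta'|^{q-2}\varphi_\nu\lesssim\nu^{q-2}$ is exactly \eqref{eq: est convol negative pow}, and your justification of $D^2(\cQ*\varphi_\nu)=(D^2\cQ)*\varphi_\nu$ is adequate.

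\textbf{The gap is item (ii): you do not prove the stated bound.} You settle for $(|\omega|+\nu)^{p-1}+(|\omega|+\nu)^{q-1}$, which is strictly weaker. For $|\omega|<\nu$ it is of size $\nu^{q-1}$, whereas the claim forces $D(\cQ*\varphi_\nu)$ to vanish at the origin at rate $|\omega|^{q-1}$.

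Your suggested reduction, bounding $|D\cQ|*\varphi_\nu$ by the same expression, cannot work. Since $|\partial_\eta\cQ(\omega')|\ge\frac q2|\eta'|^{q-1}$ everywhere, one has $(|D\cQ|*\varphi_\nu)(0)\gtrsim\nu^{q-1}>0$, while the right-hand side of (ii) vanishes at $\omega=0$. Putting absolute values inside the convolution destroys the cancellation that makes (ii) true.

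The weaker version is also not harmless for the paper. Item (ii) feeds Proposition~\ref{p: prop Qnnu}\ref{i :est grad Qnnu not n} and hence the dominated convergence in \eqref{Lebesgue_1}. There an extra term $\nu^{q-1}|\oL^{\oA}u|$ need not be integrable when $\Omega$ has infinite measure, since only $\oL^{\oA}u\in L^p\cap L^q$ is known.

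\textbf{The missing idea is symmetry plus the mean value theorem, with (iii) proved first.}
\begin{enumerate}
\item Since $\cQ_\cW$ and $\varphi_\nu$ are even in each real variable, so is $\cQ_\cW*\varphi_\nu$. Hence $\partial_\zeta(\cQ*\varphi_\nu)(0,\eta)=0$ and $\partial_\eta(\cQ*\varphi_\nu)(\zeta,0)=0$.
\item Your averaging argument already gives (ii) when $|\omega|\ge\nu$, so assume $|\omega|<\nu$.
\item Move along $\eta$ from $(\zeta,0)$, using the uniform bound $|D^2_{\eta\eta}(\cQ*\varphi_\nu)|\lesssim\nu^{q-2}$ from your computation. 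This gives $|\partial_\eta(\cQ*\varphi_\nu)(\omega)|\lesssim\nu^{q-2}|\eta|\le|\eta|^{q-1}$, because $q\le2$ and $|\eta|<\nu$.
\item Move along $\zeta$ from $(0,\eta)$, using $|D^2_{\zeta\zeta}(\cQ*\varphi_\nu)|\lesssim(|\zeta|+\nu)^{p-2}+(|\eta|+\nu)^{2-q}+1\lesssim1$ in this regime. This gives $|\partial_\zeta(\cQ*\varphi_\nu)(\omega)|\lesssim|\zeta|\le|\omega|^{q-1}$.
\end{enumerate}
This is the same mechanism the paper uses in the proof of Lemma~\ref{l: N second order 2}.
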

\smallskip

The regularization $\cQ * \varphi_\nu$ preserves the $(A,B)$-convexity of $\cQ$ \cite[Corollary~5.5]{CD-DivForm}. This follows from the identity
\begin{equation}
	\nonumber
	H_{\cQ * \varphi_\nu}^{(A,B)}[\omega; (X,Y)] = \left(H_{\cQ}^{(A,B)}[\cdot ; (X,Y)]* \varphi_\nu\right)(\omega), \qquad \omega \in \C^2, X,Y \in \C^d.
\end{equation}

This property no longer holds when the first-order coefficients $b,c,\beta,\gamma$ and the potentials $V,W$ (which are assumed here to be nonnegative) are taken into account. Indeed, the generalized Hessian $\mathbf{H}_{\cQ*\varphi_\nu}^{(\oA,\oB)}$ generally differs from the convolution $\mathbf{H}_{\cQ}^{(\oA,\oB)} * \varphi_\nu$ \cite{Poggio}. The problematic contributions are precisely those associated with the pairs $(c,\gamma)$ and $(V,W)$ \cite[Remark~19]{Poggio}. Such a discrepancy is quantified by two remainder terms,  $\cN_\nu^{(c,\gamma)}$ and $\cN_\nu^{(V,W)}$,  defined in \cite{Poggio} in such a way that
\begin{align}
	\cN^{(c,\gamma)}_\nu[\omega;(X,Y)] &= H^{(c,\gamma)}_{\cQ*\varphi_\nu}[\omega;(X,Y)]- \left(H^{(c,\gamma)}_{\cQ}[\cdot \,;(X,Y)] *\varphi_\nu \right)(\omega), \nonumber\\
	\cN^{(V,W)}_\nu(\omega) &= G^{(V,W)}_{\cQ*\varphi_\nu}(\omega)- \left(G^{(V,W)}_{\cQ} *\varphi_\nu \right)(\omega), \label{eq: rest VW}
\end{align}
for all $\omega \in \C^2$, $X,Y \in \C^d$ and $\nu \in (0,1)$. Hence, the $(\oA,\oB)$-convexity may be lost. However, these contributions  are remainders, in the sense that they vanish as $\nu \rightarrow 0$ \cite[Lemma~20]{Poggio}; more precisely
\begin{equation}
	\label{eq: lemma20 pert}
	\aligned
	\cN^{(c,\gamma)}_\nu[\omega;(X,Y)] &\rightarrow 0,\\
	\cN^{(V,W)}_\nu(\omega) & \rightarrow 0,
	\endaligned
\end{equation}
for all $\omega \in \C^2$ and all $X,Y \in \C^d$, as $\nu \rightarrow 0$.
\smallskip

The following corollary is the analogue of \cite[(43)]{Poggio}. In place of \cite[Theorem~16]{Poggio}, we rely here on Theorem~\ref{t:ConvBelman}\ref{i: conv est ur}.

\begin{corollary}
\label{t : gen conv reg bell func old}
Let $\Omega \subseteq \R^d$  be an open subset. Let $p \ge 2$ and denote by $q$ its conjugate exponent. Assume that $\oA=(A,b,c,V) \in (\cS_p \cap \cS_2)(\Omega)$ and $\oB=(B,\beta,\gamma,W) \in \cS_q(\Omega)$. Let $\delta \in (0,1)$ and function $\tau : \C^2 \rightarrow (0, \infty)$ be as in Theorem~\ref{t:ConvBelman}\ref{i: conv est ur}. Then for $\cQ = \cQ_{p,\delta}$ and any $\omega=(\zeta,\eta) \in \C^2$ we have, for a.e. $x \in \Omega$ and every $(X,Y) \in \C^d \times \C^d$,
\begin{equation}
	\nonumber
	\aligned
	\mathbf{H}_{\cQ * \varphi_\nu}^{(\oA(x),\oB(x))}[&\omega; (X,Y)]  \\
	\geqsim & \,\,   (\tau * \varphi_\nu)(\omega) |X|^2 + (\tau^{-1} * \varphi_\nu)(\omega) |Y|^2   \\
	&+  V (\tau ( F_2 \otimes \mathbf{1})* \varphi_\nu)(\omega) + W (\tau^{-1} ( \mathbf{1} \otimes F_2 )* \varphi_\nu)(\omega)\\  
	&+ \cN^{(c,\gamma)}_\nu[\omega;(X,Y)] + \cN^{(V,W)}_\nu(\omega),
	\endaligned
\end{equation}
with the implied constant depending on $p$, $\lambda(A,B)$, $\Lambda(A,B)$, $M(\oA,\oB)$, $\mu(\oA)$, $\mu_p(\oA)$, and $\mu_{q}(\oB)$,
but not on the dimension $d$.
\end{corollary}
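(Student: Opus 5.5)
The plan is to decompose the generalized Hessian of $\cQ*\varphi_\nu$ into a part that commutes with convolution and the two remainders. By \eqref{d : Hess sup gen}, $\mathbf{H}_{\cQ*\varphi_\nu}^{(\oA,\oB)} = H^{(A,B)}_{\cQ*\varphi_\nu} + H^{(b,c),(\beta,\gamma)}_{\cQ*\varphi_\nu} + G^{(V,W)}_{\cQ*\varphi_\nu}$. The matrix part satisfies $H^{(A,B)}_{\cQ*\varphi_\nu} = H^{(A,B)}_{\cQ}*\varphi_\nu$, because the matrices are constant in $\omega$ (the coefficients are evaluated at a fixed $x$) and differentiation commutes with convolution. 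For the same reason, the part of $H^{(\mathbf b,\mathbf c)}$ involving $b,\beta$, namely $\langle D\Phi(\omega), \cW(\langle X,\bar b\rangle,\langle Y,\bar\beta\rangle)\rangle$, is linear in $D\Phi$ with $\omega$-independent coefficients, so it also commutes with convolution. The only non-commuting pieces are the $(c,\gamma)$-term, whose coefficient $\omega_j c_j$ depends on $\omega$, and the $(V,W)$-term, whose coefficient $V_j\omega_j$ depends on $\omega$. By definition \eqref{eq: rest VW}, these discrepancies are exactly $\cN^{(c,\gamma)}_\nu$ and $\cN^{(V,W)}_\nu$. Hence
\begin{equation*}
\mathbf{H}_{\cQ*\varphi_\nu}^{(\oA,\oB)}[\omega;(X,Y)] = \left(\mathbf{H}_{\cQ}^{(\oA,\oB)}[\,\cdot\,;(X,Y)]*\varphi_\nu\right)(\omega) + \cN^{(c,\gamma)}_\nu[\omega;(X,Y)] + \cN^{(V,W)}_\nu(\omega).
\end{equation*}

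Next, I would apply Theorem~\ref{t:ConvBelman}\ref{i: conv est ur} pointwise inside the convolution. For a.e. $x$ and every $\omega'\in\C^2\setminus\Upsilon$,
\begin{equation*}
\mathbf{H}_{\cQ}^{(\oA,\oB)}[\omega';(X,Y)] \geqsim \tau(\omega')\bigl(|X|^2+V|\zeta'|^2\bigr)+\tau^{-1}(\omega')\bigl(|Y|^2+W|\eta'|^2\bigr),
\end{equation*}
where $\omega'=(\zeta',\eta')$. The set $\Upsilon$ has Lebesgue measure zero in $\R^4$, so this holds a.e. in $\omega'$. Since $\varphi_\nu\ge0$, integrating against $\varphi_\nu(\omega-\omega')$ preserves the inequality. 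The right-hand side then becomes $(\tau*\varphi_\nu)|X|^2 + (\tau^{-1}*\varphi_\nu)|Y|^2 + V(\tau(F_2\otimes\mathbf 1)*\varphi_\nu) + W(\tau^{-1}(\mathbf 1\otimes F_2)*\varphi_\nu)$, because $X,Y,V,W$ are constants in $\omega'$ and $V,W\ge0$.

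The main technical point is justifying that the classical convolution identity holds even though $\cQ$ is only $C^1$ globally and $C^2$ off $\Upsilon$. I would argue as follows. The distributional second derivatives of $\cQ$ are locally integrable functions with no singular part on $\Upsilon$: the bounds \eqref{eq: est Q ii sep} hold, $\cQ\in C^1$, and $\Upsilon$ is a union of a codimension-two set and a smooth hypersurface across which $D\cQ$ is continuous. Therefore $D^2(\cQ*\varphi_\nu) = (D^2\cQ)*\varphi_\nu$ with the pointwise a.e. Hessian, and the identity $H^{(A,B)}_{\cQ*\varphi_\nu}=H^{(A,B)}_\cQ*\varphi_\nu$ follows exactly as in \cite[Corollary~5.5]{CD-DivForm}. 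Local integrability of $\tau$, $\tau^{-1}$ and $|D^2\cQ|$ ensures that all convolutions are finite. The dependence of the implied constant is inherited from Theorem~\ref{t:ConvBelman}, which gives the asserted independence of $d$.
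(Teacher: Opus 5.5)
Your proposal is correct and follows the paper's proof. The paper obtains the same decomposition $\mathbf{H}_{\cQ*\varphi_\nu}^{(\oA,\oB)}=\mathbf{H}_{\cQ}^{(\oA,\oB)}*\varphi_\nu+\cN^{(c,\gamma)}_\nu+\cN^{(V,W)}_\nu$ by citing \cite[Lemma~18]{Poggio}, then applies Theorem~\ref{t:ConvBelman}\ref{i: conv est ur} inside the convolution. The only difference is that you re-derive that lemma, including the check that $D^2(\cQ*\varphi_\nu)=(D^2\cQ)*\varphi_\nu$ even though $\cQ$ is only $C^2$ off $\Upsilon$, where the paper simply cites it.
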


\begin{proof}
By \cite[Lemma~18]{Poggio} we have
\begin{equation}
	\nonumber
	\aligned
	\mathbf{H}_{\cQ*\varphi_\nu}^{(\oA,\oB)}[\omega; (X,Y)] =& \, \int_\Omega \mathbf{H}_{\cQ}^{(\oA,\oB)}[\omega -\cW_{1,2}^{-1}(z); (X,Y)] \varphi_\nu(z) \wrt z\\
	& + \cN^{(c,\gamma)}_\nu[\omega;(X,Y)] +\cN^{(V,W)}_\nu(\omega).
	\endaligned
\end{equation}
Therefore, the conclusion follows from Theorem~~\ref{t:ConvBelman}\ref{i: conv est ur}.
\end{proof}


\section{Main idea}
\label{s: new idea}
In this section, we describe the so-called \textit{heat-flow} method, which we are going to use in order to prove the dimension-free bilinear inequality \eqref{eq: N bil}.  The core of this approach lies in investigating the monotonicity of the flow defined in \eqref{eq: N flow}, exploiting the generalized convexity properties of the Bellman function $\cQ$ \cite{BER,CD-mult,CD-OU,CD-Mixed,CD-DivForm,CD-Potentials,Poggio,P-Potentials}.

In our setting, as in \cite{P-Potentials} we are led to consider potentials whose positive part may be unbounded, unlike in \cites{CD-Potentials,Poggio}, where boundedness assumptions on the positive part of the potential were initially imposed and later removed by a truncation argument. This phenomenon is unavoidable, since the class of potentials under consideration is not stable under truncations of the positive part; see the introduction of \cite[Section~7]{P-Potentials} for more details. Consequently, the heat-flow method requires working directly with potentials having unbounded positive part.   At the same time, special care must be taken to control the possible negative part of the potentials. To this end, we shall follow an approach inspired by \cite{P-Potentials}.

More precisely, as in \cite{P-Potentials}, it is convenient to first provide a new proof of the bilinear embedding in the case of nonnegative potentials, studying from the outset the monotonicity of a specific functional, without imposing any additional boundedness assumption on the potentials. Besides being of independent interest, this result will serve as an auxiliary step in the treatment of the general case, where the potentials are allowed to take negative values.
Throughout the present section, we therefore assume that the potentials are nonnegative: the novelty of the argument lies precisely in this case. The extension to possibly negative potentials will be discussed in Section~\ref{s: be bnd neg pot}, where we adapt the strategy developed in \cite{P-Potentials}, upon which we shall rely.
\medskip

Let $\Omega\subseteq\R^d$, $A,B\in\cA(\Omega)$, $b,c,\beta,\gamma\in L^\infty(\Omega,\C^d)$, $V,W\in L^1_{\rm loc}(\Omega,\R_+)$ and $\oV,\oW$ satisfy \AssBE.  Let $\Phi\colon\C^2\to\R_+$ be of class $C^2$. Given $f,g\in L^2(\Omega)$, we define
\begin{equation}
	\label{eq: def flow}
	\cE(t)=\int_\Omega\Phi\left(T_t^{\oA,\oV}f,T_t^{\oB,\oW}g\right).
\end{equation}
Proceeding as in \cite[Section~6]{Poggio}, we formally deduce that
\begin{equation}
	\label{eq: monot hat-flow}
	\cE'(t)=\int_\Omega\mathbf{H}_\Phi^{(\oA,\oB)}\left[\left(T_t^{\oA,\oV}f,T_t^{\oB.\oW}g\right);\left(\nabla T_t^{\oA,\oV}f,\nabla T_t^{\oB,\oW}g\right)\right].
\end{equation}
It follows that, if $\Phi$ is $(\oA,\oB)$-convex, then the function $\cE$ is nonincreasing on $(0,+\infty)$. When $\Phi$ is strictly $(\oA,\oB)$-convex and satisfies certain estimates, this formal method turns out to be a suitable path to prove the bilinear estimate. Thanks to Theorem~\ref{t:ConvBelman}, a possible candidate for $\Phi$ is the Bellman function $\cQ$ of Section~\ref{ss: Bell func}.

The main problem to get \eqref{eq: monot hat-flow} is justifying an integration by parts in the sense of \eqref{eq: ibp}, since it is not clear whether $\partial_\zeta\cQ\left(T_t^{\oA,\oV}f,T_t^{\oB,\oW}g\right)$ and $\partial_\eta\cQ\left(T_t^{\oA,\oV}f,T_t^{\oB,\oW}g\right)$ belong to the form domains whenever $f,g\in(L^p\cap L^q)(\Omega)$; see \cite{CD-Mixed} for further details.  In order to solve it,  in the case when $b=\beta=c=\gamma=0$ and $V=W=0$, Carbonaro and Dragi\v{c}evi\'c approximated in \cite{CD-Mixed} the Bellman function  by the sequence 
\begin{equation}
	\label{e : succ R intro}
	\cR_{n,\nu}=\psi_n\cdot(\cQ\ast\varphi_\nu)+ C_1 \nu^{q-2} (\cP_n*\varphi_\nu),
\end{equation}
where
\begin{itemize}
\item $\left(\varphi_\nu\right)_{\nu\in (0,1)}$  is a smoothly compactly supported approximation of the identity in $\C^2$;
\item $\left(\psi_n\right)_{n\in \N_+}$ is a sequence of smooth  mollifiers such that $\psi_n \geq 0$ is supported in $B_{\C^2}(0,4n)$ and $\psi_n=1$ in $B_{\C^2}(0,3n)$;
\item $C_1$ is a positive constant and $\left(\cP_{n}\right)_{n}$ is a sequence of $(A,B)$-convex functions,  constructed  in such a way that
\begin{enumerate}[label=\textnormal{(\roman*)}]
	\item \label{i: conv Rnnu} the function $\cR_{n,\nu}$ is $(A,B)$-convex for all $n\in \N_+$ and $\nu \in (0,1)$;
	\item \label{i: converg Rnnu}  $D\cR_{n,\nu}$ and $D^2\cR_{n,\nu}$ converge pointwise to $D(\cQ *\varphi_\nu)$ and $D^2(\cQ *\varphi_\nu)$ as $n \rightarrow \infty$, respectively;
	\item for each $n$ the function $\cR_{n,\nu}$ has partial derivatives with linear growth and bounded second order derivatives;
	\item \label{i: cdl Rnnu} the first order derivatives of $\cR_{n,\nu}$ satisfy suitable size estimates uniformly in $n \in \N$.
\end{enumerate}
\end{itemize}
We refer the reader to \cite[Section~3]{CD-Mixed} for a detailed discussion of the construction and motivation behind the sequence $\left(\cR_{n,\nu}\right)_{n,\nu}$. This approximation scheme has subsequently been exploited to prove bilinear embedding theorems for  Schr\"{o}dinger-type operators with nonnegative potentials \cite{CD-Potentials}, divergence-form operators with dynamical boundary conditions \cite{BER}, divergence-form operators with first-order perturbations and nonnegative potentials \cite{Poggio}, and Schr\"{o}dinger-type operators with negative potentials \cite{P-Potentials}. In the latter, it was shown that, upon modifying the constant $C_1$ in \eqref{e : succ R intro}, each function $\cR_{n,\nu}$ is not only $(A,B)$-convex, but also $((A,0,0,V), (B,0,0,W))$-convex, since
\begin{equation}
	\label{eq: Rnnu VW conv}
	G_{\cR_{n,\nu}}^{(V,W)}(\omega) \geq 0,
\end{equation}
for all $\omega \in \C^2$ \cite[Corollary~7.7]{P-Potentials}.
\smallskip

This approximation approach represents the starting point to develop our new strategy. Set
\begin{equation}
	\nonumber
	u=T_t^{\oA,\oV}f, \qquad v=T_t^{\oB,\oW}g.
\end{equation}
Properties \ref{i: converg Rnnu}-\ref{i: cdl Rnnu} are needed to deduce that
\begin{equation}
	\label{eq: 2Cdl h-f}
	\aligned
	-\cE^\prime(t) = \lim_{\nu \rightarrow 0}\lim_{n \rightarrow \infty} \int_\Omega \mathbf{H}^{(\oA,\oB)}_{\cR_{n,\nu}} \left[\left( u, v \right); \left( \nabla u,  \nabla v \right) \right]. 
	\endaligned
\end{equation}
The main issue is to justify the passage to the limits inside the integral. We first analyze the limit as $n\to\infty$. Assume for the moment that $b=\beta=c=\gamma=0$. In this case 
\begin{equation}
	\nonumber
	\mathbf{H}^{(\oA,\oB)}= H^{(A,B)}+ G^{(V,W)}.
\end{equation}
Therefore, \ref{i: conv Rnnu} and \eqref{eq: Rnnu VW conv} allows to apply Fatou's lemma and infer, by also using \ref{i: converg Rnnu}, that
\begin{equation}
	\label{eq: Fatou h-f}
	\aligned
	\lim_{n \rightarrow \infty} \int_\Omega \biggl(H^{(A,B)}_{\cR_{n,\nu}}& \left[\left( u, v \right); \left( \nabla u,  \nabla v \right) \right]+G_{\cR_{n,\nu}}^{(V,W)}\left(u,v \right) \biggr)\\
	\geq \int_\Omega &\left(H^{(A,B)}_{\cQ * \varphi_\nu} \left[\left( u, v \right); \left( \nabla u,  \nabla v \right) \right] +G_{\cQ *\varphi_\nu}^{(V,W)}\left(u,v\right)\right) .
	\endaligned
\end{equation}

Suppose now that $b,c,\beta,\gamma$ are not identically zero. In this case, the integrand on the left-hand side of \eqref{eq: Fatou h-f} contains the additional term $H^{(b,\beta,c,\gamma)}_{\cR_{n,\nu}}$. However, from the definition \eqref{eq: comp gen Hess} we have
\begin{equation}
	\nonumber
	H^{(b,\beta,c,\gamma)}_{\Phi}[\omega;-\Xi] =- H^{(b,\beta,c,\gamma)}_{\Phi}[\omega;\Xi]
\end{equation}
for every $\Phi \in C^2(\C^2)$, $\omega \in \C^2$, and $\Xi \in \C^{2d}$. Consequently, unless it vanishes identically on $\C^2 \times \C^{2d}$, this term can never be nonnegative. Therefore, in order to justify the limit passage inside the integral as in \eqref{eq: Fatou h-f}, instead of proving nonnegativity of each component of $\mathbf{H}^{(\oA,\oB)}_{\cR_{n,\nu}}$, two alternative strategies may be considered:
\begin{enumerate}[label=(\alph*)]
\item\label{i: strat a}studying $\mathbf{H}^{(\oA,\oB)}_{\cR_{n,\nu}}$ as a whole, without decoupling it into its components, so as to try to apply Fatou's lemma;
\item\label{i: strat b}dominating $H^{(b,\beta,c,\gamma)}_{\cR_{n,\nu}}$, uniformly in $n$, by an integrable function and applying Lebesgue's dominated convergence theorem.
\end{enumerate}

In \cite{CD-Mixed}, the first key ingredients ensuring the $(A,B)$-convexity of $\cR_{n,\nu}$ were the $(A,B)$-convexity of the Bellman function $\cQ$ itself and the invariance of $(A,B)$-convexity under regularization. As observed in \cite[Section~6]{Poggio}, $(\oA,\oB)$-convexity might not be preserved under convolution; see also Section~\ref{ss: reg Bel}. In particular, $\cR_{n,\nu}$ may be not $(\oA,\oB)$-convex on $\C^2$. Therefore, the strategy in \cite{Poggio} was to follow the idea in \ref{i: strat b}. However, under the general assumptions on the coefficients---they are possibly irregular---it was not straightforward to uniformly dominate the integrands by integrable functions. To overcome this difficulty, the idea was to assume  initially that the coefficients are smooth, apply Fatou's lemma  whenever possibile, and Lebesgue's dominated convergence theorem  otherwise. This latter step strongly exploited the regularity assumption on the coefficients, along with an interior elliptic regularity argument; see \cite[Section~6 \& Appendix~B]{Poggio}. Once the bilinear embedding was established for smooth coefficients, the general case was recovered via a limiting argument; see \cite[Appendix~A]{Poggio}.

At this point, it becomes apparent that such an approach is not well suited to our setting. Indeed, although the discussion above concerns nonnegative potentials, it is intended to serve as a preliminary step toward treating the case of strongly subcritical potentials with possibly nontrivial negative part. In this context, it is not clear how to regularize the potentials while preserving their subcriticality. Consequently, regularization is not available in our framework. Without regularization, however, the sequence $(\cR_{n,\nu})_{n,\nu}$ is not suitable for implementing strategy \ref{i: strat b}; see Remark~\ref{r: no strat b}. For this reason, the main strategy adopted in this paper consists in approximating the Bellman function $\cQ$ by a new sequence $(\cS_{n,\nu})_{n,\nu}$. The idea is to regularize $\cQ$ by convolution with mollifiers and then multiply $\cQ*\varphi_\nu$ by suitable compactly supported smooth cut-off functions $\Psi_n$, which are equal to $1$ on larger and larger sets. These sets are tailored to the specific structure of $\cQ$, and no additional correction term is added. This finer geometric adaptation allows us to work directly with the resulting sequence and to pass to the limit by means of the Lebesgue dominated convergence theorem, rather than relying solely on Fatou's lemma.

Once the limit as $n\to\infty$ in \eqref{eq: 2Cdl h-f} is settled using the new sequence $(\cS_{n,\nu})_{n,\nu}$, we must address the limit as $\nu\to0$. The difficulty is analogous: in the absence of first-order terms one may apply Fatou's lemma in the analogue of \eqref{eq: Fatou h-f} (see \cite{P-Potentials}), whereas in the presence of first-order terms one must obtain uniform estimates in $\nu$ for the component $H^{(b,\beta,c,\gamma)}_{\cQ*\varphi_\nu}$. In the case of nonnegative potentials, this was achieved in \cite{Poggio} again via coefficient regularization. Since regularization is unavailable here, a different approach is required. We overcome this difficulty by deriving sharper estimates for the Hessian and first-order derivatives of $\cQ*\varphi_\nu$, which in turn yield improved bounds for $H^{(b,\beta,c,\gamma)}_{\cQ*\varphi_\nu}$ and ensure the applicability of Lebesgue's dominated convergence theorem; see Section~\ref{sss: stima unif in nu}.

\subsubsection{Trilinear embedding}
\label{sss: tril emb}
The sequence $(\cS_{n,\nu})_{n,\nu}$ was originally introduced in the doctoral thesis of the second author to address a similar problem related to the so-called \textit{trilinear embedding}.

In \cite{CDKS-Tril}, a trilinear embedding theorem was established for divergence-form operators without lower-order terms, distinguishing between two cases: when $\Omega=\R^d$, and when $\Omega$ is an arbitrary open subset of $\R^d$. In the former case, the result was obtained by adapting the heat-flow method developed in \cite{CD-DivForm}; in the latter, the authors relied on the approximation techniques introduced in \cite{CD-Mixed}, which are based on the sequence $(\cR_{n,\nu})_{n,\nu}$.

However, unlike in the bilinear setting---where both approaches can be implemented under the same structural assumptions on the coefficient matrices---in the trilinear case the approximation procedure of \cite{CD-Mixed} requires stronger assumptions on the coefficients when $\Omega$ is an arbitrary open subset of $\R^d$. As already mentioned, in the bilinear framework of \cite{CD-Mixed} a key step in the construction of the approximating sequence consists in compensating for the lack of $(A,B)$-convexity of the truncated function $\psi_n \cdot (\cQ*\varphi_\nu)$ in the annulus $ \{3n<|\omega|<4n\}$, by adding a suitable correction term. This yields the $(A,B)$-convexity of $\cR_{n,\nu}$. An analogous compensation mechanism in the trilinear setting, however, turns out to require stronger assumptions on the matrices.

With the goal of addressing this difficulty in the trilinear context, the second author revisited the bilinear case---which is technically simpler---in his PhD thesis. In particular, he modified those parts of the bilinear proof where stronger assumptions on the matrices had been heavily exploited in the trilinear argument. Once these modifications are in place, the resulting technique can then be adapted back to the trilinear setting.

Observe that the truncated function $\psi_n \cdot (\cQ * \varphi_\nu)$ still satisfies properties \ref{i: converg Rnnu}--\ref{i: cdl Rnnu}; see the proof of \cite[Theorem~16]{CD-Mixed} or Proposition~\ref{p: prop Qnnu}. Consequently, the identity \eqref{eq: 2Cdl h-f} remains valid with $\psi_n \cdot (\cQ * \varphi_\nu)$ in place of $\cR_{n,\nu}$.

The key idea in the second author's thesis was therefore not to compensate for the lack of $(A,B)$-convexity of $\psi_n \cdot (\cQ * \varphi_\nu)$ in the annulus $\{3n<|\omega|<4n\}$, but rather to estimate a suitable portion of the generalized Hessian of $\psi_n \cdot (\cQ * \varphi_\nu)$ uniformly in $n$, and then invoke Lebesgue's dominated convergence theorem---rather than solely Fatou's lemma---to obtain the analogue of \eqref{eq: Fatou h-f}. For this reason, no additional correction terms were added to $\psi_n \cdot (\cQ * \varphi_\nu)$.

Unfortunately, the cut-off functions $\psi_n$ turned out not to be well suited for this purpose; see Remark~\ref{r: new proof be}. This led to the construction of new cut-off functions, specifically adapted to the geometry of $\cQ$, which ultimately give rise to the sequence $(\cS_{n,\nu})_{n,\nu}$.


\subsection{The admissible smooth truncations}
\label{s : rep psin}
In this section, we will present the new approach during the heat flow-method. As mentioned above, the idea is to appropriately and smoothly truncate the regularized Bellman function $\cQ * \varphi_\nu$ in order to prove the analogues of \eqref{eq: 2Cdl h-f} and \eqref{eq: Fatou h-f}. In \cite{CD-Mixed, CDKS-Tril} the authors used the sequence $\left(\psi_n\right)_{n\in\N}$, previously recalled. However, while the counterpart of \eqref{eq: 2Cdl h-f} still holds for $\psi_n\cdot (\cQ * \varphi_\nu)$, this truncation encounters difficulties in justifying the analogue of \eqref{eq: Fatou h-f}; see Section~\ref{s : rep psin2}. Therefore, in constructing the new sequence of truncations $\left(\Psi_n\right)_{n\in\N}$, we must incorporate all the properties of $\left(\psi_n\right)_{n\in\N}$ that allow us to prove \eqref{eq: 2Cdl h-f} and then impose new features enabling us to deduce \eqref{eq: Fatou h-f}.
 
Concerning the first issue, namely the proof of  \eqref{eq: 2Cdl h-f}, we require our new approximating sequence to satisfy  (i)-(v) of \cite[Theorem~16]{CD-Mixed}  and ensure that the analogue of \cite[Lemma~19]{CD-Mixed} holds true. For this purpose, we introduce a family of admissible sequences of truncations which  $\left(\psi_n\right)_{n\in \N}$ belongs to.

\begin{defi}
A sequence $\left(\Psi_n\right)_{n\in \N}$ of smooth and nonnegative functions on $\C^2 $ is said to be an {\it admissible sequence of truncations} if 
\begin{enumerate}[label=\alph*), ref=\alph*)]
\item\label{i: c1}$\Psi_n \leqsim 1$, uniformly in $n$;
\item\label{i: e1}$(\Psi_n)_\cW$ is even in each of the variables $\zeta_1,\zeta_2, \eta_1,\eta_2$,
\end{enumerate}
and there exist two sequences $\left(K_{1,n}\right)_{n\in \N}$ and $\left(K_{2,n}\right)_{n\in \N}$  of compact subsets of $\C^2$ such that
\begin{enumerate}[label=\alph*), ref=\alph*), resume]
\item\label{i: a}$K_{1,n} \subseteq {\rm int}(K_{2,n}) $ for all $n \in \N$;
\item\label{i: b}$K_{j,n}  \subseteq  K_{j,n+1} $ for all $n \in \N$ and $j \in \{1,2\}$;
\item\label{i: c}$0  \in  {\rm int}(K_{1,n})$ for all $n \in \N$;
\item\label{i: d}$\bigcup_{n \in \N}K_{j,n} = \C^2$ for all $j \in \{1,2\}$;
\item\label{i: a1}$\Psi_n =1$ on $K_{1,n}$;
\item\label{i: b1}${\rm supp}\Psi_n \subseteq K_{2,n}$;
\item\label{i: d1}$ |D\Psi_n(\omega)| \leqsim \min\{1,|\omega|^{-1}\}$ for all $\omega \in K_{2,n} \setminus K_{1,n}$, uniformly in $n$.
\end{enumerate}
\end{defi}
\medskip

By \ref{i: d}, without loss of generality, we may assume that
\begin{equation}
	\label{eq: omega >1}
	|\omega| \geq 1, \quad \forall \omega \in K_{2,n} \setminus K_{1,n}, \forall n \in \N.
\end{equation}
Clearly, \ref{i: b}, \ref{i: d} and \ref{i: a1} imply
\begin{equation}
	\label{eq: limit of Psinnu}
	\aligned
	\Psi_{n} & \rightarrow  1,\\
	D \Psi_{n} & \rightarrow  0, \\
	D^2 \Psi_{n} & \rightarrow  0,
	\endaligned
\end{equation}
pointwise on $\C^2$, as $n \rightarrow \infty$.

The sequence $\left(\psi_n\right)_{n\in\N}$ is an example of admissible sequence of truncations with $K_{1,n}=\overline{B(0,3n)}$ and $K_{2,n}=\overline{B(0,4n)}$.

For all $n \in \N$ and $\nu \in (0,1)$ define
\begin{equation}
	\label{eq: def Qnnu}
	\cS_{n,\nu} = \Psi_{n}\cdot  (\cQ * \varphi_\nu).
\end{equation}
The following two results are modeled after \cite[Theorem~16 and Lemma~19]{CD-Mixed}, respectively.

\begin{proposition}
\label{p: prop Qnnu}
Let $n \in \N$ and $\nu \in (0,1)$. Let $p \in (1,\infty)$ and $q$ be its conjugate exponent. The following statements hold:
\begin{enumerate}[label=\textnormal{(\roman*)}]
\item\label{i : bound hess Qnnu}$D^2 \cS_{n,\nu} \in L^\infty(\C^2; \R^{4\times4})$;
\item\label{i : conv hess and grad Qnnu}we have
\begin{equation}
	\nonumber
        \aligned
        D \cS_{n,\nu} &\rightarrow D(\cQ * \varphi_\nu),\\
        D^2\cS_{n,\nu} &\rightarrow D^2(\cQ * \varphi_\nu),
        \endaligned
\end{equation}
pointwise in $\C^2$ as $n \rightarrow \infty$;
\item\label{i : linear bound grad Qnnu}for any $n \in \N$ there exists $C(n,\nu)>0$ such that
\begin{equation}
	\nonumber
        \aligned
        |(D \cS_{n,\nu})(\omega)| \leq C(n,\nu)|\omega|, \quad \forall \omega \in \C^2;
        \endaligned
\end{equation}
\item\label{i :est grad Qnnu not n}there exists $C(\nu)>0$ that does not depend on $n$ such that
\begin{equation}
	\nonumber
        \aligned
        |(D \cS_{n,\nu})(\omega)| \leq C(\nu)(|\omega|^{p-1}+|\omega|^{q-1}), 
        \endaligned
\end{equation}
for all $\omega \in \C^2$, $n \in \N$ and $\nu \in (0,1)$;
\item\label{i : even Qnnu}for any $n \in \N$ and $\nu >0$ we have
\begin{equation}
	\nonumber
        (\partial_\zeta \cS_{n,\nu})(0,\eta)=0, \quad\; (\partial_\eta \cS_{n,\nu})(\zeta,0)=0,
\end{equation}
for all $\zeta,\eta \in \C$.
\end{enumerate}
\end{proposition}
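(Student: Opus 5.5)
The whole argument rests on the Leibniz rule applied to $\cS_{n,\nu}=\Psi_n\cdot(\cQ*\varphi_\nu)$:
\begin{equation*}
D\cS_{n,\nu}=\Psi_n\, D(\cQ*\varphi_\nu)+(\cQ*\varphi_\nu)\, D\Psi_n ,
\end{equation*}
\begin{equation*}
D^2\cS_{n,\nu}=\Psi_n D^2(\cQ*\varphi_\nu)+D\Psi_n\otimes D(\cQ*\varphi_\nu)+D(\cQ*\varphi_\nu)\otimes D\Psi_n+(\cQ*\varphi_\nu)D^2\Psi_n .
\end{equation*}
Each item then follows by combining the bounds of Lemma~\ref{l: N second order} with properties \ref{i: c1}--\ref{i: d1} of admissible truncations.

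\textbf{Items (i) and (ii).} For (i), $\Psi_n$ is smooth and supported in the compact set $K_{2,n}$. So $D^2\cS_{n,\nu}$ is a continuous compactly supported function (note $\cQ*\varphi_\nu\in C^\infty$), hence bounded. For (ii), the pointwise limits \eqref{eq: limit of Psinnu} together with the Leibniz formulas give $D\cS_{n,\nu}\to D(\cQ*\varphi_\nu)$ and $D^2\cS_{n,\nu}\to D^2(\cQ*\varphi_\nu)$. In fact, for fixed $\omega$, items \ref{i: b} and \ref{i: d} give $\omega\in\operatorname{int}K_{1,n}$ for $n$ large. Strictly, \ref{i: d} only gives $\omega\in K_{1,n}$ eventually, but \ref{i: a} and the nestedness give $\omega\in\operatorname{int}K_{2,n}$; I would adjust the argument to use that $\Psi_n=1$ near $\omega$ eventually, or simply rely on \eqref{eq: limit of Psinnu} as already stated. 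Once $\Psi_n=1$ near $\omega$, the derivatives coincide eventually.

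\textbf{Item (iii).} Since $D\cS_{n,\nu}$ is smooth with compact support, it is bounded by some $C$. I then need linear growth near the origin. By (v) and the evenness in each real variable (item \ref{i: e1}, and $\cQ$ together with the radial mollifier are even in each of $\zeta_1,\zeta_2,\eta_1,\eta_2$), each partial derivative $\partial_{x_j}(\cS_{n,\nu})_\cW$ vanishes on the hyperplane $x_j=0$. Combined with the bounded Hessian from (i), the mean value theorem gives $|D\cS_{n,\nu}(\omega)|\le \|D^2\cS_{n,\nu}\|_\infty|\omega|$. Alternatively, I would argue directly that $D\cS_{n,\nu}(0)=0$ and then use the bounded Hessian.

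\textbf{Item (iv).} This is the one requiring care, namely uniformity in $n$. The first term is handled by Lemma~\ref{l: N second order}\ref{eq: est Q*phi i)} and \ref{i: c1}. The second term vanishes off $K_{2,n}\setminus K_{1,n}$. On that set, \eqref{eq: omega >1} gives $|\omega|\ge1$, \ref{i: d1} gives $|D\Psi_n|\lesssim|\omega|^{-1}$, and Lemma~\ref{l: N second order}\ref{eq: est Q*phi} gives $|\cQ*\varphi_\nu|\lesssim |\omega|^p+|\omega|^q+1\lesssim |\omega|^{p}+|\omega|^q$. The product is therefore $\lesssim |\omega|^{p-1}+|\omega|^{q-1}$. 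Here the main obstacle is only to check that the constant in \ref{eq: est Q*phi} is independent of $\nu\in(0,1)$; Lemma~\ref{l: N second order} asserts this, so $C(\nu)$ can even be taken independent of $\nu$.

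\textbf{Item (v).} The function $(\cS_{n,\nu})_\cW$ is even in $\zeta_1$ and in $\zeta_2$ separately. Indeed, $(\Psi_n)_\cW$ is even by \ref{i: e1}, $\cQ_\cW$ depends only on $|\zeta|,|\eta|$, and $\varphi$ is radial, so the convolution preserves these symmetries. Hence $\partial_{\zeta_1}$ vanishes where $\zeta_1=0$, and likewise for $\partial_{\zeta_2}$. At $\zeta=0$ both vanish, so $\partial_\zeta\cS_{n,\nu}(0,\eta)=0$. The same argument applies to $\eta$.
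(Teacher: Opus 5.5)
Your proposal is correct and follows the paper's proof essentially item by item. You use the Leibniz rule and compact support for (i), \eqref{eq: limit of Psinnu} for (ii), $D\cS_{n,\nu}(0)=0$ with the mean value theorem and the bounded Hessian for (iii), the splitting into $\Psi_n\, D(\cQ*\varphi_\nu)$ and $(\cQ*\varphi_\nu)\, D\Psi_n$ with \eqref{eq: omega >1} and the gradient bound on $K_{2,n}\setminus K_{1,n}$ for (iv), and evenness in each real variable for (v). The only minor variation is in (iii): you get the vanishing of the gradient at the origin from evenness, which even yields the componentwise bounds $|\partial_\zeta \cS_{n,\nu}|\lesssim|\zeta|$ and $|\partial_\eta \cS_{n,\nu}|\lesssim|\eta|$, whereas the paper uses $\Psi_n\equiv 1$ near $0$ together with $D(\cQ*\varphi_\nu)(0)=0$; both routes are valid.
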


\begin{proof}
The proof is the same as \cite[Theorem~16 (i)-(v)]{CD-Mixed}. For the sake of clarity, we will write all the details in order to understand why we need the required proprieties of $\Psi_n$.

Item~\ref{i : bound hess Qnnu} follows from the fact that $\cS_{n,\nu} \in C_c^2(\C^2)$.

Item~\ref{i : conv hess and grad Qnnu}  is a consequence of \eqref{eq: limit of Psinnu}.

From  \ref{i: c} and  \ref{i: a1} we deduce that $\Psi_n \equiv 1$ in a neighborhood of $0$. Therefore, by \cite[(31)]{CD-Mixed} we conclude that $(D \cS_{n,\nu})(0)=0$. Hence, by the mean value theorem and the fact that $\cS_{n,\nu} \in C_c^\infty(\C^2)$, we get item~\ref{i : linear bound grad Qnnu}.

To prove item~\ref{i :est grad Qnnu not n}, we observe that   \ref{i: a1} and  \ref{i: b1} imply that $D\Psi_n \equiv 0$ on $ \left(K_{2,n} \setminus K_{1,n}\right)^{\rm c}$, while  \ref{i: d1}, Lemma~\ref{l: N second order}\ref{eq: est Q*phi} and \eqref{eq: omega >1} give 
\begin{equation}
	\nonumber
	\aligned
	|(D \Psi_n)(\omega)| \cdot |(\cQ * \varphi_\nu)(\omega)| &\leqsim |\omega|^{p-1} + |\omega|^{q-1}, \quad \forall \omega \in K_{2,n} \setminus K_{1,n},
	\endaligned
\end{equation}
and \ref{i: c1} and Lemma~\ref{l: N second order}\ref{eq: est Q*phi i)} yield
\begin{equation}
	\nonumber
	|\Psi_n(\omega)| \cdot |D(\cQ * \varphi_\nu)(\omega)| \leqsim |\omega|^{p-1} + |\omega|^{q-1}, \quad \forall \omega \in \C^2.
\end{equation}

Since $\cQ$ and $\varphi_\nu$ are even functions in each of the variables $\zeta_1, \zeta_2, \eta_1, \eta_2$, the function $\cQ * \varphi_\nu$ satisfies the same property. Hence, we get item~\ref{i : even Qnnu} by \ref{i: e1}.
\end{proof}

\begin{corollary}
\label{c : in form dom}
Suppose that $(\oV,\oW)$ satisfies \AssBE. Let $u \in \oV$ and $v \in \oW$. Then
\begin{equation}
	\nonumber
	(\partial_\zeta \cS_{n,\nu})(u,v) \in \oV \quad {\rm and } \quad (\partial_\eta\cS_{n,\nu})(u,v) \in \oW,
\end{equation}
for all $n \in \N_+$ and $\nu >0$.
\end{corollary}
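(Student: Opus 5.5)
The plan is to reduce the claim to the invariance/chain-rule results already used for the sequence $\cR_{n,\nu}$ in \cite{CD-Mixed} and \cite[Section~2]{P-Potentials}. By Proposition~\ref{p: prop Qnnu}, each $\cS_{n,\nu}$ is $C^2$ with bounded Hessian; in particular $D\cS_{n,\nu}$ is globally Lipschitz on $\C^2\cong\R^4$. Moreover, by Proposition~\ref{p: prop Qnnu}\ref{i : even Qnnu}, $(\partial_\zeta\cS_{n,\nu})(0,\eta)=0$ and $(\partial_\eta\cS_{n,\nu})(\zeta,0)=0$. These are exactly the structural properties of $\cR_{n,\nu}$ exploited in \cite[Lemma~19]{CD-Mixed} and its extension \cite[Section~2]{P-Potentials}. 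So the main task is to check that those proofs use nothing beyond these properties.

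\textbf{Step 1 (Sobolev regularity).} For $u\in\oV\subseteq W^{1,2}(\Omega)$ and $v\in\oW\subseteq W^{1,2}(\Omega)$, the map $(u,v)\colon\Omega\to\R^4$ lies in $W^{1,2}$. Fix $F=\partial_\zeta\cS_{n,\nu}$; its real and imaginary parts, viewed on $\R^4$, are Lipschitz and $C^1$. The chain rule then gives $F(u,v)\in W^{1,2}_{\rm loc}(\Omega)$, with $\nabla F(u,v)$ expressed through $DF(u,v)$ and $(\nabla u,\nabla v)$, which is bounded by $C(n,\nu)(|\nabla u|+|\nabla v|)$. Since $F(0,\eta)=0$, the Lipschitz bound yields $|F(u,v)|\le C(n,\nu)|u|$. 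Hence $F(u,v)\in W^{1,2}(\Omega)$, and the symmetric argument applies to $\partial_\eta\cS_{n,\nu}$ with $|\partial_\eta\cS_{n,\nu}(u,v)|\leqsim |v|$.

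\textbf{Step 2 (boundary condition).} This is the real content: membership in $\oV$ rather than merely in $W^{1,2}(\Omega)$. I would proceed case by case according to \AssBE.

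If $\oV=W^{1,2}(\Omega)$, Step 1 already suffices.

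If $\oV=W^{1,2}_0(\Omega)$, I use the pointwise bound $|F(u,v)|\le C|u|$ together with the lattice-type characterization of $W^{1,2}_0$: a $W^{1,2}$ function dominated in modulus by $C|u|$ with $u\in W^{1,2}_0$ belongs to $W^{1,2}_0$. Alternatively, I approximate $u$ by $u_k\in C_c^\infty(\Omega)$; then $F(u_k,v)$ has compact support in $\Omega$ and converges to $F(u,v)$ in $W^{1,2}$, by continuity of the superposition with a $C^1$ Lipschitz map.

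For the mixed spaces $\widetilde{W_D}^{1,2}$ and $W_D^{1,2}$, I approximate $u$ by functions whose support is at positive distance from $D$ (respectively restrictions of $C_c^\infty(\R^d\setminus D)$). The vanishing of $F$ on $\{\zeta=0\}$ transfers this support property to $F(u_k,v)$, or to $F(u_k,v_k)$ in the second case.

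\textbf{Main obstacle.} The role of \AssBE\ is precisely here. When $\oV=W^{1,2}_D$ is paired with $\oW$, we need $v$ to be approximable compatibly, so that $F(u_k,v_k)$ lies in the closure of restrictions of smooth functions on $\R^d\setminus D$; this requires extension properties that only the admissible pairs in the table provide. I expect this step to be the hardest, and I would invoke verbatim the corresponding argument in \cite[Section~2]{P-Potentials} (and \cite[Appendix~A]{CD-Potentials}). That argument is stated for functions with bounded second derivatives, vanishing partial derivatives on the coordinate axes, and evenness, all of which $\cS_{n,\nu}$ has by Proposition~\ref{p: prop Qnnu}.
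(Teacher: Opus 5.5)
Your proposal is correct and follows essentially the same route as the paper. The paper also derives the claim from the bounded Hessian of $\cS_{n,\nu}$ and from the vanishing of $\partial_\zeta\cS_{n,\nu}$ on $\{\zeta=0\}$ and of $\partial_\eta\cS_{n,\nu}$ on $\{\eta=0\}$ (Proposition~\ref{p: prop Qnnu}), and then defers to \cite[Lemma~19]{CD-Mixed} and \cite[Proposition~2.3]{P-Potentials} under \AssBE.

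One small precision: in the $W^{1,2}_D$ case, what \AssBE\ provides is not an extension property. It guarantees that the other component is also approximable by restrictions of $C_c^\infty(\R^d)$ functions. This holds for $W^{1,2}_0$ and $W^{1,2}_D$, but not in general for $W^{1,2}$ or $\widetilde{W_D}^{1,2}$. It is exactly what makes $F(u_k,v_k)$ itself such a restriction.
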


\begin{proof}
It is a consequence of Proposition~\ref{p: prop Qnnu}~\ref{i : bound hess Qnnu},~\ref{i : linear bound grad Qnnu} and \ref{i : even Qnnu}. For a detailed argument, we refer the reader to the proof of \cite[Proposition~2.3]{P-Potentials} or to \cite[Lemma~19]{CD-Mixed}, which remains applicable in our setting under \AssBE.
\end{proof}

As we will see in the proof of the following result, Proposition~\ref{p: prop Qnnu} and Corollary~\ref{c : in form dom} allow us to deduce \eqref{eq: 2Cdl h-f} for $\cS_{n,\nu}$. In addition, next proposition provides us the new properties $\left(\Psi_n\right)_{n\in\N}$ must satisfy to prove \eqref{eq: Fatou h-f} for  $\cS_{n,\nu}$.

Recall the definitions \eqref{eq: comp gen Hess} and \eqref{d : Hess sup gen}. Let $\mathscr{A}=(A,b,c,V), \mathscr{B}=(B,\beta,\gamma,W)\in\mathcal{B}(\Omega)$. 
Denote
\begin{equation}
	\label{eq: def ET}
	(\text{E.T.})_{n,\nu}[\omega;\Xi]=\textbf{H}^{(\mathscr{A},\mathscr{B})}_{S_{n,\nu}}[\omega;\Xi]-\Psi_n(\omega)\left(H^{(A,B)}_{\cQ\ast\varphi_\nu}[\omega;\Xi]+G^{(V,W)}_{\cQ\ast\varphi_\nu}(\omega)\right),
\end{equation}
for $\omega=(\zeta,\eta)\in\C^2$.

\begin{proposition}
\label{p : L11}
Let $p\geq2$, $q=\frac{p}{p-1}$, $\mathscr{A},\mathscr{B}\in\mathcal{B}_p(\Omega)$ and $\mathscr{V}$ and $\mathscr{W}$ satisfy \AssBE. Suppose that there exists an admissible sequence of truncations $\left(\Psi_n\right)_{n\in\N}$ and $F\colon\C^2\times\C^{2d}\to[0,+\infty)$ such that 
\begin{enumerate}[label=\textnormal{(\roman*)}]
\item\label{i: ass 1}for all $\nu\in(0,1)$ there exists a positive constant $C_1=C_1(\nu)$, not depending on $n$, such that
\begin{equation}
	\label{Stima_L^1}
	\quad\;\left|({\rm E.T.})_{n,\nu}[\omega,\Xi]\right|\leq C_1F(\omega;\Xi),
\end{equation}
for all $n\in\N$, $\omega\in\C^2$ and $\Xi\in\C^{2d}$;
\item\label{i: ass 2}there exists a positive constant $C_2$,  not depending on $\nu$, such that
\begin{equation}
	\label{stima_L1_2}
	\left|H^{(b,c,\beta,\gamma)}_{\cQ\ast\varphi_\nu}[\omega;\Xi]\right|\leq C_2F(\omega;\Xi),
\end{equation}
 for all $\nu\in(0,1)$, $\omega\in\C^2$ and $\Xi\in\C^d$;
\item for all $u\in \Dom(\gota_{\oA,\oV})$ and $v \in \Dom(\gota_{\oB,\oW})$ such that $u,v,\mathscr{L}^\mathscr{A}u,\mathscr{L}^\mathscr{B}v\in\left(L^p\cap L^q\right)(\Omega)$ we have
\begin{equation}
	\label{prop_L1_F1F2}
	F(u,v;\nabla u,\nabla v) \in L^1(\Omega). 
\end{equation}
\end{enumerate}
Then, the statement of Theorem \ref{t: N bil} holds.
\end{proposition}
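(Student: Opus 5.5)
The plan is to run the heat-flow argument with the truncated, regularized Bellman function $\cS_{n,\nu}=\Psi_n\cdot(\cQ*\varphi_\nu)$ in place of $\cQ$. We pass first to the limit $n\to\infty$ and then to $\nu\to0$, using (i)--(iii) to justify both limits by dominated convergence.

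By symmetry (the roles of $\oA,\oB$ and $p,q$ can be exchanged via duality and the invariance of the classes under adjoints), we may take $p\geq2$. By density and the analyticity of the semigroups, it suffices to treat $f,g\in(L^p\cap L^q)(\Omega)$ such that $u_t=T_t^{\oA,\oV}f$ and $v_t=T_t^{\oB,\oW}g$, together with $\oL u_t$ and $\oL v_t$, lie in $L^p\cap L^q$ for $t>0$. This is the regime in which (iii) applies.

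\emph{Step 1: differentiate the approximate flow.} Set $\cE_{n,\nu}(t)=\int_\Omega\cS_{n,\nu}(u_t,v_t)$. By Proposition~\ref{p: prop Qnnu}\ref{i : bound hess Qnnu},\ref{i : linear bound grad Qnnu}, the composition is differentiable in $t$ with
\[
-\cE_{n,\nu}'(t)=2\Re\bigl(\gota_{\oA,\oV}(u_t,(\partial_\zeta\cS_{n,\nu})(u_t,v_t))+\gota_{\oB,\oW}(v_t,(\partial_\eta\cS_{n,\nu})(u_t,v_t))\bigr).
\]
This identity uses \eqref{eq: ibp}, which is legitimate because Corollary~\ref{c : in form dom} places the test functions in the form domains. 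The finiteness of $\int V|u|^2$-type terms follows from the linear growth in Proposition~\ref{p: prop Qnnu}\ref{i : linear bound grad Qnnu}. A chain-rule computation identifies the right-hand side with $\int_\Omega\mathbf H^{(\oA,\oB)}_{\cS_{n,\nu}}[(u_t,v_t);(\nabla u_t,\nabla v_t)]$. Integrating over $[s,T]$ then gives
\[
\int_s^T\!\!\int_\Omega\mathbf H_{\cS_{n,\nu}}\leq\cE_{n,\nu}(s).
\]

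\emph{Step 2: limit in $n$.} Split the integrand as
\[
\mathbf H_{\cS_{n,\nu}}=\Psi_n\bigl(H^{(A,B)}_{\cQ*\varphi_\nu}+G^{(V,W)}_{\cQ*\varphi_\nu}\bigr)+(\mathrm{E.T.})_{n,\nu}.
\]
The first part equals $\Psi_n\bigl(\mathbf H_{\cQ*\varphi_\nu}-H^{(b,c,\beta,\gamma)}_{\cQ*\varphi_\nu}\bigr)$. By Corollary~\ref{t : gen conv reg bell func old}, $\mathbf H_{\cQ*\varphi_\nu}$ is bounded below by a nonnegative quantity plus the remainders $\cN_\nu$. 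Assumption~(ii) and \eqref{prop_L1_F1F2} make $H^{(b,c,\beta,\gamma)}_{\cQ*\varphi_\nu}(u,v;\nabla u,\nabla v)$ integrable. Assumption~(i) and \eqref{prop_L1_F1F2} give an $n$-uniform integrable majorant for $(\mathrm{E.T.})_{n,\nu}$, which tends to $0$ pointwise by \eqref{eq: limit of Psinnu}.

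Dominated convergence on the E.T. and $b,c$ parts, combined with Fatou on the nonnegative part (after moving the $\cN_\nu$ terms aside, which are locally controlled), yields
\[
\int_s^T\!\!\int_\Omega\mathbf H_{\cQ*\varphi_\nu}\le\liminf_n\cE_{n,\nu}(s)=\int_\Omega(\cQ*\varphi_\nu)(u_s,v_s).
\]
The last equality holds by dominated convergence together with Lemma~\ref{l: N second order}\ref{eq: est Q*phi}.

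\emph{Step 3: limit in $\nu$.} Again write $\mathbf H_{\cQ*\varphi_\nu}$ as $H^{(A,B)}+G^{(V,W)}$ plus the $b,c$ part. The $b,c$ part is dominated uniformly in $\nu$ by $C_2F$ through (ii), so it converges by dominated convergence. The remaining part is treated by Fatou with the lower bound of Corollary~\ref{t : gen conv reg bell func old}; here the remainders $\cN_\nu\to0$ by \eqref{eq: lemma20 pert}, but they need a uniform majorant. Letting $\nu\to0$, then $s\to0$ and $T\to\infty$, and using Theorem~\ref{t:ConvBelman}\ref{i: conv est ur}, we obtain
\[
\int_0^\infty\!\!\int_\Omega\tau(|\nabla u|^2+V|u|^2)+\tau^{-1}(|\nabla v|^2+W|v|^2)\lesssim\int_\Omega\cQ(f,g)\lesssim\|f\|_p^p+\|g\|_q^q.
\]
The AM--GM inequality applied pointwise to the two $\tau$-weighted terms gives \eqref{eq: N bil} after the standard rescaling $f\mapsto sf$, $g\mapsto s^{-1}g$.

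The main obstacle is Step 3: controlling the remainders $\cN^{(c,\gamma)}_\nu,\cN^{(V,W)}_\nu$ and the $(\mathrm{E.T.})$ terms by one integrable $F$ independent of $\nu$. I would first try to bound $\cN_\nu$ by a multiple of $F$ using the size estimates \eqref{eq: est Q ii sep} and $|b|,|c|\lesssim\sqrt V$ from \eqref{eq: bc cont by V}. If that fails, I would absorb $\cN_\nu$ into the strict positivity term for small $\nu$.
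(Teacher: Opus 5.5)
Your architecture (truncate with $\Psi_n$, send $n\to\infty$ and then $\nu\to0$, use (i)--(iii) for dominated convergence and Fatou on a sign-definite part) is the paper's. However, the step you leave open is exactly where the argument hinges, and your proposed way through it does not work. You take the sign for Fatou from the lower bound in Corollary~\ref{t : gen conv reg bell func old}, which brings in $\cN^{(c,\gamma)}_\nu+\cN^{(V,W)}_\nu$. These remainders cannot be dominated separately, because they need not be integrable at all. Indeed, $\cN^{(V,W)}_\nu(0,0)=-(G^{(V,W)}_\cQ*\varphi_\nu)(0,0)=-(a_\nu V+b_\nu W)$ with $a_\nu,b_\nu>0$. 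Take $\Omega=\R^d$, $b=c=\beta=\gamma=0$ and $V=W\equiv1$. Then $\cN_\nu(u,v)\le -c_\nu<0$ on the infinite-measure set where $|u|+|v|$ is small. Since $F(u,v;\nabla u,\nabla v)\in L^1$ by (iii), no bound $|\cN_\nu|\leqsim F$ is possible, and ``locally controlled'' is not enough for $\int_\Omega$. The missing observation is that no remainders are needed at this stage. First, $H^{(A,B)}_{\cQ*\varphi_\nu}=H^{(A,B)}_\cQ*\varphi_\nu\ge0$, because the purely matricial part commutes with convolution \cite[Corollary~5.5]{CD-DivForm}. Second, $G^{(V,W)}_{\cQ*\varphi_\nu}=2V\Re(\zeta\,\partial_\zeta(\cQ*\varphi_\nu))+2W\Re(\eta\,\partial_\eta(\cQ*\varphi_\nu))\ge0$ for $V,W\ge0$ \cite[Proposition~7.6]{P-Potentials}. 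So the only non-sign-definite piece of $\mathbf H^{(\oA,\oB)}_{\cQ*\varphi_\nu}$ is $H^{(b,c,\beta,\gamma)}_{\cQ*\varphi_\nu}$, which is precisely what (ii) controls. Fatou then applies to $\Psi_n(H^{(A,B)}_{\cQ*\varphi_\nu}+G^{(V,W)}_{\cQ*\varphi_\nu})$ as $n\to\infty$, with (i) handling (E.T.) at fixed $\nu$; no $\nu$-uniform bound on (E.T.) is ever needed. It also applies to $\mathbf H^{(\oA,\oB)}_{\cQ*\varphi_\nu}+C_2F\ge0$ as $\nu\to0$. The remainders enter only afterwards, pointwise, in $\liminf_{\nu\to0}\mathbf H^{(\oA,\oB)}_{\cQ*\varphi_\nu}[(u,v);(\nabla u,\nabla v)]\geqsim\sqrt{|\nabla u|^2+V|u|^2}\sqrt{|\nabla v|^2+W|v|^2}$ \cite[Corollary~21]{Poggio}, where the pointwise convergence \eqref{eq: lemma20 pert} suffices.

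A second problem is in Step~1. The approximate flow $\cE_{n,\nu}(t)=\int_\Omega\cS_{n,\nu}(u_t,v_t)$ is identically $+\infty$ whenever $|\Omega|=\infty$. Since $\Psi_n=1$ near the origin, $\cS_{n,\nu}(0,0)=(\cQ*\varphi_\nu)(0,0)>0$. Hence $\cS_{n,\nu}(u_t,v_t)$ is bounded below by a positive constant on the infinite-measure set where $|u_t|+|v_t|$ is small; this is the ``$+1$'' in Lemma~\ref{l: N second order}\ref{eq: est Q*phi}. As a result, the differentiation in $t$, the dominated convergence producing $\int_\Omega(\cQ*\varphi_\nu)(u_s,v_s)$, and the final bound by $\int_\Omega\cQ(f,g)$ all break down on, e.g., $\Omega=\R^d$. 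The paper never integrates $\cS_{n,\nu}$ itself. It uses the genuine flow $\int_\Omega\cQ(u_t,v_t)$, which is finite since $\cQ\leqsim|\zeta|^p+|\eta|^q$, only to reduce as in \cite[Section~6.1]{CD-Mixed} to the fixed-time inequality \eqref{eq: hfm stima finale}. The right-hand side $2\Re\int_\Omega(\partial_\zeta\cQ(u,v)\oL^\oA u+\partial_\eta\cQ(u,v)\oL^\oB v)$ is then written as $\lim_{\nu\to0}\lim_{n\to\infty}$ of the same expression with $\cS_{n,\nu}$ in place of $\cQ$. Only $D\cS_{n,\nu}$ occurs there, and Proposition~\ref{p: prop Qnnu}\ref{i :est grad Qnnu not n} dominates it uniformly in $n$ against $\oL^\oA u,\oL^\oB v\in L^p\cap L^q$. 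Integration by parts, justified by Corollary~\ref{c : in form dom}, then turns it into $\int_\Omega\mathbf H^{(\oA,\oB)}_{\cS_{n,\nu}}[(u,v);(\nabla u,\nabla v)]$.
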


\begin{proof}
Following the approach in \cite[Section~6.1]{CD-Mixed}, the proof of the bilinear embedding reduces to establishing the inequality
\begin{equation}
	\label{eq: hfm stima finale}
        \int_\Omega\sqrt{|\nabla u|^2+V|u|^2}\sqrt{\nabla v|^2+W|v|^2}\leqsim2\,\Re\int_\Omega\left(\partial_\zeta \cQ(u,v)\mathscr{L}^\mathscr{A}u+\partial_\eta \cQ(u,v)\mathscr{L}^\mathscr{B}v\right),
\end{equation}
for all $u\in \Dom(\gota_{\oA,\oV})$, $v \in \Dom(\gota_{\oB,\oW})$ such that $u,v,\mathscr{L}^\mathscr{A}u,\mathscr{L}^\mathscr{B}v\in\left(L^p\cap L^q\right)(\Omega)$. See also \cite[Proposition~26]{Poggio}.

As in \cite[Section~6.1]{CD-Mixed}, by using Proposition~\ref{p: prop Qnnu}~\ref{i : conv hess and grad Qnnu}~\ref{i :est grad Qnnu not n}, the fact that $\cQ \in C^1(\C^2)$ and Lebesgue's dominated convergence theorem twice, we deduce that
\begin{equation}
	\label{Lebesgue_1}
        \aligned
	2\Re\int_\Omega&\left(\partial_\zeta \cQ(u,v)\mathscr{L}^\mathscr{A}u+\partial_\eta \cQ(u,v)\mathscr{L}^\mathscr{B}v\right)\\
	&=\lim_{\nu\to0}\lim_{n\to\infty}2\Re\int_\Omega\left(\partial_\zeta \cS_{n,\nu}(u,v)\mathscr{L}^\mathscr{A}u+\partial_\eta \cS_{n,\nu}(u,v)\mathscr{L}^\mathscr{B}v\right).
        \endaligned
\end{equation}
Combining Proposition~\ref{p: prop Qnnu}~\ref{i : bound hess Qnnu} and \ref{i : even Qnnu} with the mean value theorem, we get
\begin{equation}
	\nonumber
	\aligned
	|(\partial_{\zeta}\cS_{n,\nu})(\zeta,\eta)| &\leq C(n,\nu) |\zeta|, \\
	|(\partial_{\eta}\cS_{n,\nu})(\zeta,\eta)| &\leq C(n,\nu) |\eta|,
	\endaligned
\end{equation}
for any $\zeta,\eta \in \C$. These estimates, together with Corollary~\ref{c : in form dom}, imply that 
\begin{equation}
	\nonumber
	\aligned
	(\partial_{\zeta}\cS_{n,\nu})(u,v) &\in \Dom(\gota_{\oA,\oV}), \\
	(\partial_{\eta}\cS_{n,\nu})(u,v) &\in  \Dom(\gota_{\oB,\oW}),
	\endaligned
\end{equation}
for all $u \in \Dom(\gota_{\oA,\oV})$, $v \in \Dom(\gota_{\oB,\oW})$. Hence, we can integrate by parts on the right-hand side of \eqref{Lebesgue_1} and, by means of the chain rule for the composition of smooth functions with vector-valued Sobolev functions, deduce that
\begin{equation}
	\nonumber
        2\Re\int_\Omega\left(\partial_\zeta \cS_{n,\nu}(u,v)\mathscr{L}^\mathscr{A}u+\partial_\eta \cS_{n,\nu}(u,v)\mathscr{L}^\mathscr{B}v\right)=\int_\Omega\mathbf{H}^{(\mathscr{A},\mathscr{B})}_{\cS_{n,\nu}}[(u,v);(\nabla u,\nabla v)].
\end{equation}
Therefore, from \eqref{eq: def ET} we obtain
\begin{equation} 
	\label{eq: hfm ibp}
        \aligned
        2\Re\int_\Omega\biggl(&\partial_\zeta \cS_{n,\nu}(u,v)\mathscr{L}^\mathscr{A}u+\partial_\eta \cS_{n,\nu}(u,v)\mathscr{L}^\mathscr{B}v\biggr)\\
        =&\,\int_\Omega\Psi_n(u,v)\left(H^{(A,B)}_{\cQ\ast\varphi_\nu}[(u,v);(\nabla u,\nabla v)]+G^{(V,W)}_{\cQ\ast\varphi_\nu}(u,v)\right) \\
	&\qquad\qquad+\int_\Omega(\text{E.T.})_{n,\nu}[(u,v);(\nabla u,\nabla v)].
        \endaligned
\end{equation}
Specifically, the potential term $G^{(V,W)}_{\cQ\ast\varphi_\nu}$ is given by 
\begin{equation}
	\nonumber
	G^{(V,W)}_{\cQ\ast\varphi_\nu}(u,v) = 2 V\, \Re\left(u \cdot\partial_\zeta(\cQ\ast\varphi_\nu)(u,v)\right) + 2W \,\Re \left(v \cdot 	\partial_\eta(\cQ\ast\varphi_\nu)(u,v)\right).
\end{equation}
Hence, the nonnegativity of  $V$ and $W$ and \cite[Proposition~7.6]{P-Potentials} imply that
\begin{equation}
	\label{eq: term pot pos}
	G^{(V,W)}_{\cQ\ast\varphi_\nu}(u,v) \geq 0,
\end{equation}
for all $\nu \in (0,1)$. These considerations, together with the $(A,B)$-convexity of $\cQ\ast\varphi_\nu$ \cite[Corollary~5.5]{CD-DivForm} and the nonnegativity of $\Psi_n$, allow us to use Fatou's Lemma and deduce from \eqref{eq: limit of Psinnu} that
\begin{equation}
	\label{stima_I}
	\aligned
	\lim_{n\to\infty}\int_\Omega\Psi_n&(u,v)\left(H^{(A,B)}_{\cQ\ast\varphi_\nu}[(u,v);(\nabla u,\nabla v)]+G^{(V,W)}_{\cQ\ast\varphi_\nu}(u,v)\right)\\
	&\geq\int_\Omega \left(H^{(A,B)}_{\cQ\ast\varphi_\nu}[(u,v);(\nabla u,\nabla v)]+G^{(V,W)}_{\cQ\ast\varphi_\nu}(u,v)\right),
        \endaligned
\end{equation}
for all $\nu\in(0,1)$. On the other hand, it follows from \eqref{Stima_L^1},  \eqref{eq: limit of Psinnu} and Lebesgue's dominated convergence theorem  that
\begin{equation}
	\label{eq: hfm conv n 2}
        \lim_{n\to\infty}\int_\Omega(\text{E.T.})_{n,\nu}[(u,v);(\nabla u,\nabla v)]=\int_\Omega H^{(b,c,\beta,\gamma)}_{\cQ\ast\varphi_\nu}[(u,v);(\nabla u,\nabla v)].
\end{equation}
Therefore, by combining \eqref{eq: hfm ibp}, \eqref{stima_I} and \eqref{eq: hfm conv n 2}, we obtain
\begin{equation}
	\label{stima_intermedia_nu}
        \aligned
	\lim_{n\to\infty}&2\,\Re\int_\Omega\left(\partial_\zeta \cS_{n,\nu}(u,v)\mathscr{L}^\mathscr{A}u+\partial_\eta \cS_{n,\nu}(u,v)\mathscr{L}^\mathscr{B}v\right)\\
	&\geq\int_\Omega \left(H^{(A,B)}_{\cQ\ast\varphi_\nu}[(u,v);(\nabla u,\nabla v)]+G^{(V,W)}_{\cQ\ast\varphi_\nu}(u,v)\right)+\int_\Omega H^{(b,c,\beta,\gamma)}_{\cQ\ast\varphi_\nu}[(u,v);(\nabla u,\nabla v)].
	\endaligned
\end{equation}
The first integrand on the right-hand side of \eqref{stima_intermedia_nu} is nonnegative, thanks to the $(A,B)$-convexity of $\cQ*\varphi_\nu$ and \eqref{eq: term pot pos}, whereas the second integrand is uniformly dominated by an integrable function due to  assumption \eqref{stima_L1_2}. Hence, Fatou's lemma, on the one hand, and Lebesgue's dominated convergence theorem, on the other hand, yield
\begin{equation}
	\label{eq: hfm penultima stima}
        \aligned
        \lim_{\nu\to0}\lim_{n\to\infty}2\,\Re&\int_\Omega\left(\partial_\zeta \cS_{n,\nu}(u,v)\mathscr{L}^\mathscr{A}u+\partial_\eta \cS_{n,\nu}(u,v)\mathscr{L}^\mathscr{B}v\right)\\
	&\geq\int_\Omega \liminf_{\nu\to0}\textbf{H}^{(\mathscr{A},\mathscr{B})}_{\cQ\ast\varphi_\nu}[(u,v);(\nabla u,\nabla v)]\\
	&\geqsim\int_\Omega\sqrt{|\nabla u|^2+V|u|^2}\sqrt{|\nabla v|^2+W|v|^2},
        \endaligned
\end{equation}
where the last inequality follows from \cite[Corollary~21]{Poggio}. Finally,  \eqref{Lebesgue_1} and \eqref{eq: hfm penultima stima} give \eqref{eq: hfm stima finale}.
\end{proof}


\subsection{$L^p$-estimates for $\oL^{\oA,\oV}$}
The application of Proposition~\ref{p : L11} requires two key steps: the construction of an admissible sequence of truncations and the identification of a suitable dominating function $F$. To this latter purpose, we shall rely on the following proposition. This regularity result was already proved by Egert in \cite[Proposition~11]{Egert20} for the case $p\geq2$ and $b=c=V=0$ and by the second author in \cite[Proposition~4.6]{P-Potentials} for  $p>1$, $(A,V)\in\mathcal{AP}_p(\Omega,\oV)$ and $b=c=0$. We present a more general result for operators involving first-order perturbations and potentials which may admit a non-trivial negative part.

\begin{proposition}
\label{t : p grad est negative V}
Suppose that $\oV$ satisfies \eqref{eq: inv P} and \eqref{eq: inv N}. Let $p\in(1,\infty)$ and $\oA\in\mathcal{SP}_p(\Omega,\oV)$. If $u\in D(\oL^\oA)\cap L^p(\Omega)$ is such that $\oL^\oA u\in L^p(\Omega)$, then $|u|^{p/2-1}u\in\oV$ and 
\begin{equation}
	\nonumber
	\mathbbm{1}_{\{u\neq0\}} |u|^{p-2}\left(|\nabla u|^2+V_+|u|^2 \right) \in L^1(\Omega).
\end{equation}
Moreover, given $\alpha$ and $\sigma$ the parameters associated with $\oA\in\cS\cP_p(\Omega,\oV)$,
\begin{equation}
	\label{eq : egert pot est}
        \int_\Omega V_-|u|^p\leq\alpha\int_\Omega\frac{q}{4}H_{F_p}^{I_d}[u;\nabla u]\mathbbm{1}_{\{u\neq0\}}+\sigma\int_\Omega V_+|u|^p,
\end{equation}
where $q=\frac{p}{p-1}$.
\end{proposition}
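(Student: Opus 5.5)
We follow the scheme of \cite[Proposition~11]{Egert20} and \cite[Proposition~4.6]{P-Potentials}: test the equation against a truncated power of $u$ and pass to the limit. Fix $\alpha,\sigma$ with $V\in\cP_{\alpha,\sigma}(\Omega,\oV)$ and $\cC_{p,\alpha,\sigma}(\oA)\in\cS_p(\Omega)$. For $\epsilon>0$ and $k\in\N$ we use a bounded Lipschitz function $\theta_{\epsilon,k}\colon\C\to\C$, odd in the sense $\theta(\zeta)=\rho(|\zeta|)\,{\rm sign}\,\zeta$, which mimics $|\zeta|^{p-2}\zeta$. Concretely, $\rho$ is linear near $0$, behaves like $(\epsilon+|\zeta|^2)^{(p-2)/2}|\zeta|$ up to height $k$, and is constant afterwards. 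The invariance \eqref{eq: inv P}--\eqref{eq: inv N} (via \cite[Theorem~4.31]{O}, as in \cite{P-Potentials}) guarantees that $\theta_{\epsilon,k}(u)\in\oV$. Since $\rho$ is bounded and vanishes linearly at the origin, $\int V_+|\theta(u)|^2<\infty$. Hence $\theta_{\epsilon,k}(u)\in\Dom(\gota)$, and \eqref{eq: ibp} gives
\[
\Re\gota\bigl(u,\theta_{\epsilon,k}(u)\bigr)=\Re\int_\Omega \oL^\oA u\,\overline{\theta_{\epsilon,k}(u)}\leq \|\oL^\oA u\|_p\,\|\theta_{\epsilon,k}(u)\|_q\leqsim \|\oL^\oA u\|_p\|u\|_p^{p-1}.
\]
The last bound uses $|\theta(u)|\leqsim |u|^{p-1}$, uniformly in $\epsilon,k$, up to harmless adjustments.

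The core step is a pointwise lower bound on the integrand of $\Re\gota(u,\theta(u))$. On $\{u\neq0\}$ we set $\xi=\nabla u/u$. In the untruncated region, the integrand equals $|u|^{p}$ (suitably regularized) times
\[
\Re\sk{A\xi}{\cJ_p\xi}\cdot\tfrac1{p-1}\text{-type weights}+\Re\sk{\overline b+\cJ_p c}{\xi}+V,
\]
that is, exactly $|u|^p\,\Gamma_p^{(A,b,c,V)}(x,\xi)$, up to normalization; this is the identity $\mathbf{H}_{F_p}=p|\zeta|^p\Gamma_p$. Splitting $A=(A-\alpha\tfrac{pq}4I_d)+\alpha\tfrac{pq}4 I_d$ and $V=(1-\sigma)V_+ + \sigma V_+ - V_-$ yields
\[
|u|^p\Gamma_p^{\oA}(x,\xi)=|u|^p\Gamma_p^{\cC_{p,\alpha,\sigma}(\oA)}(x,\xi)+\alpha\tfrac{q}{4}H^{I_d}_{F_p}[u;\nabla u]+\sigma V_+|u|^p-V_-|u|^p .
\]
The first term is bounded below by $\mu_p\,|u|^{p-2}(|\nabla u|^2+V_+|u|^2)$. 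In the truncated and regularized regions we rely on the convexity arguments already available in \cite{Poggio,P-Potentials} to keep the analogous bound, at least up to errors vanishing as $\epsilon\to0$, $k\to\infty$. The constant region gives only the $A$-part with $\cJ$ replaced by the identity, which is controlled by $\cC_{p,\alpha,\sigma}(\oA)\in\cS_2$.

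To absorb $V_-$, we apply the subcriticality inequality \eqref{e : subc ineq} to the test function $w=|u|^{p/2-1}u$, truncated correspondingly; $w\in\oV$ by the same invariance argument. We have $|\nabla w|^2=\tfrac{p q}{4}\cdot\tfrac{q}{p}\cdots$, more precisely $\alpha|\nabla w|^2=\alpha\tfrac q4 H^{I_d}_{F_p}[u;\nabla u]$ on $\{u\ne0\}$. Hence $\int V_-|u|^p\le \alpha\tfrac q4\int H^{I_d}_{F_p}+\sigma\int V_+|u|^p$, which is exactly \eqref{eq : egert pot est} once finiteness is known. Summing, the negative terms cancel against the $\alpha$- and $\sigma$-terms, so $\mu_p\int |u|^{p-2}(|\nabla u|^2+V_+|u|^2)$ (truncated) is bounded by $\|\oL u\|_p\|u\|_p^{p-1}$ uniformly. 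Monotone convergence as $k\to\infty$, $\epsilon\to0$ then gives the $L^1$ claim and $w\in\oV$, using boundedness in $W^{1,2}$ and weak closedness of $\oV$. With finiteness established, the subcriticality inequality applied to $w$ yields \eqref{eq : egert pot est}.

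The main obstacle is the second step. The first-order terms make the pointwise bound non-monotone under truncation, since $\Re\sk{\overline b+\cJ_pc}{\xi}$ is sign-indefinite and is only controlled through \eqref{eq: sect form above} by $\sqrt{V_+}$. In the transition regions of $\rho$, one must check that the modified quadratic form still dominates. We would first try to choose $\rho$ so that on each region the integrand is a convex combination of $\Gamma_p$ and $\Gamma_2$ evaluated at $\cC_{p,\alpha,\sigma}(\oA)$, which is positive because $\cS_r$ interpolates in $r\in[2,p]$, as in Proposition~\ref{p: propr_SPp}. The analogous truncation of $w$ in the subcriticality step must be chosen compatibly, so that the $\alpha$-terms match exactly region by region.
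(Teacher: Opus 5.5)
Your overall scheme is the paper's: test $\oL^\oA u$ against a truncation of $|u|^{p-2}u$, absorb $V_-$ through \eqref{e : subc ineq} applied to the matching truncation of $|u|^{p/2-1}u$, then H\"older, Fatou, weak closedness of $\oV$, and \eqref{e : subc ineq} once more for \eqref{eq : egert pot est}. However, the truncation you chose breaks the key pointwise step.

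For $\theta(\zeta)=\rho(|\zeta|)\,{\rm sign}\,\zeta$ and $\xi=\nabla u/u$, one computes $\nabla\theta(u)=\frac{u}{|u|}\rho(|u|)\,\cJ_r\xi$ with local exponent $r=1+|u|\rho'(|u|)/\rho(|u|)$. So the integrand of $\Re\gota(u,\theta(u))$ is $|u|\rho(|u|)\,\Gamma^{\oA}_r(x,\xi)$. Where $\rho\equiv k$ is constant you get $r=1$ and $\cJ_1\xi=i\Im\xi$, not the identity.

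\begin{itemize}
\item The integrand $k|u|\,\Gamma^{\oA}_1(x,\xi)$ gives no control of $\Re\xi$: for real $\xi$ it equals $\sk{\Re b}{\xi}+V$.
\item For complex $A$ it contains the sign-indefinite term $\sk{(\Im A)\Re\xi}{\Im\xi}$.
\item The matching subcriticality function $w=\sqrt{k|u|}\,{\rm sign}\,u$ has $|\nabla w|^2=k|u|\bigl(\tfrac14|\Re\xi|^2+|\Im\xi|^2\bigr)$. This leaves a term $-\tfrac{\alpha}{4}k|u||\Re\xi|^2$ with nothing to absorb it.
\end{itemize}

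Nor can this region be dismissed as a vanishing error. Its contribution is of order $\int_{\{|u|>s_k\}}k|u|^{-1}|\nabla u|^2$ with $k\approx s_k^{p-1}$. For $p>2$ the natural bounds are $s_k^{p-2}\norm{\nabla u}{2}^2$, which diverges, or the tail $\int_{\{|u|>s_k\}}|u|^{p-2}|\nabla u|^2$ of the very quantity you are trying to show is finite. (For $p<2$ the factor $s_k^{p-2}\to0$ happens to make it negligible, but not for the reason you give.)

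The fix, which is what the paper does with the test function $u(|u|^{p-2}\wedge n)$, is to cap the multiplier rather than $\rho$, so the truncated region is linear ($r=2$). To keep your regularization, use $\zeta\min\{(\epsilon+|\zeta|^2)^{(p-2)/2},n\}$ for $p>2$. For $p<2$, $(\epsilon+|\zeta|^2)^{(p-2)/2}\zeta$ is already Lipschitz and bounded by $\epsilon^{(p-2)/2}|\zeta|$.

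Boundedness of $\theta$ is unnecessary. The bound $|\theta(u)|\le C_n|u|$ together with $\int_\Omega V_+|u|^2<\infty$ already gives $\theta(u)\in\Dom(\gota)$, via \eqref{eq: inv P}, \eqref{eq: inv N} and \cite[Proposition~2.1]{P-Potentials}. On the linear region, apply subcriticality with $w_n=u(|u|^{p-2}\wedge n)^{1/2}$, for which $|\nabla w_n|^2=n|\nabla u|^2$ there. This turns the integrand into $n|u|^2\,\Gamma_2^{\oA_{\alpha,\sigma}}(x,\xi)\ge0$ by \eqref{eq: new condit p2}.

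It is \eqref{eq: new condit p2} that controls this region, not ``$\cC_{p,\alpha,\sigma}(\oA)\in\cS_2$''. The latter is not assumed and, since $pq/4>1$, would be a stronger hypothesis. The reason is that the subcriticality term carries the weight of the local exponent: $|\nabla w|^2\le|u|\rho(|u|)\,\frac{rr'}{4}\Re\sk{\xi}{\cJ_r\xi}$. This is an inequality rather than the equality you wrote, but it goes in the harmless direction. If you keep intermediate exponents $r$ between $2$ and $p$, positivity still holds: $r\mapsto\Gamma_r^{\cC_{r,\alpha,\sigma}(\oA)}(x,\xi)$ is affine minus convex, hence concave, and it is coercive at $r=2$ and at $r=p$.
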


\begin{proof}
Let $u\in D(\oL^\oA)\subseteq D(\gota_{\oA,\oV})$ be such that $u,\oL^\oA u\in L^p(\Omega)$. By Lebesgue's dominated convergence theorem, we obtain
\begin{equation}
	\label{eq : Egert conv dom}
        \Re\int_\Omega(\oL^\oA u)\overline{u}|u|^{p-2}=\lim_{n\to\infty}\Re\int_\Omega(\oL^\oA u)\overline{u}\left(|u|^{p-2}\wedge n\right).
\end{equation}
Assumptions \eqref{eq: inv P} and \eqref{eq: inv N}, together with the fact that $u \in D(\gota_{\oA,\oV})$, ensure that $\overline{u}(|u|^{p-2}\wedge n)$ belongs to $D(\gota_{\oA,\oV})$ by \cite[Proposition~2.1]{P-Potentials}. Consequently, \eqref{eq: ibp} yields
\begin{equation*}
	\Re\int_\Omega(\oL^\oA u)\overline{u}\left(|u|^{p-2}\wedge n\right)= \gota_{\oA,\oV}\left(u,\overline{u}\left(|u|^{p-2}\wedge n\right)\right). 
\end{equation*}
The strategy now relies on applying the subcritical inequality following the argument in the proof of \cite[Proposition~4.6]{P-Potentials}. By assumption, there exist $\alpha\geq0$ and $\sigma\in[0,1)$ such that $V\in\cP_{\alpha,\sigma}(\Omega,\oV)$ and $\cC_{p,\alpha,\sigma}(\Omega,\oV)\in\cS_p(\Omega)$. Since this approach depends exclusively on the matrix and the potential, the presence of first-order terms does not affect the validity of the method. Consequently, one obtains an analogue of \cite[(4.14)]{P-Potentials}, which in the current setting takes the form
\begin{equation}
        \label{eq : Egert accretive}
        \aligned
	\Re\int_\Omega(\oL^\oA u)\overline{u}\bigl(|u|^{p-2}\wedge n\bigr)  \geq p^{-1} \int_{\left\{|u|^{p-2}<n,\;u\neq0\right\}} \mathbf{H}^{\cC_{p,\alpha,\sigma}(\oA)}_{F_p}[u; \nabla u].
        \endaligned
\end{equation}
Since $\oA\in\mathcal{SP}_p(\Omega,\oV)$, \eqref{eq : Egert accretive} and \cite[Proposition~13]{Poggio} imply
\begin{equation*}
	\aligned
	\Re\int_\Omega(\oL^\oA u)\overline{u}\left(|u|^{p-2}\wedge n\right)\geq\mu_p\left(\cC_{p,\alpha,\sigma}(\oA)\right)\int_{\left\{|u|^{p-2}<n,\;u\neq0\right\}}|u|^{p-2}\left(|\nabla u|^2+V_+|u|^2\right).
	\endaligned
\end{equation*}
Combining \eqref{eq : Egert conv dom} with Fatou's Lemma, we obtain
\begin{equation}
	\nonumber
        \Re\int_\Omega(\oL^\oA u)\overline{u}|u|^{p-2}\geqsim\int_\Omega\left(\mathbbm{1}_{\{u\neq0\}} |u|^{p-2}|\nabla u|^2+V_+|u|^p\right).
\end{equation}
Since both $u$ and $\oL^\oA u$ belong to $L^p(\Omega)$, the left-hand side is finite by H\"older's inequality, thus completing the proof. The fact that $|u|^{p/2-1}u\in\oV$ can be proved as in \cite[Proposition~4.6]{P-Potentials}, while \eqref{eq : egert pot est} is a consequence of \cite[(4.7)]{P-Potentials}, which holds if $|u|^{p/2-1}u\in W^{1,2}(\Omega)$.
\end{proof}

Since the previous result is used in Section~\ref{ss : alt proof bil pot nonneg} only for nonnegative potentials, for the reader's convenience we restate it for this specific setting.
\begin{proposition}
\label{t : p grad est}
Suppose that $\oV$ satisfies \eqref{eq: inv P} and \eqref{eq: inv N}. Let $p\in(1,\infty)$ and $\oA\in (\cS_p \cap \cS_2)(\Omega)$. If $u\in D(\oL^\oA)\cap L^p(\Omega)$ is such that $\oL^\oA u\in L^p(\Omega)$, then
\begin{equation}
	\nonumber
	\mathbbm{1}_{\{u\neq0\}} |u|^{p-2}\left(|\nabla u|^2+V|u|^2 \right) \in L^1(\Omega).
\end{equation}
\end{proposition}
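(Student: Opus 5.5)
The statement is the special case $V_-=0$ of Proposition~\ref{t : p grad est negative V}, so I will deduce it from that result. The main task is to check that the hypothesis $\oA\in(\cS_p\cap\cS_2)(\Omega)$ with $V\geq 0$ places $\oA$ in $\cS\cP_p(\Omega,\oV)$, and that the operator $\oL^\oA$ used here is the same one as there. For the first point, take $\alpha=0$ and $\sigma=0$. Since $V_-=0$, inequality \eqref{e : subc ineq} holds trivially, so $V\in\cP_{0,0}(\Omega,\oV)$. With these parameters, \eqref{eq: def FA e A+} gives $\cC_{p,0,0}(\oA)=(A,b,c,V_+)=(A,b,c,V)=\oA$. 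Hence the requirement $\cC_{p,0,0}(\oA)\in\cS_p(\Omega)$ is exactly the assumption $\oA\in\cS_p(\Omega)$, and therefore $\oA\in\cS\cP_p(\Omega,\oV)$.

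The second point is where $\oA\in\cS_2(\Omega)=\cB(\Omega)$ is used. With $\alpha=\sigma=0$, condition \eqref{eq: new condit p2} reduces to $\oA\in\cB(\Omega)$. This makes the form $\gota_{\oA,\oV}$ closed and sectorial, so $\oL^\oA$ and $D(\oL^\oA)$ are well defined exactly as in Section~\ref{ss: op}. The invariance hypotheses \eqref{eq: inv P} and \eqref{eq: inv N} on $\oV$ are the same in both statements.

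It remains to apply Proposition~\ref{t : p grad est negative V}. For $u\in D(\oL^\oA)\cap L^p(\Omega)$ with $\oL^\oA u\in L^p(\Omega)$, it gives $\mathbbm{1}_{\{u\neq0\}}|u|^{p-2}(|\nabla u|^2+V_+|u|^2)\in L^1(\Omega)$. Since $V_+=V$, this is the claimed integrability. The additional estimate \eqref{eq : egert pot est} becomes the trivial statement $0\leq 0$ and can be ignored.

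I do not expect a real obstacle. The only point needing care is that the proof of Proposition~\ref{t : p grad est negative V} should not tacitly assume $\alpha>0$ or $V_-\neq0$. Looking at that proof, the key inequality \eqref{eq : Egert accretive} uses the subcritical inequality with constants $(\alpha,\sigma)$. With $\alpha=\sigma=0$ and $V_-=0$ this inequality is trivial, and \eqref{eq : Egert accretive} reduces directly to the lower bound by $p^{-1}\int\mathbf{H}^{\oA}_{F_p}[u;\nabla u]$, which is controlled by \cite[Proposition~13]{Poggio}. So the specialization goes through unchanged.
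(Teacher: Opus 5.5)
Your proposal is correct and matches the paper's approach: the paper gives this proposition as the restatement of Proposition~\ref{t : p grad est negative V} for nonnegative potentials. Your choice $\alpha=\sigma=0$ yields $\cC_{p,0,0}(\oA)=\oA\in\cS_p(\Omega)$, hence $\oA\in\cS\cP_p(\Omega,\oV)$, and you correctly note that $\oA\in\cS_2(\Omega)=\cB(\Omega)$ is what makes \eqref{eq: new condit p2} hold, so that $\oL^\oA$ is well defined.
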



\subsection{Replacement of the sequence $\left(\psi_n\right)_{n\in\N}$}
\label{s : rep psin2}
We now clarify why the sequence $\left(\psi_n\right)_{n\in\N}$ used by Carbonaro and Dragi\v{c}evi\'c in \cites{CD-Mixed, CDKS-Tril} is not suitable for invoking Proposition~\ref{p : L11}.  The issue is that we are unable to obtain the estimate \eqref{Stima_L^1} with a suitable function $F$ satisfying \eqref{prop_L1_F1F2}, even when relying on Proposition~\ref{t : p grad est}.
\smallskip

Let $u$ and $v$ be as in the statement of Proposition~\ref{p : L11} and set
\begin{equation}
	\nonumber
	\omega=(u,v), \qquad \nabla\omega=(\nabla u,\nabla v).
\end{equation}
According to the decomposition in \eqref{eq: scomp ET} below, obtained through \eqref{eq: leibniz per hess gen prod N}, the term $(\text{E.T.})_{n,\nu}$ splits into several components. One of these is
\begin{equation}
	\label{eq: formal term}
	(\cQ * \varphi_\nu)(\omega) H_{\psi_n}^{(b,\beta,c,\gamma)}[\omega;\nabla \omega].
\end{equation}
From the definition of $\left(\psi_n\right)_{n\in\N}$ it suffices estimating \eqref{eq: formal term} in the region $\{3n \leq |(u,v)| \leq 4n\}$. By \eqref{eq: trivial est hess first} and the boundedness of $b,\beta, c$ and $\gamma$,
\begin{equation}
	\nonumber
	\aligned
	\left|H^{(b,\beta,c,\gamma)}_{\psi_n}[\omega;\nabla \omega] \right|\leqsim & \, |\partial_\zeta \psi_n(\omega)| |\nabla u| + |\partial_\eta \psi_n(\omega)| |\nabla v|  \\ 
	&+ |D^2_{\zeta\zeta}\psi_n(\omega)||\nabla u||u| + |D^2_{\eta\eta}\psi_n(\omega)||\nabla v||v| \\
	&+ |D^2_{\zeta\eta}\psi_n(\omega)|\left(|\nabla u||v| +|\nabla v||u|\right).
	\endaligned
\end{equation}
Let $p \geq 2$ and denote by $q$ its conjugate exponent. Suppose that $|v|^q \leq |u|^p$, then by the first estimate of  \eqref{eq: zero and first order est reg}, we get
\begin{equation}
	\nonumber
	|(\cQ * \varphi_\nu)(\omega)|\leqsim_\nu |u|^p+1.
\end{equation}
Therefore, if we estimated \eqref{eq: formal term} on $\{3n \leq |(u,v)| \leq 4n\} \cap \{|v|^q \leq |u|^p\}$ by combining the previous two inequalities, mixed terms would also arise, such as
\begin{equation}
	\label{eq: formal mix term}
	|u|^{p+1}|D^2_{\zeta\eta}\psi_n(\omega)||\nabla v|, 
\end{equation}
which could be uniformly controlled in $n$ by
\begin{equation}
	\nonumber
	|u|^{p-1}|\nabla v|, 
\end{equation}
as we are in the region $\{3n \leq |(u,v)| \leq 4n\}$. However, we cannot deduce that this term is an integrable function from Proposition~\ref{t : p grad est} since there is no relation between $u$ and $v$. In the region $\{|v|^q \geq |u|^p\}$ we would have an analogous problem.

To bypass this obstruction, we seek a new sequence $\left(\Psi_n\right)_{n\in\N}$ on $\C^2$ such that 
\begin{equation}
	\label{eq: key prop Psin}
	\begin{array}{rrlcl}
	 |D^2_{\eta\eta}\Psi_n| &=& 0 & {\rm on} &\{(\zeta,\eta) \in \C^2 : |\eta|^q \leq |\zeta|^p\},\\
	 |D^2_{\zeta\zeta}\Psi_n|&= &0 & {\rm on} &\{(\zeta,\eta) \in \C^2 :|\eta|^q \geq |\zeta|^p\}, \\
	 | D^2_{\zeta\eta}\Psi_n| &=& 0 & {\rm on}  & \C^2.
	 \end{array}
\end{equation}

\begin{remark}
\label{r: no strat b}
Recall the definition \eqref{e : succ R intro} of $(\cR_{n,\nu})_{n,\nu}$. 
The previous argument also clarifies why this sequence is not suitable for implementing Strategy~\ref{i: strat b} introduced at the beginning of  this section. 
Indeed, by \eqref{eq: trivial est hess first}, one of the components of $H_{\cR_{n,\nu}}^{(b,\beta,c,\gamma)}$ coincides with the term in \eqref{eq: formal term}, for which we have just shown that no suitable uniform bound can be obtained. It is therefore unlikely that such a bound holds for the entire  $H_{\cR_{n,\nu}}^{(b,\beta,c,\gamma)}$.
\end{remark}

\begin{remark}
\label{r: new proof be}
A similar argument also explains why the cut-off functions $\psi_n$ are not suitable for the alternative approach outlined in the second author's doctoral thesis and briefly described in Section~\ref{sss: tril emb}. Indeed, the same obstruction appears when attempting to estimate the term $(\cQ * \varphi_\nu)\cdot H_{\psi_n}^{(A,B)}$ uniformly in $n$.
\end{remark}


\section{The new approximating sequence}
\label{s: new seq}
In view of Proposition~\ref{p : L11}, the proof of the bilinear embedding requires the construction of an admissible sequence of truncations $\left(\Psi_n\right)_{n \in \N}$ and the choice of a suitable $F$ satisfying \eqref{prop_L1_F1F2}, such that
\begin{equation}
	\label{eq : Q e hess Psi dominated 22}
	\aligned
	\left|(\text{E.T.})_{n,\nu}[\omega;\Xi] \right| \leqsim_\nu   F(\omega; \Xi),
	\endaligned
\end{equation}
for all $\omega =(\zeta,\eta)\in \C^2$ and all $\Xi =(X,Y)\in \C^{2d}$. Let $p \geq 2$ and denote by $q$ its conjugate exponent. Proposition~\ref{t : p grad est} ensures that a suitable candidate for $F$ is the function
\begin{equation}
\label{eq : cand for F}
\aligned
\C^2 \times &\C^{2d} \ni \, (\zeta,\eta, X, Y) \mapsto \\
& \mapsto \begin{cases}
\left(|\zeta|^{p-2}+1\right) \left(|X|^2+V|\zeta|^2\right) + \left(|\eta|^{q-2}+1\right)\left(|Y|^2+W|\eta|^2\right) + |\eta|^2, & {\rm if} \,\, \eta \ne 0; \\
\left(|\zeta|^{p-2}+1\right) \left(|X|^2+V|\zeta|^2\right) + |Y|^2, & {\rm if}\,\, \eta = 0\,.
\end{cases}
\endaligned
\end{equation}

In this section we outline the construction of  the sequence $\left(\Psi_n\right)_{n\in \N}$. The essential property that $\left(\Psi_n\right)_{n\in \N}$ must satisfy is \eqref{eq: key prop Psin}.
\medskip

Since $\left(\Psi_n\right)_{n\in \N}$ must be an admissible sequence of truncations --- that is, a family of compactly supported smooth functions identically equal to $1$ on gradually larger sets --- the strategy is to start from a smooth function identically equal to $1$ on a tailored set $K \subset \mathbb{C}^2$ and then dilate it appropriately.

In Section~\ref{s : formal psin}, we provide an initial candidate for this function. The main difference compared to the construction of the cut-off sequence $\left(\psi_n\right)_{n \in \N}$ of Carbonaro and Dragi\v{c}evi\'c  \cite{CD-Mixed} lies in choosing  a candidate that is adapted to the geometry of the Bellman function $\cQ$; see \eqref{eq: def Psiii}. Although this candidate is not smooth everywhere, we show heuristically that its proper dilation satisfies \eqref{eq: key prop Psin} and yields the estimate \eqref{eq : Q e hess Psi dominated 22}.

The rigorous argument is then obtained by regularizing this initial function and applying the same dilation procedure. We shall see that, due to this regularization, there exists a region in $\mathbb{C}^2$ where the resulting sequence fails to satisfy \eqref{eq: key prop Psin}. However, this region possesses a favorable structure: specific relations between the first and second complex coordinates allow us to recover the required estimate \eqref{eq : Q e hess Psi dominated 22}. This construction is carried out in detail in Sections~\ref{ss: reg Psi kappa} and \ref{s: def Psi n fin}.


\subsection{The formal $\left(\Psi_n\right)_n$}
\label{s : formal psin}
Let $\phi \in C^\infty_c([0,\infty))$ be such that
\begin{itemize}
\item $0 \leq \phi \leq 1$;
\item $\phi =1 $ on $[0,3]$;
\item $\phi = 0$ on $[4,\infty)$.
\end{itemize}
Let $p \geq 2$ and denote by $q$ its conjugate exponent. Define the function $\Psi : \C^2 \rightarrow [0,1]$ by 
\begin{equation}
	\label{eq: def Psiii}
	\Psi(\zeta, \eta) :=
	\begin{cases}
	\phi( |\eta|^q ), &{\rm if} \, |\zeta|^p \leq |\eta|^q;\\
	\phi(|\zeta|^p), &{\rm if} \, |\zeta|^p \geq |\eta|^q\,.
	\end{cases}
\end{equation}
Clearly, $\Psi \in C_c(\C^2) \cap C^\infty(\C^2 \setminus \widetilde{\Upsilon}_0)$, where
\begin{equation}
	\nonumber
	\widetilde{\Upsilon}_0:= \{ (\zeta,\eta) \in \C^2 \, : \, |\zeta|^p = |\eta|^q\}.
\end{equation}
Moreover,
\begin{itemize}
\item $\Psi = 1$ on $I_0:=\{(\zeta,\eta) \in \C^2 \, : \, |\zeta|^p \leq 3, |\eta|^q \leq 3\}$;
\item $\Psi =0$ on $O_0:= \{(\zeta,\eta) \in \C^2 \, : \,  |\zeta|^p \geq 4\} \cup \{(\zeta,\eta) \in \C^2  \, : \, |\eta|^q \geq 4\}$;
\item $\partial_\eta \Psi =0$ on $R_\zeta:=\{(\zeta,\eta) \in \C^2  \, : \, |\zeta|^p > |\eta|^q, |\zeta|^p \in [3,4]\}$;
\item $\partial_\zeta \Psi =0$ on $R_\eta:=\{(\zeta,\eta) \in \C^2  \, : \, |\zeta|^p < |\eta|^q, |\eta|^q \in [3,4]\}$.
\end{itemize}

Define the map $\cZ : \C^2 \rightarrow \R_+^2$ by
\begin{equation}
	\label{eq: def Z}
	\cZ(\zeta,\eta) = (|\zeta|, |\eta|), \qquad \zeta,\eta \in \C.
\end{equation}
We have the following partition of $\R_+^2$:

\begin{center}
\begin{tikzpicture}[scale=1.3]

\def\p{3}  

\fill[blue, opacity=0.5] (0, 0) -- ( {3^(1/\p)}, 0) -- ( {3^(1/\p)}, {3^(1-1/\p)} ) -- (0, {3^(1-1/\p)} ) -- cycle;

\fill[green, opacity=0.5] ( {4^(1/\p)}, 0) -- (3, 0) -- (3, 4) -- (0, 4) -- (0, {4^(1-1/\p)}) -- ({4^(1/\p)}, {4^(1-1/\p)}) -- cycle;

\fill[orange, opacity=0.5] ( 0, {3^(1-1/\p)}) -- ({(3^(1-1/\p))^(1/(\p-1))}, {3^(1-1/\p)}) -- ({(4^(1-1/\p))^(1/(\p-1))}, {4^(1-1/\p)}) -- (0, {4^(1-1/\p)}) -- cycle;

\fill[pink, opacity=0.5] ( {3^(1/\p)}, 0) -- ({4^(1/\p)}, 0) -- ({4^(1/\p)}, {4^(1-1/\p)}) --({(4^(1/\p)+3^(1/\p))/2},{((4^(1/\p)+3^(1/\p))/2 )^(\p-1)})-- ({3^(1/\p)}, {(3^(1/\p))^(\p-1)}) -- cycle;


\draw[red, thick, domain=0:2, samples=100] plot (\x, {\x^(\p-1)});

\draw[->] (0,0) -- (3,0) node[right] {$|\zeta|$};
\draw[->] (0,0) -- (0,4) node[above] {$|\eta|$};

\node[black] at (0.5, 0.8) {$\cZ(I_0)$};
\node[black] at (2.5, 3) {$\cZ(O_0)$};
\node[red] at (2, 4.2) {$|\zeta|^p=|\eta|^q$};
\node[black] at (0.5, 2.3) {$\cZ(R_\eta)$};
\node[black] at (1.5, 0.8) {$\cZ(R_\zeta)$};


\draw[dashed] ({3^(1/\p)}, 0) -- ({3^(1/\p)}, 4);
\draw[dashed] ({4^(1/\p)}, 0) -- ({4^(1/\p)}, 4);
\draw[dashed] (0, {3^(1-1/\p)}) -- (3, {3^(1-1/\p)});
\draw[dashed] (0, {4^(1-1/\p)}) -- (3, {4^(1-1/\p)});


\node[below] at ({3^(1/\p)- 0.2} , -0.1) {$3^{1/p}$};
\node[below] at ({4^(1/\p) + 0.2} , -0.1) {$4^{1/p}$};
\node[left] at (-0.1, {3^(1-1/\p)}) {$3^{1/q}$};
\node[left] at (-0.1, {4^(1-1/\p)}) {$4^{1/q}$};

\node[below left] at (0,0) {$(0,0)$};


\end{tikzpicture}
\end{center}

For all $n \in \N$ define the dilation $\cD_{n} : \C^2 \rightarrow \C^2$ by
\begin{equation}
	\label{eq: dilat oper}
	\cD_{n}(\omega) = \left(\frac{\zeta}{n^{1/p}},\frac{\eta}{n^{1/q}} \right), \qquad \omega=(\zeta,\eta) \in \C^2,
\end{equation}
and the function $\Psi_n : \C^2 \rightarrow \R$ by
\begin{equation}
	\label{eq: dilat psin}
	\Psi_n:= \Psi \circ \cD_n.
\end{equation}
Explicitly, 
\begin{equation}
	\nonumber
	\Psi_n(\zeta,\eta)=
	\begin{cases}
	\phi( |\eta|^q/n ), &{\rm if} \,\,|\zeta|^p \leq |\eta|^q;\\
	\phi(|\zeta|^p/n), &{\rm if} \,\, |\zeta|^p \geq |\eta|^q\,,
	\end{cases}
\end{equation}
for any $\zeta,\eta \in \C$.

Each function $\Psi_n$ is not $C^2$ on the entire $\C^2$. However, apart from the smoothness condition, it can be readily verified that $\left(\Psi_n\right)_{n \in \N}$ is an admissible sequence of truncations with
\begin{equation}
	\nonumber
	\aligned
	K_{1,n} =  \cD_{n}^{-1}(I_0), \qquad  K_{2,n} = \cD_{n}^{-1}(\C^2 \setminus O_0),
	\endaligned
\end{equation}
for all $n \in \N$.

We now show why the sequence $\left(\Psi_n\right)_{n\in \N}$ constitutes a natural candidate for proving the inequality in \eqref{eq : Q e hess Psi dominated 22} with $F$ as in \eqref{eq : cand for F}. As will be shown in \eqref{eq: scomp ET} below, the term $(\text{E.T.})_{n,\nu}$ can be decomposed into several components by means of \eqref{eq: leibniz per hess gen prod N}. One of these is $(\cQ *\varphi_\nu) \cdot H_{\Psi_n}^{(A,B)}$, on which we now focus. 

Fix 
\begin{equation}
	\nonumber
	(\zeta, \eta) \in \C^2 \setminus\left\{ (\zeta,\eta) \in \C^2  :  |\zeta|^p=|\eta|^q, |\zeta|^p\in [3n,4n], |\eta|^q \in [3n,4n]\right\}.
\end{equation}
We exclude this set since each function $\Psi_n$ fails to be $C^1$ at such points. We have
\begin{equation}
\nonumber
\aligned
|D^2_{\zeta\zeta} \Psi_n(\zeta,\eta)| &\leqsim 
\begin{cases}
\frac{1}{n^{2/p}}, &{\rm if} \,\, |\zeta|^p > |\eta|^q, \, 3n \leq |\zeta|^p \leq 4n;\\
0, &{\rm otherwise}\,,
\end{cases}\\
|D^2_{\eta\eta} \Psi_n(\zeta,\eta)| &\leqsim 
\begin{cases}
\frac{1}{n^{2/q}}, &{\rm if} \,\, |\zeta|^p < |\eta|^q, \, 3n \leq |\eta|^q \leq 4n;\\
0, &{\rm otherwise}\,.
\end{cases}
\endaligned
\end{equation}
On the other hand,
\begin{equation}
	\nonumber
	D^2_{\zeta\eta} \Psi_n (\zeta,\eta) =0.
\end{equation}
Therefore, from \eqref{eq: trivial est hess} we obtain
\begin{equation}
	\label{e: stima hess nuovo cutoff}
	\left|H_{\Psi_n}^{(A,B)}[(\zeta,\eta);(X,Y)] \right| \leqsim 
	\begin{cases}
	\frac{1}{n^{2/p}}|X|^2, &{\rm if} \,\, |\zeta|^p > |\eta|^q, \, 3n \leq |\zeta|^p \leq 4n;\\
	\frac{1}{n^{2/q}}|Y|^2, &{\rm if} \,\, |\zeta|^p < |\eta|^q, \, 3n \leq |\eta|^q \leq 4n;\\
	0, &{\rm otherwise}\,.
	\end{cases}
\end{equation}
Hence, by combining the first estimate of \eqref{eq: zero and first order est reg} and \eqref{e: stima hess nuovo cutoff}, we get
\begin{equation}
	\nonumber
	\aligned
	\biggl|  (\cQ *\varphi_\nu)(\zeta,\eta) H_{\Psi_n}^{(A,B)}&[(\zeta,\eta); (X,Y)]\biggr| \\
	&\leqsim_\nu 
	\begin{cases}
	\left(|\zeta|^{p-2}+1\right)|X|^2, &{\rm if} \,\, |\zeta|^p > |\eta|^q, \, 3n \leq |\zeta|^p \leq 4n;\\
	\left(|\eta|^{q-2}+1\right)|Y|^2, &{\rm if} \,\, |\zeta|^p < |\eta|^q, \, 3n \leq |\eta|^q \leq 4n;\\
	0, &{\rm otherwise}
	\end{cases}\\
	& \, \leqsim_\nu F(\zeta,\eta;X,Y),
	\endaligned
\end{equation} 
for all $X,Y \in \C^d$. The implied constants do not depend on $n \in \N$.
\medskip

We would like the previous estimate to hold for {\it any} $(\zeta,\eta) \in \C^2$. The problem is that the function $\Psi$ is not globally $C^1$, and thus neither is each $\Psi_n$. To address this, we will regularize $\Psi$ by means of convolution with smooth approximation of identity. Once $\Psi$ is smoothed, the corresponding dilations from \eqref{eq: dilat psin} are then considered. However, this regularization process implies that near $\widetilde{\Upsilon}_0$, the mollified version of $\Psi$ might depend on both variables $\zeta$ and $\eta$. Nevertheless, this does not pose an obstruction because $|\zeta|^p$ and $|\eta|^q$ are comparable near $\widetilde{\Upsilon}_0$; see Lemma~\ref{l: 2 2}. This property allows us to establish the desired estimate \eqref{eq : Q e hess Psi dominated 22} even in proximity to  $\widetilde{\Upsilon}_0$.

Moreover, all the previous analysis has focused only on the term $(\cQ *\varphi_\nu) \cdot H_{\Psi_n}^{(A,B)}$. The remaining components of $(\text{E.T.})_{n,\nu}$ appeared in \eqref{eq: scomp ET} must also be estimated. It turns out the sequence $\left(\Psi_n\right)_{n \in \N}$ is equally well suited to handle these terms, and no additional difficulties arise.


\subsection{The regularized $\Psi_\kappa$}
\label{ss: reg Psi kappa}
Let $p\geq 2$ and denote by $q$ its conjugate exponent. Let $\Psi$ be defined as in \eqref{eq: def Psiii} and $\varphi \in C_c^\infty(\R^4)$ be radial such that $0 \leq \varphi \leq 1$, ${\rm supp} \, \varphi \subset B(0,1)$ and $\int \varphi=1$. For all $\kappa \in (0,1)$ define the function $\varphi_\kappa(\cdot) = \kappa^{-4} \varphi(\cdot/\kappa)$.

Recall notation \eqref{e : compl convol}. For all  $\kappa \in (0,1)$ define the function $ \Psi_{\kappa} : \C^2 \rightarrow [0,1]$ by
\begin{equation}
	\label{eq: def Psi e Psinu 2}
	\aligned
	\Psi_{\kappa} &:= \Psi * \varphi_{\kappa}.
	\endaligned
\end{equation}
We aim to study the behavior of the regularized $\Psi_\kappa$ in different regions of $\C^2$. 

Recall the definition \eqref{eq: def Z} of $\cZ$. For all $\kappa \in (0,1)$  we define three regions of $\C^2$ as follows:
\begin{equation}
	\label{eq: def I,R,R,O}
	\aligned
	I_{\kappa} &:=  \cZ^{-1}\left([0, 3^{1/p}-\kappa) \times [0, 3^{1/q}-\kappa) \right), \\
	R_\kappa &:= \cZ^{-1}\left([3^{1/p}-\kappa,4^{1/p}+\kappa] \times [0, 4^{1/q}+\kappa] \,\, \cup \,\, [0,4^{1/p}+\kappa]  \times [3^{1/q}-\kappa, 4^{1/q}+\kappa]\right), \\
	O_\kappa &:= \cZ^{-1} \left(\R_+^2 \setminus \left( [0,4^{1/p}+\kappa] \times [0,4^{1/q}+\kappa] \right)\right).
	\endaligned
\end{equation}
It is clear that these regions form a partition of $\C^2$.

Denote
\begin{equation}
	\nonumber
	\Upsilon_0 := \cZ(\widetilde{\Upsilon}_0)= \left\{(s,t) \in \R_+^2 \, : \, s^{p} = t^q\right\}.
\end{equation}
We now refine the above partition further.  For all $\kappa \in (0,1)$ define the following subsets of $R_\kappa$:
\begin{equation}
	\label{eq: def R,T,Q,Q}
	\aligned
	T_{\kappa} &:=  \left\{\omega \in R_\kappa \, : \, {\rm dist}(\cZ(\omega), \Upsilon_0) \leq \kappa \right\}, \\
	R_{\zeta,\kappa} &:=\left(R_{\kappa} \cap\left\{|\zeta|^{p}>|\eta|^q\right\}\right) \setminus T_{\kappa}, \\
	R_{\eta,\kappa} &:= \left(R_{\kappa} \cap\{|\zeta|^p< |\eta|^q\}\right)  \setminus T_{\kappa}.
	\endaligned
\end{equation}
They give a partition of $R_\kappa$ itself. Therefore, for all $\kappa \in (0,1)$ we obtain the following full partition of $\C^2$:
\begin{equation}
	\label{eq: partiz R2}
	\C^2 = I_\kappa \sqcup R_{\zeta,\kappa} \sqcup R_{\eta,\kappa} \sqcup T_\kappa \sqcup O_\kappa.
\end{equation}

\begin{center}
\begin{tikzpicture}[scale=1.7]

\def\p{3}  
\def\v{0.15}  

\fill[blue, opacity=0.5] (0, 0) -- ( {3^(1/\p)-\v}, 0) -- ( {3^(1/\p)-\v}, {3^(1-1/\p)-\v} ) -- (0, {3^(1-1/\p)-\v} ) -- cycle;

\fill[green, opacity=0.5] ( {4^(1/\p)+\v}, 0) -- (3, 0) -- (3, 4) -- (0, 4) -- (0, {4^(1-1/\p)+\v}) -- ({4^(1/\p)+\v}, {4^(1-1/\p)+\v}) -- cycle;

\fill[orange, opacity=0.5] ( 0, {3^(1-1/\p)-\v}) -- ({(3^(1-1/\p)-\v)^(1/(\p-1))-\v}, {3^(1-1/\p)-\v}) -- ({(4^(1-1/\p)+\v)^(1/(\p-1))-\v}, {4^(1-1/\p)+\v}) -- (0, {4^(1-1/\p)+\v}) -- cycle;

\fill[pink, opacity=0.5] ( {3^(1/\p)-\v}, 0) -- ({4^(1/\p)+\v}, 0) -- ({4^(1/\p)+\v}, {4^(1-1/\p)}) --({(4^(1/\p)+3^(1/\p))/2},{((4^(1/\p)+3^(1/\p))/2 -\v)^(\p-1)})-- ({3^(1/\p)-\v}, {(3^(1/\p)-\v-\v)^(\p-1)}) -- cycle;

\fill[yellow, opacity=0.5] ({(3^(1-1/\p)-\v)^(1/(\p-1))-\v}, {3^(1-1/\p)-\v}) -- ( {3^(1/\p)-\v}, {3^(1-1/\p)-\v} ) -- ({3^(1/\p)-\v}, {(3^(1/\p)-\v-\v)^(\p-1)}) -- ({(4^(1/\p)+3^(1/\p))/2},{((4^(1/\p)+3^(1/\p))/2 -\v)^(\p-1)}) -- ({4^(1/\p)+\v}, {4^(1-1/\p)}) -- ({4^(1/\p)+\v}, {4^(1-1/\p)+\v}) -- ({(4^(1-1/\p)+\v)^(1/(\p-1))-\v}, {4^(1-1/\p)+\v}) -- cycle;

\draw[red, thick, domain=0:2, samples=100] plot (\x, {\x^(\p-1)});
\draw[blue, thick, dashed, domain=(3^(1-1/\p)-\v)^(1/(\p-1))-\v:(4^(1-1/\p)+\v)^(1/(\p-1))-\v, samples=100] plot (\x, {(\x+\v)^(\p-1)});
\draw[blue, thick, dashed, domain=3^(1/\p)-\v:4^(1/\p)+\v, samples=100] plot (\x, {(\x-\v)^(\p-1)});

\draw[->] (0,0) -- (3,0) node[right] {$s$};
\draw[->] (0,0) -- (0,4) node[above] {$t$};

\node[black] at (0.5, 0.8) {$\cZ(I_\kappa)$};
\node[black] at (2.5, 3) {$\cZ(O_\kappa)$};
\node[red] at (2, 4.1) {$s^p=t^q$};
\node[black] at (0.5, 2.3) {$\cZ(R_{\eta,\kappa})$};
\node[black] at (1.5, 0.8) {$\cZ(R_{\zeta,\kappa})$};
\node[black] at (1.5, 2.3) {$\cZ(T_{\kappa})$};


\draw[dashed] ({3^(1/\p)-\v}, 0) -- ({3^(1/\p)-\v}, 4);
\draw[dashed] ({4^(1/\p)+\v}, 0) -- ({4^(1/\p)+\v}, 4);
\draw[dashed] (0, {3^(1-1/\p)-\v}) -- (3, {3^(1-1/\p)-\v});
\draw[dashed] (0, {4^(1-1/\p)+\v}) -- (3, {4^(1-1/\p)+\v});


\node[below] at ({3^(1/\p)-\v- 0.2} , -0.1) {$3^{1/p}-\kappa$};
\node[below] at ({4^(1/\p)+\v + 0.2} , -0.1) {$4^{1/p}+\kappa$};
\node[left] at (-0.1, {3^(1-1/\p)-\v}) {$3^{1/q}-\kappa$};
\node[left] at (-0.1, {4^(1-1/\p)+\v}) {$4^{1/q}+\kappa$};

\node[below left] at (0,0) {$(0,0)$};


\end{tikzpicture}
\end{center}

It is straightforward to verify that
\begin{equation}
	\label{eq: inclusioni R}
	\aligned
	\cZ\left(R_{\zeta,\kappa}\right) \subseteq [3^{1/p}-\kappa,4^{1/p}+\kappa] \times [0, 4^{1/q}+\kappa],\\
	\cZ\left(R_{\eta,\kappa}\right) \subseteq [0,4^{1/p}+\kappa]  \times [3^{1/q}-\kappa, 4^{1/q}+\kappa].
	\endaligned
\end{equation}
In what follows, we analyze the behavior of $\Psi_k$  in each of the regions defined above. More precisely, we will show that:
\begin{itemize}
\item $\Psi_k$ is constant on $I_\kappa$ and on $O_\kappa$;
\item it depends only on the variable $\zeta$ on $R_{\zeta,\kappa}$;
\item it depends only on the variable $\eta$ on $R_{\eta,\kappa}$;
\item and it may depend on both variables on $T_{\kappa}$.
\end{itemize}

\begin{lemma}
\label{l: dipendenze di Psi 2}
Let $\kappa \in (0,1)$. Then, $\Psi_\kappa \in C_c^\infty(\C^2)$, $\Psi_\kappa =1$ on $I_\kappa$ and $\Psi_\kappa =0$ on $O_\kappa$. Moreover,
\begin{enumerate}[label=\textnormal{(\roman*)}]
\item\label{eq: not dep Psinu 2}$\partial_{\zeta} \Psi_\kappa = 0$ on $\C^2 \setminus \left(R_{\zeta,\kappa} \cup T_{\kappa}\right)$; 
\item \label{eq: not dep Psinu 3} $\partial_{\eta} \Psi_\kappa = 0$ on $\C^2 \setminus \left(R_{\eta,\kappa} \cup T_{\kappa}\right)$.
\end{enumerate}
\end{lemma}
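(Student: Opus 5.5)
The proof works directly with the convolution formula \eqref{e : compl convol}: $\Psi_\kappa(\omega)=\int\Psi(\omega-\cW_{2,1}^{-1}(z))\varphi_\kappa(z)\wrt z$, where $\varphi_\kappa$ is supported in the ball of radius $\kappa$. So $\Psi_\kappa(\omega)$ depends only on the values of $\Psi$ on the closed ball $B(\omega,\kappa)\subset\C^2$.

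The regularity and support claims are immediate. $\Psi$ is continuous, compactly supported and bounded, so $\Psi_\kappa\in C_c^\infty(\C^2)$ with values in $[0,1]$.

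For the constancy claims, fix $\omega'=(\zeta',\eta')$ with $|\omega'-\omega|<\kappa$. Then $\big||\zeta'|-|\zeta|\big|<\kappa$ and $\big||\eta'|-|\eta|\big|<\kappa$.
\begin{itemize}
\item If $\omega\in I_\kappa$, then $|\zeta'|<3^{1/p}$ and $|\eta'|<3^{1/q}$. Hence $\omega'\in I_0$, so $\Psi(\omega')=1$, and integrating against $\varphi_\kappa$ gives $\Psi_\kappa(\omega)=1$.
\item If $\omega\in O_\kappa$, then $|\zeta|>4^{1/p}+\kappa$ or $|\eta|>4^{1/q}+\kappa$. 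Hence $\omega'\in O_0$, so $\Psi(\omega')=0$ and $\Psi_\kappa(\omega)=0$.
\end{itemize}
Both $I_\kappa$ and $O_\kappa$ are open: $I_\kappa$ is defined by strict inequalities, and $O_\kappa$ is the preimage of an open set. So $D\Psi_\kappa=0$ there. This gives \ref{eq: not dep Psinu 2} and \ref{eq: not dep Psinu 3} on $I_\kappa\cup O_\kappa$.

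By the partition \eqref{eq: partiz R2}, the remaining cases are \ref{eq: not dep Psinu 2} on $R_{\eta,\kappa}$ and \ref{eq: not dep Psinu 3} on $R_{\zeta,\kappa}$. By symmetry it suffices to treat $\omega\in R_{\zeta,\kappa}$ and show $\partial_\eta\Psi_\kappa(\omega)=0$. The key step is the following claim: for every $\omega'\in B(\omega,\kappa)$,
\[
|\zeta'|^p\ge|\eta'|^q \quad\text{and}\quad \Psi(\omega')=\phi(|\zeta'|^p).
\]

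To prove the claim, use that $\omega\notin T_\kappa$, so $\mathrm{dist}(\cZ(\omega),\Upsilon_0)>\kappa$. Also $|\cZ(\omega')-\cZ(\omega)|\le|\omega'-\omega|<\kappa$. Consider the segment in $\R_+^2$ joining $\cZ(\omega)$ to $\cZ(\omega')$. It stays within distance less than $\kappa$ of $\cZ(\omega)$, so it never meets $\Upsilon_0$. The function $(s,t)\mapsto s^p-t^q$ is continuous, positive at $\cZ(\omega)$ (since $|\zeta|^p>|\eta|^q$), and never vanishes along the segment. Hence it is positive at $\cZ(\omega')$, which gives $|\zeta'|^p>|\eta'|^q$ and $\Psi(\omega')=\phi(|\zeta'|^p)$.

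Granting the claim, for $\omega''$ in a small neighbourhood of $\omega$ (shrink to radius $\kappa-\epsilon$, using that $\mathrm{dist}(\cZ(\omega),\Upsilon_0)>\kappa$ strictly, so the claim holds on a slightly larger ball around nearby points) we get
\[
\Psi_\kappa(\omega'')=\int\phi\big(|\zeta''-z_\zeta|^p\big)\varphi_\kappa(z)\wrt z .
\]
Here $z=(z_\zeta,z_\eta)$ and the integral runs over $|z|<\kappa$. Integrate out $z_\eta$. The resulting function depends only on $\zeta''$. So $\Psi_\kappa$ is locally a function of $\zeta$ alone, and $\partial_\eta\Psi_\kappa=0$ on $R_{\zeta,\kappa}$.

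The main obstacle is the openness issue in this last step. The set $R_{\zeta,\kappa}$ need not be open, and derivatives must be computed pointwise. The fix is the one indicated: apply the claim on balls $B(\omega'',\kappa)$ for all $\omega''$ near $\omega$. This is legitimate by continuity of the distance to $\Upsilon_0$, together with the strict inequality $\mathrm{dist}(\cZ(\omega),\Upsilon_0)>\kappa$.
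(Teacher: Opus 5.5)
Your proof is correct and follows the paper's argument. The $\kappa$-support of the mollifier together with the triangle inequality gives constancy on $I_\kappa$ and $O_\kappa$, and the strict inequality $\mathrm{dist}(\cZ(\omega),\Upsilon_0)>\kappa$ off $T_\kappa$ keeps the whole $\kappa$-ball on one side of $\widetilde{\Upsilon}_0$, where $\Psi$ depends on a single variable (you treat (ii) on $R_{\zeta,\kappa}$ while the paper treats (i), which is symmetric), and your segment/intermediate-value argument and explicit neighbourhood step, using the slack $\mathrm{dist}(\cZ(\omega),\Upsilon_0)-\kappa>0$, make rigorous the ball inclusion and the local (not merely pointwise) independence that the paper only asserts.
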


\begin{proof}
Recall notations \eqref{e : realis of compl fun} and  \eqref{e : compl convol}. Since the support of $\varphi_\kappa$ is contained in $ B_{\R^4}(0,\kappa)$, we have
\begin{equation}
	\label{eq: Psinu 6 2}
	\aligned
	\Psi_\kappa(\omega) 
	&= \int_{B(0,\kappa)} \Psi_\cW(\cW(\omega) -z) \varphi_\kappa(z) \, \wrt z\\
	&= \int_{B(0,\kappa)} \Psi(\omega - \cW^{-1}(z)) \varphi_\kappa(z) \, \wrt z,
	\endaligned
\end{equation}
for all $\omega \in \C^2$.
    
Assume now that $\omega =(\zeta,\eta)\in \C^2$ is such that $\omega \in I_\kappa$. Then, by triangle inequality,
\begin{equation}
	\nonumber
	|\zeta-\cV^{-1}(x)| < 3^{1/p}, \qquad 
	|\eta-\cV^{-1}(y)| <3^{1/q},
\end{equation}
for all $z=(x,y) \in B_{\R^4}(0,\kappa)$. Consequently, $\Psi(\omega - \cW^{-1}(z)) = 1$ for all such $z$. Substituting this into \eqref{eq: Psinu 6 2}, we obtain
\begin{equation}
	\nonumber
	\Psi_\kappa(\omega) = \int_{\R^4}  \varphi_\kappa(z) \, \wrt z =1.
\end{equation}
A similar argument ensures that $\Psi_\kappa(\omega)=0$ for all $\omega \in O_\kappa$.

We now prove only \ref{eq: not dep Psinu 2}; the proof for \ref{eq: not dep Psinu 3} is analogous. As previously shown, $\Psi_\kappa$ is constant on the open set $I_\kappa \cup O_\kappa= \C^2 \setminus R_\kappa$; hence, its derivatives vanish there. Furthermore, observe that 
\begin{equation}
	\aligned
	R_{\zeta,\kappa} \cup T_\kappa &= \bigl[\bigl (R_\kappa \cap \{|\zeta|^{p} > |\eta|^q\} \bigr) \setminus T_\kappa \bigr] \cup T_\kappa\\
	&=\bigl (R_\kappa \cap \{|\zeta|^{p} > |\eta|^q\} \bigr) \cup T_\kappa\\
	&= R_\kappa \cap \bigl(\{|\zeta|^{p} > |\eta|^q\} \cup T_{\kappa}\bigr),
	\endaligned
\end{equation}
where we used the definition \eqref{eq: def R,T,Q,Q} of $R_{\zeta,\kappa}$ in the first equality and  the fact  that $T_\kappa \subseteq R_\kappa$ in the last one. Therefore, to conclude, it suffices to verify that
\begin{equation}
	\label{eq: not dep j Psinu 2}
	\partial_{\zeta}\Psi_\kappa(\omega)=0, \qquad \forall \omega \in {\rm int}(R_\kappa) \setminus \left( \{|\zeta|^{p} >|\eta|^q\} \cup T_{\kappa} \right).
\end{equation} 
Fix $\omega =(\zeta,\eta) \in R_\kappa \setminus ( \{|\zeta|^{p} >|\eta|^q\} \cup T_{\kappa})$.  The condition $\omega \in R_\kappa \setminus T_{\kappa}$ implies that the ball $B_{\R^2}(\cZ(\omega),\kappa)$ does not intersect $\Upsilon_0$. Consequently, since $\cZ(\omega)$ lies in the region $\{(s,t) \in \R_+^2 : s^p \leq t^q\}$, we have 
\begin{equation}
	\label{eq: bolla in sp geq 2}
	B_{\R^2}(\cZ(\omega),\kappa) \subseteq  \{s^{p} \leq t^q\}.
\end{equation}
Moreover, the inequality
\begin{equation}
	\nonumber
	\left||\zeta-\cV^{-1}(x)|-|\zeta|\right|^2 + \left||\eta-\cV^{-1}(y)|-|\eta|\right|^2 \leq |\cV^{-1}(x)|^2 +|\cV^{-1}(y)|^2 < \kappa^2
\end{equation}
holds for every $z=(x,y) \in B_{\R^4}(0,\kappa)$, ensuring that $\cZ(\omega-\cW^{-1}(z)) \in B_{\R^2}(\cZ(\omega),\kappa)$. Hence, by \eqref{eq: bolla in sp geq 2}, it follows that
\begin{equation}
	\label{eq: bolla 3 2}
	\omega-\cW^{-1}(z) \in \{|\zeta|^p \leq |\eta|^q\}, \qquad z \in B_{\R^4}(0,\kappa).
\end{equation}
Combining \eqref{eq: Psinu 6 2}, \eqref{eq: bolla 3 2}, and the definition \eqref{eq: def Psiii} of $\Psi$ (which is independent of $\zeta$ where $|\zeta|^p \leq |\eta|^q$), we  prove \eqref{eq: not dep j Psinu 2}.
\end{proof}

On $T_\kappa$, the function $\Psi_\kappa$ might depend on both $\zeta$ and $\eta$. However, the next lemma ensures that $|\zeta|^p$ and $|\eta|^q$ are comparable in that region.

\begin{lemma}
\label{l: 2 2}
Let $p \geq 2$, $q =p/(p-1)$ and $\kappa \in (0,1)$. Then there exists a positive constant $\delta=\delta(p,q)$ such that  
\begin{equation}
	\aligned
	|\zeta|^{p} &\sim_\kappa |\eta|^q, 
	\endaligned
\end{equation}
for all $(\zeta,\eta) \in T_\kappa$ and $\kappa \in (0,\delta)$.
\end{lemma}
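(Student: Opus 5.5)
Our plan is to reduce the statement to a two-dimensional geometric fact about the set $\cZ(T_\kappa)\subseteq\R_+^2$, and then to use compactness. Write $(s,t)=\cZ(\zeta,\eta)=(|\zeta|,|\eta|)$. By definition \eqref{eq: def R,T,Q,Q}, $(s,t)$ lies in $\cZ(R_\kappa)$, so $s\le 4^{1/p}+\kappa\le 4^{1/p}+1$ and $t\le 4^{1/q}+1$. Moreover, $(s,t)$ lies within distance $\kappa$ of the curve $\Upsilon_0=\{s^p=t^q\}$. Both sides of the claimed equivalence are therefore bounded above by a constant depending only on $p$. The real content is a \emph{lower} bound: we must show that $(s,t)$ stays away from the axes, since near the origin $s^p$ and $t^q$ could otherwise fail to be comparable.

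\textbf{Step 1: distance from the origin.} The first step is to show that $\cZ(T_\kappa)$ avoids a neighbourhood of $(0,0)$ once $\kappa$ is small. Since $T_\kappa\subseteq R_\kappa$, every point of $\cZ(T_\kappa)$ satisfies $s\ge 3^{1/p}-\kappa$ or $t\ge 3^{1/q}-\kappa$. We now choose $\delta\le \tfrac12\min\{3^{1/p}-1,3^{1/q}-1\}$, which is positive because $p,q>1$ and so $3^{1/p},3^{1/q}>1$. Then for $\kappa<\delta$ we have $\max\{s,t\}\ge 1+\delta$.

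\textbf{Step 2: the other coordinate.} Next we use the closeness to $\Upsilon_0$. There is a point $(s_0,t_0)$ with $s_0^p=t_0^q$, $|s-s_0|\le\kappa$ and $|t-t_0|\le\kappa$. Suppose, say, $s\ge 1+\delta$. Then $s_0\ge 1$, hence $t_0=s_0^{p/q}=s_0^{p-1}\ge 1$, and so $t\ge 1-\kappa\ge 1-\delta>0$, provided we also take $\delta\le 1/2$. The symmetric case $t\ge1+\delta$ works the same way: $t_0\ge1$ gives $s_0=t_0^{q/p}\ge 1$, hence $s\ge 1-\delta$.

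\textbf{Step 3: conclusion.} Combining the two steps, on $\cZ(T_\kappa)$ both coordinates satisfy $\tfrac12\le s\le C_p$ and $\tfrac12\le t\le C_p$. Consequently $s^p$ and $t^q$ both lie in a fixed compact subinterval $[c_p,C_p']\subset(0,\infty)$, which gives $|\zeta|^p\sim|\eta|^q$ with constants depending only on $p$. These constants are in particular uniform in $\kappa\in(0,\delta)$, which is stronger than the stated $\sim_\kappa$.

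The only delicate point is Step 1: making sure that the rectangles defining $R_\kappa$ do not touch the curve near the origin. This is precisely why a smallness threshold $\delta(p,q)$ is needed, since for $\kappa$ close to $1$ the region $\cZ(R_\kappa)$ could reach the axes, where the comparability breaks down.
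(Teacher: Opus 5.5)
Your proof is correct. Steps 1 and 2 are essentially the paper's auxiliary Lemma~\ref{l: 1 2}, with the constants made explicit. That lemma uses $T_\kappa\subseteq R_\kappa$ to make one coordinate large. It then transfers a lower bound to the other coordinate through a point of $\Upsilon_0$ within distance $\kappa$.

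The difference is in the final step. The paper first pins $s,t,s',t'$ in $[C,1/C]$. It then uses the Lipschitz continuity of $x\mapsto x^{q/p}$ on that interval to get $|s-t^{q/p}|\leq (1+L)\kappa$. This gives comparison constants of the form $(1+O(\kappa))^p$, so $|\zeta|^p$ and $|\eta|^q$ become asymptotically equal as $\kappa\to 0$. You instead observe that once $s$ and $t$ both lie in a fixed compact subinterval of $(0,\infty)$, comparability of $s^p$ and $t^q$ is automatic, with crude constants depending only on $p$.

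Your route is shorter. It also only needs the existence of some point of $\Upsilon_0$ within distance $\kappa$, which follows from closedness. The paper instead asserts, without proof, that the nearest point is unique. The paper's route buys sharper constants that tend to $1$ as $\kappa\to0$. These are not needed later: Corollary~\ref{c: s come t come rho 2} and its applications only use comparability with constants independent of $n$, and your constants provide exactly that.
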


In order to prove the previous lemma we need the following auxiliary result. For all $\omega \in T_{\kappa}$, with $\kappa \in (0,1)$ small enough, we denote by $(s^\prime, t^\prime)$ the unique element in $\Upsilon_0$ such that
\begin{equation}
	\label{eq: sprime tends to s 2}
	|\cZ(\omega) - (s^\prime,t^\prime)| = {\rm dist}(\cZ(\omega), \Upsilon_0) \leq \kappa.
\end{equation}

\begin{lemma}
\label{l: 1 2}
Let $p \geq 2$, $q =p/(p-1)$ and $\kappa \in (0,1)$. Suppose that $(\zeta,\eta) \in T_{\kappa}$. Then there exist two positive constants $\delta=\delta(p,q)$ and $C=C(p,q)$, not depending on $\kappa$, such that 
\begin{equation}
	\nonumber
	|\zeta|,|\eta|\geq C,
\end{equation}
for all $\kappa \in (0,\delta)$.
\end{lemma}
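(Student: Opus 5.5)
The plan is to combine two facts: points of $T_\kappa$ lie in $R_\kappa$, hence stay away from the origin in at least one coordinate, and $\cZ(\omega)$ is within distance $\kappa$ of the curve $\Upsilon_0=\{s^p=t^q\}$. Write $\cZ(\omega)=(s,t)=(|\zeta|,|\eta|)$ and let $(s',t')\in\Upsilon_0$ be a point with $|(s,t)-(s',t')|\le\kappa$ (a nearest point; uniqueness is not needed for the lower bound). By the definition \eqref{eq: def I,R,R,O} of $R_\kappa$, either $s\ge 3^{1/p}-\kappa$ or $t\ge 3^{1/q}-\kappa$. Hence, for $\kappa<\delta_1:=\tfrac12\min\{3^{1/p},3^{1/q}\}-\tfrac14$ (which is positive, since $3^{1/p},3^{1/q}\ge 1$ and therefore $\tfrac12\min\{3^{1/p},3^{1/q}\}\ge\tfrac12$), we get $\max\{s,t\}\ge m:=\min\{3^{1/p},3^{1/q}\}-\delta_1$, a constant depending only on $p$.

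We transfer this bound to the nearby curve point. Since $|s-s'|,|t-t'|\le\kappa$, we get $\max\{s',t'\}\ge m-\kappa\ge m-\delta_1$. On $\Upsilon_0$ we have $t'=s'^{\,p/q}=s'^{\,p-1}$, so the curve is parametrized by $s'$ alone. If $s'\ge t'$, then $s'\ge m-\delta_1$ and $t'=s'^{p-1}\ge (m-\delta_1)^{p-1}$. If $t'\ge s'$, then $t'\ge m-\delta_1$ and $s'=t'^{1/(p-1)}\ge (m-\delta_1)^{1/(p-1)}$. In either case both $s'$ and $t'$ are bounded below by
\[
c_0:=\min\{(m-\delta_1),(m-\delta_1)^{p-1},(m-\delta_1)^{1/(p-1)}\},
\]
which is positive as long as $m-\delta_1>0$. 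This holds because $m-\delta_1=\min\{3^{1/p},3^{1/q}\}-2\delta_1=\tfrac12$.

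Finally we return to $\omega$ itself: $s\ge s'-\kappa\ge c_0-\kappa$, and likewise for $t$. Choose $\delta:=\min\{\delta_1,c_0/2\}$ and $C:=c_0/2$. Then for every $\kappa\in(0,\delta)$ we obtain $|\zeta|,|\eta|\ge C$, with $C$ and $\delta$ depending only on $p$ (and thus on $q$), not on $\kappa$.

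The only delicate point is the bookkeeping of constants so that $\delta$ is independent of $\kappa$ and the thresholds remain positive. In particular, the power $p-1\ge1$ can shrink a number smaller than $1$, which is why the minimum over the three exponents is taken in $c_0$. No step should present a real obstacle.
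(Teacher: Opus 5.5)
Your proof is correct and follows the paper's argument. You use $T_\kappa\subseteq R_\kappa$ to get $s\ge 3^{1/p}-\kappa$ or $t\ge 3^{1/q}-\kappa$, pass to a point $(s',t')\in\Upsilon_0$ within distance $\kappa$, use $t'=(s')^{p-1}$ to bound the other coordinate, and return to $(s,t)$ at a further cost of $\kappa$; the only differences are that you merge the paper's two cases via $\max\{s,t\}$ and give explicit $\delta$ and $C$ where the paper just says ``choose $\delta$ sufficiently small.''
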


\begin{proof}
Let $(\zeta,\eta) \in T_{\kappa}$. Set
\begin{equation}
	\nonumber
	s=|\zeta|, \qquad t=|\eta|.
\end{equation}
Since $T_\kappa \subseteq R_\kappa$, either $s \geq 3^{1/p}-\kappa$ or $t \geq 3^{1/q}-\kappa$. Suppose we are in the first case. Then, by \eqref{eq: sprime tends to s 2},
\begin{equation}
	\label{eq: sprime geq 2}
	\aligned
	s^\prime = s^\prime -s + s \geq -\kappa +s  \geq 3^{1/p}-2\kappa.
	\endaligned
\end{equation}
Since $(s^\prime,t^\prime) \in \Upsilon_0$, we have $(s^\prime)^p=(t^\prime)^q$. Hence,
\begin{equation*}
	\label{eq: t geq}
	\aligned
	t = t- t^\prime + t^\prime 
	= t- t^\prime + (s^\prime)^{p/q} 
	\geq -\kappa + (3^{1/p}-2\kappa)^{p/q},
	\endaligned
\end{equation*}
where in the last inequality we used \eqref{eq: sprime tends to s 2} and \eqref{eq: sprime geq 2}. Choosing $\delta>0$ sufficiently small, both $s$ and $t$ remain uniformly bounded from below by a positive constant $C=C(p,q)$.

The case \(t \geq 3^{1/q}-\kappa\) is entirely analogous.
\end{proof}

\begin{proof}[Proof of Lemma~\ref{l: 2 2}] 
Let $(\zeta,\eta) \in T_{\kappa}$. Set again $s=|\zeta|$ and $t=|\eta|$. As $(s^\prime, t^\prime) \in \Upsilon_0$, we get
\begin{equation}
	\label{eq: stpq 2}
	\aligned
	s - t^{q/p} &= s - (t^\prime)^{q/p} + (t^\prime)^{q/p} - t^{q/p} \\
	& = s - s^\prime + (t^\prime)^{q/p} - t^{q/p}.
	\endaligned
\end{equation}
By Lemma \ref{l: 1 2} and \eqref{eq: sprime tends to s 2} there exist $C,\delta >0$, not depending on $\kappa$, such that
\begin{equation}
	\label{eq: t, tprime bounded 2}
	s, t, s^\prime, t^\prime \in [C, 1/C],
\end{equation}
for all $\kappa \in (0,\delta)$. Since the function $[C, 1/C] \ni x \mapsto x^{q/p}$ is $L=L(C)$-Lipschitz continuous, we obtain for all $\kappa \in (0,\delta)$,
\begin{equation}
	\label{Lips ineq tqp 2}
	|(t^\prime)^{q/p} - t^{q/p}| \leq L |t^\prime - t| \leq L \kappa,
\end{equation}
where in the last inequality we used \eqref{eq: sprime tends to s 2}. Therefore, by combining \eqref{eq: sprime tends to s 2}, \eqref{eq: stpq 2}, \eqref{eq: t, tprime bounded 2} and \eqref{Lips ineq tqp 2}, we get
\begin{equation}
	\nonumber
	\aligned
	|s - t^{q/p}| &\leq (1+L)\kappa \leq \frac{1+L}{C^{q/p}}\kappa \cdot t^{q/p}, \\
	|s- t^{q/p}| &\leq (1+L)\kappa \leq \frac{1+L}{C}\kappa \cdot s.
	\endaligned
\end{equation}
Thus,
\begin{equation}
	\nonumber
	s^p \leq \left( 1+ \frac{1+L}{C^{q/p}}\kappa\right)^p t^{q} \qquad \text{and} \qquad t^{q}  \leq \left(1+\frac{1+L}{C}\kappa \right)^p s^p,
\end{equation}
which imply that $s^p \sim_\kappa t^q$.
\end{proof}

Recall  the dilation operator $\cD_n$, $n \in \N_+$, introduced in \eqref{eq: dilat psin}.

\begin{corollary}
\label{c: s come t come rho 2}
Let $p \geq 2$, $q =p/(p-1)$, $\kappa \in (0,1)$ and $n \in \N_+$. Then there exists a positive constant $\delta=\delta(p,q)$, not depending on $n$, such that  
\begin{equation}
	\aligned
	|\zeta|^{p} &\sim_\kappa |\eta|^q,
	\endaligned
\end{equation}
for all $ (\zeta,\eta) \in \cD_n^{-1}(T_\kappa)$ and $\kappa \in (0,\delta)$. The implied constants do not depend on $n$.
\end{corollary}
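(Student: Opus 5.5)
The corollary follows from Lemma~\ref{l: 2 2} by a scaling argument, because the dilation $\cD_n$ in \eqref{eq: dilat oper} is designed so that $|\zeta|^p$ and $|\eta|^q$ scale by the same factor. Take $\delta=\delta(p,q)$ as in Lemma~\ref{l: 2 2}; it does not depend on $n$. Fix $\kappa\in(0,\delta)$ and $(\zeta,\eta)\in \cD_n^{-1}(T_\kappa)$, and set
\begin{equation}
	\nonumber
	(\zeta',\eta') := \cD_n(\zeta,\eta) = \left(\frac{\zeta}{n^{1/p}},\frac{\eta}{n^{1/q}}\right) \in T_\kappa.
\end{equation}
Lemma~\ref{l: 2 2} then gives constants $c_1(\kappa),c_2(\kappa)>0$, depending only on $p$, $q$ and $\kappa$, such that
\begin{equation}
	\nonumber
	c_1(\kappa)\,|\eta'|^q \leq |\zeta'|^p \leq c_2(\kappa)\,|\eta'|^q.
\end{equation}

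Next, compute $|\zeta'|^p = |\zeta|^p/n$ and $|\eta'|^q = |\eta|^q/n$. Both powers carry the same factor $1/n$, since the exponents $1/p$ and $1/q$ in $\cD_n$ are paired with the powers $p$ and $q$ respectively. Multiplying the two-sided inequality by $n$ yields
\begin{equation}
	\nonumber
	c_1(\kappa)\,|\eta|^q \leq |\zeta|^p \leq c_2(\kappa)\,|\eta|^q,
\end{equation}
with the same constants. In particular the implied constants do not depend on $n$.

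There is essentially no obstacle here. The only point to verify is that the constants in Lemma~\ref{l: 2 2} depend only on $p$, $q$ and $\kappa$, and not on the particular point of $T_\kappa$. This holds because that lemma produces the explicit bounds $\bigl(1+\tfrac{1+L}{C^{q/p}}\kappa\bigr)^p$ and $\bigl(1+\tfrac{1+L}{C}\kappa\bigr)^p$, where $C$ and $L$ depend only on $p$ and $q$ through Lemma~\ref{l: 1 2}.
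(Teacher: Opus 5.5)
Your proposal is correct and follows the paper's own proof: you apply Lemma~\ref{l: 2 2} to $\cD_n(\zeta,\eta)\in T_\kappa$ and use $|\zeta/n^{1/p}|^p=|\zeta|^p/n$ and $|\eta/n^{1/q}|^q=|\eta|^q/n$, so the common factor $1/n$ cancels and the constants are independent of $n$. Your check that the constants in Lemma~\ref{l: 2 2} depend only on $p$, $q$ and $\kappa$ (via $C$ and $L$ from Lemma~\ref{l: 1 2}) makes explicit a point the paper leaves implicit.
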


\begin{proof}
It follows by combining Lemma~\ref{l: 2 2} with the identities
\begin{equation}
	\nonumber
	\left(\frac{|\zeta|}{n^{1/p}}\right)^{p} = \frac{|\zeta|^{p}}{n}, \qquad \left(\frac{|\eta|}{n^{1/q}}\right)^{q} = \frac{|\eta|^{q}}{n},
\end{equation}
which holds for $n \in \N$ and $\zeta,\eta \in \C$.
\end{proof}


\subsection{The sequence $\left(\Psi_{n}\right)_{n \in\N}$}
\label{s: def Psi n fin}
Let $p \geq 2$ and denote by $q$ its conjugate exponent. Recall definitions \eqref{eq: def I,R,R,O} and \eqref{eq: def R,T,Q,Q}. Fix $\kappa \in (0, \delta)$, where $\delta$ is the positive constant of Lemma~\ref{l: 2 2} and Corollary~\ref{c: s come t come rho 2}. For all $n \in \N$ define the function $\Psi_{\kappa,n} : \C^2 \rightarrow [0,1]$   by 
\begin{equation}
	\label{eq : def Psin}
	\aligned
	\Psi_{\kappa, n} &:= \Psi_\kappa \circ \cD_{n},
	\endaligned
\end{equation}
where $\Psi_\kappa$ is the regularization of $\Psi$, both defined in \eqref{eq: def Psi e Psinu 2}, and $\cD_{n}$ is the dilation operator introduced in \eqref{eq: dilat psin}. By construction, each $\Psi_{\kappa,n}$ is  smooth on $\C^2$. Moreover, it is straightforward to verify that $\left(\Psi_{\kappa,n}\right)_{n\in\N}$ is an admissible sequence of truncations with
\begin{equation}
	\nonumber
	\aligned
	K_{1,n} =  \cD_{n}^{-1}(I_\kappa), \qquad
	K_{2,n} =  \cD_{n}^{-1}(O_\kappa),
	\endaligned
\end{equation}
for all $n \in \N$.

Since the regularization parameter $\kappa$ remains fixed throughout the subsequent analysis, we shall henceforth suppress the subscript $\kappa$ and denote the sequence defined in \eqref{eq : def Psin} simply by $\left(\Psi_n\right)_{n\in \N}$. It should not be confused with the regularization $\Psi_\kappa$ of $\Psi$. For the same reason, we set
\begin{equation}
	\nonumber
	\begin{array}{rclcrcl}
	I &= &I_\kappa,& \qquad & O &= &O_\kappa, \\
	R_\zeta &= &R_{\zeta,\kappa},& \qquad & R_\eta &=&  R_{\eta,\kappa}, \\
	T &=& T_\kappa,& \qquad & R &= &R_\kappa.
	\end{array}
\end{equation}

We now establish quantitative bounds for the first- and second-order derivatives of $\Psi_n$.

\begin{corollary}
\label{c: stime deriv prime Psi 2}
Let $n \in \N$. Then 
\begin{equation}
	\nonumber
	\aligned
	|\partial_{\zeta} \Psi_{n} (\omega)| & \leqsim
	\begin{cases}
	\frac{1}{n^{1/p}}, &{\rm if} \,  \cD_{n}(\omega) \in R_{\zeta} \cup T; \\
	0, &{\rm otherwise}\,,
	\end{cases}\\
	|\partial_{\eta} \Psi_{n} (\omega)| & \leqsim
	\begin{cases}
	\frac{1}{n^{1/q}}, &{\rm if} \,  \cD_{n}(\omega) \in R_{\eta} \cup T; \\
	0, &{\rm otherwise}\,.
	\end{cases}
	\endaligned
\end{equation}
\end{corollary}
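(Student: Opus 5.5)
The plan is to reduce the statement to properties of the fixed function $\Psi_\kappa$ via the chain rule for the dilation $\cD_n$, and then read off the support information from Lemma~\ref{l: dipendenze di Psi 2}. Since $\Psi_n=\Psi_\kappa\circ\cD_n$ and $\cD_n$ acts by the real-linear scaling $\zeta\mapsto\zeta/n^{1/p}$ in the first complex variable and $\eta\mapsto\eta/n^{1/q}$ in the second, the chain rule gives
\begin{equation}
	\nonumber
	(\partial_\zeta\Psi_n)(\omega)=n^{-1/p}(\partial_\zeta\Psi_\kappa)(\cD_n(\omega)),\qquad (\partial_\eta\Psi_n)(\omega)=n^{-1/q}(\partial_\eta\Psi_\kappa)(\cD_n(\omega)),
\end{equation}
for all $\omega\in\C^2$. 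The same identities hold for the real partial derivatives in $\zeta_1,\zeta_2$ and in $\eta_1,\eta_2$, so it makes no difference which convention for $\partial_\zeta$ is used.

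Next, I will show that $\partial_\zeta\Psi_\kappa$ and $\partial_\eta\Psi_\kappa$ are bounded on $\C^2$ by a constant independent of $n$. This is immediate because $\Psi_\kappa=\Psi*\varphi_\kappa\in C_c^\infty(\C^2)$ by Lemma~\ref{l: dipendenze di Psi 2}. For an explicit bound, differentiate under the integral in \eqref{eq: Psinu 6 2} and place the derivative on the mollifier. Using $0\le\Psi\le1$, this gives $|D\Psi_\kappa|\le\|D\varphi_\kappa\|_{L^1}\lesssim\kappa^{-1}$. Since $\kappa\in(0,\delta)$ is fixed once and for all, this constant is harmless and may be absorbed into $\leqsim$. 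Multiplying by the scaling factors $n^{-1/p}$ and $n^{-1/q}$ then yields the nonzero cases of the statement.

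Finally, for the vanishing cases I will invoke Lemma~\ref{l: dipendenze di Psi 2}\ref{eq: not dep Psinu 2}, which says that $\partial_\zeta\Psi_\kappa=0$ outside $R_\zeta\cup T$, and \ref{eq: not dep Psinu 3}, which says that $\partial_\eta\Psi_\kappa=0$ outside $R_\eta\cup T$. Combined with the chain-rule identities, these give $\partial_\zeta\Psi_n(\omega)=0$ whenever $\cD_n(\omega)\notin R_\zeta\cup T$, and similarly for $\partial_\eta\Psi_n$. There is no real obstacle here. The only points needing care are that the implied constant depends on the fixed $\kappa$ but not on $n$, and that the scaling factor differs between the two variables, which is precisely what the anisotropic dilation $\cD_n$ is designed to produce.
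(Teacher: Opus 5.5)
Your proposal is correct and follows essentially the same route as the paper: you use the chain-rule identity $\partial_\zeta\Psi_n(\omega)=n^{-1/p}(\partial_\zeta\Psi_\kappa)(\cD_n(\omega))$ (and its $\eta$-analogue), and you read off the vanishing from Lemma~\ref{l: dipendenze di Psi 2}. Your explicit bound $|D\Psi_\kappa|\le\|D\varphi_\kappa\|_{L^1}\lesssim\kappa^{-1}$, with $\kappa$ fixed, spells out the uniform boundedness of $D\Psi_\kappa$ that the paper uses without comment when it states the nonzero case.
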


\begin{proof}
Let us prove only the first estimate. Clearly,
\begin{equation}
	\label{eq: chian rule first der Psi 2}
	\partial_{\zeta} \Psi_{n}(\omega) = \frac{1}{n^{1/p}} \partial_{\zeta} \Psi_\kappa(\cD_{n}(\omega)),
\end{equation}
for every $\omega \in \C^2$. Thus, by Lemma~\ref{l: dipendenze di Psi 2} we obtain that $\partial_{\zeta} \Psi_{n}(\omega) =0$ if $ \cD_{n} (\omega) \notin R_{\zeta} \cup T$.

On the other hand, \eqref{eq: chian rule first der Psi 2} implies that
\begin{equation}
	\nonumber
	|\partial_{\zeta} \Psi_{n}(\omega)| \leqsim \frac{1}{n^{1/p}},
\end{equation}
if $ \cD_{n} (\omega) \in R_{\zeta} \cup T$.
\end{proof}

\begin{corollary}
\label{c: est hess jk Psinnu 2}
Let  $n \in \N$. Then
\begin{equation}
	\nonumber
	\aligned
	|D^2_{\zeta\zeta}\Psi_{n}(\omega)| &\leqsim 
	\begin{cases}
	\frac{1}{n^{2/p}}, &{\rm if}\,   \cD_{n}(\omega) \in R_{\zeta} \cup T;\\
	0, &{\rm otherwise}\,,
	\end{cases}\\
	|D^2_{\eta\eta}\Psi_{n}(\omega)| &\leqsim 
	\begin{cases}
	\frac{1}{n^{2/q}}, &{\rm if}\,   \cD_{n}(\omega) \in R_{\eta} \cup T; \\
	0, &{\rm otherwise}\,,
	\end{cases}\\
	|D^2_{\zeta\eta}\Psi_{n}(\omega)| &\leqsim 
	\begin{cases}
	\frac{1}{n^{1/p}}\cdot\frac{1}{n^{1/q}}, &{\rm if}\,   \cD_{n}(\omega) \in T; \\
	0, &{\rm otherwise}\,.
	\end{cases}
	\endaligned
\end{equation}
\end{corollary}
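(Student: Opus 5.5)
The estimates follow from the chain rule applied to $\Psi_n=\Psi_\kappa\circ\cD_n$, combined with the localisation of the derivatives of $\Psi_\kappa$ given by Lemma~\ref{l: dipendenze di Psi 2}. This is the same argument as in Corollary~\ref{c: stime deriv prime Psi 2}, one order higher.

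\emph{Scaling.} Since $\cD_n$ is a real-linear map that scales the $\zeta$-block by $n^{-1/p}$ and the $\eta$-block by $n^{-1/q}$, the real Hessian of $(\Psi_n)_\cW$ in the $(\zeta_1,\zeta_2)$ variables equals $n^{-2/p}$ times the corresponding block of $D^2\Psi_\kappa$ evaluated at $\cD_n(\omega)$. Analogously, the $(\eta,\eta)$ block carries the factor $n^{-2/q}$, and the mixed $(\zeta,\eta)$ block carries $n^{-1/p}n^{-1/q}$. Moreover, $\Psi_\kappa\in C_c^\infty(\C^2)$ for a fixed $\kappa$, so $D^2\Psi_\kappa$ is bounded with a bound depending only on $\kappa$ and $\phi$, hence independent of $n$. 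This already yields the claimed size bounds on the respective regions.

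\emph{Vanishing.} It remains to show that each block vanishes outside the stated region. By Lemma~\ref{l: dipendenze di Psi 2}\ref{eq: not dep Psinu 2}, $\partial_{\zeta_j}\Psi_\kappa$ vanishes on $\C^2\setminus(R_\zeta\cup T)$. I want to differentiate this identity once more, so I need it on an open set. Since $\Psi_\kappa$ is constant on the open set $\C^2\setminus R$, all its derivatives vanish there. On the remaining part of $\C^2\setminus(R_\zeta\cup T)$, namely $R_\eta$ away from $T$, the proof of that lemma shows more: for such $\omega$ the whole ball $B(\cZ(\omega),\kappa)$ lies in $\{s^p\le t^q\}$. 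Hence, near $\omega$, $\Psi_\kappa$ is a convolution of a function of $\eta$ alone and is itself independent of $\zeta$. Consequently $D^2_{\zeta\zeta}\Psi_\kappa=0$ and $D^2_{\zeta\eta}\Psi_\kappa=0$ there.

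The symmetric argument gives $D^2_{\eta\eta}\Psi_\kappa=D^2_{\zeta\eta}\Psi_\kappa=0$ on the interior-type points of $\C^2\setminus(R_\eta\cup T)$. Intersecting the two statements, the mixed block vanishes outside $T$. Points on the boundary of $R_\zeta\cup T$ can be handled either by the continuity of $D^2\Psi_\kappa$ or simply absorbed into the nonzero case, since the inequalities only need to hold as upper bounds. Composing with $\cD_n$ then gives all three cases.

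\emph{Main obstacle.} The only delicate point is justifying that the vanishing of first derivatives propagates to second derivatives. The statement of Lemma~\ref{l: dipendenze di Psi 2} is pointwise on a set that is not open, so I would rely on the local independence argument from its proof (the ball inclusion \eqref{eq: bolla in sp geq 2} holds at the given point and persists on a small neighbourhood) rather than on the lemma's statement alone. If that persistence needs care near the boundary of $T$, I would just enlarge the nonzero region slightly, which does no harm to the estimates.
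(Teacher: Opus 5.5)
Your proposal is correct and is essentially the paper's proof: the chain rule under $\cD_n$ produces the factors $n^{-2/p}$, $n^{-2/q}$ and $n^{-1/p}n^{-1/q}$, the localisation comes from Lemma~\ref{l: dipendenze di Psi 2}, and the mixed block vanishes off $(R_\zeta\cup T)\cap(R_\eta\cup T)=T$. The ``main obstacle'' you flag is not actually there, because $R_\zeta\cup T$ is closed: its complement $I\cup R_\eta\cup O$ is open, since $I$ and $O$ are open and the conditions $|\zeta|^p<|\eta|^q$ and ${\rm dist}(\cZ(\omega),\Upsilon_0)>\kappa$ are open. Hence $\partial_\zeta\Psi_\kappa=0$ on an open set and can be differentiated directly, which is what the paper tacitly uses, so your local-independence workaround is valid but unnecessary.
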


\begin{proof}
The first two estimates can be proven in the same way as in Corollary~\ref{c: stime deriv prime Psi 2}. Let us prove only the third one. Clearly,
\begin{equation}
	\label{eq: chian rule second der Psi 2}
	|D^2_{\zeta\eta} \Psi_{n}(\omega)| = \frac{1}{n^{1/p}} \cdot \frac{1}{n^{1/q}} |D_{\zeta\eta} \Psi_\kappa(\cD_{n}(\omega))|,
\end{equation}
for all $\omega \in \C^2$. Then, by Lemma~\ref{l: dipendenze di Psi 2}, we obtain that $D^2_{\zeta\eta} \Psi_{n}(\omega) =0$ if
\begin{equation}
	\nonumber
	\aligned
	 \cD_{n} (\omega) \notin (R_{\zeta} \cup T) \cap (R_{\eta} \cup T) = T,
	\endaligned
\end{equation}
where the last equality holds since $R_{\zeta} \cap R_{\eta}=\emptyset$.

On the other hand, \eqref{eq: chian rule second der Psi 2} implies that
\begin{equation}
	\nonumber
	|D^2_{\zeta\eta} \Psi_{n}(\omega)| \leqsim \frac{1}{n^{1/p}}\cdot \frac{1}{n^{1/q}},
\end{equation}
if $ \cD_{n} (\omega) \in T$.
\end{proof}

As a consequence of the estimates obtained above, we derive bounds for $\mathbf{H}^{(\oA,\oB)}_{\Psi_n}$.

\begin{corollary}
\label{c : 10 2}
Let $\oA=(A,b,c,V), \oB=(B,\beta,\gamma,W) \in \cB(\Omega)$ and $n \in \N$. Then the following statements hold.
\begin{enumerate}[label=\textnormal{(\roman*)}]
\item\label{i : est hess new seq}for every $X,Y \in \C^d$ we have
\begin{equation*}
	\aligned
	\bigl|H_{\Psi_{n}}^{(A,B)}[\omega; (X,Y)] \bigr| \leqsim
	\begin{cases}
	\frac{1}{n^{2/p}} |X|^2, & \,\,{\rm if} \,  \cD_{n}(\omega) \in R_{\zeta};\\
	\frac{1}{n^{2/q}} |Y|^2, & \,\,{\rm if}\,  \cD_{n}(\omega) \in R_{\eta};\\
	\frac{1}{n^{2/p}} |X|^2 + \frac{1}{n^{2/q}} |Y|^2 , & \,\,{\rm if} \,  \cD_{n}(\omega) \in T;\\
	0, & \,\,{\rm otherwise}\,;
	\end{cases}
	\endaligned
\end{equation*}
\item\label{i : est low hess new seq}for every $X,Y \in \C^d$ we have
\begin{equation*}
	\bigl|H_{\Psi_{n}}^{(b,c,\beta,\gamma)}[\omega; (X,Y)] \bigr| \leqsim
	\begin{cases}
	\frac{1}{n^{1/p}} \sqrt{V}|X|,& \,\,{\rm if} \,  \cD_{n}(\omega) \in R_{\zeta};\\[1mm]
	\frac{1}{n^{1/q}}\sqrt{W} |Y|,& \,\,{\rm if} \,  \cD_{n}(\omega) \in R_{\eta};\\[1mm]
	\left( \sqrt{V}+\sqrt{W}\right) \left(\frac{1}{n^{1/p}} |X|+ \frac{1}{n^{1/q}} |Y|\right),& \,\,{\rm if} \,  \cD_{n}(\omega) \in T;\\[2mm]
	0, & \,\,{\rm otherwise}\,;
	\end{cases}
\end{equation*}
\item\label{i : est pot hess new seq}we have
\begin{equation*}
	\bigl|G_{\Psi_{n}}^{(V,W)}(\omega) \bigr| \leqsim
	\begin{cases}
	V, & \,\,{\rm if} \,  \cD_{n}(\omega) \in R_{\zeta};\\[1mm]
	W, & \,\,{\rm if}\,  \cD_{n}(\omega) \in R_{\eta};\\[1mm]
	V+W, & \,\,{\rm if} \,  \cD_{n}(\omega) \in T;\\[2mm]
	0, & \,\,{\rm otherwise}\,.
	\end{cases}
\end{equation*}
\end{enumerate}
\end{corollary}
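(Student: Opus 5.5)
The plan is to plug the derivative bounds of Corollaries~\ref{c: stime deriv prime Psi 2} and \ref{c: est hess jk Psinnu 2} into the elementary estimates \eqref{eq: trivial est hess}, \eqref{eq: trivial est hess first} and \eqref{eq: trivial est hess pot}, with $N=2$. We use the region of $\cD_n(\omega)$ to determine which derivatives survive. Throughout we use that $\Psi_n$ is supported where $\cD_n(\omega)\in R\cup I$. In particular, on $\cD_n^{-1}(R)$ we have $|\zeta|\leqsim n^{1/p}$ and $|\eta|\leqsim n^{1/q}$, since $\cZ(R)\subseteq[0,4^{1/p}+1]\times[0,4^{1/q}+1]$.

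\emph{Item \ref{i : est hess new seq}.} By \eqref{eq: trivial est hess},
\[
|H^{(A,B)}_{\Psi_n}|\leqsim |D^2_{\zeta\zeta}\Psi_n||X|^2+2|D^2_{\zeta\eta}\Psi_n||X||Y|+|D^2_{\eta\eta}\Psi_n||Y|^2 .
\]
On $\cD_n^{-1}(R_\zeta)$ only the first term is nonzero, and it is $\leqsim n^{-2/p}|X|^2$. The region $\cD_n^{-1}(R_\eta)$ is symmetric. On $\cD_n^{-1}(T)$ all three terms can appear. The mixed term is $\leqsim n^{-1/p}n^{-1/q}|X||Y|\leq \tfrac12(n^{-2/p}|X|^2+n^{-2/q}|Y|^2)$ by Young's inequality. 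Outside $\cD_n^{-1}(R)$, $\Psi_n$ is locally constant, so everything vanishes.

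\emph{Item \ref{i : est pot hess new seq}.} By \eqref{eq: trivial est hess pot},
\[
|G^{(V,W)}_{\Psi_n}|\leqsim |\partial_\zeta\Psi_n|V|\zeta|+|\partial_\eta\Psi_n|W|\eta| .
\]
On the relevant regions Corollary~\ref{c: stime deriv prime Psi 2} gives $|\partial_\zeta\Psi_n||\zeta|\leqsim n^{-1/p}n^{1/p}=1$, and similarly $|\partial_\eta\Psi_n||\eta|\leqsim 1$. This yields $V$ on $R_\zeta$, $W$ on $R_\eta$ and $V+W$ on $T$, because $\partial_\eta\Psi_n$ vanishes on $R_\zeta$ and $\partial_\zeta\Psi_n$ vanishes on $R_\eta$. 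Here $V,W\geq0$ since $\cB(\Omega)$ consists of nonnegative potentials.

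\emph{Item \ref{i : est low hess new seq}.} This is the only place where membership in $\cB(\Omega)$ is needed. We use \eqref{eq: bc cont by V}: $|b|,|c|\leqsim\sqrt V$ and $|\beta|,|\gamma|\leqsim\sqrt W$. By \eqref{eq: trivial est hess first}, $|H^{(b,c,\beta,\gamma)}_{\Psi_n}|$ is bounded by second-derivative terms of the form $|D^2_{\omega_j\omega_k}\Psi_n||\Xi_k||c_j||\omega_j|$ and first-derivative terms $|\partial_{\omega_j}\Psi_n||\Xi_j||b_j|$.

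On $\cD_n^{-1}(R_\zeta)$ the only surviving contributions are
\[
|D^2_{\zeta\zeta}\Psi_n||X||c||\zeta|\leqsim n^{-2/p}n^{1/p}\sqrt V|X|
\qquad\text{and}\qquad
|\partial_\zeta\Psi_n||X||b|\leqsim n^{-1/p}\sqrt V|X| ,
\]
which gives $n^{-1/p}\sqrt V|X|$. The case $R_\eta$ is symmetric. On $\cD_n^{-1}(T)$ we additionally get the mixed terms $|D^2_{\zeta\eta}\Psi_n|(|Y||c||\zeta|+|X||\gamma||\eta|)$. These are bounded by
\[
n^{-1/p}n^{-1/q}\bigl(n^{1/p}\sqrt V|Y|+n^{1/q}\sqrt W|X|\bigr)=n^{-1/q}\sqrt V|Y|+n^{-1/p}\sqrt W|X| ,
\]
which is absorbed by the stated bound $(\sqrt V+\sqrt W)(n^{-1/p}|X|+n^{-1/q}|Y|)$.

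The main point to check is this mixed bookkeeping on $T$: the factor $|\zeta|$ must pair with $n^{-1/p}$ and $|\eta|$ with $n^{-1/q}$. This pairing works precisely because of the anisotropic dilation $\cD_n$. Corollary~\ref{c: s come t come rho 2} is not even needed, since the support bound alone suffices.
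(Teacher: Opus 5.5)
Your proposal is correct and follows essentially the same route as the paper. You insert the derivative bounds of Corollaries~\ref{c: stime deriv prime Psi 2} and \ref{c: est hess jk Psinnu 2} into \eqref{eq: trivial est hess}, \eqref{eq: trivial est hess first} and \eqref{eq: trivial est hess pot}, use Young's inequality for the mixed term in (i), and combine \eqref{eq: bc cont by V} with the support bounds $|\zeta|\leqsim n^{1/p}$, $|\eta|\leqsim n^{1/q}$ on $\cD_n^{-1}(R)$ for (ii) and (iii), including the same pairing of $|\zeta|$ with $n^{-1/p}$ and $|\eta|$ with $n^{-1/q}$ in the mixed terms on $T$.
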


\begin{proof}
Let $\omega=(\zeta,\eta) \in \C^2$. Item~\ref{i : est hess new seq} follows by combining \eqref{eq: trivial est hess}, applied with $\Phi=\Psi_n$, with Corollary~\ref{c: est hess jk Psinnu 2}, \eqref{eq: partiz R2} and the inequality
\begin{equation}
	\nonumber
	\frac{1}{n^{1/p}}|X|\cdot\frac{1}{n^{1/q}}|Y| \leqsim \frac{1}{n^{2/p}} |X|^2 + \frac{1}{n^{2/q}} |Y|^2.
\end{equation}

Now prove item~\ref{i : est low hess new seq}. By means of Corollary~\ref{c: stime deriv prime Psi 2}, Corollary~\ref{c: est hess jk Psinnu 2} and \eqref{eq: partiz R2}, it suffices to consider the case $  \cD_{n}(\omega) \in R_{\zeta} \sqcup R_\eta \sqcup T$.  Suppose first that $ \cD_{n}(\omega) \in R_{\zeta}$. By combining \eqref{eq: trivial est hess first}, applied with $\Phi=\Psi_n$, with \eqref{eq: bc cont by V}, Corollary~\ref{c: stime deriv prime Psi 2} and Corollary~\ref{c: est hess jk Psinnu 2}, we obtain
\begin{equation}
	\nonumber
	\bigl|H_{\Psi_{n}}^{(b,c,\beta,\gamma)}[\omega; (X,Y)] \bigr| \leqsim  \sqrt{V} \left(  \frac{1}{n^{2/p}} |X||\zeta|+\frac{1}{n^{1/p}}|X|\right),
\end{equation}
for all $X,Y \in \C^d$. Since $ R_{\zeta} \subset R$, we have $ |\zeta|\leqsim n^{1/p}$, and so
\begin{equation}
	\nonumber
	\frac{1}{n^{2/p}}|X||\zeta| \leqsim \frac{1}{n^{1/p}}|X|,
\end{equation}
which gives the desired inequality. The case $ \cD_{n}(\omega) \in R_{\eta}$ is entirely analogous. Finally, suppose that $ \cD_{n}(\omega) \in T$. By combining \eqref{eq: trivial est hess first} with \eqref{eq: bc cont by V}, Corollary~\ref{c: stime deriv prime Psi 2} and Corollary~\ref{c: est hess jk Psinnu 2}, we obtain
\begin{equation}
	\nonumber
	\aligned
	\bigl|H_{\Psi_{n}}^{(b,c,\beta,\gamma)}[\omega; (X,Y)] \bigr| \leqsim& \,\,  \sqrt{V}\left(\frac{1}{n^{2/p}} |X||\zeta|+\frac{1}{n^{1/p}}|X|\right)  + \frac{1}{n^{1/p}}  \frac{1}{n^{1/q}} \sqrt{W} |X||\eta|  \\
	& +\sqrt{W} \left( \frac{1}{n^{2/q}} |Y||\eta|+\frac{1}{n^{1/q}}|Y|\right)+\frac{1}{n^{1/p}}  \frac{1}{n^{1/q}} \sqrt{V} |Y||\zeta|,
	\endaligned
\end{equation}
for all $X,Y \in \C^d$. Since $T \subseteq R$, we have 
\begin{equation}
	\nonumber
	|\zeta| \leqsim n^{1/p} \quad {\rm and} \quad  |\eta|\leqsim n^{1/q},
\end{equation}
which imply
\begin{equation}
	\nonumber
	\aligned
	\frac{1}{n^{2/p}} \sqrt{V} |X||\zeta| +\frac{1}{n^{1/p}}  \frac{1}{n^{1/q}} \sqrt{W}|X||\eta| &\leqsim \frac{1}{n^{1/p}}  \sqrt{V} |X|+\frac{1}{n^{1/p}}  \sqrt{W} |X|, \\
	\frac{1}{n^{2/q}} \sqrt{W}|Y||\eta| +\frac{1}{n^{1/p}}  \frac{1}{n^{1/q}} \sqrt{V} |Y||\zeta| &\leqsim \frac{1}{n^{1/q}} \sqrt{W}  |Y| + \frac{1}{n^{1/q}} \sqrt{V}  |Y|.
	\endaligned
\end{equation}
Hence, we conclude the proof of item~\ref{i : est low hess new seq}.

Finally, item~\ref{i : est pot hess new seq} follows by combining \eqref{eq: trivial est hess pot} with Corollary~\ref{c: stime deriv prime Psi 2}, \eqref{eq: partiz R2} and the fact that
\begin{alignat*}{2}
	|\zeta| &\leqsim n^{1/p}, \qquad \forall  \cD_{n}(\omega) \in R_{\zeta} \cup T, \\
	|\eta| &\leqsim n^{1/q}, \qquad \forall  \cD_{n}(\omega) \in R_{\eta} \cup T.
	\tag*{\qedhere}
	\end{alignat*}
\end{proof}

The definition \eqref{d : Hess sup gen} of $\mathbf{H}^{(\oA,\oB)}_{\Psi_n}$, Corollary~\ref{c : 10 2} and the inequality $2xy \leq x^2+y^2$ give the following result.

\begin{corollary}
\label{c : 10 3}
Let $\oA=(A,b,c,V), \oB=(B,\beta,\gamma,W) \in \cB(\Omega)$ and $n \in \N$. Then
\begin{equation}
	\nonumber
	\aligned
	\bigl|\mathbf{H}^{(\oA,\oB)}_{\Psi_n}[\omega; (X,Y)]\bigr| \leqsim 
	\begin{cases}
	\frac{1}{n^{2/p}} |X|^2 +V, & \,\,{\rm if} \,  \cD_{n}(\omega) \in R_{\zeta};\\
	\frac{1}{n^{2/q}} |Y|^2+ W, & \,\,{\rm if}\,  \cD_{n}(\omega) \in R_{\eta};\\
	\frac{1}{n^{2/p}}  |X|^2 +V + \frac{1}{n^{2/q}}|Y|^2+ W,
	& \,\,{\rm if} \,  \cD_{n}(\omega) \in T;\\
	0, & \,\,{\rm otherwise}\,.
	\end{cases}
	\endaligned
\end{equation}
\end{corollary}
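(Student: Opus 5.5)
The plan is to expand $\mathbf{H}^{(\oA,\oB)}_{\Psi_n}$ via its definition \eqref{d : Hess sup gen} as
\[
H^{(A,B)}_{\Psi_n}+H^{(b,c,\beta,\gamma)}_{\Psi_n}+G^{(V,W)}_{\Psi_n}
\]
and bound each of the three summands by Corollary~\ref{c : 10 2}, region by region in the partition \eqref{eq: partiz R2} (transported by $\cD_n$). Outside $\cD_n^{-1}(R_\zeta\sqcup R_\eta\sqcup T)$, all three items of Corollary~\ref{c : 10 2} give zero, which yields the last case.

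\textbf{Case $\cD_n(\omega)\in R_\zeta$.} Item~\ref{i : est hess new seq} contributes $n^{-2/p}|X|^2$ and item~\ref{i : est pot hess new seq} contributes $V$. The only mixed term comes from item~\ref{i : est low hess new seq}, namely $n^{-1/p}\sqrt{V}|X|$. I will absorb it with $2xy\le x^2+y^2$, taking $x=n^{-1/p}|X|$ and $y=\sqrt V$, so that
\[
n^{-1/p}\sqrt V|X|\le n^{-2/p}|X|^2+V .
\]
The case $\cD_n(\omega)\in R_\eta$ is symmetric, with $Y$, $W$ and $q$ in place of $X$, $V$ and $p$.

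\textbf{Case $\cD_n(\omega)\in T$.} The first-order bound is
\[
(\sqrt V+\sqrt W)\bigl(n^{-1/p}|X|+n^{-1/q}|Y|\bigr),
\]
which expands into four products. Each one, for instance $\sqrt W\,n^{-1/p}|X|$, is again controlled by $2xy\le x^2+y^2$ by a sum of one of $n^{-2/p}|X|^2,\,n^{-2/q}|Y|^2$ and one of $V,W$. Adding the bounds from items~\ref{i : est hess new seq} and \ref{i : est pot hess new seq} then gives the stated bound
\[
n^{-2/p}|X|^2+V+n^{-2/q}|Y|^2+W .
\]

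There is no real obstacle: the argument is pure bookkeeping. The only point to check is that the implicit constants are independent of $n$, which holds because those of Corollary~\ref{c : 10 2} are.
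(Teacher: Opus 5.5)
Your proposal is correct and follows the paper's argument: the paper obtains this corollary directly from the definition \eqref{d : Hess sup gen}, the three bounds of Corollary~\ref{c : 10 2}, and the inequality $2xy\le x^2+y^2$. Your region-by-region bookkeeping, including the absorption of the four mixed products on $T$ and the observation that the constants do not depend on $n$, is exactly what that one-line proof leaves implicit.
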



\section{(Alternative) proof of the bilinear embedding for unbounded nonnegative potentials}
\label{s: alt proof} \label{ss: est hess appr}
In this section, we present an alternative proof of the bilinear embedding for second-order divergence-form operators with non-negative potentials \cite[Theorem~3]{Poggio}. 

The core strategy consists in establishing that the sequence $\left(\Psi_n\right)_{n\in \mathbb{N}}$ constructed in Section~\ref{s: new seq} satisfies the requirements of Proposition~\ref{p : L11}. Specifically, we show that this sequence allows for the derivation of uniform estimates of the form \eqref{Stima_L^1}. Moreover, we derive  sharper estimates for the Hessian and first-order derivatives of $\cQ*\varphi_\nu$, which in turn yield improved bounds for $H^{(b,\beta,c,\gamma)}_{\cQ*\varphi_\nu}$ and ensure the validity of  \eqref{stima_L1_2}. Beyond its intrinsic interest, this approach serves as a fundamental framework for addressing the case of negative potentials, as will be elucidated in Section~\ref{s: be bnd neg pot}.
\medskip

Henceforth, and throughout this section, we fix $p \geq 2$ and denote by $q$ its conjugate exponent. Furthermore, we let $\cQ$ be the Bellman function associated with the index $p$. Let $\oA=(A,b,c,V), \oB=(B,\beta,\gamma,W) \in \cB(\Omega)$. First, we verify the validity of \eqref{Stima_L^1} through Proposition~\ref{p: final estimate}. To this purpose, it is useful to provide the following decomposition of $ (\text{E.T.})_{n,\nu}$.
Recall definitions \eqref{d : Hess sup gen} and \eqref{eq: def ET}. The identity \eqref{eq: leibniz per hess gen prod N} applied to $\Psi_n$ and $\cQ *\varphi_\nu$ gives 
\begin{equation}
	\label{eq: scomp ET}
	\begin{aligned}
	(\text{E.T.})_{n,\nu}[\omega;\Xi]
	&=(\cQ\ast\varphi_\nu)(\omega)\mathbf{H}^{(\oA,\oB)}_{\Psi_n}[\omega;\Xi]\\
	&\quad\;+\Psi_n(\omega) H^{(b,\beta,c,\gamma)}_{\cQ *\varphi_\nu}[\omega;\Xi]\\
	&\quad\;+L^{(A,B)}_{(\Psi_n,\cQ\ast\varphi_\nu)}[\omega;\Xi]\\
	&\quad\;+T^{(c,\gamma)}_{(\Psi_n,\cQ\ast\varphi_\nu)}[\omega;\Xi],
	\end{aligned}
\end{equation}
for all $\omega\in\C^2$ and $\Xi\in\C^{2d}$.

\begin{proposition}
\label{p: final estimate}
Let $p \geq 2$, $n \in \N$ and $\nu \in (0,1)$. Suppose that $\mathscr{A},\mathscr{B}\in\mathcal{B}(\Omega)$. Then
\begin{equation}
	\nonumber
	\aligned
	\left|({\rm E.T.})_{n,\nu}[\omega;\Xi]\right|\leqsim_\nu & F(\omega; \Xi),
	\endaligned
\end{equation}
for all $\omega\in\C^2$ and $\Xi\in\C^{2d}$, with $F$ defined as in \eqref{eq : cand for F}.
\end{proposition}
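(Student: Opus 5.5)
We estimate separately each of the four terms in the decomposition \eqref{eq: scomp ET}. In each case we bound the term by one of the summands of $F$, with constants depending on $\nu$ but not on $n$. Two facts do most of the work. First, outside $\cD_n^{-1}(R_\zeta\cup R_\eta\cup T)$ all derivatives of $\Psi_n$ vanish, by Corollaries~\ref{c: stime deriv prime Psi 2} and~\ref{c: est hess jk Psinnu 2}. Second, on the relevant regions the dilated variables satisfy size relations: on $\cD_n^{-1}(R_\zeta)$ we have $|\zeta|^p\sim n$ (indeed $|\zeta|^p\geq(3^{1/p}-\kappa)^p n$) and $|\eta|^q\lesssim n$, with $|\zeta|^p>|\eta|^q$ there. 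The situation is symmetric on $\cD_n^{-1}(R_\eta)$. On $\cD_n^{-1}(T)$, Corollary~\ref{c: s come t come rho 2} and Lemma~\ref{l: 1 2} give $|\zeta|^p\sim|\eta|^q\sim n$.

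\textbf{The terms $\Psi_n H^{(b,\beta,c,\gamma)}_{\cQ*\varphi_\nu}$ and $(\cQ*\varphi_\nu)\mathbf{H}^{(\oA,\oB)}_{\Psi_n}$.} For the first, $0\le\Psi_n\le1$, so it suffices to prove $|H^{(b,\beta,c,\gamma)}_{\cQ*\varphi_\nu}|\lesssim_\nu F$. This bound is in fact the content of \eqref{stima_L1_2}, uniformly in $\nu$. Here I would use \eqref{eq: trivial est hess first} together with \eqref{eq: bc cont by V}, the first-order bounds \eqref{eq: zero and first order est reg}, and Lemma~\ref{l: N second order}. Each product $\sqrt V|X|\cdot(\text{weight})$ is split by $2xy\le x^2+y^2$ into $(|\zeta|^{p-2}+1)(|X|^2+V|\zeta|^2)$-type pieces. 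The cross terms $|D^2_{\zeta\eta}|\,|c||\zeta||Y|\lesssim\sqrt V|\zeta||Y|$ are bounded by $V|\zeta|^2+|Y|^2$, and these are admissible since $|Y|^2$ appears in $F$ both when $\eta\neq0$ and when $\eta=0$. For the second term, Corollary~\ref{c : 10 3} gives the bound. On $R_\zeta$, $\cQ*\varphi_\nu\lesssim_\nu|\zeta|^p+1\lesssim n$, so $(\cQ*\varphi_\nu)(n^{-2/p}|X|^2+V)\lesssim_\nu|\zeta|^{p-2}|X|^2+V|\zeta|^p$, which is part of $F$. On $R_\eta$ we have $\cQ*\varphi_\nu\lesssim_\nu|\eta|^q+1$ (the $\delta|\zeta|^2|\eta|^{2-q}$ part is $\le|\eta|^q$ there), and this is absorbed by $(|\eta|^{q-2}+1)(|Y|^2+W|\eta|^2)$. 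On $T$ the comparability $|\zeta|^p\sim|\eta|^q\sim n$ lets us split the product symmetrically.

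\textbf{The terms $L^{(A,B)}$ and $T^{(c,\gamma)}$.} By \eqref{eq: firs est G}, \eqref{eq: firs est T}, Corollary~\ref{c: stime deriv prime Psi 2} and \eqref{eq: zero and first order est reg}, $L$ is a sum of terms of the form $|\partial_j\Psi_n||\partial_k(\cQ*\varphi_\nu)||\Xi_j||\Xi_k|$. The diagonal pieces on $R_\zeta$ are $n^{-1/p}\max\{|\zeta|^{p-1},|\eta|\}|X|^2\lesssim(|\zeta|^{p-2}+1)|X|^2$, using $|\eta|\le|\zeta|^{p-1}$ there. The off-diagonal piece $n^{-1/p}|\partial_\eta(\cQ*\varphi_\nu)||X||Y|\lesssim n^{-1/p}(|\eta|+1)^{q-1}|X||Y|$ is the delicate one. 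Here I use that $\partial_\eta\cQ$ on $\{|\zeta|^p>|\eta|^q\}$ equals $(1+\delta(2/q-1))q|\eta|^{q-2}\bar\eta/2$, so the bound is $n^{-1/p}|\eta|^{q-1}|X||Y|$. By AM–GM this is at most $|\zeta|^{p-2}|X|^2+n^{-2/p}|\zeta|^{2-p}|\eta|^{2q-2}|Y|^2$. Since $|\eta|^{q}\le|\zeta|^p\sim n$, the second factor is $\lesssim|\eta|^{q-2}\cdot n^{-2/p-(p-2)/p}|\eta|^{q}\lesssim|\eta|^{q-2}$, giving $|\eta|^{q-2}|Y|^2\le F$. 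The symmetric argument works on $R_\eta$, and on $T$ everything is comparable. The term $T^{(c,\gamma)}$ is handled identically, with $|c||\zeta|\lesssim\sqrt V|\zeta|$ replacing $|X|$.

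\textbf{Main obstacle.} The main obstacle is making these mixed estimates rigorous near $\eta=0$ and near $\widetilde\Upsilon_0$ after mollification by $\varphi_\nu$. For $|\eta|\lesssim\nu$, the bound $(|\eta|+\nu)^{q-1}$ is not comparable to $|\eta|^{q-1}$. There I would instead use the crude $\nu$-dependent bound $\lesssim_\nu1$ and absorb $n^{-1/p}|X||Y|\le|X|^2+|Y|^2$, which is allowed since $|Y|^2$ belongs to $F$ whether or not $\eta=0$. Second derivatives of $\cQ*\varphi_\nu$ near $\widetilde\Upsilon_0$ are only controlled by Lemma~\ref{l: N second order}\ref{eq: est Q*phi ii)} with a $\nu^{q-2}$ loss, which is acceptable here since the constants may depend on $\nu$.
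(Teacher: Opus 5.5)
Your treatment of $(\cQ*\varphi_\nu)\mathbf{H}^{(\oA,\oB)}_{\Psi_n}$, $L^{(A,B)}_{(\Psi_n,\cQ*\varphi_\nu)}$ and $T^{(c,\gamma)}_{(\Psi_n,\cQ*\varphi_\nu)}$ is essentially the paper's. For the off-diagonal pieces of $L$ there is a quicker route: $|\eta|^{q-1}\lesssim|\zeta|$ on $\cD_n^{-1}(R_\zeta\cup T)$, so $|\partial_\zeta\Psi_n\,\partial_\eta(\cQ*\varphi_\nu)|\lesssim_\nu n^{-1/p}(|\zeta|+1)\lesssim 1$, with no case distinction near $\eta=0$.

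The gap is in the term $\Psi_n H^{(b,\beta,c,\gamma)}_{\cQ*\varphi_\nu}$, where the tools you list are too crude. The pieces $|\partial_\zeta(\cQ*\varphi_\nu)|\,|b|\,|X|$ and $|\partial_\eta(\cQ*\varphi_\nu)|\,|\beta|\,|Y|$ are \emph{linear} in $\Xi$. They can be absorbed into $F$ only if the coefficient carries a factor $|\zeta|$, resp.\ $|\eta|^{q-1}$, and the bounds you cite do not vanish on $\{\zeta=0\}$, resp.\ $\{\eta=0\}$:
\begin{itemize}
\item At $\omega=(0,0)$, \eqref{eq: zero and first order est reg} only gives $\nu|b||X|$, whereas $F(0,0;X,0)\sim|X|^2$.
\item On $\{\zeta=0\}$, Lemma~\ref{l: N second order}\ref{eq: est Q*phi i)} gives $|\eta|^{q-1}|b||X|$. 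For $p>2$ this is not controlled by $|X|^2+|\eta|^2$ as $\eta\to0$, and $|b|$ is controlled by $\sqrt V$, not $\sqrt W$.
\item At $\eta=0$ you are left with $\nu^{q-1}|\beta||Y|$ against $F(0,0;0,Y)=|Y|^2$.
\end{itemize}
Your fallback (a crude $\lesssim_\nu 1$ bound followed by $|X||Y|\le|X|^2+|Y|^2$) works only for terms quadratic in $\Xi$.

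The full-Hessian bound of Lemma~\ref{l: N second order}\ref{eq: est Q*phi ii)} also mixes the weights. In the $\gamma$-term $|D^2_{\eta\eta}(\cQ*\varphi_\nu)||\eta||\gamma||Y|$ it produces $|\zeta|^{p-2}\sqrt W|\eta||Y|$, which $F$ does not dominate. It also does not provide the bound $|D^2_{\zeta\eta}(\cQ*\varphi_\nu)|\lesssim 1$ that you use for the cross terms.

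What is missing is the content of Lemma~\ref{l: N second order 2} and Proposition~\ref{prop_bcbetagamma_indip_nu}. First, you need the \emph{separated} second-order bounds
$|D^2_{\zeta\zeta}(\cQ*\varphi_\nu)|\lesssim|\zeta|^{p-2}+|\eta|^{2-q}+1$, $|D^2_{\eta\eta}(\cQ*\varphi_\nu)|\lesssim\nu^{q-2}$ and $|D^2_{\zeta\eta}(\cQ*\varphi_\nu)|\lesssim 1$, which follow from \eqref{eq: est Q ii sep} and \eqref{eq: est convol negative pow}. Second, $\cQ*\varphi_\nu$ is even in each real variable, so $\partial_\zeta(\cQ*\varphi_\nu)(0,\eta)=0=\partial_\eta(\cQ*\varphi_\nu)(\zeta,0)$. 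The mean value theorem then gives $|\partial_\zeta(\cQ*\varphi_\nu)|\lesssim(|\zeta|^{p-2}+|\eta|^{2-q}+1)|\zeta|$ and $|\partial_\eta(\cQ*\varphi_\nu)|\lesssim|\eta|^{q-1}$, using \eqref{eq: zero and first order est reg} when $|\eta|\ge\nu$.

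Even with these weights, you cannot bound $b,c$ by $\sqrt V$ throughout, as your sketch proposes. Since $|\eta|^{2-q}|\zeta|\le|\zeta|^{p-1}+|\eta|$, the weight contains a bare $|\eta|$. Pairing it with $\sqrt V|X|$ produces the mixed term $V|\eta|^2$, which is not part of $F$ (cf.\ Remark~\ref{r : stima eta 2}). There you must use $\|b\|_\infty,\|c\|_\infty$ and $|X||\eta|\le|X|^2+|\eta|^2$. This is precisely the role of the summand $|\eta|^2$ in \eqref{eq : cand for F}, which your argument never uses.

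With these ingredients:
\begin{itemize}
\item the $b,c$-pieces are $\lesssim(|\zeta|^{p-2}+1)\sqrt V|\zeta||X|+|\eta||X|$;
\item the $\beta,\gamma$-pieces are $\lesssim_\nu|\eta|^{q-1}\sqrt W|Y|+\sqrt W|\eta||Y|$;
\item the cross pieces are $\lesssim\sqrt V|\zeta||Y|+\sqrt W|\eta||X|$.
\end{itemize}
All of these are dominated by $F$.
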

 
We will prove Proposition~\ref{p: final estimate} by estimating the right-hand side terms of \eqref{eq: scomp ET} separately.

\subsection{Estimate of $(Q\ast\varphi_\nu) \cdot H_{\Psi_n}^{(\oA,\oB)}$}
\begin{proposition}
\label{c : Q e hess Psi dominated 2}
Let  $\oA=(A,b,c,V), \oB=(B,\beta,\gamma,W) \in \cB(\Omega)$, $\nu \in (0,1)$ and $n \in \N$. Then
\begin{equation}
	\nonumber
	\aligned
	\bigl|(\cQ * \varphi_\nu)(\omega) \mathbf{H}_{\Psi_{n}}^{(\oA,\oB)}[\omega;(X,Y)]  \bigr| \leqsim  
	|\zeta|^{p-2}\left(|X|^2+V|\zeta|^2\right)+ \mathbbm{1}_{\{\eta \ne 0\}}|\eta|^{q-2}\left(|Y|^2+W|\eta|^2\right), 
	\endaligned
\end{equation}
for all $\omega=(\zeta,\eta) \in \C^2$ and $X,Y \in \C^{d}$. The implied constant depends on $\nu$, but not on $n$.
\end{proposition}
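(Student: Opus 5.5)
The plan is to combine Corollary~\ref{c : 10 3} with the growth bound for $\cQ*\varphi_\nu$ in \eqref{eq: zero and first order est reg}, treating separately the regions of the partition \eqref{eq: partiz R2} transported by $\cD_n$. Outside $\cD_n^{-1}(R_\zeta\sqcup R_\eta\sqcup T)$ the generalized Hessian of $\Psi_n$ vanishes, so nothing needs to be shown there. On the remaining regions, the key geometric facts are the following. On $\cD_n^{-1}(R)$, by \eqref{eq: inclusioni R} and $\kappa<1$, we have $|\zeta|^p\leqsim n$ and $|\eta|^q\leqsim n$. Moreover, on $\cD_n^{-1}(R_\zeta)$ the point lies in $\{|\zeta|^p>|\eta|^q\}$ with $|\zeta|\geq (3^{1/p}-\kappa)n^{1/p}$, so $|\zeta|^p\sim n$. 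Likewise $|\eta|^q\sim n$ on $\cD_n^{-1}(R_\eta)$. On $\cD_n^{-1}(T)$, Lemma~\ref{l: 1 2} (dilated) together with Corollary~\ref{c: s come t come rho 2} gives $|\zeta|^p\sim|\eta|^q\sim n$, with constants independent of $n$ because $\kappa$ is fixed.

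Step 1, the region $\cD_n(\omega)\in R_\zeta$. Here $|\eta|^q<|\zeta|^p$ and $|\zeta|\geq C n^{1/p}\geq C$, so \eqref{eq: zero and first order est reg} gives $(\cQ*\varphi_\nu)(\omega)\leqsim_\nu (|\zeta|+1)^p+(|\eta|+1)^q\leqsim |\zeta|^p\sim n$. Corollary~\ref{c : 10 3} bounds $|\mathbf H_{\Psi_n}|$ by $n^{-2/p}|X|^2+V$. Since $n\cdot n^{-2/p}\sim |\zeta|^{p-2}$ and $nV\sim V|\zeta|^p$, the product is $\leqsim_\nu |\zeta|^{p-2}(|X|^2+V|\zeta|^2)$.

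Step 2, the region $\cD_n(\omega)\in R_\eta$. This case is symmetric: $|\eta|\gtrsim n^{1/q}>0$ (so $\eta\neq0$) and $(\cQ*\varphi_\nu)(\omega)\leqsim_\nu|\eta|^q\sim n$. The bound $n^{-2/q}|Y|^2+W$ then turns into $|\eta|^{q-2}(|Y|^2+W|\eta|^2)$.

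Step 3, the region $T$. Here both comparabilities $|\zeta|^p\sim n\sim|\eta|^q$ hold, $\eta\neq0$, and $(\cQ*\varphi_\nu)\leqsim_\nu n$. Each of the four terms of Corollary~\ref{c : 10 3} is converted exactly as in Steps 1--2, yielding the sum of both contributions.

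The only delicate point is Step 3, where the mixed dependence on $\zeta$ and $\eta$ forces one to trade $n$ for either $|\zeta|^p$ or $|\eta|^q$. This is precisely what Lemma~\ref{l: 1 2} and Corollary~\ref{c: s come t come rho 2} provide. One must also check that the lower bounds $|\zeta|,|\eta|\gtrsim n^{1/p},n^{1/q}\geq c$ allow absorbing the $+\nu$ in \eqref{eq: zero and first order est reg}, so that the implied constant depends only on $\nu$ and $p$ and not on $n$.
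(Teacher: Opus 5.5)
Your proposal is correct and follows the paper's proof essentially step by step. You reduce via Corollary~\ref{c : 10 3} to the three dilated regions, bound $\cQ*\varphi_\nu$ through \eqref{eq: zero and first order est reg} by $|\zeta|^p$, by $|\eta|^q$, or by both, and convert the powers of $n$ using $|\zeta|^p\sim n$ on $R_\zeta$, $|\eta|^q\sim n$ on $R_\eta$, and $|\zeta|^p\sim|\eta|^q\sim n$ on $T$ (Lemma~\ref{l: 1 2} and Corollary~\ref{c: s come t come rho 2}). The only cosmetic difference is on $T$: you use $n$ as the common comparison scale, whereas the paper compares $|\zeta|^p$ with $|\eta|^q$ directly and then uses $n^{-1/p}\leqsim|\zeta|^{-1}$ and $n^{-1/q}\leqsim|\eta|^{-1}$.
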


\begin{proof}
Let $\omega= (\zeta,\eta) \in \C^2$. By Corollary~\ref{c : 10 3} it suffices to consider the case when $ \cD_{n}(\omega) \in R_\zeta \sqcup R_\eta \sqcup T$.

 Suppose that \framebox{$ \cD_{n}(\omega) \in R_{\zeta}$}. In particular, we have $|\eta|^q < |\zeta|^p$ and, by \eqref{eq: inclusioni R}, $1 \leqsim |\zeta|^p$. Therefore, the first estimate of \eqref{eq: zero and first order est reg} yields
\begin{equation}
	\label{eq: stima sX in u 2}
	\left( \cQ * \varphi_\nu\right)(\omega) \leqsim_\nu |\zeta|^{p}.
\end{equation}
Moreover, $ \cD_{n}(\omega) \in R_{\zeta}$ implies $|\zeta|^{p} \sim n$. Therefore, from Corollary~\ref{c : 10 3}  and \eqref{eq: stima sX in u 2} we deduce that
\begin{equation}
	\nonumber
	\aligned
	\biggl| (\cQ * \varphi_\nu)(\omega)\mathbf{H}_{\Psi_{n}}^{(\oA,\oB)} [\omega; (X,Y)] \biggr| \leqsim_\nu |\zeta|^{p} \left( \frac{|X|^2}{n^{2/p}}+V\right) \leqsim_\nu |\zeta|^{p-2} \left( |X|^2 +V|\zeta|^2\right),
	\endaligned
\end{equation}
for all $X,Y \in \C^{d}$.

If \framebox{$  \cD_{n}(\omega) \in R_{\eta}$}, we prove in the same way as before that
\begin{equation}
	\nonumber
	\aligned
	\biggl| (\cQ * \varphi_\nu)(\omega) \mathbf{H}_{\Psi_{n}}^{(\oA,\oB)}[\omega; (X,Y)] \biggr| &\leqsim_\nu |\eta|^{q-2}  \left(|Y|^2 +W|\eta|^q\right),
	\endaligned
\end{equation}
for all $X,Y \in \C^{d}$.

Finally, suppose that \framebox{$ \cD_{n}(\omega) \in T$}. By Corollary~\ref{c: s come t come rho 2} and Lemma~\ref{l: 1 2}, we have
\begin{equation}
	\label{eq: u come v 2}
	|\zeta|^{p} \sim |\eta|^{q} \geqsim 1.
\end{equation}
Therefore, from Corollary~\ref{c : 10 3}, the first estimate of \eqref{eq: zero and first order est reg} and \eqref{eq: u come v 2} we obtain
\begin{equation}
	\nonumber
	\aligned
	\biggl| (\cQ * \varphi_\nu)(\omega) \cdot \mathbf{H}_{\Psi_{n}}^{(\oA,\oB)}[\omega; (X,Y)] \biggr|& \leqsim_\nu  \left(|\zeta|^{p} +|\eta|^q+1\right) \left(\frac{|X|^2}{n^{2/p}} + V +\frac{|Y|^2}{n^{2/q}} +W \right)\\
	&\leqsim_\nu |\zeta|^{p} \left(\frac{|X|^2}{n^{2/p}} +V\right) +  |\eta|^{q} \left(\frac{|Y|^2}{n^{2/q}}+W\right),
	\endaligned
\end{equation}
for every $X, Y \in \C^d$.
Now, we conclude by observing that 
\begin{equation}
	\nonumber
	\frac{1}{n^{1/p}} \leqsim \frac{1}{|\zeta|}, \qquad \frac{1}{n^{1/q}} \leqsim \frac{1}{|\eta|},
\end{equation}
which hold since $T \subseteq R$.
\end{proof}


\subsection{Estimate of $\Psi_n \cdot H^{(b,c,\beta,\gamma)}_{\cQ * \varphi_\nu}$}
\label{sss: stima unif in nu}
We begin by establishing some estimates for $\cQ * \varphi_\nu$ which refine and extend the bounds established in \cite[Lemma~14]{CD-Mixed}. In that proof, the authors showed that
\begin{equation}
	\label{eq: est convol negative pow}
	\int_{\R^4} |\eta -\cV^{-1}_1(y)|^{q-2} \varphi_\nu(x,y) \wrt(x,y) \leqsim \nu^{q-2}.
\end{equation}
By exploiting \eqref{eq: est convol negative pow}, together with further considerations, we obtain the following refined bounds.

\begin{lemma}
\label{l: N second order 2}
There exists $C=C(p,\delta)>0$ such that 
\begin{enumerate}[label=\textnormal{(\roman*)}]
\item\label{eq: est Q*phi i zeta)}${\displaystyle \hskip 5pt\mod{\partial_{\zeta}(\cQ*\varphi_{\nu})(\omega)}\leq C\left(|\zeta|^{p-2}+|\eta|^{2-q} +1\right)|\zeta|;}$
\item\label{eq: est Q*phi i eta)}${\displaystyle \hskip 5pt\mod{\partial_{\eta}(\cQ*\varphi_{\nu})(\omega)}\leq C |\eta|^{q-1};}$
\item\label{eq: est Q*phi ii zeta)}${\displaystyle \mod{D^{2}_{\zeta\zeta}(\cQ*\varphi_{\nu})(\omega)}\leq C \left( |\zeta|^{p-2}+|\eta|^{2-q}+1\right);}$
\item\label{eq: est Q*phi ii eta)}${\displaystyle \mod{D^{2}_{\eta\eta}(\cQ*\varphi_{\nu})(\omega)}\leq C \nu^{q-2};}$
\item\label{eq: est Q*phi ii zetaeta)}${\displaystyle \mod{D^{2}_{\zeta\eta}(\cQ*\varphi_{\nu})(\omega)}\leq C}$,
\end{enumerate}
for all $\nu\in (0,1)$ and $\omega=(\zeta,\eta)\in\C\times\C$.
\end{lemma}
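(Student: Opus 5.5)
The plan is to differentiate under the convolution. For every $\omega$ we write
\[
\partial_\zeta(\cQ*\varphi_\nu)(\omega)=\int_{\R^4}(\partial_\zeta\cQ)(\omega-\cW^{-1}(z))\varphi_\nu(z)\wrt z ,
\]
and similarly for $\partial_\eta$. The second-order derivatives $D^2_{\zeta\zeta},D^2_{\eta\eta},D^2_{\zeta\eta}$ are obtained in the same way. This is legitimate because $\cQ\in C^1$, and $D^2\cQ$ exists off the null set $\Upsilon$ and is locally integrable, so the distributional second derivatives of $\cQ$ coincide with the pointwise ones. We then feed in pointwise bounds on the derivatives of $\cQ$, computed region by region from \eqref{eq: N Bellman}:
\begin{itemize}
\item for $\partial_\eta\cQ$ we have $|\partial_\eta\cQ|\leqsim |\eta|^{q-1}$;
\item for $\partial_\zeta\cQ$ we have $|\partial_\zeta\cQ|\leqsim |\zeta|^{p-1}+|\zeta||\eta|^{2-q}$, since the derivative of $|\zeta|^2|\eta|^{2-q}$ in $\zeta$ carries the factor $\zeta$;
\item the second derivatives satisfy \eqref{eq: est Q ii sep}.
\end{itemize}

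Items (iii), (iv), (v) follow by averaging these bounds. For (iii) and (v) we average $|\zeta-\cdot|^{p-2}+|\eta-\cdot|^{2-q}+1$ against $\varphi_\nu$. Since $p-2\ge0$ and $2-q\in(0,1]$ and $\nu<1$, this average is $\leqsim(|\zeta|+1)^{p-2}+(|\eta|+1)^{2-q}\leqsim|\zeta|^{p-2}+|\eta|^{2-q}+1$. For (iv) we average $|\eta-\cV^{-1}(y)|^{q-2}$, which has a negative exponent, and \eqref{eq: est convol negative pow} gives exactly $C\nu^{q-2}$.

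Items (i) and (ii) are where the refinement over Lemma~\ref{l: N second order} lies: the right-hand sides vanish at $\zeta=0$, respectively $\eta=0$. The averaging argument above only gives $\max\{(|\zeta|+\nu)^{p-1},|\eta|+\nu\}$, which does not vanish there. To recover the vanishing we use evenness. $\cQ*\varphi_\nu$ is even in $\zeta_1,\zeta_2$ separately, so $\partial_\zeta(\cQ*\varphi_\nu)(0,\eta)=0$, as in Proposition~\ref{p: prop Qnnu}\ref{i : even Qnnu}. The mean value theorem along the segment from $(0,\eta)$ to $(\zeta,\eta)$ then gives
\[
|\partial_\zeta(\cQ*\varphi_\nu)(\zeta,\eta)|\le |\zeta|\sup_{s\in[0,1]}|D^2_{\zeta\zeta}(\cQ*\varphi_\nu)(s\zeta,\eta)| ,
\]
and (iii) along the segment, which is monotone in $|s\zeta|$, yields (i).

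For (ii) the same argument would give $|\eta|\nu^{q-2}$, which is not the claimed bound $|\eta|^{q-1}$ uniformly in $\nu$. We handle it by splitting into two regimes.
\begin{itemize}
\item If $|\eta|\ge\nu$: averaging the bound $|\partial_\eta\cQ|\leqsim|\eta|^{q-1}$ gives $\leqsim(|\eta|+\nu)^{q-1}\leqsim|\eta|^{q-1}$, since $q-1>0$.
\item If $|\eta|<\nu$: we use the vanishing at $\eta=0$ together with (iv), which gives $|\partial_\eta(\cQ*\varphi_\nu)|\leqsim|\eta|\nu^{q-2}=|\eta|\,\nu^{q-2}$. Since $q-2<0$ and $|\eta|<\nu$, we have $\nu^{q-2}\le|\eta|^{q-2}$, hence the bound $\leqsim|\eta|^{q-1}$.
\end{itemize}

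The main obstacle is this balancing in (ii), together with a careful verification of the pointwise bound on $\partial_\zeta\cQ$ in the region $|\zeta|^p\le|\eta|^q$. There we must check that every term carries a factor $|\zeta|$ before averaging: the $\zeta$-dependence is $|\zeta|^2|\eta|^{2-q}$ plus $|\zeta|^p$, whose derivatives are $\zeta|\eta|^{2-q}$ and $|\zeta|^{p-2}\zeta$. Routing (i) through the mean value argument avoids needing that factor to survive the convolution, so this check only has to be done on $\cQ$ itself. All constants depend only on $p$ and $\delta$.
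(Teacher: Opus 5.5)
Your proposal is correct and follows essentially the same route as the paper. Items (iii)--(v) come from averaging the pointwise bounds \eqref{eq: est Q ii sep} against $\varphi_\nu$, using \eqref{eq: est convol negative pow} for (iv); item (i) from evenness, the mean value theorem and (iii); and item (ii) from the same split into $|\eta|<\nu$ (evenness, mean value theorem, (iv)) and $|\eta|\geq\nu$ (third bound in \eqref{eq: zero and first order est reg}).

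One small tightening concerns differentiating under the convolution. A null exceptional set alone does not make the distributional Hessian of $\cQ$ equal to the pointwise one (Cantor-type functions are counterexamples). It does hold here because $D\cQ$ is continuous and $\Upsilon$ consists of a smooth hypersurface away from the origin together with the codimension-two plane $\{\eta=0\}$; the paper uses this fact without comment.
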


\begin{proof}
Item \ref{eq: est Q*phi ii zeta)} follows by the first estimate in \eqref{eq: est Q ii sep}. 

Item \ref{eq: est Q*phi ii eta)} follows by the second estimate in \eqref{eq: est Q ii sep} and by \eqref{eq: est convol negative pow}. Item \ref{eq: est Q*phi ii zetaeta)} follows by the third estimate in \eqref{eq: est Q ii sep}. 

We now prove \ref{eq: est Q*phi i zeta)} and \ref{eq: est Q*phi i eta)}. To this purpose, it is convenient to work in $\R^4$ for the sake of notational clarity. Since $\cQ_\cW$ and $\varphi_{\nu}$ are even functions in each variable of $\R^4$, the function $\cQ_\cW*\varphi_{\nu}$ also has this property, so 
\begin{equation*}
	\label{eq: ENG-SWE}
	\aligned
	\partial_{j}(\cQ_\cW*\varphi_{\nu})(0,y)&=0, \qquad j \in \{1,2\},\\
	\partial_{j}(\cQ_\cW*\varphi_{\nu})(x, 0)&=0, \qquad j \in \{3,4\},
	\endaligned
\end{equation*}
for any $x, y \in \R^2$. Hence, by item \ref{eq: est Q*phi ii zeta)} and the mean value theorem, for all $j\in \{1,2\}$ and  $z=(x,y) \in \R^2\times \R^2$ there exists $c \in (0,1)$ such that 
\begin{equation}
	\nonumber
	\mod{\partial_{j}(\cQ_\cW*\varphi_{\nu})(z)}
	\leqsim \sup_{k=1,2} \mod{\partial^2_{kj} (\cQ_\cW*\varphi_{\nu})(cx,y)}|x|
	\leq C\left(|x|^{p-2}+|y|^{2-q} +1\right)|x|.
\end{equation}
We conclude by recalling the convention \eqref{e : compl convol}.

Fix now $j \in \{3,4\}$. By item \ref{eq: est Q*phi ii eta)} and the mean value theorem, if $|\eta|<\nu\leq 1$  we get
\begin{equation}
	\nonumber
	\mod{\partial_{j}(\cQ_\cW*\varphi_{\nu})(x,y)}
	\leq C\nu^{q-2}|y|
	\leq C|y|^{q-1}.
\end{equation}
Finally, by the third estimate in \eqref{eq: zero and first order est reg}, there exists $C>0$, not depending on $\nu\in (0,1)$, such that
\begin{equation}
	\nonumber
	\mod{\partial_{j}(\cQ_\cW*\varphi_{\nu})(x,y)}\leq C|y|^{q-1},\quad \forall |y|\geq\nu.
\end{equation}
We conclude by combining the last two estimates with the convention \eqref{e : compl convol}.
\end{proof}

\begin{remark}
\label{rem_stime}
Lemma~\ref{l: N second order 2}\ref{eq: est Q*phi i zeta)} and \ref{eq: est Q*phi ii eta)} also  imply, respectively, that
\begin{align}
	|\partial_{\zeta}(\cQ * \varphi_\nu)(\omega)|
	&\lesssim_{p,\delta} \big( |\zeta|^{p-2} + 1 \big)\,|\zeta| + |\eta|,
	\label{eq: est Qphi zeta}\\
	|D^2_{\eta\eta}(\cQ * \varphi_\nu)(\omega)|\,|\eta|
	&\lesssim_{p,\delta} |\eta|^{q-1},
	\label{eq: est Qphi etaeta}
\end{align}
for all $\omega=(\zeta,\eta) \in \C \times \C$. 

Regarding the first estimate, note that
\begin{equation}
	\label{eq : dis eta zeta}
	|\eta|^{2-q} |\zeta| \leq
	\begin{cases}
	|\eta|,& |\zeta|^p\leqslant |\eta|^q;\\
	|\zeta|^{p-1},&|\zeta|^p\geqslant |\eta|^q\,.
	\end{cases}
\end{equation}
Hence, we conclude by applying Lemma~\ref{l: N second order 2}\ref{eq: est Q*phi i zeta)}.

As for the second bound, if $|\eta| < \nu < 1$, Lemma~\ref{l: N second order 2}\ref{eq: est Q*phi ii eta)} yields 
\begin{equation}
	\nonumber
	|D^2_{\eta\eta}(\cQ * \varphi_\nu)(\omega)||\eta| \leq C \nu^{q-2}|\eta| \leq C|\eta|^{q-1}.
\end{equation}
Suppose instead that $|\eta| \geq \nu$. Using the second estimate of \eqref{eq: est Q ii sep} and the triangle inequality, we obtain
\begin{equation}
	\nonumber
	\aligned
	|D^2_{\eta\eta}(\cQ * \varphi_\nu)(\omega)||\eta| \leq &\,\,\int_{\R^4} |\eta -\cV^{-1}_1(y)|^{q-2} \cdot |\eta|\ \varphi_\nu(x,y) \wrt(x,y) \\
	\leq &\,\,\int_{\R^4} |\eta -\cV^{-1}_1(y)|^{q-1}  \varphi_\nu(x,y) \wrt(x,y) \\
	&+ \int_{\R^4} |\eta -\cV^{-1}_1(y)|^{q-2}    |y|\, \varphi_\nu(x,y) \wrt(x,y).
	\endaligned
\end{equation}
Since $|\eta| \geq \nu$ and $\mathrm{supp}(\varphi_\nu) \subset B(0,\nu)$, we have
\begin{equation}
	\nonumber
	\int_{\R^4} |\eta -\cV^{-1}_1(y)|^{q-1}  \varphi_\nu(x,y) \, \wrt(x,y) \leq \left(|\eta|+\nu\right)^{q-1} \leqsim |\eta|^{q-1},
\end{equation}
and, exploiting also \eqref{eq: est convol negative pow},
\begin{equation}
	\nonumber
	\int_{\R^4} |\eta -\cV^{-1}_1(y)|^{q-2}    |y| \varphi_\nu(x,y) \, \wrt(x,y) \leqsim \nu^{q-2} \cdot \nu \leq |\eta|^{q-1}.
\end{equation}
Therefore, we get \eqref{eq: est Qphi etaeta}.
\end{remark}

\begin{proposition}
\label{prop_bcbetagamma_indip_nu}
Let $\nu\in(0,1)$. Suppose that $b,c,\beta,\gamma \in L^\infty(\Omega;\C^d)$ and $V, W \in L^1_{{\rm loc}}(\Omega,\R_+)$ are such that
\begin{equation}
	\label{eq: first vs zero}
	\aligned
	|b|, |c| \leqsim \sqrt{V} \quad \text{and} \quad  |\beta|, |\gamma| \leqsim \sqrt{W}.
	\endaligned
\end{equation}
Then 
\begin{equation}
	\nonumber
	\aligned
	\bigl|&H^{(b,c,\beta,\gamma)}_{\cQ\ast\varphi_\nu}[\omega;(X,Y)]\bigr| \\ 
	&\qquad\leqsim\left(|\zeta|^{p-2}+1\right) \left(|X|^2+V|\zeta|^2\right) + \left(\mathbbm{1}_{\{\eta \ne 0\}}|\eta|^{q-2}+1\right)\left(|Y|^2+W|\eta|^2\right) + |\eta|^2, 
	\endaligned
\end{equation}
for all $\omega=(\zeta,\eta)\in\C^2$ and $X,Y\in\C^d$. The implied constant does not depend on $\nu$.
\end{proposition}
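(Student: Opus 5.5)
We bound $H^{(b,c,\beta,\gamma)}_{\cQ*\varphi_\nu}$ by the sum of the two contributions singled out in \eqref{eq: trivial est hess first}, applied with $\Phi=\cQ*\varphi_\nu$, $N=2$, $\Xi=(X,Y)$, and with $(b_1,b_2)=(b,\beta)$, $(c_1,c_2)=(c,\gamma)$. This gives
\begin{equation*}
\aligned
\bigl|H^{(b,c,\beta,\gamma)}_{\cQ\ast\varphi_\nu}[\omega;(X,Y)]\bigr| \leqsim\;& |D^2_{\zeta\zeta}(\cQ*\varphi_\nu)||X||c||\zeta| + |D^2_{\eta\eta}(\cQ*\varphi_\nu)||Y||\gamma||\eta| \\
&+ |D^2_{\zeta\eta}(\cQ*\varphi_\nu)|\bigl(|Y||c||\zeta|+|X||\gamma||\eta|\bigr) \\
&+ |\partial_\zeta(\cQ*\varphi_\nu)||X||b| + |\partial_\eta(\cQ*\varphi_\nu)||Y||\beta|.
\endaligned
\end{equation*}
We then use \eqref{eq: first vs zero} to replace $|b|,|c|$ by $\sqrt V$ and $|\beta|,|\gamma|$ by $\sqrt W$. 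Each of the six terms is handled with a $\nu$-independent bound taken from Lemma~\ref{l: N second order 2} and Remark~\ref{rem_stime}, followed by Young's inequality $2xy\le x^2+y^2$ applied with suitable weights.

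The terms are treated as follows.

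\emph{Pure $\eta$ terms.} By \eqref{eq: est Qphi etaeta} and Lemma~\ref{l: N second order 2}\ref{eq: est Q*phi i eta)}, both are bounded by $|\eta|^{q-1}\sqrt W|Y| = |\eta|^{q-2}|Y|\cdot\sqrt W|\eta|$. Young's inequality then gives $\lesssim \mathbbm 1_{\{\eta\ne0\}}|\eta|^{q-2}(|Y|^2+W|\eta|^2)$. When $\eta=0$ both terms vanish, since $q>1$.

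\emph{Mixed terms.} Lemma~\ref{l: N second order 2}\ref{eq: est Q*phi ii zetaeta)} bounds the mixed second derivative by a constant, so these terms are at most $\sqrt V|\zeta||Y|+\sqrt W|\eta||X|$. Young's inequality gives $\le V|\zeta|^2+|Y|^2+W|\eta|^2+|X|^2$, which is dominated by the "$+1$" parts of the right-hand side.

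\emph{Pure $\zeta$ terms.} By Lemma~\ref{l: N second order 2}\ref{eq: est Q*phi ii zeta)} the second-order term is at most $(|\zeta|^{p-2}+|\eta|^{2-q}+1)\sqrt V|\zeta||X|$, and by \eqref{eq: est Qphi zeta} the first-order term is at most $((|\zeta|^{p-2}+1)|\zeta|+|\eta|)\sqrt V|X|$. The parts carrying $|\zeta|^{p-2}+1$ are absorbed via Young into $(|\zeta|^{p-2}+1)(|X|^2+V|\zeta|^2)$.

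\emph{The obstacle.} The difficulty lies in the pieces $|\eta|^{2-q}\sqrt V|\zeta||X|$ and $|\eta|\sqrt V|X|$. These cannot be absorbed directly, because $V$ is not bounded and so $V|\eta|^2$ is not available on the right-hand side. For the first piece we use \eqref{eq : dis eta zeta}:
\begin{itemize}
\item when $|\zeta|^p\ge|\eta|^q$, we have $|\eta|^{2-q}|\zeta|\le|\zeta|^{p-1}$, and the piece is $\le |\zeta|^{p-2}|X|\sqrt V|\zeta|$, which Young handles;
\item when $|\zeta|^p\le|\eta|^q$, it reduces to the same type as the second piece, $\sqrt V|\eta||X|$.
\end{itemize}
So everything hinges on estimating $\sqrt V|\eta||X|$. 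Here I would avoid attaching $\sqrt V$ to $\eta$. Instead I would refine the splitting: rather than \eqref{eq: est Qphi zeta}, use Lemma~\ref{l: N second order 2}\ref{eq: est Q*phi i zeta)} directly, which carries a factor $|\zeta|$. On the region $|\zeta|^p\le|\eta|^q$ we have $|\eta|^{2-q}\ge|\zeta|^{p-2}$, hence $|\eta|^{2-q}|\zeta|\le |\eta|^{2-q}|\eta|^{q/p}=|\eta|$. This still does not help.

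The first remedy I would try is a Young split $\sqrt V|\eta||X|\le |X|^2+V|\eta|^2$, together with the observation that in this region $|\zeta|\le|\eta|^{q/p}$. But $V|\eta|^2$ is still not controlled. The honest route is therefore to exploit the structure of $\cQ$ more finely: in the region $|\zeta|^p<|\eta|^q$ the genuine derivative is $\partial_\zeta(|\zeta|^2|\eta|^{2-q})\sim|\eta|^{2-q}|\zeta|$. Pairing it as $|\eta|^{2-q}|\zeta|\sqrt V|X| \le |\eta|^{2-q}\bigl(|X|^2+V|\zeta|^2\bigr)$ and then using $|\eta|^{2-q}\le$ ... fails for the same reason.

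I therefore expect this mixed region to be the main obstacle. The term $|\eta|^2$ on the right-hand side suggests that the intended resolution is to use $|b|,|c|\lesssim 1$ (boundedness of the coefficients, not \eqref{eq: first vs zero}) precisely for these pieces. That gives $|\eta||X|\lesssim|X|^2+|\eta|^2$ and $|\eta|^{2-q}|\zeta||X|\le|\eta||X|$ on that region. I would adopt this choice, and use \eqref{eq: first vs zero} only where a $V|\zeta|^2$ or $W|\eta|^2$ pairing is available.
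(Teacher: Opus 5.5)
Your final argument is correct and matches the paper's proof. Like the paper, you split via the trivial estimate into the pure-$\zeta$, pure-$\eta$ and mixed pieces and use Lemma~\ref{l: N second order 2} together with Remark~\ref{rem_stime}. You reduce $|\eta|^{2-q}|\zeta|$ to $|\zeta|^{p-1}+|\eta|$ through \eqref{eq : dis eta zeta}, and you bound $|b|,|c|$ by $\sqrt V$ only where they multiply $(|\zeta|^{p-2}+1)|\zeta|$, using their $L^\infty$ norms where they multiply $|\eta|$; this is exactly where the extra $|\eta|^2$ comes from (cf.\ Remark~\ref{r : stima eta 2}). In a write-up, state that choice from the outset and drop the exploratory dead ends.
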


\begin{proof}
Let $\omega=(\zeta,\eta)\in\C^2$ and $X,Y\in\C^d$. From \eqref{eq: trivial est hess first} it follows that
\begin{equation}
	\label{stima_Hess_Qastphi}
	H^{(b,c,\beta,\gamma)}_{\cQ\ast\varphi_\nu}[\omega;(X,Y)]\leq I_1 + I_2 + I_3,
\end{equation}
where
\begin{equation*}
	\begin{aligned}
	I_1&= |X| \left(\left|D^2_{\zeta\zeta}(\cQ\ast\varphi_\nu)(\omega)\right | |\zeta| |c|+ \left|\partial_\zeta(\cQ\ast\varphi_\nu)(\omega)\right| |b| \right), \\
	I_2 &= |Y| \left(\left|D^2_{\eta\eta}(\cQ\ast\varphi_\nu)(\omega)\right | |\eta| |\gamma|+ \left|\partial_\eta(\cQ\ast\varphi_\nu)(\omega)\right| |\beta| \right),\\
	I_3 &=  \Bigl|D^2_{\zeta\eta}(\cQ\ast\varphi_\nu)(\omega)\Bigr||Y| |c||\zeta|+\left|D^2_{\eta\zeta}(\cQ\ast\varphi_\nu)(\omega)\right||X| |\gamma||\eta|.
	\end{aligned}
\end{equation*}

Let us now consider the first term. Using Lemma~\ref{l: N second order 2}\ref{eq: est Q*phi i zeta)}, \ref{eq: est Q*phi ii zeta)}, we have
\begin{equation}
	\nonumber
	I_1 \leqsim |X|\left( |\zeta|^{p-2}+|\eta|^{2-q}+1 \right) |\zeta|\left(|b|+|c|\right).
\end{equation}
From \eqref{eq : dis eta zeta} it follows that $|\eta|^{2-q}|\zeta| \leq |\zeta|^{p-1}+|\eta|$, which implies
\begin{equation}
	\nonumber
	I_1 \leqsim |X| \left((|\zeta|^{p-2}+1)|\zeta|+ |\eta|\right)(|b|+|c|).
\end{equation}
By combining the boundedness of $b$ and $c$ with the fact that $b, c$, and $V$ satisfy \eqref{eq: first vs zero}, we obtain
\begin{equation}
	\label{eq: stima finale I1}
	I_1 \leqsim(|\zeta|^{p-2}+1)|X| \sqrt{V}|\zeta| +|X||\eta| \leqsim (|\zeta|^{p-2}+1)\left( |X|^2+V|\zeta|^2\right) + |\eta|^2, 
\end{equation}
where in the last step we used twice the inequality $xy \leqsim x^2+y^2$.
    
Regarding the  term $I_2$, Lemma~\ref{l: N second order 2}\ref{eq: est Q*phi i eta)}, the estimate \eqref{eq: est Qphi etaeta} and \eqref{eq: first vs zero} yield
\begin{equation}
	\label{eq: stima finale I2}
	\aligned
	I_2 &\leqsim |\eta|^{q-1}|Y|  \sqrt{W}  = |\eta|^{q/2-1}|Y| \cdot\sqrt{W}|\eta|^{q/2} \leqsim\mathbbm{1}_{\{\eta \ne 0\}} |\eta|^{q-2}\left(|Y|^2+W|\eta|^2\right).
	\endaligned
\end{equation}

Finally, from Lemma~\ref{l: N second order 2}\ref{eq: est Q*phi ii zetaeta)} and \eqref{eq: first vs zero} it follows that
\begin{equation}
	\label{eq: stima finale I3}
	\aligned
	I_3 &\leqsim|Y| \sqrt{V}|\zeta| +|X| \sqrt{W}|\eta| \leqsim |X|^2+V|\zeta|^2 + |Y|^2+ W|\eta|^2.
	\endaligned
\end{equation}
We conclude by combining \eqref{stima_Hess_Qastphi} with \eqref{eq: stima finale I1}, \eqref{eq: stima finale I2} and \eqref{eq: stima finale I3}.
\end{proof}

By combining Proposition~\ref{prop_bcbetagamma_indip_nu} with the uniform bound $\Psi_n \leqsim 1$, we immediately obtain, under the same assumptions,
\begin{equation}
	\label{eq: stima Hbc n}
	\aligned
	\left|\Psi_n(\omega)\cdot H^{(b,c,\beta,\gamma)}_{\cQ\ast\varphi_\nu}[\omega;\Xi]\right|
	\leqsim F(\omega; \Xi), 
	\endaligned
\end{equation}
for all $\omega \in \C^2$, $\Xi \in \C^{2d}$, $n \in \N$ and $\nu \in (0,1)$.

\begin{remark}
\label{r : stima eta 2}
It is worth noting that, in establishing \eqref{eq: stima finale I1}, the first-order coefficients $b$ and $c$ are estimated differently depending on whether they are coupled with $(|\zeta|^{p-2}+1)|\zeta|$ or with $|\eta|$. In the former case, we rely on \eqref{eq: bc cont by V}; in the latter, we bound $b$ and $c$ by their respective $L^\infty$-norms. Suppose that we estimate these terms exclusively via the square root of $V$. This would lead to the appearance of the mixed term $V|\eta|^2$. Recalling that the estimates in this section are intended for the application of Proposition~\ref{p : L11} with the \textit{ad hoc} identification $\eta=v=T_t^{\oB,\oW}g$, our goal is to provide bounds via functions that are integrable on $\Omega$. However, it remains unclear whether the mixed term $V|v|^2$ is integrable. Conversely, we know that $W|v|^2 \in L^1(\Omega)$ follows from the fact that $v \in \Dom(\gota_{\oB,\oW})$.
\end{remark}


\subsection{Estimate of $L^{(A,B)}_{(\Psi_n,Q\ast\varphi_\nu)}$}
\begin{lemma}
\label{l: est mix der Q psi}
Let $\nu \in (0,1)$ and $n \in \N$. Then
\begin{equation}
	\nonumber
	\aligned
	\bigl|\partial_\zeta\Psi_{n}(\omega) \cdot \partial_\zeta(\cQ * \varphi_\nu)(\omega)\bigr| & \leqsim|\zeta|^{p-2}, \\
	\bigl|\partial_\eta\Psi_{n}(\omega) \cdot \partial_\eta(\cQ * \varphi_\nu)(\omega) \bigr|& \leqsim \mathbbm{1}_{\{\eta \ne 0\}} |\eta|^{q-2}, \\
	\bigl|\partial_\zeta\Psi_{n}(\omega) \cdot \partial_\eta(\cQ * \varphi_\nu)(\omega)\bigr| & \leqsim1, \\
	\bigl|\partial_\eta\Psi_{n}(\omega) \cdot \partial_\zeta(\cQ * \varphi_\nu)(\omega)\bigr| & \leqsim1,
	\endaligned
\end{equation}
for all $\omega =(\zeta,\eta) \in \C^2$ and $X,Y \in \C^{d}$. The implied constant depends on $\nu$, but not on $n$.
\end{lemma}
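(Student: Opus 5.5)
The plan is a case analysis based on the supports of the first derivatives of $\Psi_n$, as given in Corollary~\ref{c: stime deriv prime Psi 2}. By that corollary, $\partial_\zeta\Psi_n(\omega)\neq0$ only if $\cD_n(\omega)\in R_\zeta\cup T$, and there $|\partial_\zeta\Psi_n(\omega)|\leqsim n^{-1/p}$. Similarly, $\partial_\eta\Psi_n(\omega)\neq0$ only if $\cD_n(\omega)\in R_\eta\cup T$, with $|\partial_\eta\Psi_n(\omega)|\leqsim n^{-1/q}$. So all four products vanish off $\cD_n^{-1}(R)$.

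On $\cD_n^{-1}(R)$ we record the geometric facts we need. First, $|\zeta|\leqsim n^{1/p}$ and $|\eta|\leqsim n^{1/q}$ hold, since $R$ is bounded. Second:
\begin{itemize}
\item on $\cD_n^{-1}(R_\zeta)$ we have $|\zeta|^p\sim n$ and $|\eta|^q<|\zeta|^p$, by \eqref{eq: inclusioni R};
\item on $\cD_n^{-1}(R_\eta)$ we have $|\eta|^q\sim n$ and $|\zeta|^p<|\eta|^q$;
\item on $\cD_n^{-1}(T)$ we have $|\zeta|^p\sim|\eta|^q\sim n$, by Corollary~\ref{c: s come t come rho 2} together with Lemma~\ref{l: 1 2} rescaled by $\cD_n$.
\end{itemize}
In particular, on the support of $\partial_\zeta\Psi_n$ we have $n^{-1/p}\sim|\zeta|^{-1}$ and $|\eta|\leqsim n^{1/q}$. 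On the support of $\partial_\eta\Psi_n$ we have $n^{-1/q}\sim|\eta|^{-1}$ and $|\zeta|\leqsim n^{1/p}$. Both points of the support are away from the origin, so $|\omega|\geqsim1$ there.

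For the derivatives of $\cQ*\varphi_\nu$ I will use \eqref{eq: est Qphi zeta}, namely $|\partial_\zeta(\cQ*\varphi_\nu)|\leqsim(|\zeta|^{p-2}+1)|\zeta|+|\eta|$, and Lemma~\ref{l: N second order 2}\ref{eq: est Q*phi i eta)}, namely $|\partial_\eta(\cQ*\varphi_\nu)|\leqsim|\eta|^{q-1}$.

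\emph{First estimate.} Here $|\partial_\zeta\Psi_n\,\partial_\zeta(\cQ*\varphi_\nu)|\leqsim |\zeta|^{-1}\big((|\zeta|^{p-2}+1)|\zeta|+|\eta|\big)$. Since $|\zeta|\geqsim1$ and $p\geq2$, the first part is $\leqsim|\zeta|^{p-2}$. For the remaining part $|\eta|/|\zeta|$, I use $|\eta|\leqsim n^{1/q}\sim|\zeta|^{p/q}=|\zeta|^{p-1}$. This comes from $|\eta|^q\leqsim|\zeta|^p$ on $\cD_n^{-1}(R_\zeta\cup T)$, and it gives $|\eta|/|\zeta|\leqsim|\zeta|^{p-2}$.

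\emph{Second estimate.} On $\cD_n^{-1}(R_\eta\cup T)$ we get $|\partial_\eta\Psi_n\,\partial_\eta(\cQ*\varphi_\nu)|\leqsim|\eta|^{-1}|\eta|^{q-1}=|\eta|^{q-2}$, and there $\eta\neq0$.

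\emph{Third estimate.} On $\cD_n^{-1}(R_\zeta\cup T)$ we have $n^{-1/p}|\eta|^{q-1}\leqsim n^{-1/p}n^{(q-1)/q}=n^{-1/p}n^{1/p}=1$.

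\emph{Fourth estimate.} On $\cD_n^{-1}(R_\eta\cup T)$ we have $n^{-1/q}\big((|\zeta|^{p-2}+1)|\zeta|+|\eta|\big)\leqsim n^{-1/q}\big(n^{(p-1)/p}+n^{1/p}+n^{1/q}\big)$. Since $(p-1)/p=1/q$ and $1/p\le1/q$, this is $\leqsim1$.

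The main point needing care is the cross term $|\eta|/|\zeta|$ in the first estimate. This is where the geometry of the new truncation is essential: on the support of $\partial_\zeta\Psi_n$ we have $|\eta|^q\leqsim|\zeta|^p$, including inside $T$, which follows from Corollary~\ref{c: s come t come rho 2}. A radial cutoff would not give this bound.

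The constants depend only on $p$, $\delta$ and $\kappa$, and not on $n$. The $\nu$-dependence is harmless, and in fact the bounds used above are uniform in $\nu$.
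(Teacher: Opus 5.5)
Your proof is correct and follows essentially the paper's argument. As in the paper, you restrict to the supports of $\partial_\zeta\Psi_n$ and $\partial_\eta\Psi_n$ via Corollary~\ref{c: stime deriv prime Psi 2}. You then use $|\zeta|\sim n^{1/p}$ (resp.\ $|\eta|\sim n^{1/q}$) there, together with $|\eta|^q\leqsim|\zeta|^p$ (resp.\ the reverse), coming from the geometry of $R_\zeta$, $R_\eta$ and $T$.

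The only difference is that you use the $\nu$-uniform bounds \eqref{eq: est Qphi zeta} and Lemma~\ref{l: N second order 2}\ref{eq: est Q*phi i eta)} instead of \eqref{eq: zero and first order est reg}. This makes the independence of the constants from $\nu$ explicit, which is a harmless refinement.
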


\begin{proof}
Let us prove the first estimate. By Corollary~\ref{c: stime deriv prime Psi 2} it suffices to consider the case when $\cD_{n}(\omega) \in R_{\zeta} \cup T$. In this region, by the definition of $R_\zeta$ and Corollary~\ref{c: s come t come rho 2}, we have either
\begin{equation}
	\label{eq: eta min o ug a zeta}
	|\zeta|^p \geq |\eta|^q \quad \text{ or } \quad |\zeta|^p \sim |\eta|^q.
\end{equation}
In addition, \eqref{eq: inclusioni R} and Lemma~\ref{l: 1 2} imply that $1 \leq n^{1/p} \leqsim |\zeta|$, which, in particular, gives
\begin{equation}
	\label{eq: zeta lontano da zero}
	1 \leqsim |\zeta|^{p-1}.
\end{equation}
Thus, the second estimate in \eqref{eq: zero and first order est reg}, together with \eqref{eq: eta min o ug a zeta}, \eqref{eq: zeta lontano da zero} and Corollary~\ref{c: stime deriv prime Psi 2} implies
\begin{equation}
	\nonumber
	\aligned
	\bigl|\partial_\zeta\Psi_{n}(\omega) \cdot \partial_\zeta(\cQ * \varphi_\nu)(\omega) \bigr| \leqsim_\nu \frac{1}{n^{1/p}} |\zeta|^{p-1}\leqsim |\zeta|^{p-2},
	\endaligned
\end{equation}
where in the last inequality we used the fact that $|\zeta|^p \leqsim n$, which follows from the inclusion $R_\zeta \cup T \subseteq R$.

In the same way, one can prove that
\begin{equation}
	\nonumber
	\aligned
	\bigl|\partial_\eta\Psi_{n}(\omega) \cdot \partial_\eta(\cQ * \varphi_\nu)(\omega) \bigr|
	\leqsim_\nu |\eta|^{q-2},
	\endaligned
\end{equation}
for all $\cD_n(\omega) \in R_\eta \cup T$. Thus, the fact that $\eta\ne 0$ in this region, together with Corollary~\ref{c: stime deriv prime Psi 2}, yields the desired inequality.

Next, we establish the third estimate. Again, by Corollary~\ref{c: stime deriv prime Psi 2} it suffices to consider the case where $ \cD_{n}(\omega) \in R_{\zeta} \cup T$. Under this assumption, the third estimate in \eqref{eq: zero and first order est reg}, together with Corollary~\ref{c: stime deriv prime Psi 2} and \eqref{eq: eta min o ug a zeta}, yields 
\begin{equation}
\nonumber
\bigl|\partial_\zeta\Psi_{n}(\omega) \cdot \partial_\eta(\cQ * \varphi_\nu)(\omega) \bigr| \leqsim_\nu \frac{1}{n^{1/p}} \left( |\zeta|+1\right)\leqsim1,
\end{equation}
where in the last step we used again that $|\zeta|^p \leqsim n$.

The last estimate can be proved in the same way.
\end{proof}

\begin{proposition}
\label{c : mixed der dominated 2}
Let  $A,B \in \cA(\Omega)$, $\nu \in (0,1)$ and $n \in \N$. Then,
\begin{equation}
	\nonumber
	\left|L^{(A,B)}_{(\Psi_{n},\cQ * \varphi_\nu)}[\omega; (X,Y)]\right| \leqsim  \left(|\zeta|^{p-2}+1\right)|X|^2+ \left(\mathbbm{1}_{\{\eta \ne 0\}}|\eta|^{q-2}+1\right)|Y|^2,
\end{equation}
for all $\omega=(\zeta,\eta) \in \C^2$ and $X,Y \in \C^{d}$. The implied constant depends on $\nu$, but not on $n$.
\end{proposition}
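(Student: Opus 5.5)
The plan is to reduce the statement to Lemma~\ref{l: est mix der Q psi} via the bound \eqref{eq: firs est G}, with $N=2$, $\Psi=\Psi_n$ and $\Phi=\cQ*\varphi_\nu$. This bound, which uses only the boundedness of $A$ and $B$, gives
\begin{equation*}
\bigl|L^{(A,B)}_{(\Psi_n,\cQ*\varphi_\nu)}[\omega;(X,Y)]\bigr| \leqsim \sum_{j,k\in\{\zeta,\eta\}} \bigl|\partial_{j}\Psi_n(\omega)\,\partial_{k}(\cQ*\varphi_\nu)(\omega)\bigr|\,|\Xi_j||\Xi_k|,
\end{equation*}
where $\Xi_\zeta=X$ and $\Xi_\eta=Y$. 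The implied constant depends only on $\Lambda(A),\Lambda(B)$ and not on $d$.

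I would then treat the four terms one at a time using the four estimates of Lemma~\ref{l: est mix der Q psi}.

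The diagonal $\zeta\zeta$ term is bounded by $|\zeta|^{p-2}|X|^2$.

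The diagonal $\eta\eta$ term is bounded by $\mathbbm{1}_{\{\eta\ne0\}}|\eta|^{q-2}|Y|^2$.

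The two mixed terms are each bounded by a constant times $|X||Y|$. Applying $2|X||Y|\leq |X|^2+|Y|^2$ turns these into the contributions $|X|^2+|Y|^2$, which are the ``$+1$'' parts of the right-hand side.

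Summing the four contributions gives exactly $(|\zeta|^{p-2}+1)|X|^2+(\mathbbm{1}_{\{\eta\ne0\}}|\eta|^{q-2}+1)|Y|^2$. All constants inherited from Lemma~\ref{l: est mix der Q psi} depend on $\nu$ but not on $n$, so the final constant is uniform in $n$.

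The only step needing care is the off-diagonal one. Note that \eqref{eq: firs est G} involves products $\partial_j\Psi_n\,\partial_k(\cQ*\varphi_\nu)$ with $j\ne k$ in both orders, and these are exactly the third and fourth estimates of Lemma~\ref{l: est mix der Q psi}. There is also a possible concern at $\eta=0$ in the second estimate. This is harmless: there $\partial_\eta\Psi_n=0$ (by Corollary~\ref{c: stime deriv prime Psi 2}, since $\cD_n(\omega)\in R_\eta\cup T$ forces $\eta\neq0$), and $\partial_\eta(\cQ*\varphi_\nu)(\zeta,0)=0$ by evenness, so the indicator is legitimate. Beyond this, no new obstacle arises; the substantive work was already done in Lemma~\ref{l: est mix der Q psi}.
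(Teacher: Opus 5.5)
Your proposal is correct and follows the paper's proof: expand via \eqref{eq: firs est G} into the four products $\partial_j\Psi_n\,\partial_k(\cQ*\varphi_\nu)$, bound each with Lemma~\ref{l: est mix der Q psi}, and absorb the mixed terms using $2|X||Y|\leq |X|^2+|Y|^2$. Your extra comment on $\eta=0$ is accurate: $\cD_n(\omega)\in R_\eta\cup T$ forces $\eta\neq0$, by the definition of $R_\eta$ and Lemma~\ref{l: 1 2}.
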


\begin{proof}
By \eqref{eq: firs est G} we need to estimate the following four terms 
\begin{equation}
	\nonumber
	\aligned
	\bigl|\partial_\zeta\Psi_{n}(\omega) \cdot \partial_\zeta(\cQ * \varphi_\nu)(\omega) \bigr||X|^2 , \\
	\bigl|\partial_\eta\Psi_{n}(\omega) \cdot \partial_\eta(\cQ * \varphi_\nu)(\omega) \bigr||Y|^2 , \\
	\bigl|\partial_\zeta\Psi_{n}(\omega) \cdot \partial_\eta(\cQ * \varphi_\nu)(\omega) \bigr||X||Y| , \\
	\bigl|\partial_\eta\Psi_{n}(\omega) \cdot \partial_\zeta(\cQ * \varphi_\nu)(\omega) \bigr||X||Y|,
	\endaligned
\end{equation}
for all $\omega =(\zeta,\eta) \in \C^2$ and $X,Y \in \C^{d}$. We conclude by applying Lemma~\ref{l: est mix der Q psi}.
\end{proof}


\subsection{Estimate of $T^{(c,\gamma)}_{(\Psi_n,\cQ\ast\varphi_\nu)}$}
\begin{proposition}
\label{p: est T}
Let  $\nu\in(0,1)$ and $n \in \N$. Suppose that $c,\gamma \in L^\infty(\Omega;\C^d)$ and $V, W \in L^1_{{\rm loc}}(\Omega,\R_+)$ satisfy
\begin{equation}
	\label{eq: first vs zero 1}
	\aligned
	|c| \leqsim \sqrt{V} \quad \text{and} \quad |\gamma| \leqsim \sqrt{W}.
	\endaligned
\end{equation}
Then,
\begin{equation*}
	\aligned
	\bigl|T^{(c,\gamma)}_{(\Psi_n,\cQ\ast\varphi_\nu)}[\omega;(X,Y)]\bigr|\! \leqsim \!  \left(|\zeta|^{p-2}+1\right)\left( |X|^2+V|\zeta|^2\right) \! + \! \left(\mathbbm{1}_{\{\eta \ne 0\}}|\eta|^{q-2}+1\right)\left(|Y|^2+W|\eta|^2\right),
	\endaligned
\end{equation*}
for all $\omega=(\zeta,\eta)\in\C^2$ and $X,Y\in\C^d$. The implied constant depends on $\nu$, but not on $n$.
\end{proposition}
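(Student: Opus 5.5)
We start from \eqref{eq: firs est T} with $N=2$, $\Psi=\Psi_n$ and $\Phi=\cQ*\varphi_\nu$. This reduces the claim to bounding four families of terms:
\begin{equation*}
\aligned
&|\partial_\zeta\Psi_n\,\partial_\zeta(\cQ*\varphi_\nu)|\,|X||c||\zeta|, \qquad |\partial_\eta\Psi_n\,\partial_\eta(\cQ*\varphi_\nu)|\,|Y||\gamma||\eta|,\\
&|\partial_\zeta\Psi_n\,\partial_\eta(\cQ*\varphi_\nu)|\bigl(|Y||c||\zeta|+|X||\gamma||\eta|\bigr),\qquad |\partial_\eta\Psi_n\,\partial_\zeta(\cQ*\varphi_\nu)|\bigl(|X||\gamma||\eta|+|Y||c||\zeta|\bigr).
\endaligned
\end{equation*}
The products of first derivatives are controlled by Lemma~\ref{l: est mix der Q psi}, uniformly in $n$. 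The coefficients are controlled through \eqref{eq: first vs zero 1} by $|c|\leqsim\sqrt V$ and $|\gamma|\leqsim\sqrt W$.

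For the diagonal terms, Lemma~\ref{l: est mix der Q psi} gives the bound $|\zeta|^{p-2}|X|\sqrt V|\zeta|$. Writing this as $|\zeta|^{(p-2)/2}|X|\cdot|\zeta|^{(p-2)/2}\sqrt V|\zeta|$ and using $xy\leqsim x^2+y^2$, we obtain $|\zeta|^{p-2}(|X|^2+V|\zeta|^2)$. The $\eta$-diagonal term is treated in the same way and gives $\mathbbm{1}_{\{\eta\ne0\}}|\eta|^{q-2}(|Y|^2+W|\eta|^2)$.

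The off-diagonal terms are bounded by Lemma~\ref{l: est mix der Q psi} by a constant times $|Y|\sqrt V|\zeta|+|X|\sqrt W|\eta|$. By Young's inequality this is at most $|X|^2+V|\zeta|^2+|Y|^2+W|\eta|^2$, which the stated right-hand side absorbs through its ``$+1$'' factors. This step needs care: it is exactly where the mixed pairing of $X$ with $W$, or of $Y$ with $V$, appears. It does no harm because each factor ($|X|$, $|Y|$, $\sqrt V|\zeta|$, $\sqrt W|\eta|$) is squared separately, so no mixed product such as $V|\eta|^2$ is created. This is in line with Remark~\ref{r : stima eta 2}.

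The main obstacle is to make sure that the bounds of Lemma~\ref{l: est mix der Q psi} really apply on the support of $D\Psi_n$, and that no power of $|\zeta|$ or $|\eta|$ is left over. On that support $\cD_n(\omega)\in R_\zeta\cup R_\eta\cup T$, so $|\zeta|\leqsim n^{1/p}$ and $|\eta|\leqsim n^{1/q}$. Outside it every term vanishes. If needed, I would recheck directly, region by region using Corollary~\ref{c: stime deriv prime Psi 2} and the estimates \eqref{eq: zero and first order est reg}, that for instance
\[
n^{-1/p}\,(|\eta|+1)\,|\zeta|\sqrt V\,|Y|\leqsim \sqrt V|\zeta|\,|Y|.
\]
This uses $|\eta|\leqsim|\zeta|^{p-1}$ on $R_\zeta\cup T$ (by Corollary~\ref{c: s come t come rho 2}) together with $|\zeta|\geqsim1$ there. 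Summing the contributions gives the claim, with constants depending on $\nu$ but not on $n$.
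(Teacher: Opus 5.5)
Your proof is correct and is the same as the paper's: it combines \eqref{eq: firs est T}, Lemma~\ref{l: est mix der Q psi} and $|c|\leqsim\sqrt V$, $|\gamma|\leqsim\sqrt W$, then applies Young's inequality exactly as you do. Since Lemma~\ref{l: est mix der Q psi} is stated for every $\omega\in\C^2$, no separate support check is needed.

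One caution about your optional re-check: the bound $n^{-1/p}(|\eta|+1)\leqsim 1$ is false for $p>2$. When $\cD_n(\omega)\in R_\zeta$ one may have $|\eta|\sim n^{1/q}$, so $n^{-1/p}|\eta|\sim n^{1-2/p}$. You must keep the exponent from \eqref{eq: zero and first order est reg}, namely $|\partial_\eta(\cQ*\varphi_\nu)(\omega)|\leqsim(|\eta|+1)^{q-1}\leqsim|\zeta|+1$ on that region. This is what the proof of Lemma~\ref{l: est mix der Q psi} does.
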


\begin{proof}
We deduce the desired estimate by combining \eqref{eq: firs est T}, Lemma~\ref{l: est mix der Q psi} and the assumption \eqref{eq: first vs zero 1}.
\end{proof}

Finally, we combine the previous results to provide the proof of Proposition~\ref{p: final estimate}.

\begin{proof}[Proof of Proposition~\ref{p: final estimate}] 
Since $\oA, \oB \in \cB(\Omega)$, the estimate \eqref{eq: first vs zero} (and consequently \eqref{eq: first vs zero 1}) is satisfied, as previously noted in Section~\ref{sss: first order}. The proof is then concluded by combining the identity \eqref{eq: scomp ET} with Proposition~\ref{c : Q e hess Psi dominated 2}, \eqref{eq: stima Hbc n}, Proposition~\ref{c : mixed der dominated 2} and Proposition~\ref{p: est T}.
\end{proof}


\subsection{Alternative proof of \cite[Theorem~3]{Poggio}}
\label{ss : alt proof bil pot nonneg}
We are now in a position to provide an alternative proof of \cite[Theorem~3]{Poggio}. To this end, it suffices to establish that the sequence $\left(\Psi_n\right)_{n\in \mathbb{N}}$ and the function $F$ defined in \eqref{eq : cand for F} satisfy the hypotheses of Proposition~\ref{p : L11}.

Let $\Omega\subseteq\R^d$, $p\geq2$, and let $q$ be its conjugate exponent. Suppose that $\oA,\oB\in \cB_p(\Omega)$ and that $\oV$ and $\oW$ satisfy \AssBE. Let $u\in\Dom(\gota_{\oA,\oV})$ and $u\in\Dom(\gota_{\oB,\oW})$ be such that $u,v,\oL^\oA u,\oL^\oB v\in(L^p\cap L^q)(\Omega)$. By applying Proposition~\ref{t : p grad est} and considering the definition of the form domains, we ensure that
\begin{equation}
	\nonumber
	\begin{aligned}
	F(u,v; \nabla u, \nabla v) 
	&= \left(|u|^{p-2}+1\right)\left(|\nabla u|^2+ V|u|^2\right) \\
	&\quad + \left(\mathbbm{1}_{\{v \ne 0\}}|v|^{q-2}+1\right)\left(|\nabla v|^2+W|v|^2\right) + |v|^2 \in L^1(\Omega).
	\end{aligned}
\end{equation}
Furthermore, Proposition~\ref{p: final estimate} and Proposition~\ref{prop_bcbetagamma_indip_nu} guarantee that $\left(\Psi_n\right)_{n \in \mathbb{N}}$ and $F$ satisfy conditions \eqref{Stima_L^1} and \eqref{stima_L1_2}, respectively. We therefore conclude the proof by invoking Proposition~\ref{p : L11}.

 
\section{Bilinear embedding for potentials with bounded negative part}
\label{s: be bnd neg pot}
We now address the general case of strongly subcritical potentials that may take negative values. Throughout this section, we impose the additional assumption that the negative parts, $V_-$ and $W_-$, are \textit{essentially bounded}; this condition will be subsequently removed in Section~\ref{s: unb neg} by means of a truncation argument. Working with potentials with unbounded negative part from the outset would prevent the application of the limiting argument within the heat-flow method outlined in Section~\ref{s: new idea}. We refer the reader to Remark~\ref{r : bound neg part necess} for an exhaustive explanation.

Our approach adapts the strategy developed in \cite{P-Potentials}. To fix ideas, let $\cE$ denote the flow associated with the Bellman function $\cQ$, as defined in \eqref{eq: def flow} with $\Phi=\cQ$, and set $u=T_t^{\oA}f$ and $v=T_t^{\oB}g$. For $\oA=(A,b,c,V)$, we denote
\begin{equation}
	\label{eq : 4tupla pos}
	\oA_+ = (A,b,c,V_+).
\end{equation}
In \cite{P-Potentials} it was formally established that
\begin{equation}
	\label{eq: der flow no bc}
	\aligned
	-\cE^\prime(t) = \,&\int_\Omega H_{\cQ}^{(A,B)}[(u,v);(\nabla u,\nabla v)] + G_{\cQ}^{(V_+,W_+)}[(u,v);(\nabla u,\nabla v)]\\
	&- \int_\Omega V_- \cdot 2\,\Re\left( u \cdot \partial_\zeta \cQ(u,v)\right) + W_- \cdot 2\,\Re\left( v \cdot \partial_\eta \cQ(u,v)\right)
	\endaligned
\end{equation}
The presence of a nontrivial negative part of the potentials prevents one from directly concluding that $-\cE^\prime(t)\ge 0$ and, consequently, from obtaining an estimate suitable for our purposes. The strategy is therefore to absorb the negative contribution of the second integral into the first one by means of the subcritical inequality \eqref{e : subc ineq}. Indeed, the core of the proof consists in re-expressing the terms corresponding to the negative part of the potentials, for instance identifying
\begin{equation}
	\nonumber
	2\,\Re\left( u \cdot \partial_\zeta \cQ(u,v)\right) = |\Theta(u,v)|^2,
\end{equation}
for a suitable $\Theta$ on $\C^2$, and subsequently applying the subcritical inequality \eqref{e : subc ineq}, namely,
\begin{equation}
	\label{eq: dis sub hfm}
	\int_\Omega V_- |\Theta(u,v)|^2 \leq \alpha \int_\Omega |\nabla\Theta(u,v)|^2 + \sigma\int_\Omega V_+ |\Theta(u,v)|^2.
\end{equation}
Once the gradient $\nabla[\Theta(u,v)]$ is computed via the chain rule, the contribution arising from the negative part of the potentials is absorbed into the first integral term on the right-hand side of \eqref{eq: der flow no bc}. This compensation procedure is enabled by new, refined lower bounds for $H_{\cQ}^{(A,B)}$ \cite[Theorem~6.2]{P-Potentials}. These estimates are obtained by imposing the generalized $p$-ellipticity condition recalled in Section~\ref{sss : p ellipt gen negat}, which strengthens the standard $p$-ellipticity assumptions on $A$ and $B$.

Returning to our specific setting, the strategy consists in adapting the previously outlined methodology developed in \cite{P-Potentials}; specifically, the term $H_\cQ^{(A,B)} + G_\cQ^{(V_+,W_+)}$ in \eqref{eq: der flow no bc} is replaced by the full $\mathbf{H}_{\cQ}^{(\oA_+,\oB_+)}$. Hence, in analogy to \cite[Theorem~6.2]{P-Potentials}, we estimate the latter under the assumptions
\begin{equation}
	\label{eq : cond low bound}
	\begin{array}{rcccl}
	\cC_{p,\alpha,\sigma}(\oA) &= & \left(A - \alpha \frac{pq}{4}I_d, b,c, (1-\sigma)V_+\right) &\in & (\cS_p \cap \cS_2)(\Omega),\\
	\cC_{p,\alpha,\sigma}(\oB) &=&\left(B - \alpha \frac{pq}{4} I_d, \beta,\gamma, (1-\sigma) W_+\right) &\in &\cS_q(\Omega).
	\end{array}
\end{equation}
This entire discussion is presented at a formal level. Indeed, a rigorous justification is required for two crucial steps: first, the application of the subcritical inequality \eqref{e : subc ineq} to $\Theta(u,v)$, which requires that $\Theta(u,v)$ belongs to $\oV$; and second, the validity of the chain rule for complex-valued Sobolev functions. This justification is provided in \cite[Proposition~5.4]{P-Potentials}, on which we will rely. The aforementioned proposition requires the functions $u$ and $v$ to satisfy several regularity properties. Some of these are guaranteed by Proposition \ref{t : p grad est}, which generalizes \cite[Proposition~4.6]{P-Potentials}. For the remaining estimates, the strategy consists in exploiting specific bounds arising in the proof of the bilinear embedding for unbounded nonnegative potentials presented in Section~\ref{s : rep psin}, following an argument analogous to that in \cite[Section~7]{P-Potentials}.

In summary, the strategy comprises the following steps:
\begin{enumerate}[label=\textnormal{(\roman*)}]
\item\label{i: proof be pot nn unb}establishing the bilinear embedding for nonnegative, possibly unbounded potentials;
\item\label{i: reg prop uv}leveraging suitable estimates from step~\ref{i: proof be pot nn unb} to derive specific regularity properties for $u=T_t^{\oA}f$ and $v=T_t^{\oB}g$;
\item\label{i: new est H}establishing a new lower bound for $\mathbf{H}_{\cQ}^{(\oA_+,\oB_+)}$ under the assumptions in \eqref{eq : cond low bound};
\item\label{i: si e cr}invoking the subcritical inequality within the heat-flow method and justifying the application of the chain rule via the regularity results in item~\ref{i: reg prop uv};
\item\label{i: comp est e fine hfm}compensating the negative terms appearing after the application of the chain rule by means of the estimate obtained in \ref{i: new est H}, thereby concluding the heat-flow method.
\end{enumerate}

The first step is carried out in Section~\ref{s: alt proof}; the second will be the focus of Section~\ref{s: reg prop smgs}; step~\ref{i: new est H} will be addressed in Section~\ref{ss: str est gen hess Q}; for step~\ref{i: si e cr}  we rely on \cite[Proposition~5.4]{P-Potentials};  and for the final step we adapt in Section~\ref{ss: proof be bnd neg} the argument presented in \cite[Section~7.1]{P-Potentials}.


\subsection{Regularity properties of $(T_t^\oA f, T_t^\oB g)$}
\label{s: reg prop smgs}
By means of the argument presented in Section~\ref{s: alt proof}, all the hypotheses of Proposition~\ref{p : L11} are verified. Consequently, each intermediate step in the proof of the aforementioned result is justified. In particular, our framework enables us to combine \eqref{Lebesgue_1} and \eqref{eq: hfm penultima stima} to obtain the following lemma. It is worth noting that the results in this section are stated for operators $\oA$ with nonnegative potentials; they will be subsequently applied to the quadruple $\oA_+$ within the setting of (real-valued) strongly subcritical potentials in Section~\ref{ss: proof be bnd neg}. In addition, we observe that the estimate stated in the following lemma was already established in the case of regular coefficients and bounded nonnegative potentials in \cite[Section~6.2]{Poggio}. The use of the new approximating sequence $\left(\cS_{n,\nu}\right)_{n,\nu}$ allows us to establish it for unbounded (nonnegative) potentials, thereby generalizing the method discussed in \cite[Section~7.2]{P-Potentials}.

\begin{lemma}
\label{l : heat flow est pos 2}
Let $p\geq2$, $q=\frac{p}{p-1}$, $\mathscr{A},\mathscr{B}\in\mathcal{B}_p(\Omega)$. Suppose that $\mathscr{V}$ and $\mathscr{W}$ satisfy \AssBE.  Then 
\begin{equation}
	\nonumber
	\aligned
	2 \, \Re \int_\Omega \biggl(   \partial_{\zeta}\cQ(u,v) & \oL^{\oA}u + \partial_{\eta}\cQ(u,v) \oL^{\oB}v \biggr) \\
	\geq &   \int_\Omega \left(\liminf_{\nu \rightarrow 0}  \mathbf{H}_{\cQ * \varphi_\nu}^{(\oA,\oB)}[(u,v); (\nabla u,\nabla v)]  \right),
	\endaligned
\end{equation}
for all $u \in \Dom(\gota_{\oA,\oV})$, $v \in \Dom(\gota_{\oB,\oW})$  such that $u, v, \oL^{\oA}u, \oL^{\oB}v \in (L^p \cap L^q)(\Omega)$.
\end{lemma}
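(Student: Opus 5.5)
The lemma is essentially a bookkeeping consequence of the proof of Proposition~\ref{p : L11}, now that its hypotheses have been verified in Section~\ref{s: alt proof}. I will rerun that argument and stop one step before the final inequality in \eqref{eq: hfm penultima stima}, so as to keep the $\liminf$ of the generalized Hessians.

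First, I fix $u,v$ as in the statement and take the admissible sequence $(\Psi_n)_n=(\Psi_{\kappa,n})_n$ of Section~\ref{s: def Psi n fin}, with $F$ as in \eqref{eq : cand for F}. By Proposition~\ref{t : p grad est}, applied to $\oA\in\cS_p(\Omega)$ and to $\oB\in\cS_q(\Omega)$ (both contained in $\cB_p$), and by the definition of the form domains, we have $F(u,v;\nabla u,\nabla v)\in L^1(\Omega)$. Proposition~\ref{p: final estimate} gives \eqref{Stima_L^1}, and \eqref{eq: stima Hbc n} together with Proposition~\ref{prop_bcbetagamma_indip_nu} gives \eqref{stima_L1_2}. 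This uses that $\oA,\oB\in\cB(\Omega)$, so that \eqref{eq: first vs zero} holds.

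Second, I repeat the chain from the proof of Proposition~\ref{p : L11}:
\begin{itemize}
\item the identity \eqref{Lebesgue_1} expresses the left-hand side as $\lim_{\nu\to0}\lim_{n\to\infty}$ of the corresponding expressions for $\cS_{n,\nu}$;
\item integrating by parts, which Corollary~\ref{c : in form dom} justifies, gives \eqref{eq: hfm ibp};
\item Fatou's lemma on the nonnegative part, which is nonnegative by the $(A,B)$-convexity of $\cQ*\varphi_\nu$ and by \eqref{eq: term pot pos}, combined with dominated convergence for $(\mathrm{E.T.})_{n,\nu}$, yields \eqref{stima_intermedia_nu}.
\end{itemize}
The right-hand side of \eqref{stima_intermedia_nu} is exactly $\int_\Omega \mathbf{H}^{(\oA,\oB)}_{\cQ*\varphi_\nu}[(u,v);(\nabla u,\nabla v)]$.

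Third, I pass to $\nu\to0$. I split $\mathbf{H}^{(\oA,\oB)}_{\cQ*\varphi_\nu}$ into the nonnegative part $H^{(A,B)}_{\cQ*\varphi_\nu}+G^{(V,W)}_{\cQ*\varphi_\nu}$ and the part $H^{(b,c,\beta,\gamma)}_{\cQ*\varphi_\nu}$. The latter is bounded by $C_2F$ uniformly in $\nu$. Adding $C_2F$ to it, the whole integrand $\mathbf{H}^{(\oA,\oB)}_{\cQ*\varphi_\nu}+C_2F$ is nonnegative. Fatou's lemma applied to this sum, followed by subtracting the finite quantity $C_2\int F$, gives
$$\liminf_{\nu\to0}\int_\Omega \mathbf{H}^{(\oA,\oB)}_{\cQ*\varphi_\nu}\ge\int_\Omega\liminf_{\nu\to0}\mathbf{H}^{(\oA,\oB)}_{\cQ*\varphi_\nu}.$$
This is the first inequality of \eqref{eq: hfm penultima stima}. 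Since the limit in \eqref{Lebesgue_1} exists, the $\liminf$ over $\nu$ on the left coincides with the left-hand side of the lemma, and the proof is complete.

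The only delicate point is this last Fatou step. Here $\liminf$ must be handled carefully: the $\liminf$ of a sum is not split into separate $\liminf$s, which is why Fatou is applied to the whole integrand $\mathbf{H}+C_2F$. The integrability of $F$ from Proposition~\ref{t : p grad est} is what makes this legitimate. Everything else is already in place from Section~\ref{s: alt proof}.
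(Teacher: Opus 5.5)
Your proposal is correct and is exactly the paper's argument: verify the hypotheses of Proposition~\ref{p : L11} via Section~\ref{s: alt proof}, then combine \eqref{Lebesgue_1} with the first inequality in \eqref{eq: hfm penultima stima}. Your $\nu\to0$ step, which applies Fatou's lemma to the nonnegative integrand $\mathbf{H}^{(\oA,\oB)}_{\cQ*\varphi_\nu}+C_2F$ and then subtracts $C_2\int_\Omega F<\infty$, is a clean way to make rigorous the paper's ``Fatou on one part, dominated convergence on the other''. Two small corrections: Proposition~\ref{t : p grad est} also needs $\cB_p\subseteq\cS_2$, and the inclusion in your parenthetical runs the other way, namely $\cB_p\subseteq\cS_p\cap\cS_q$.
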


As in \cite{P-Potentials}, this serves as a fundamental auxiliary step for extanding the bilinear embedding to cases where the potentials possess a nontrivial (bounded) negative part. Specifically, it will enable us to apply the chain rule through \cite[Proposition~5.4]{P-Potentials}. To this end, we recall the auxiliary functions introduced in \cite{P-Potentials}. For any $(X,Y) \in \C^d \times \C^d$ we define the nonnegative functions $b_p[\,\cdot\,; (X,Y)]$ on $\C^2 \setminus \{(\zeta,\eta) \in \C^2 \, : \, \eta=0\} $, $g_p[\,\cdot\,;X]$ on $\C$ and $h_p[\,\cdot\,; (X,Y)]$ on $\C^2$ as follows:
\begin{equation}
	\label{e : def gp e bp}
	\aligned
	b_p[(\zeta,\eta); (X,Y)] =&\,|\eta|^{2-q}|X|^2 + \left( 1-\frac{q}{2}\right)^2 |\zeta|^2|\eta|^{-q}||\Re( e^{-i\arg\eta}Y)|^2  \\
	&+ 2 \left( 1-\frac{q}{2}\right) |\zeta||\eta|^{1-q}\sk{ \Re (e^{-i\arg\zeta}X)}{ \Re (e^{-i\arg\eta}Y)}, \\
	g_p[\zeta; X] =& \, \frac{p}{2} |\zeta|^{p-2} \left( \frac{p}{2} |\Re( e^{-i\arg\zeta}X)|^2 + \frac{2}{p} |\Im( e^{-i\arg\zeta}X)|^2 \right),\\
	h_p[(\zeta,\eta); (X,Y)] =&
	\begin{cases}
	b_p[(\zeta,\eta);(X,Y)], & \, |\zeta|^p < |\eta|^q;\\
	g_p[\zeta; X], &\, |\zeta|^p \geq|\eta|^q\,.
	\end{cases}
	\endaligned
\end{equation}
For fixed $X,Y \in \C^d$, the function $b_p[\,\cdot\,;(X,Y)]$ is continuous on $\C^2 \setminus \{\eta =0\}$, while $g_p[\,\cdot\,;X]$ is continuous on the entire complex plane $\C$. 

\begin{corollary}
\label{c: chain rule}
Let $p\geq2$, $q=\frac{p}{p-1}$, $\mathscr{A},\mathscr{B}\in\mathcal{B}_p(\Omega)$. Suppose that $\mathscr{V}$ and $\mathscr{W}$ satisfy \AssBE.  Let $u \in \Dom(\gota_{\oA,\oV})$, $v \in \Dom(\gota_{\oB,\oW})$ be such that $u, v, \oL^{\oA}u, \oL^{\oB}v \in (L^p \cap L^q)(\Omega)$. Then, the following integrability condition holds:
\begin{equation}
	\nonumber
	\left(|u|^{p-2} + |v|^{2-q}\right)|\nabla u|^2 + |v|^{q-2}|\nabla v|^2\mathds{1}_{\{v \ne0\}} \in L^1(\Omega).
\end{equation}
In particular, $G_p(u,v) := \max \{|u|^{p/2-1},|v|^{1-q/2}\}$ belongs to $\oV$ and
\begin{equation}
	\label{e : hp equal Fp}
	|\nabla[G_p(u,v)]|^2 =
	\begin{cases}
	b_p[(u,v);(\nabla u, \nabla v)] \mathds{1}_{\{v\ne 0\}}, & \, |u|^p \leq |v|^q;\\
	g_p[u; \nabla u], &\, |u|^p \geq|v|^q\,.
	\end{cases}
\end{equation}
\end{corollary}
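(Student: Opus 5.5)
The plan is to combine Lemma~\ref{l : heat flow est pos 2} with the pointwise lower bounds for the generalized Hessian of the regularized Bellman function in Corollary~\ref{t : gen conv reg bell func old}. The conclusion about $G_p(u,v)$ will then follow from \cite[Proposition~5.4]{P-Potentials}.

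First, fix $u,v$ as in the statement. Since $u,v,\oL^\oA u,\oL^\oB v\in L^p\cap L^q$, Hölder's inequality and the growth bounds \eqref{eq: N 5} make the left-hand side of Lemma~\ref{l : heat flow est pos 2} finite. Because $\cB_p\subseteq \cS_p\cap\cS_2$ for $\oA$ and $\cB_p\subseteq\cS_q$ for $\oB$, Corollary~\ref{t : gen conv reg bell func old} applies. It gives a pointwise lower bound for $\mathbf{H}^{(\oA,\oB)}_{\cQ*\varphi_\nu}[(u,v);(\nabla u,\nabla v)]$ of the form
\[
(\tau*\varphi_\nu)|\nabla u|^2+(\tau^{-1}*\varphi_\nu)|\nabla v|^2+\mathcal{N}^{(c,\gamma)}_\nu+\mathcal{N}^{(V,W)}_\nu ,
\]
with the potential contributions dropped since they are nonnegative.

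Next, take $\liminf_{\nu\to0}$ pointwise. The remainders vanish by \eqref{eq: lemma20 pert}. By continuity of $\tau$, we have $\tau*\varphi_\nu\to\tau(u,v)$ everywhere. The term $\tau^{-1}*\varphi_\nu$ converges to $\tau^{-1}(u,v)$ wherever $\tau^{-1}$ is continuous, which includes $(u,v)\neq(0,0)$. On the set $\{v=0\}$ we only need the $|\nabla u|^2$ term, and the indicator $\mathds{1}_{\{v\neq0\}}$ in the statement removes the singular part there. With $\tau=\max\{|\zeta|^{p-2},|\eta|^{2-q}\}$, so that $\tau^{-1}=\min\{|\zeta|^{2-p},|\eta|^{q-2}\}$, this yields
\[
\liminf_{\nu\to0}\mathbf{H}\gtrsim \max\{|u|^{p-2},|v|^{2-q}\}|\nabla u|^2+\tau^{-1}(u,v)|\nabla v|^2\mathds{1}_{\{v\ne0\}} .
\]
Integrating and using Lemma~\ref{l : heat flow est pos 2} gives $(|u|^{p-2}+|v|^{2-q})|\nabla u|^2\in L^1$.

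The term $|v|^{q-2}|\nabla v|^2\mathds 1_{\{v\neq0\}}$ does not come from this bound, because $\tau^{-1}$ is a minimum. Instead, obtain it from Proposition~\ref{t : p grad est} applied to $\oB\in(\cS_q\cap\cS_2)(\Omega)$ with exponent $q$. This is allowed because $\cB_p=\cS_p\cap\cS_q$ and $\cS_2\supseteq$ the relevant class by the decreasing-chain property, or directly since $\oB\in\cB(\Omega)$. Together these give the stated integrability.

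Finally, the membership $G_p(u,v)\in\oV$ and the formula \eqref{e : hp equal Fp} are exactly the output of \cite[Proposition~5.4]{P-Potentials}. Its hypotheses are precisely the integrability condition just proved together with $u\in\oV$, $v\in\oW$ and \AssBE. Concretely, $G_p(u,v)$ is the composition of a Lipschitz-away-from-the-origin function with $(u,v)$. On $\{|u|^p\ge|v|^q\}$ it equals $|u|^{p/2-1}$, and its gradient gives $g_p$. On $\{|u|^p<|v|^q\}$ it equals $|v|^{1-q/2}$, and a direct chain rule computation produces $b_p$.

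The main obstacle is the pointwise $\liminf$ near the singular set $\Upsilon$ and near $\{v=0\}$, where $\tau^{-1}$ blows up. I would handle it by restricting to $\{v\neq0\}$ for the $\nabla v$ term and using Fatou's lemma, relying on the nonnegativity of the principal terms.
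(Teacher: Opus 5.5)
Your proposal is correct and follows the paper's proof. The $|v|^{q-2}|\nabla v|^2\mathds{1}_{\{v\ne0\}}$ term comes from Proposition~\ref{t : p grad est} with exponent $q$. The $|v|^{2-q}|\nabla u|^2$ term comes from inserting Corollary~\ref{t : gen conv reg bell func old} into Lemma~\ref{l : heat flow est pos 2}, using \eqref{eq: lemma20 pert}, continuity of $\tau$ and of $\tau^{-1}$ off the origin, and \eqref{eq: N 5} for finiteness. The claims on $G_p$ come from \cite[Proposition~5.4]{P-Potentials}. Taking $|u|^{p-2}|\nabla u|^2$ from the Bellman bound instead is equally fine.

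Do not rely on your ``concrete'' gradient computation, though. Taken literally, $|\nabla(|u|^{p/2-1})|^2\neq g_p[u;\nabla u]$, so \eqref{e : hp equal Fp} only holds for the intended $G_p(u,v)=u\max\{|u|^{p/2-1},|v|^{1-q/2}\}$. That is, $G_p$ equals $|u|^{p/2-1}u$ and $u|v|^{1-q/2}$ in the two regions; the statement's formula is missing the factor $u$.
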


\begin{proof}
Let $u$ and $v$ satisfy the stated assumptions. The fact that
\begin{equation}
	\nonumber
	|u|^{p-2} |\nabla u|^2 + |v|^{q-2}|\nabla v|^2\mathds{1}_{\{v \ne0\}} \in L^1(\Omega)
\end{equation}
is a direct consequence of Proposition~\ref{t : p grad est}. It remains to verify that $|v|^{2-q}|\nabla u|^2 \in L^1(\Omega)$.
    
By virtue of Corollary~\ref{t : gen conv reg bell func old}, \eqref{eq: lemma20 pert} and the facts that
\begin{equation}
	\nonumber
	\begin{array}{rcl}
	\tau, \tau(F_2 \otimes \mathds{1}), \tau^{-1}( \mathds{1} \otimes F_2) &\in &C(\C^2), \\
	\tau^{-1} &\in &C(\C^2 \setminus \{(0,0)\}),
	\end{array}
\end{equation}
it follows from  Lemma~\ref{l : heat flow est pos 2} that
\begin{equation}
	\label{e : heat flow est pos 3}
	\aligned
	2 \, \Re \int_\Omega \biggl(   &\partial_{\zeta}\cQ(u,v)  \oL^{\oA}u + \partial_{\eta}\cQ(u,v) \oL^{\oB}v \biggr) \\
	&\geqsim \int_{\Omega \setminus\{u=0,v=0 \}} \tau(u,v) \left(|\nabla u|^2 + V |u|^2 \right)+ \tau(u,v)^{-1} \left(|\nabla v|^2 + W |v|^2\right),
	\endaligned
\end{equation}
with $\tau(u,v)= \max\{|u|^{p-2}, |v|^{2-q}\}$. In particular, \eqref{e : heat flow est pos 3}, together with the last two estimates of \eqref{eq: N 5}, yields the required integrability:
\begin{equation}
	\label{eq: uv l1}
	|v|^{2-q}|\nabla u|^2 \in L^1(\Omega).
\end{equation}

The last statement follows by applying \cite[Proposition~5.4]{P-Potentials}.
\end{proof}

Let us briefly consider the case of negative potentials. If $\oA\in\cB\cP_p(\Omega,\oV)$ and $\oB\in\cB\cP_p(\Omega,\oW)$, it is straightforward to verify that $\oA_+,\oB_+\in\cB_p(\Omega)$. In particular, assuming $V\in\cP_{\alpha,\sigma}(\Omega,\oV)$, Corollary \ref{c: chain rule} applied to the pair $(\oA_+,\oB_+)$ yields the following subcritical inequality:
\begin{equation}
	\label{eq : dis subcr con Gp}
	\int_\Omega V_-|G_p(u,v)|^2\leq\alpha\int_\Omega|\nabla G_p(u,v)|^2+\sigma\int_\Omega V_+|G_p(u,v)|^2
\end{equation}
for all $u\in\Dom(\gota_{\oA_+,\oV})$, $v\in\Dom(\gota_{\oB_+,\oW})$ such that $u,v,\oL^{\oA_+}u,\oL^{\oB_+}v\in(L^p\cap L^q)(\Omega)$.


\subsection{New estimates for the generalized hessian of $\cQ$}
\label{ss: str est gen hess Q}
Let $A, B \in \cA(\Omega)$, $b,c,\beta,\gamma \in L^\infty(\Omega;\C^d)$, $V,W \in L^1_{{\rm loc}}(\Omega)$ and set $\oA=(A,b,c,V)$ and $\oB=(B,\beta,\gamma,W)$. This section is devoted to establishing lower bounds for $H_\cQ^{(\oA_+,\oB_+)}$ that ensure the strict monotonicity of the flow $\cE$. The following theorem is an adaptation of \cite[Theorem~6.2]{P-Potentials} to our setting; we omit the more technical details of the proof for the sake of brevity. We recall the notation $\cC_{p,\alpha,\sigma}$ introduced in \eqref{eq: def FA e A+}.

\begin{theorem}
\label{t : alpha gen strict conv Q}
Let $p \geq 2$, $A, B \in \cA_p(\Omega)$, $b,c,\beta,\gamma \in L^\infty(\Omega;\C^d)$ and $V,W \in L^1_{{\rm loc}}(\Omega)$. Suppose that for some $\alpha_1, \alpha_2  \geq0$ and $\sigma_1, \sigma_2 \in [0,1)$ we have $\cC_{p,\alpha_1,\sigma_1}(\oA) \in (\cS_p \cap \cS_2)(\Omega)$ and $\cC_{p,\alpha_2,\sigma_2}(\oB) \in \cS_q(\Omega)$. Then, there exists a continuous function $\tau : \C^2 \rightarrow [0,+\infty)$ such that $\tau^{-1}=1/\tau$ is locally integrable on $\C^2 \setminus \{(0,0)\}$, and $\delta \in (0,1)$ such that $\cQ=\cQ_{p,\delta}$ as in \eqref{eq: N Bellman} admits the following property:
\begin{itemize}
\item there exists $\widetilde{C}>0$ such that for any $\omega =(\zeta,\eta) \in \C^2 \setminus \Upsilon$, $X,Y \in \C^d$, and  a.e. $x \in \Omega$, we have
\begin{equation}
	\nonumber
	\aligned
	\mathbf{H}_{\cQ}^{(\oA_+(x),\oB_+(x))}[\omega; (X,Y)]  \geq& \,\widetilde{C} [\tau (|X|^2+V_+ |\zeta|^2) + \tau^{-1}(|Y|^2+W_+ |\eta|^2)] \\
	&+ \alpha_1 \left( \frac{pq}{4}H_{F_p}^{I_d}[\zeta,X] +  2 \delta  h_p[\omega;(X,Y)] \right) \\
	&+  \alpha_2 [ q+  (2-q)\delta ] \frac{p}{4}H_{F_q}^{I_d}[\eta,Y] \\
	 &+  \, G_\cQ^{(\sigma_1 V_+,\sigma_2 W_+)}(\omega).
	\endaligned
\end{equation}
\end{itemize}
The constant $\widetilde{C}>0$ can be chosen so as to  depend on  $p$, $\alpha_1$, $\alpha_2$ and the ellipticity constants of $\cC_{p,\alpha_1,\sigma_1}(\oA)$ and $\cC_{p,\alpha_2,\sigma_2}(\oB)$, but not on the dimension $d$.

We may take $\tau(\zeta,\eta)= \max\{|\zeta|^{p-2},|\eta|^{2-q}\}$. 
\end{theorem}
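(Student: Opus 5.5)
The plan is to split $\oA_+$ into the perturbed quadruple $\cC_{p,\alpha_1,\sigma_1}(\oA)$ plus explicit remainders, and to exploit the linearity of the generalized Hessian in the coefficients. Since $H^{\mathbf{A}}_\Phi$ is linear in the matrices and $G^{\mathbf{V}}_\Phi$ is linear in the potentials, while $H^{(\mathbf{b},\mathbf{c})}_\Phi$ is unaffected, we can write
\begin{equation*}
	\mathbf{H}_{\cQ}^{(\oA_+,\oB_+)}[\omega;(X,Y)]
	=\mathbf{H}_{\cQ}^{(\cC_{p,\alpha_1,\sigma_1}(\oA),\,\cC_{p,\alpha_2,\sigma_2}(\oB))}[\omega;(X,Y)]
	+\frac{pq}{4}H_{\cQ}^{(\alpha_1 I_d,\alpha_2 I_d)}[\omega;(X,Y)]
	+G_{\cQ}^{(\sigma_1V_+,\sigma_2W_+)}(\omega).
\end{equation*}
This holds for $\omega\notin\Upsilon$. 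The last term already appears verbatim in the claimed inequality, so the remaining work concerns the first two terms.

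For the first term, the hypotheses $\cC_{p,\alpha_1,\sigma_1}(\oA)\in(\cS_p\cap\cS_2)(\Omega)$ and $\cC_{p,\alpha_2,\sigma_2}(\oB)\in\cS_q(\Omega)$ allow us to apply Theorem~\ref{t:ConvBelman}\ref{i: conv est ur} to this pair. For $\delta$ small, this gives
\begin{equation*}
	\mathbf{H}_{\cQ}^{(\cC_{p,\alpha_1,\sigma_1}(\oA),\,\cC_{p,\alpha_2,\sigma_2}(\oB))}
	\geqsim \tau\bigl(|X|^2+(1-\sigma_1)V_+|\zeta|^2\bigr)+\tau^{-1}\bigl(|Y|^2+(1-\sigma_2)W_+|\eta|^2\bigr),
\end{equation*}
with $\tau=\max\{|\zeta|^{p-2},|\eta|^{2-q}\}$. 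Since $1-\sigma_j>0$, this is the $\widetilde C$-term. The constant depends only on the ellipticity constants of the perturbed quadruples and on $p$, and not on $d$, as stated in Theorem~\ref{t:ConvBelman}.

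The second term, $\frac{pq}{4}H_\cQ^{(\alpha_1 I_d,\alpha_2 I_d)}$, is a purely matrix quantity with no first-order terms or potentials. Here I would reproduce the computation of \cite[Theorem~6.2]{P-Potentials}, which is exactly the identity-matrix Hessian estimate with the same Bellman function. I would treat the two regions of $\cQ$ separately.
\begin{itemize}
\item On $\{|\zeta|^p>|\eta|^q\}$, $\cQ=(1+2\delta/p)F_p+(1+\delta(2/q-1))F_q$. Its Hessian with respect to $(\alpha_1I_d,\alpha_2I_d)$ is an explicit combination of $H^{I_d}_{F_p}[\zeta;X]$ and $H^{I_d}_{F_q}[\eta;Y]$. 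Using $H^{I_d}_{F_p}[\zeta;X]=2g_p[\zeta;X]$ (up to the normalization in \eqref{e : def gp e bp}), the excess coefficient $2\delta/p$ produces the $2\delta h_p$ term. The $F_q$-coefficient $1+\delta(2/q-1)$, multiplied by $pq/4$, equals $[q+(2-q)\delta]p/4$, which matches the stated term.
\item On $\{|\zeta|^p<|\eta|^q\}$, the additional part $\delta|\zeta|^2|\eta|^{2-q}$ has an identity-Hessian that, by completing the square, equals $2\,b_p[\omega;(X,Y)]$ plus a multiple of $H^{I_d}_{F_q}[\eta;Y]$ with a nonnegative coefficient. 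After the factor $\alpha_j$ these give $2\delta\alpha_1 h_p$ and the rest of the $\alpha_2$-term.
\end{itemize}
In each region, the cross terms between $X$ and $Y$ must be absorbed either into $b_p$ or, where the coefficients are not exact, into a small portion of the $\widetilde C$-term.

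I expect this bookkeeping of the second term to be the main obstacle. It requires matching the coefficients exactly to the form claimed and absorbing any leftover negative parts into the $\widetilde C$-term by shrinking $\delta$. The latter is possible precisely because of the $\delta$-dependence tracked in Theorem~\ref{t:ConvBelman}\ref{i: conv est br}, in particular \eqref{eq: beah c1}: $C_2(\delta)\to\infty$ dominates the $|\eta|^{q-2}|Y|^2$ errors that arise in the region $|\zeta|^p<|\eta|^q$.
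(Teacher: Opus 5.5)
Your proposal follows essentially the same route as the paper. Both use the decomposition \eqref{eq: scomp hess}, bound the first term with Theorem~\ref{t:ConvBelman}, treat the identity-matrix term as in \cite[Theorem~6.2]{P-Potentials}, and absorb the leftover negative contributions in $\{|\zeta|^p<|\eta|^q\}$ by shrinking $\delta$ via \eqref{eq: beah c1}. Two points need care: in that region you must use item~\ref{i: conv est br} rather than item~\ref{i: conv est ur}, whose constant does not track $\delta$; and your bullet-point ``identities'' are really inequalities with error terms (e.g.\ the $\eta\eta$-part of the Hessian of $|\zeta|^2|\eta|^{2-q}$ is not nonnegative in general, and the $X$--$Y$ cross term carries weight $\alpha_1+\alpha_2$ rather than matching $2\alpha_1 b_p$), which is exactly what the absorption step is for.
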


\begin{proof}[Sketch of the proof]
Set $\oC:= \cC_{p,\alpha_1,\sigma_1}(\oA)$ and $\oD:= \cC_{p,\alpha_2,\sigma_2}(\oB)$. Observe that
\begin{equation}
	\nonumber
	\oA_+ = \oC + \left( \alpha_1\frac{pq}{4}I_d, 0,0, \sigma_1 V_+\right),  \qquad \quad \oB_+ = \oD + \left( \alpha_2\frac{pq}{4}I_d, 0,0, \sigma_2 W_+\right).
\end{equation}
Hence,  for all $\omega \in \C$ and $X,Y \in \C^d$ we have
\begin{equation}
	\label{eq: scomp hess}
	\mathbf{H}^{(\oA_+,\oB_+)}_{\cQ}[\omega; (X,Y)] = \mathbf{H}^{(\oC,\oD)}_{\cQ}[\omega; (X,Y)] + \frac{pq}{4} H_\cQ^{(\alpha_1 I_d,\alpha_2 I_d)}[\omega; (X,Y)] +  \, G^{(\sigma_1V_+,\sigma_2W_+)}_\cQ(\omega).
\end{equation}
By assumptions, $\oC \in (\cS_p \cap \cS_2)(\Omega)$ and $\oD \in \cS_q(\Omega)$. In the region $\{|\zeta|^p > |\eta|^q > 0\}$, we estimate the first term on the right-hand side of \eqref{eq: scomp hess} by means of Theorem~\ref{t:ConvBelman}\ref{i: conv est gr}, while the second term is treated as in the proof of \cite[Theorem~6.2]{P-Potentials}.

Conversely, in the region $\{|\zeta|^p < |\eta|^q\}$, we first estimate $\mathbf{H}^{(\oC,\oD)}_{\cQ}$ via Theorem~\ref{t:ConvBelman}\ref{i: conv est br} and then combine this quantity with the second term of \eqref{eq: scomp hess}. By choosing $\delta$ sufficiently small and adapting the argument of \cite[Lemma~6.1]{P-Potentials}, we obtain the desired inequality.

Combining the estimates from both regions completes the sketch of the proof.
\end{proof}

The following corollary is a generalization of \cite[Theorem~6.3]{P-Potentials}.

\begin{corollary}
\label{t : gen conv reg bell func}
Let $p \geq 2$, $A, B \in \cA_p(\Omega)$, $b,c,\beta,\gamma \in L^\infty(\Omega;\C^d)$ and $V,W \in L^1_{{\rm loc}}(\Omega)$. Suppose that for some $\alpha_1, \alpha_2  >0$ and $\sigma_1, \sigma_2 \in [0,1)$ we have $\cC_{p,\alpha_1,\sigma_1}(\oA) \in (\cS_p \cap \cS_2)(\Omega)$ and $\cC_{p,\alpha_2,\sigma_2}(\oB) \in \cS_q(\Omega)$.
Let $\delta \in (0,1)$, $\widetilde{C}>0$ and function $\tau : \C^2 \rightarrow (0, \infty)$ be as in Theorem~\ref{t : alpha gen strict conv Q}. Then, for $\cQ = \cQ_{p,\delta}$ and any $\omega=(\zeta,\eta) \in \C^2$ we have, for a.e. $x \in \Omega$ and every $(X,Y) \in \C^d \times \C^d$,
\begin{equation}
	\nonumber
	\aligned
	\mathbf{H}_{\cQ * \varphi_\nu}^{(\oA_+(x),\oB_+(x))}[&\omega; (X,Y)]  \\
	\geq & \,\, \widetilde{C} \left( (\tau * \varphi_\nu)(\omega) |X|^2 + (\tau^{-1} * \varphi_\nu)(\omega) |Y|^2 \right)  \\
	&+ \widetilde{C} \left[ V_+ (\tau ( F_2 \otimes \mathbf{1})* \varphi_\nu)(\omega) + W_+ (\tau^{-1} ( \mathbf{1} \otimes F_2 )* \varphi_\nu)(\omega)\right]\\  
	&+ \alpha_1 \frac{pq}{4}\left(H_{F_p }^{I_d}[\cdot;X]  *\varphi_\nu\right) (\zeta)	\\	
	&+  \alpha_2 [ q+  (2-q)\delta ] \frac{p}{4} \left( H_{F_q}^{I_d}[\cdot;Y] *\varphi_\nu \right) (\eta) \\
	&+ 2 \delta \alpha_1 (h_p[\, \cdot \, ;(X,Y)] * \varphi_\nu)(\omega) \\
	&+  \, \left(G_\cQ^{(\sigma_1V_+,\sigma_2W_+)} * \varphi_\nu \right)(\omega)\\
	&+ \cN^{(c,\gamma)}_\nu[\omega;(X,Y)] +\cN^{(V_+,W_+)}_\nu(\omega).
	\endaligned
\end{equation}
The constant $\widetilde{C}>0$ can be chosen so as to depend on  $p$, $\alpha_1$, $\alpha_2$ and the ellipticity constants of $\cC_{p,\alpha_1,\sigma_1}(\oA)$ and $\cC_{p,\alpha_2,\sigma_2}(\oB)$, but not on the dimension $d$.
\end{corollary}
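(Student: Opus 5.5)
The plan is to mirror the proof of Corollary~\ref{t : gen conv reg bell func old}: transfer the pointwise lower bound of Theorem~\ref{t : alpha gen strict conv Q} to $\cQ*\varphi_\nu$ by convolution, and keep track of the error committed by the first-order and potential parts. The starting point is the identity of \cite[Lemma~18]{Poggio}, applied to the quadruples $\oA_+=(A,b,c,V_+)$ and $\oB_+=(B,\beta,\gamma,W_+)$:
\begin{equation}
	\nonumber
	\mathbf{H}_{\cQ*\varphi_\nu}^{(\oA_+,\oB_+)}[\omega;(X,Y)] = \left(\mathbf{H}_{\cQ}^{(\oA_+,\oB_+)}[\,\cdot\,;(X,Y)]*\varphi_\nu\right)(\omega) + \cN^{(c,\gamma)}_\nu[\omega;(X,Y)] + \cN^{(V_+,W_+)}_\nu(\omega).
\end{equation}
The matrix part $H^{(A,B)}$ commutes with convolution, since it is linear in $D^2\Phi$ with coefficients that do not depend on $\omega$. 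The first-order contribution coming from $b,\beta$ also commutes with convolution, since it is linear in $D\Phi$ with $\omega$-independent coefficients. The only parts that fail to commute are those weighted by $\omega_j$, namely the $c,\gamma$ part and the potential part. By the definitions \eqref{eq: rest VW}, these discrepancies are exactly $\cN^{(c,\gamma)}_\nu$ and $\cN^{(V_+,W_+)}_\nu$. Since $V_+,W_+\ge 0$, the hypotheses needed for that lemma are the same as in the nonnegative case.

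Next, I would convolve the pointwise inequality of Theorem~\ref{t : alpha gen strict conv Q} against $\varphi_\nu\ge0$. The inequality holds off $\Upsilon$, and $\Upsilon$ is Lebesgue-null in $\R^4$. Moreover, $\cQ\in C^1$ and $D^2\cQ$ is locally integrable by \eqref{eq: est Q ii sep}, so the convolution of the generalized Hessian equals the generalized Hessian of the convolution, up to the $\cN$ terms. Convolving term by term then gives each line of the claimed bound:
\begin{itemize}
\item $\widetilde C\,\tau|X|^2$ becomes $\widetilde C(\tau*\varphi_\nu)|X|^2$.
\item $\widetilde C\,\tau V_+|\zeta|^2$ becomes $\widetilde C V_+(\tau(F_2\otimes\mathbf 1)*\varphi_\nu)$, because $V_+(x)$ is constant in $\omega$.
\item The $\tau^{-1}$ terms transform in the same way.
\item The $\alpha_1 H_{F_p}^{I_d}$ and $\alpha_2 H_{F_q}^{I_d}$ terms become their convolutions; since they depend only on $\zeta$ or only on $\eta$, they can be written as convolutions in $\zeta$ or in $\eta$ with the marginal of $\varphi_\nu$.
\item The $2\delta\alpha_1 h_p$ term becomes $2\delta\alpha_1(h_p*\varphi_\nu)$.
\item $G^{(\sigma_1V_+,\sigma_2W_+)}_\cQ$ becomes its convolution.
\end{itemize}
All coefficients here are nonnegative, so the inequality is preserved.

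The main obstacle is checking that each convolved term is finite and well defined, which is what allows the splitting of the convolution. The dangerous terms are those involving $|\eta|^{q-2}$: $\tau^{-1}$, $H_{F_q}^{I_d}$, and the $|\eta|^{-q}$ and $|\eta|^{1-q}$ factors in $b_p$. Here I would use that $q-2>-2$, so $|\eta|^{q-2}$ is locally integrable in $\R^2$, together with \eqref{eq: est convol negative pow} and the local integrability of $\tau^{-1}$ away from the origin. I would also note that $h_p$ is nonnegative, so the convolution is meaningful even if it equals $+\infty$.

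Because all terms in Theorem~\ref{t : alpha gen strict conv Q} are bounded by $|D^2\cQ|$-type quantities or are nonnegative, splitting the integral of a sum into a sum of integrals is legitimate. That justifies the term-by-term convolution and completes the plan.
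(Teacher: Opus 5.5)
Your proposal is correct and is essentially the paper's proof, which simply combines the identity of \cite[Lemma~18]{Poggio}, applied to $\oA_+$ and $\oB_+$, with the pointwise bound of Theorem~\ref{t : alpha gen strict conv Q} integrated against $\varphi_\nu\geq 0$ off the null set $\Upsilon$. Your concern about finiteness of the individual convolved terms is in fact automatic: every term on the right-hand side of Theorem~\ref{t : alpha gen strict conv Q} is nonnegative, and their sum is dominated by the locally integrable $\mathbf{H}_{\cQ}^{(\oA_+,\oB_+)}[\,\cdot\,;(X,Y)]$, so each term is dominated by it as well.
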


\begin{proof}
It follows by combining  \cite[Lemma~18]{Poggio} with Theorem~\ref{t : alpha gen strict conv Q}.
\end{proof}


\subsection{Proof of the bilinear embedding}
\label{ss: proof be bnd neg}
Let $p \geq2$ and denote by $q$ its conjugate exponent. Suppose that $\oA=(A,b,c,V) \in \cB\cP_p(\Omega,\oV)$ and $\oB=(B,\beta,\gamma,W) \in \cB\cP_p(\Omega,\oW)$, where $\oV$ and $\oW$ satisfy \AssBE. In particular, $\oA_+,\oB_+\in\cB_p(\Omega)$, as remarked at the end of Section~\ref{s: reg prop smgs}. Hence, applying Lemma~\ref{l : heat flow est pos 2} to this tuples of coefficients yields 
\begin{equation}
	\label{e : heat flow est pos 2}
	\aligned
	2 \, \Re \int_\Omega \biggl(   \partial_{\zeta}\cQ(u,v) & \oL^{\oA_+}u + \partial_{\eta}\cQ(u,v) \oL^{\oB_+}v \biggr) \\
	\geq &   \int_\Omega \left(\liminf_{\nu \rightarrow 0}  \mathbf{H}_{\cQ * \varphi_\nu}^{(\oA_+,\oB_+)}[(u,v); (\nabla u,\nabla v)]  \right),
	\endaligned
\end{equation}
for all $u \in \Dom(\gota_{\oA_+,\oV})$, $v \in \Dom(\gota_{\oB_+,\oW})$  such that $u, v, \oL^{\oA_+}u, \oL^{\oB_+}v \in (L^p \cap L^q)(\Omega)$. This estimate provides the starting point for establishing the bilinear embedding, under the temporary assumption that $V_-$ and $W_-$ are essentially bounded. To this end, we adapt the argument from \cite[Section~7.1]{P-Potentials} to our current setting. Following the approach described in \cite{CD-Mixed,CD-Potentials,Poggio,P-Potentials}, it suffices to prove that 
\begin{equation}
	\label{eq: hfm 1}
	\aligned
	\int_\Omega  \sqrt{|\nabla u|^2+|V||u|^2}&\sqrt{|\nabla v|^2+|W||v|^2} \leqsim \, \Re \int_\Omega \left(  \partial_{\zeta}\cQ(u,v)  \oL^{\oA}u + \partial_{\eta}\cQ(u,v) \oL^{\oB}v \right),
	\endaligned
\end{equation}
for all  $u \in \Dom(\gota_{\oA,\oV})$, $v \in \Dom(\gota_{\oB,\oW})$ such that $u, v, \oL^{\oA}u, \oL^{\oB}v \in (L^p \cap L^q)(\Omega)$. 
\smallskip

Given such $u$ and $v$, denote
\begin{equation}
	\label{eq: OQ}
	\cO(\cQ)(u,v) := 2 \, \Re \int_\Omega \left(  \partial_{\zeta}\cQ(u,v)  \oL^{\oA}u + \partial_{\eta}\cQ(u,v) \oL^{\oB}v \right).
\end{equation}
By definition \eqref{e : form sesq}, we have $\Dom(\gota_{\oA,\oV}) = \Dom(\gota_{\oA_+,\oV})$. In addition, since $V_-$ is bounded, $\Dom(\oL^{\oA}) = \Dom(\oL^{\oA_+})$ and the following decomposition holds for all $u \in \Dom(\oL^{\oA})$:
\begin{equation}
	\label{e : split op under bound}
	\oL^{\oA} u = \oL^{\oA_+}u  - V_- u.
\end{equation}
Consequently, for $u \in (L^p \cap L^q)(\Omega)$, we have the equivalence
\begin{equation}
	\label{eq: equiv_domains}
	u \in \Dom(\gota_{\oA,\oV}), \,\,  \oL^{\oA}u \in (L^p \cap L^q)(\Omega) 
	\iff 
	u \in \Dom(\gota_{\oA_+,\oV}), \,\, \oL^{\oA_+}u \in (L^p \cap L^q)(\Omega).
\end{equation}

The same considerations apply to $\oB=(B,\beta,\gamma,W)$. In particular, \eqref{e : heat flow est pos 2} remains valid for all  $u \in \Dom(\gota_{\oA,\oV})$, $v \in \Dom(\gota_{\oB,\oW})$ such that $u, v, \oL^{\oA}u, \oL^{\oB}v \in (L^p \cap L^q)(\Omega)$. Therefore, by combining \eqref{e : split op under bound} with \eqref{e : heat flow est pos 2}, we obtain
\begin{equation}
	\label{e : flow deriv}
	\aligned
	\cO(\cQ)(u,v) =& \, 2\, \Re \int_\Omega  \left(  \partial_{\zeta}\cQ(u,v)  \oL^{\oA_+}u + \partial_{\eta}\cQ(u,v) \oL^{\oB_+}v \right)\\
	& -  2 \int_\Omega \left[ V_- (\partial_{\zeta}\cQ)(u,v) \cdot u + W_- (\partial_{\eta}\cQ)(u,v) \cdot v \right] \\
	\geq & \, I_1 -I_2,
	\endaligned
\end{equation}
where
\begin{equation}
	\nonumber
	\aligned
	I_1 &:=   \int_\Omega \left(\liminf_{\nu \searrow 0} \mathbf{H}_{\cQ * \varphi_\nu}^{(\oA_+,\oB_+)}[(u,v); (\nabla u,\nabla v)]\right),  \\
	I_2 &:= 2 \int_\Omega \left[ V_- (\partial_{\zeta}\cQ)(u,v) \cdot u + W_- (\partial_{\eta}\cQ)(u,v) \cdot v \right].
	\endaligned
\end{equation}

Estimating $I_1$ and $I_2$ separately, we will first prove that
\begin{equation}
	\label{e : first est of E}
	\aligned
	\cO(\cQ)(u,v)   \geqsim \int_{\Omega \setminus\{u=0,v=0 \}}   \tau(u,v) \left( |\nabla u|^2 +  V_+|u|^2 \right) + \tau^{-1}(u,v) \left(|\nabla v|^2 + W_ + |v|^2 \right),
	\endaligned
\end{equation}
and then 
\begin{equation}
	\label{e : second est of E}
	\aligned
	\cO(\cQ)(u,v)  &\geqsim \int_{\Omega \setminus\{u=0,v=0 \}} \tau(u,v)  \cdot V_- |u|^2 + \tau^{-1}(u,v) \cdot W_- |v|^2,
	\endaligned
\end{equation}
which, along with the fact that for $w \in W^{1,2}(\Omega)$ we have $\nabla w =0$ almost everywhere on $\{w= 0\}$, imply \eqref{eq: hfm 1}. Recall that $\tau(u,v) = \max\{ |u|^{p-2}, |v|^{2-q}\}$. 

The proofs of these estimates follow the general strategy developed in \cite{P-Potentials} for the case without first-order terms. While the arguments are conceptually similar, for the sake of clarity we provide a detailed proof of \eqref{e : first est of E}. Conversely, we omit the details for \eqref{e : second est of E}, since its proof follows almost verbatim that of \cite[(7.12)]{P-Potentials}; indeed, the necessary modifications are analogous to those illustrated in the proof of \eqref{e : first est of E} below.

\subsubsection{Proof of \eqref{e : first est of E}}
We start estimating $I_1$. Recall the notation in  \eqref{eq: def FA e A+}. Since $\oA\in\cB\cP_p(\Omega,\oV)$ and $\oB\in\cB\cP_p(\Omega,\oW)$, there exist $\alpha_1,\alpha_2\geq0$ and $\sigma_1,\sigma_2\in[0,1)$ such that
\begin{equation}
	\label{eq : pot subcr V W}
	V\in\cP_{\alpha_1,\sigma_1}(\Omega,\oV),\qquad W\in\cP_{\alpha_2,\sigma_2}(\Omega,\oW),
\end{equation}
\begin{equation}
	\label{eq : perturb in B}
	\cC_{p,\alpha_1,\sigma_1}(\oA),\;\cC_{p,\alpha_2,\sigma_2}(\oB)\in\cB_p(\Omega).
\end{equation}
Set $\omega=(u,v)$ and $\nabla \omega=(\nabla u,\nabla v)$.  Recall that $G_p(\omega)= \max \{|u|^{p/2-1},|v|^{1-q/2}\}$. Because $\cB_p\subseteq(\cS_p\cap\cS_2)\cap\cS_q$ \cite{Poggio}, we can estimate $I_1$ from below by applying Corollary~\ref{t : gen conv reg bell func} under the condition \eqref{eq : perturb in B}. Then, by taking the limit as $\nu \to 0$ and following the argument in the proof of \cite[Proposition~7.3]{P-Potentials}, while also exploiting \eqref{eq: lemma20 pert}, we obtain
\begin{equation}
	\label{e : est I1}
	\aligned
	I_1 \geq &\,\, \widetilde{C} \int_{\Omega \setminus \{u=0,v=0\}}  \tau(u,v) (|\nabla u|^2 +V_+|u|^2) + \tau^{-1}(u,v) (|\nabla v|^2+W_+|v|^2)\\
	&+J(\alpha_1,\alpha_2) + 2   \int_\Omega \left[ \sigma_1 V_+ (\partial_{\zeta}\cQ)(u,v) \cdot u + \sigma_2 W_+ (\partial_{\eta}\cQ)(u,v) \cdot v \right],
	\endaligned
\end{equation}
where
\begin{equation}
	\aligned
	J(\alpha_1&,\alpha_2) \\
	&:=  \int_{\Omega }\alpha_1 \frac{pq}{4} H_{F_p}^{I_d}[u,\nabla u] + \alpha_2 [ q+  (2-q)\delta ]  \frac{p}{4} H_{F_q}^{I_d}[v,\nabla v] \mathds{1}_{\{v \ne 0\}} + 2 \delta \alpha_1  |\nabla[ G_p(u,v)]|^2. 
	\endaligned
\end{equation}

Finally we estimate $I_2$. It follows from \cite[Lemma~3.4]{P-Potentials} that
\begin{equation}
	\label{e : est I_3}
	I_2 \leq \int_\Omega pV_-|u|^p  + 2\delta V_- \left|G_p(u,v) \right|^2 +[q + (2-q)\delta]\int_\Omega W_-|v|^q.
\end{equation}
Let $\varepsilon \in (0,1)$. By combining \eqref{eq : pot subcr V W} with Proposition~\ref{t : p grad est negative V}, \eqref{eq : dis subcr con Gp} and, subsequently, \cite[Lemma~3.4]{P-Potentials} applied with $1-\varepsilon$, we get
\begin{equation}
	\label{e : est I3}
	\aligned
	I_2 \leq& \, J(\alpha_1,\alpha_2) + \sigma_1 \int_\Omega \left[ p V_+|u|^p   +  2\delta V_+ |G_p(u,v)|^2 \right] +\sigma_2 [q+(2-q)\delta]\int_\Omega  W_+|v|^q\\
	\leq& \,J(\alpha_1,\alpha_2) +2 \int_\Omega \left[\sigma_1\, V_+ (\partial_\zeta \cQ )(u,v) \cdot u + \frac{\sigma_2}{1-\varepsilon} \,  W_+ (\partial_\eta \cQ )(u,v) \cdot v \right]. 
	\endaligned
\end{equation}
Note that \eqref{eq : dis subcr con Gp} is applicable thanks to the equivalence \eqref{eq: equiv_domains}.

The estimates  \eqref{e : est I1} and \eqref{e : est I3}, together with \eqref{e : flow deriv}, yield
\begin{equation}
	\nonumber
	\aligned
	\cO(\cQ)(u,v) \geq &\,\, \widetilde{C} \int_{\Omega \setminus \{u=0,v=0\}}  \tau(u,v) (|\nabla u|^2 +V_+|u|^2) + \tau^{-1}(u,v) (|\nabla v|^2+W_+|v|^2) \\
	&+ 2\sigma_2\left(1-\frac{1}{1-\varepsilon}\right) \int_\Omega  W_+ (\partial_{\eta}\cQ)(u,v) \cdot v.
	\endaligned
\end{equation}
Therefore, sending $\varepsilon \rightarrow 0$ gives \eqref{e : first est of E}. 

\subsubsection{Proof of \eqref{e : second est of E}}
Following the strategy in \cite{P-Potentials}, the idea is to apply the subcriticality inequality to the term $I_1$ rather than $I_2$. Specifically, we exploit the fact that the conditions $V \in \cP_{\alpha, \sigma}$ and $\cC_{p,\alpha,\sigma}(\oA) \in \cB_p$ (and similarly for $W$ and $\oB$) remain satisfied if the parameters $(\alpha, \sigma)$ are slightly increased. If  $\alpha_1, \alpha_2 > 0$, this allows us to invoke the subcriticality condition with parameters $\widetilde{\alpha}_j > \alpha_j$ and $\widetilde{\sigma}_j > \sigma_j$ for $j=1,2$, effectively absorbing the term $I_2$ into $I_1$ while leaving a positive remainder, as done in the proof of  \cite[(7.12)]{P-Potentials}.

If either $\alpha_1=0$ or $\alpha_2=0$, the proof is simplified; indeed, as discussed in \cite[Section~1.1]{P-Potentials}, the condition $\alpha_1=0$ (resp. $\alpha_2=0$) is equivalent to $V_-=0$ (resp. $W_-=0$). In particular, if $\alpha_1=0$, it suffices to set $\widetilde{\alpha}_1 = \widetilde{\sigma}_1 = 0$. In the specific case where $\alpha_1 = \alpha_2 = 0$, the right-hand side of \eqref{e : second est of E} vanishes, and the estimate follows directly from \eqref{e : first est of E}.

\begin{remark}
\label{r : bound neg part necess}
The argument preceding the proof of \eqref{e : first est of E} and \eqref{e : second est of E} relies on the decomposition \eqref{e : split op under bound}, which is justified by the boundedness assumption on the potentials. This decomposition allows us to work with the operators $\oL^{\oA_+}$ and $\oL^{\oB_+}$, namely divergence-form operators with nonnegative potentials. For such operators, as previously shown, the use of the new approximating sequence enables the derivation of \eqref{e : heat flow est pos 2} in the proof of Proposition~\ref{p : L11}. This raises the question of whether an analogue of \eqref{e : heat flow est pos 2} could be obtained directly for the full operators $\oL^\oA$ and $\oL^\oB$ by considering potentials with an unbounded negative part, thereby not relying on \eqref{e : split op under bound}.

Indeed, if one attempts to extend the proof of Proposition~\ref{p : L11} to divergence-form operators with negative potentials, it becomes clear that the term $(\text{E.T.})_{n,v}$ would include the component
\begin{equation}
	\nonumber
	\Psi_n(u,v) G_{\cQ * \varphi_\nu}^{(V_-,W_-)}(u,v),
\end{equation}
which must be uniformly dominated by an integrable function. Here, we recall that $u \in \Dom(\gota_{\oA,\oV})$ and $v \in \Dom(\gota_{\oB,\oW})$ are such that $u, v, \mathscr{L}^\mathscr{A}u, \mathscr{L}^\mathscr{B}v \in (L^p \cap L^q)(\Omega)$. However, in the process of estimating this term, one encounters contributions of the type
\begin{equation}
	\nonumber
	V_- |u| |v|,
\end{equation}
which, under the current assumptions, cannot be guaranteed to belong to $L^1(\Omega)$; see Remark~\ref{r : stima eta 2} for a similar problem.
\end{remark}


\section{Bilinear embedding for unbounded nonnegative potentials}
\label{s: unb neg}
To handle the general case of unbounded potentials, we adapt the approximation scheme developed in \cite[Section~8]{P-Potentials}, which in turn follows the strategy of \cite[Section~3.4]{CD-Potentials}. As in those works, Theorem~\ref{t: N bil} will be obtained by reducing the general case to the situation where the negative part of the potential is bounded, which was treated in Section~\ref{s: be bnd neg pot}, once an appropriate approximation result is established.
\smallskip

Let $A \in \cA(\Omega)$, $b,c \in L^\infty(\Omega;\C^d)$, $\oV$ be a closed subspace of $W^{1,2}(\Omega)$ containing $W_0^{1,2}(\Omega)$ and  $V \in \cP_{\alpha,\sigma}(\Omega,\oV)$ such that 
\begin{equation}
	\label{eq: AVlambda}
	\oA_{\alpha,\sigma} = \left(A-\alpha I_d, b,c, (1-\sigma)V_+\right)\in \cB(\Omega).
\end{equation}
For each $n\in \N$ define
\begin{equation*}
	\aligned
	V_n &:=  V_+ - V_- \wedge n,\\
	\oA_n &:= (\oA,b,c,V_n).
	\endaligned
\end{equation*}
We also set $V_{\infty}=V$ and $\oA_\infty=\oA$. Denote $\vartheta_0^* = \pi/2-\vartheta_0$, with $\vartheta_0$ being the angle defined in page \pageref{eq: sect numer range}.

Clearly, for each $n \in \N \cup \{\infty\}$,
\begin{equation}
	\label{e : dis sub unif}
	\int_\Omega (V_-  \wedge n) |u|^2 \leq \alpha \int_{\Omega}|\nabla u|^2 + \sigma \int_\Omega V_+ |u|^2, \quad \forall u \in\oV. 
\end{equation}
It follows that the operators $\oL^{\oA_n}$, $n \in \N \cup \{\infty\}$, are uniformly sectorial of angle $\theta_0$ in the sense that
\begin{equation}
	\label{e : unif sect est}
	\|( \zeta -\oL^{\oA_n})^{-1} \|_2 \leq \frac{1}{{\rm dist}(\zeta, \overline{\bS}_{\theta_0})}, \quad \forall \zeta \in \C \setminus \overline{\bS}_{\theta_0};
\end{equation}
see Section~\ref{ss: op} for details. In particular, \eqref{eq: AVlambda} and the uniform subcriticality estimate \eqref{e : dis sub unif} guarantee the existence of a constant $\widetilde{C}>0$, independent of $n$, such that
\begin{equation}
	\label{eq: unif below est form}
	\Re \gota_{\oA_n}(u,u)
	\geq \widetilde{C}\left( \|\nabla u\|_2 + \|V_+^{1/2}u\|_2 \right),
\end{equation}
for all $u \in \Dom(\gota_{\oA_n})$; see the calculation leading to \eqref{e : below bound Rea}.

The following theorem is modeled after \cite[Theorem~8.1]{P-Potentials}, which in turn follows the strategy of \cite[Theorem~3.6]{CD-Potentials}.

\begin{theorem}
\label{t : conv semig}
For all $f \in L^2(\Omega)$  and all $z \in \bS_{\theta_0^*}$ we have  
\begin{equation}
	\nonumber
	\begin{array}{rclccl}
	\nabla T_z^{\oA_{n}}f &\rightarrow& \nabla T_z^{\oA} f& \quad &\text{in}& L^2(\Omega, \C^d), \\
	 |V_n|^{1/2} T_z^{ \oA_{n}}f &\rightarrow& |V|^{1/2} T_z^{\oA} f & \quad &\text{in} & L^2(\Omega),
	\end{array}
\end{equation}
as $n \rightarrow \infty$.
\end{theorem}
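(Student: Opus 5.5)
We follow the route of \cite[Theorem~3.6]{CD-Potentials} and \cite[Theorem~8.1]{P-Potentials}, based on strong resolvent convergence in the energy norm and the Cauchy integral representation of the semigroup. Write $\gota_n=\gota_{\oA_n,\oV}$ and $\gota=\gota_{\oA,\oV}$. All these forms share the domain $\Dom(\gota)=\{u\in\oV:\ V_+^{1/2}u\in L^2\}$, since $V_-\wedge n\le V_-$ and the domain only involves $V_+$. Equip this domain with the norm $\|u\|_{\gotb}^2=\|\nabla u\|_2^2+\|V_+^{1/2}u\|_2^2+\|u\|_2^2$. By \eqref{eq: unif below est form} and the upper bound coming from \eqref{e : dis sub unif}, boundedness of $A$ and \eqref{eq: bc cont by V} (which hold for $\oA_{\alpha,\sigma}$), the forms $\gota_n+1$ are uniformly coercive and uniformly bounded on $(\Dom(\gota),\|\cdot\|_{\gotb})$.

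The first step is resolvent convergence in the energy space. Fix $\lambda\in\C\setminus\overline{\bS}_{\theta_0}$, say $\lambda=-1$ first, $f\in L^2$, and set $u_n=(\lambda-\oL^{\oA_n})^{-1}f$ and $u=(\lambda-\oL^{\oA})^{-1}f$. Subtracting the weak formulations and testing with $w=u_n-u\in\Dom(\gota)$ gives
\begin{equation*}
(\gota_n-\lambda)(u_n-u,w)=(\gota-\gota_n)(u,w)=-\int_\Omega (V_--V_-\wedge n)\,u\,\overline{w}.
\end{equation*}
By uniform coercivity, $\|w\|_{\gotb}^2\lesssim\bigl|\int (V_--V_-\wedge n)|u||w|\bigr|$. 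Cauchy--Schwarz with weight $V_-$ bounds the right-hand side by $\|(V_--V_-\wedge n)^{1/2}u\|_2\,\|V_-^{1/2}w\|_2$. Moreover \eqref{e : subc ineq} gives $\|V_-^{1/2}w\|_2\lesssim\|w\|_{\gotb}$, and $V_-|u|^2\in L^1$ by subcriticality, so dominated convergence gives $\|(V_--V_-\wedge n)^{1/2}u\|_2\to0$. Hence $u_n\to u$ in $\|\cdot\|_{\gotb}$, which gives $\nabla u_n\to\nabla u$ and $V_+^{1/2}u_n\to V_+^{1/2}u$ in $L^2$. For the negative part we use
\begin{equation*}
\|(V_-\wedge n)^{1/2}u_n-V_-^{1/2}u\|_2\le\|V_-^{1/2}(u_n-u)\|_2+\|(V_-^{1/2}-(V_-\wedge n)^{1/2})u\|_2\to0.
\end{equation*}
Since $|V_n|^{1/2}=\bigl(V_+^{1/2}+(V_-\wedge n)^{1/2}\bigr)$ on disjoint supports, this gives $|V_n|^{1/2}u_n\to|V|^{1/2}u$. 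For general $\lambda$ outside the sector we repeat the argument using sectoriality \eqref{eq: sect numer range} to get coercivity of $\gota_n-\lambda$ after a rotation, with constants locally uniform in $\lambda$ on compact subsets.

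The second step transfers this to the semigroup. For $z\in\bS_{\theta_0^*}$ we write $T_z^{\oA_n}f=\frac1{2\pi i}\int_\Gamma e^{-z\lambda}(\lambda-\oL^{\oA_n})^{-1}f\,d\lambda$ along a contour $\Gamma$ around $\overline{\bS}_{\theta_0'}$ with $\theta_0<\theta_0'<\pi/2-|\arg z|$. Taking the closed operators $\nabla$ and $|V_n|^{1/2}$ inside the integral requires uniform energy bounds on the resolvents. From the equation, $\|\nabla(\lambda-\oL^{\oA_n})^{-1}f\|_2^2+\|V_+^{1/2}(\cdot)\|_2^2\lesssim\Re\gota_n(u_n,u_n)\le|\langle f,u_n\rangle|+|\lambda|\|u_n\|^2\lesssim\|f\|_2^2/|\lambda|$. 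This is uniform in $n$ by \eqref{e : unif sect est}, and the $V_-$ part is controlled through \eqref{e : dis sub unif}. Thus the integrand is dominated by $C|e^{-z\lambda}||\lambda|^{-1/2}\|f\|_2$, which is integrable on $\Gamma$ away from the origin. Near $0$ we either cut the contour off at a small circle or replace $\oL$ by $\oL+\varepsilon$. Pointwise convergence in $\lambda$ from the first step then gives the result by dominated convergence for Bochner integrals.

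The main obstacle is obtaining uniformity in $\lambda$ near the origin and handling the $V_-$ component without a uniform lower bound on $\|u\|_2$. I would first try a contour avoiding $0$ (through $-1$, say), using that $\lambda=0$ lies in the resolvent set only up to the shift. If this fails, I would instead use $\|\nabla T_z f\|^2\lesssim|\Re\langle\oL T_zf,T_zf\rangle|\lesssim|z|^{-1}\|f\|_2^2$ for uniform bounds, and prove convergence first for $f$ in the range of a resolvent, then conclude by density.
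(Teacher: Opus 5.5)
Your proposal is correct and follows the route the paper indicates: convergence of the resolvents in the energy norm for the truncations $V_-\wedge n$, which you make direct by exploiting the common form domain and the subcritical inequality (this controls both the error term $\int (V_--V_-\wedge n)u\overline{w}$ and the $V_-$ component), followed by the Cauchy-integral representation of $T_z$ with $n$-uniform bounds coming from \eqref{e : unif sect est} and \eqref{eq: unif below est form}. The fallback options in your last paragraph are not needed. The standard contour $\{re^{\pm i\theta_0'}: r\ge\epsilon\}\cup\{\epsilon e^{i\psi}:\theta_0'\le|\psi|\le\pi\}$ stays at positive distance from $\overline{\bS}_{\theta_0}$, so the resolvent and energy bounds on it are uniform in $n$, and dominated convergence applies directly.
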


The main step for proving Theorem~\ref{t : conv semig}  consists in extending \cite[Proposition~8.3]{P-Potentials} to the case of nontrivial first-order terms. This is achieved by combining the uniform sectoriality \eqref{e : unif sect est} and the uniform elliptic estimate \eqref{eq: unif below est form}, following the argument of \cite[Proposition~3.9]{CD-Potentials}. The presence of the first-order terms $b$ and $c$ does not affect the underlying method and can be handled with straightforward modifications. We leave it to the reader to fill in the details.
Once an analogue of \cite[Proposition~8.3]{P-Potentials} is obtained, Theorem~\ref{t : conv semig} follows from the standard representation of analytic semigroups by means of a Cauchy integral.


\section{$H^\infty$-calculus and maximal regularity}
\label{ss : funct calcul max regul}
Let $\oV$ satisfy \eqref{eq: inv P} and \eqref{eq: inv N}, $p \in (1, \infty)$, and $\oA \in \cB\cP_p(\Omega, \oV)$. We denote by $-\oL_p$ the generator of the semigroup $(T_t^{\oA,\oV})_{t>0}$ on $L^p(\Omega)$. By combining Corollary~\ref{c: N analytic sem} with \cite[Chapter~II, Theorem~4.6]{EN}, we deduce that $\oL_p$ is a sectorial operator on $L^p(\Omega)$. Consequently, the functional calculus introduced in \cite{CDMY} is applicable; in what follows, we investigate its boundedness. We refer the reader to \cite{CDMY} and \cite[Section~7.1 \& Section~7.2]{CD-Mixed} for the relevant terminology and further references.

We provide a proof of Theorem~\ref{t : teo funct calc Hinfty} by adapting the approach of \cite[Section~7]{CD-Mixed}, which was further developed in \cite[Section~6]{BER}, \cite[Section~7]{Poggio}, and \cite[Section~9]{P-Potentials}. In line with the strategy of the aforementioned works, we require some basic properties of the class $\cB\cP_p$ for $p\in(1,\infty)$---such as its invariance under taking the adjoint and under small complex rotations of the coefficients---which enable us to obtain an extension of the bilinear embedding to complex times (analogous to the estimates in \cite[(42)]{CD-Mixed} and \cite[Section~9.1]{P-Potentials}).


\subsection{Basic properties of the classes $\cS\cP_p$ and $\cB\cP_p$}
In this section, we derive several fundamental properties of $\cS\cP_p$ and, consequently, of $\cB\cP_p$. These results are not only of intrinsic interest but also constitute key tools for the proof of Theorem~\ref{t : teo funct calc Hinfty}.

Before stating the next proposition, we recall  notation \eqref{eq: def rot}. Moreover, given a $4$-tupla $\oA=(A,b,c,V)$, where $A\in\cA(\Omega)$, $b,c\in L^\infty(\Omega)$ and $V\in\cP(\Omega,\oV)$, we denote
\begin{equation}
	\label{eq : tupla aggiunta}
	\oA^\ast=(A^\ast,\overline{c},\overline{b},V)
\end{equation}

\begin{proposition}
\label{p: propr_SPp}
Let $1<p<\infty$, $q$ be its conjugate exponent and $\oV$ be a closed subspace of $W^{1,2}(\Omega)$ containing $W_0^{1,2}(\Omega)$. Let 
\begin{equation}
	\nonumber
	\oA=(A,b,c,V) \in \cA(\Omega) \times \left(L^\infty(\Omega; \C^d)\right)^2\times L^1_\text{loc}(\Omega).
\end{equation}
Then, the following assertions hold:
\begin{enumerate}
[label=\textnormal{(\roman*)}]
\item\label{p : it star}$\oA\in\cS\cP_p(\Omega,\oV)$ if and only if $\oA^*\in\cS\cP_q(\Omega,\oV)$;
\item\label{p : it : SPr SPp} we have
\begin{equation}
 	\nonumber
	\cB\cP_p(\Omega,\oV)\subset\cS\cP_r(\Omega,\oV)
\end{equation}
for all $|1-2/r|\leq |1-2/p|$. In particular, $\left\{\cB\cP_p(\Omega,\oV)\colon p\in[2,\infty)\right\}$ is a decreasing chain;
\item\label{p : i : 3}if $\oA\in\cS\cP_p(\Omega,\oV)$, then there exists $\varepsilon>0$ such that $\oA\in\cS\cP_r(\Omega,\oV)$ for all $r\in[p-\varepsilon,p+\varepsilon]$;
\item\label{p : it : SPp theta}if $\oA\in\cS\cP_p(\Omega,\oV)$, then there exists $\vartheta\in(0,\pi/2)$ such that $\oA_\phi\in\cS\cP_p(\Omega,\oV)$ for all $\phi\in[-\vartheta,\vartheta]$.
\end{enumerate}

\end{proposition}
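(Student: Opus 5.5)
The plan is to reduce every assertion to the corresponding property of the class $\cS_p$ proved in \cite{Poggio}, and to check that the perturbation map $\oA\mapsto \cC_{p,\alpha,\sigma}(\oA)$ is compatible with each operation. The key observation is that $\cC_{p,\alpha,\sigma}$ affects the matrix only through the real multiple $\alpha\frac{pq}{4}I_d$ and the potential only through $(1-\sigma)V_+$. Moreover, the pair $(\alpha,\sigma)$ with $V\in\cP_{\alpha,\sigma}(\Omega,\oV)$ does not depend on $p$ and is unchanged under adjoints, and $pq/4$ is symmetric in $p,q$.

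For \ref{p : it star}, we first note that $(\cC_{p,\alpha,\sigma}(\oA))^*=(A^*-\alpha\frac{pq}4 I_d,\overline c,\overline b,(1-\sigma)V_+)=\cC_{q,\alpha,\sigma}(\oA^*)$, because $\frac{pq}4$ is the same for $p$ and $q$ and $(\alpha I_d)^*=\alpha I_d$. We then invoke the duality $\oC\in\cS_p(\Omega)\iff\oC^*\in\cS_q(\Omega)$ from \cite{Poggio}. This is the identity $\Gamma^{\oC}_p(x,\xi)=\Gamma^{\oC^*}_q(x,\cdot)$ after the change of variables $\xi\mapsto \cJ_p\xi/(p-1)$ and the like, as in \cite[Corollary~5.16]{CD-DivForm}. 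Condition \eqref{eq: sect form above} is clearly invariant, since $|\overline{\overline c}-\overline b|=|\overline b-c|$.

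For \ref{p : it : SPr SPp}, fix $(\alpha,\sigma)$ with $\cC_{p,\alpha,\sigma}(\oA)\in\cB_p=\cS_p\cap\cS_q$. Since the matrix shift depends on $p$, I cannot directly use the monotonicity $\cB_p\subset\cS_r$ of \cite{Poggio}. Instead, I write
\[
\cC_{r,\alpha,\sigma}(\oA)=\cC_{p,\alpha,\sigma}(\oA)+\Bigl(\alpha\bigl(\tfrac{pq}{4}-\tfrac{rr'}{4}\bigr)I_d,0,0,0\Bigr).
\]
When $|1-2/r|\le|1-2/p|$, one has $rr'\le pq$, so the added matrix is a nonnegative real multiple of the identity. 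The term it contributes to $\Gamma_r$ is $\Re\langle \xi,\cJ_r\xi\rangle=|\xi|^2+(r-2)|\Re\xi|^2\ge \min\{1,r-1\}|\xi|^2\ge0$. Hence $\cC_{r,\alpha,\sigma}(\oA)\in\cS_r$ provided $\cC_{p,\alpha,\sigma}(\oA)\in\cS_r$, and the latter follows from $\cB_p\subset\cS_r$ in \cite{Poggio}. That result rests on the fact that $\Gamma_r$ is, for fixed $\xi$, affine in the parameter $r-2$, so it interpolates between the exponents $p$ and $q$. The chain statement follows at once, since $\cB\cP_p=\cS\cP_p\cap\cS\cP_q$ in the sense of a common $(\alpha,\sigma)$ and the interval condition is symmetric.

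For \ref{p : i : 3}, let $\mu=\mu_p(\cC_{p,\alpha,\sigma}(\oA))>0$. For fixed $\xi$, both $\Gamma_r$ and the shift $\alpha\frac{rr'}{4}$ depend continuously on $r$. Specifically,
\[
\Gamma_r^{\cC_{r}}(\xi)-\Gamma_p^{\cC_p}(\xi)=(r-p)\bigl(\Re\langle A\xi,\Re\xi\rangle+\Re\langle \Re c,\xi\rangle\bigr)+O(|r-p|)\alpha|\xi|^2 .
\]
Using $|c|\lesssim\sqrt{V_+}$ from \eqref{eq: bc cont by V}, which holds for the $\cS_p$ tuple together with \eqref{eq: sect form above}, this difference is bounded by $C|r-p|(|\xi|^2+V_+)$, and this is absorbed by $\mu(|\xi|^2+(1-\sigma)V_+)$ for $|r-p|$ small. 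I expect this to be the main technical point: the bound $|c|\lesssim\sqrt{V_+}$ must hold in $\cS_p$ and not only in $\cB$. If it fails there, I will instead derive it from \eqref{e : equiv B} by minimizing in $\xi$ along real directions.

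For \ref{p : it : SPp theta}, I rotate: $(\cC_{p,\alpha,\sigma}(\oA))_\phi$ differs from $\cC_{p,\alpha,\sigma}(\oA_\phi)$ in the matrix by $\alpha\frac{pq}4(e^{i\phi}-1)I_d$ and in the potential by a term of the form $(\cos\phi-1)$ times $V_+$. Together with $\Gamma_p^{\oC_\phi}-\Gamma_p^{\oC}=O(|\phi|)(|\xi|^2+|\xi|\sqrt{V_+}+V_+)$, this shows that all perturbations are $O(|\phi|)(|\xi|^2+V_+)$. Subcriticality of $(\cos\phi)V$ also holds with the same $(\alpha/\cos\phi,\sigma)$, and I absorb the small change of $\alpha$ in the same way as the other perturbations. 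The argument then concludes exactly as in \ref{p : i : 3}.
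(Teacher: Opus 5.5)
Your proposal is correct and essentially follows the paper's proof. The same three identities drive it: $\cC_{p,\alpha,\sigma}(\oA)^*=\cC_{q,\alpha,\sigma}(\oA^*)$, $\cC_{r,\alpha,\sigma}(\oA)=\cC_{p,\alpha,\sigma}(\oA)+(\alpha\frac{pq-rr'}{4}I_d,0,0,0)$ with $rr'\leq pq$, and $\cC_{p,\alpha,\sigma}(\oA_\phi)=(\cC_{p,\alpha,\sigma}(\oA))_\phi+(\alpha\frac{pq}{4}(e^{i\phi}-1)I_d,0,0,0)$. The only difference is that in (iii)--(iv) you carry out inline the perturbation estimate that the paper imports from \cite[Proposition~9]{Poggio}. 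The bound $|c|\lesssim\sqrt{V_+}$ you flag does hold in $\cS_p$: the discriminant of $t\mapsto\Gamma_p(x,t\xi_0)$ gives $|\bar b+\cJ_p c|\lesssim\sqrt{V_+}$, and together with \eqref{eq: sect form above} this controls $|c+\cJ_p c|\geq\min\{p,2\}|c|$.

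There are three harmless slips. In (iv) the two tuples carry the same potential $(1-\sigma)(\cos\phi)V_+$, so they do not differ by a multiple of $V_+$. Also $(\cos\phi)V\in\cP_{\alpha,\sigma}(\Omega,\oV)$ with the same $\alpha$ because $\cos\phi\leq1$, so no $\alpha/\cos\phi$ adjustment is needed. In (i) the exact substitution is $\xi=\cJ_q\eta$, which gives $\Gamma_p^{\oC}(x,\cJ_q\eta)=\Gamma_q^{\oC^*}(x,\eta)$ with no factor $1/(p-1)$.
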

\begin{proof}
Item \ref{p : it star} follows from \cite[Proposition~10]{Poggio}.

Recall notation \eqref{eq: def FA e A+} and definition \eqref{e : def Gammap}. 
Let $r$ be such that $|1-2/r|\leq |1-2/p|$ and $\oA \in\cB\cP_p(\Omega,\oV)$. This means that there exist $\alpha\geq0$ and $\sigma\in[0,1)$ such that $V\in\cP_{\alpha,\sigma}(\Omega,\oV)$ and $\cC_{p,\alpha,\sigma}(\oA) \in \cB_p(\Omega)$. Therefore, it suffices to prove that $\cC_{r,\alpha,\sigma}(\oA) \in \cS_r(\Omega)$. Hence, \cite[Proposition~10]{Poggio} implies that $\cC_{p,\alpha,\sigma}(\oA) \in \cB_r(\Omega)$, which, combined with the identity 
\begin{equation}
	\label{eq: useful ident}
	\Gamma^{\cC_{r,\alpha,\sigma}(\oA)}_r(\cdot,\xi) = \Gamma_{r}^{\cC_{p,\alpha,\sigma}(\oA)}(\cdot,\xi)+ \frac{\alpha}{4}(pq-rr^\prime)\Re\sk{\xi}{\mathcal{J}_p\xi},\quad\xi\in\C^d,
\end{equation}
the inequality $pq\geq rr^\prime$, and the fact that the identity  matrix is $p$-elliptic, yields
\begin{equation}
\nonumber
\aligned
\Gamma^{\cC_{r,\alpha,\sigma}(\oA)}_r(\cdot,\xi) \geqsim\,|\xi|^2+(1-\sigma)V_+
\endaligned
\end{equation}
for all $\xi\in\C^d$. It follows that $\cC_{r,\alpha,\sigma}(\oA) \in \cS_r(\Omega)$.

Finally, we prove items \ref{p : i : 3} and \ref{p : it : SPp theta}. Let $\oA\in\cS\cP_p(\Omega,\oV)$; by definition, there exist $\alpha\geq0$ and $\sigma\in[0,1)$ such that $V\in\cP_{\alpha,\sigma}(\Omega,\oV)$ and $\cC_{p,\alpha,\sigma}(\oA)\in\cS_p(\Omega)$.

By \cite[Proposition~9]{Poggio}, there exists $\varepsilon>0$ such that $\cC_{p,\alpha,\sigma}(\oA) \in \cS_r(\Omega)$ for every $r\in[p-\varepsilon,p+\varepsilon]$. Consequently, combining the identity \eqref{eq: useful ident} with the Cauchy-Schwarz inequality, we obtain
\begin{equation}
	\nonumber
	\aligned
	\Gamma^{\cC_{r,\alpha,\sigma}(\oA)}_r(\cdot,\xi) \geqsim|\xi|^2+(1-\sigma )V_+ -\frac{\alpha}{4}|pq-rr^\prime|\|\mathcal{J}_p\||\xi|^2,
	\endaligned
\end{equation}
for all $\xi\in\C^d$. Since $pq-rr^\prime \to 0$ as $r\to p$, the last term can be absorbed for $r$ sufficiently close to $p$, which concludes the proof of item \ref{p : i : 3}.

As for item \ref{p : it : SPp theta}, we note that $(\cos\phi)V \in \cP_{\alpha,\sigma}(\Omega,\oV)$ for all $\phi \in [-\pi/2,\pi/2]$. Thus, it remains to show that $\cC_{p,\alpha,\sigma}\left(\oA_\phi\right) \in \cS_p(\Omega)$ for sufficiently small $\phi$. Since $\cC_{p,\alpha,\sigma}\left(\oA\right) \in \cS_p(\Omega)$, it follows from \cite[Proposition~9]{Poggio} that there exists $\vartheta_0\in(0,\pi/2)$ such that $\left(\cC_{p,\alpha,\sigma}\left(\oA\right)\right)_\phi \in \cS_p(\Omega)$ for all $\phi\in[-\vartheta_0,\vartheta_0]$. Moreover, observing that
\begin{equation}
	\nonumber
	\cC_{p,\alpha,\sigma}\left(\oA_\phi\right) = \left(\cC_{p,\alpha,\sigma}(\oA)\right)_\phi + \left(\alpha\frac{pq}{4}(e^{i\phi}-1),0,0,0 \right),
\end{equation}
the Cauchy-Schwarz inequality yields
\begin{equation}
	\nonumber
	\aligned
	\Gamma^{\cC_{p,\alpha,\sigma}\left(\oA_\phi\right)}_p(\cdot,\xi) &=\Gamma_{p}^{\left(\cC_{p,\alpha,\sigma}(\oA)\right)_\phi}(\cdot,\xi)+\alpha\frac{pq}{4} \Re \sk{(e^{i\phi}-1)\xi}{\mathcal{J}_p\xi}\\
	& \geq \Gamma_{p}^{\left(\cC_{p,\alpha,\sigma}(\oA)\right)_\phi}(\cdot,\xi)-\alpha\frac{pq}{4}\left|e^{i\phi}-1\right|\|\mathcal{J}_p\| |\xi|^2.
	\endaligned
\end{equation}
The assertion follows by recalling that $\left(\cC_{p,\alpha,\sigma}(\oA)\right)_\phi \in \cS_p(\Omega)$ and by taking the limit as $\phi \to 0$.
\end{proof}


\subsection{Proof of Theorem~\ref{t : teo funct calc Hinfty}}
\label{ss : dim hinf calc}
The proof of Theorem~\ref{t : teo funct calc Hinfty} reduces to establishing the boundedness of the $H^\infty$-functional calculus for an angle strictly less than $\pi/2$. Indeed, the Dore--Venni theorem \cite{DoreVenni,PrussSohr} allows us to derive the following result concerning maximal regularity, which is modeled after \cite[Proposition~20]{CD-Mixed}; see also \cite[Proposition~27]{Poggio} and \cite[Proposition~9.1]{P-Potentials}. We refer the reader to \cite{CDMY} and \cite[Section~7.1 \& Section~7.2]{CD-Mixed} for the necessary terminology and notation.

\begin{proposition}
\label{p : prop_max_reg}
Let $\Omega$ be an arbitrary open subset of $\R^d$ and suppose that $\oV$ satisfy \eqref{eq: inv P} and \eqref{eq: inv N}. Let $p>1$ and $\oA\in\cB\cP_p(\Omega,\oV)$. Let $-\oL_p$ be the generator of the semigroup $(T_t^{\oA,\oV})_{t>0}$ on $L^p(\Omega)$. If $\omega_{H^\infty}(\oL_p)<\pi/2$, then $\oL_p$ has parabolic maximal regularity.
\end{proposition}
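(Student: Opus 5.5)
The plan is to follow the scheme of \cite[Proposition~20]{CD-Mixed}, as already adapted in \cite[Proposition~27]{Poggio} and \cite[Proposition~9.1]{P-Potentials}, and to invoke the Dore--Venni theorem in the form of \cite{PrussSohr}. That theorem says the following. Let $X$ be a UMD space and let $\oL$ be a sectorial operator on $X$ that is invertible and has bounded imaginary powers $\oL^{is}$ of power angle $<\pi/2$. Then $\oL$ has parabolic maximal regularity on finite time intervals. It suffices to have $0\in\rho(\oL+\mu)$ after a shift, because maximal regularity on bounded intervals is invariant under such shifts.

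First, $X=L^p(\Omega)$ with $1<p<\infty$ is a UMD space. Corollary~\ref{c: N analytic sem}, combined with \cite[Chapter~II, Theorem~4.6]{EN}, shows that $\oL_p$ is sectorial of angle $<\pi/2$. Indeed, the semigroup is analytic and contractive on a sector $\bS_\theta$, so $\oL_p$ is sectorial of angle $\le \pi/2-\theta$.

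Next, I would use the hypothesis $\omega_{H^\infty}(\oL_p)<\pi/2$. Pick $\omega_{H^\infty}(\oL_p)<\theta'<\pi/2$. For every $\mu>0$, the shifted operator $\oL_p+\mu$ also admits a bounded $H^\infty(\bS_{\theta'})$-calculus, by \cite{CDMY} or the standard perturbation lemma for shifts. Since $\oL_p+\mu$ is also invertible, its imaginary powers $(\oL_p+\mu)^{is}=f_s(\oL_p+\mu)$ with $f_s(z)=z^{is}$ satisfy
\begin{equation}
\nonumber
\|(\oL_p+\mu)^{is}\|\leqsim \sup_{z\in\bS_{\theta'}}|z^{is}|\le e^{\theta'|s|}.
\end{equation}
So the power angle is at most $\theta'<\pi/2$. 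The Dore--Venni theorem then gives maximal regularity for $\oL_p+\mu$. Since $u\mapsto e^{\mu t}u$ preserves $L^r(0,T;L^p)$ on bounded intervals, this transfers to $\oL_p$ on $(0,T)$.

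The only delicate point is that $\oL_p$ need not be injective on a general open set, for instance for the Neumann Laplacian on a bounded domain. The shift by $\mu$ is exactly what handles this, so I do not expect a genuine obstacle. The hypothesis $\oA\in\cB\cP_p(\Omega,\oV)$ enters only to guarantee sectoriality through Corollary~\ref{c: N analytic sem}.
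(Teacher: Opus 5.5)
Your proposal is correct and follows essentially the paper's route: the paper obtains this proposition directly from the Dore--Venni theorem \cite{DoreVenni,PrussSohr}, modeled on \cite[Proposition~20]{CD-Mixed}, using that $L^p(\Omega)$ is UMD and that a bounded $H^\infty$-calculus of angle $<\pi/2$ gives bounded imaginary powers of angle $<\pi/2$. Your shift to $\oL_p+\mu$ is a convenient way around the possible non-invertibility of $\oL_p$ (the version in \cite{PrussSohr} cited by the paper does not need invertibility), and it yields maximal regularity on bounded time intervals, which is the standard meaning of parabolic maximal regularity here.
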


Let us now prove Theorem~\ref{t : teo funct calc Hinfty}. We assume without loss of generality that $p\geq2$. Suppose that $\oV$ falls into any of the special cases \ref{i: D}-\ref{i: cM} of Section~\ref{s: boundary}. By Proposition~\ref{p : prop_max_reg}, it suffices to show that
\begin{equation}
	\nonumber
	\oA=(A,b,c,V)\in\cB\cP_p(\Omega,\oV)\Rightarrow\omega_{H^\infty}(\oL_p)<\pi/2.
\end{equation}
Recalling the notation $\oA^\ast$ from \eqref{eq : tupla aggiunta}, we observe that $\oL^{\oA^\ast}=(\oL^\oA)^\ast$, so $T_t^{\oA^\ast,\oV}=(T_t^{\oA,\oV})^\ast$ for all $t>0$. Set $T_t=T_t^{\oA,\oV}$ and $T_t^\ast=T_t^{\oA^\ast,\oV}$ for all $t>0$. According to Proposition~\ref{p: propr_SPp}\ref{p : it star},\ref{p : it : SPp theta}, there exists $\theta\in(0,\pi/2)$ such that
\begin{equation}
	\label{eq : rot coeff}
	\aligned
	\left(e^{\pm i\theta}A,e^{\pm i\theta}b,e^{\pm i\theta}c,(\cos\theta)V\right)&\in\cB\cP_p(\Omega,\oV),\\
	\left(e^{\mp i\theta}A^\ast,e^{\mp i\theta}\overline{c},e^{\mp i\theta}\overline{b},(\cos\theta)V\right)&\in\cB\cP_p(\Omega,\oV).
	\endaligned
\end{equation}
Furthermore, Remark~\ref{r : angolo sect analit} ensures that for every $r\in[q,p]$, both $(T_t)_{t>0}$ and $(T_t^\ast)_{t>0}$ are analytic (and contractive) on $L^r(\Omega)$ in the sector $\mathbf{S}_\theta$. 

Let $f,g\in(L^p\cap L^q)(\Omega)$. By investigating the monotonicity of the functional
\begin{equation}
	\nonumber
	\cE_\theta(t):=\int_\Omega\cQ\left(T_{te^{\pm i\theta}}f,T_{te^{\mp i\theta}}^\ast g\right)
\end{equation}
and adapting the heat-flow method---whose application is justified in the previous sections---, \eqref{eq : rot coeff} enable us to obtain the following inequality:
\begin{equation}
	\label{e : bil emb compl}
	\int_0^{\infty}\int_\Omega\sqrt{\left|\nabla T_{te^{\pm i\theta}}f\right|^2+|V|\left|T_{te^{\pm i\theta}}f\right|^2}\sqrt{\left|\nabla T_{te^{\mp i\theta}}^\ast g\right|^2+|W|\left|T_{te^{\mp i\theta}}^\ast g\right|^2}\leqsim\|f\|_p\|g\|_q.
\end{equation}

Therefore, proceeding as in \cite[Section~7.3]{CD-Mixed}, \cite[Section~7.1]{Poggio} and \cite[Section~9.2]{P-Potentials}, we deduce that
\begin{equation}
	\label{e : finale max reg}
	\int_0^\infty\left|\int_\Omega\oL_pT_{te^{\pm i\theta}}f\overline{g}\right|\leqsim\|f\|_p\|g\|_q,
\end{equation}
for all $f\in L^p(\Omega)$ and $g\in L^q(\Omega)$.

We now apply \cite[Theorem~4.6 \& Example~4.8]{CDMY} to the dual subpair $\langle\overline{R}(\oL_p),\overline{R}(\oL_q^\ast)\rangle$
and the dual operators $(\oL_p)_{||}$, $(\oL_q^\ast)_{||}$ \cite[p.~64]{CDMY}, and deduce from \eqref{e : finale max reg} that $\omega_{H^\infty}(\oL_p)\leq\pi/2-\theta$.

\subsection*{Acknowledgements}
The authors were partially supported by the MIUR Excellence Department Project awarded to Dipartimento di Matematica, Universit\`a di Genova, CUPD33C23001110001, and the ``National Group for Mathematical Analysis, Probability and their Applications'' (GNAMPA-INdAM).

They would like to thank Andrea Carbonaro and Oliver Dragi\v{c}evi\'c for their support and guidance during the writing of this paper.

\bibliographystyle{amsxport}
\bibliography{biblio_mixed}
\end{document}